\providecommand{\noopsort}[1]{} %necessary for the bib file to work
\theoremstyle{plain}
\newtheorem{Th}{Theorem}[section]
\newtheorem{Prop}[Th]{Proposition}
\newtheorem{Lemma}[Th]{Lemma}
\newtheorem{Cor}[Th]{Corollary}
\theoremstyle{definition}
\newtheorem{Remark}[Th]{Remark}
\newtheorem{Def}[Th]{Definition}%[section]
\newtheorem{Example}[Th]{Example}%[section]
\newtheorem{Question}[Th]{Question}
\theoremstyle{plain}
\newtheorem*{namedthm}{\namedthmname}
\newcounter{namedthm}
	\newenvironment{named}[2]
	{\def\namedthmname{#1}
	\refstepcounter{namedthm}
	\namedthm[#2]\def\@currentlabel{#1}}
	{\endnamedthm}
\definecolor{Scarlet}{rgb}{0.78, 0.11, 0.0}
\newcommand{\un}{\underline}
\newcommand{\ca}{{\cal A}}
\newcommand{\cb}{{\cal B}}
\newcommand{\cf}{{\cal F}}
\newcommand{\ck}{{\cal K}}
\newcommand{\cm}{{\cal M}}
\newcommand{\xbm}{(X,{\cal B},\mu)}
\newcommand{\xbmt}{(X,{\cal B},\mu,T)}
\newcommand{\ot}{\otimes}
\newcommand{\ov}{\overline}
\newcommand{\Q}{\mathbb{Q}}
\newcommand{\R}{{\mathbb{R}}}
\newcommand{\C}{{\mathbb{C}}}
\newcommand{\Z}{{\mathbb{Z}}}
\newcommand{\N}{{\mathbb{N}}}
\newcommand{\vep}{\varepsilon}
\newcommand{\1}{\mathbbm{1}}
\newcommand{\E}{\mathbb{E}}
\renewcommand{\leq}{\leqslant}
\renewcommand{\geq}{\geqslant}
\newcommand{\mob}{\boldsymbol{\mu}}
\newcommand{\tot}{\boldsymbol{\varphi}}
\newcommand{\divides}{\mid}
\newcommand{\sB}{\mathscr{B}}
\newcommand{\sC}{\mathscr{C}}
\newcommand{\bdelta}{\boldsymbol{\delta}}
\DeclareMathOperator{\lcm}{lcm}
\newcommand{\Nk}{(N_k)_{k\geq 1}}
\newcommand{\Mk}{(M_k)_{k\geq 1}}
\newcommand{\Automaton}{M}
\newcommand{\alphB}{\cb}
\newcommand{\sqfree}{Q}
\newcommand{\abundantnumbers}{\mathscr{A}}
\newcommand{\deficientnumbers}{\mathscr{D}}
\newcommand{\perfectnumbers}{\mathscr{P}}
\numberwithin{equation}{section}
\title{Rationally almost periodic sequences, polynomial multiple recurrence and symbolic dynamics}
\author{V.\ Bergelson\thanks{The first author gratefully acknowledges the support of the NSF under grant DMS-1500575.} \and J.\ Ku\l aga-Przymus\thanks{Research supported by Narodowe Centrum Nauki UMO-2014/15/B/ST1/03736.} \and M.\ Lema{\'n}czyk\thanks{Research supported by Narodowe Centrum Nauki UMO-2014/15/B/ST1/03736 and the EU grant ``AOS'', FP7-PEOPLE-2012-IRSES, No 318910.} \and F.\ K.\ Richter}
\date{}
\begin{document}
\bibliographystyle{siam}

\thispagestyle{empty}
\maketitle

\begin{abstract}
A set $R\subset \N$ is called \emph{rational} if it is well-approximable by finite unions of arithmetic progressions, meaning that for every $\epsilon>0$ there exists a set $B=\bigcup_{i=1}^ra_i\N+b_i$, where $a_1,\ldots,a_r,b_1,\ldots,b_r\in\N$,
such that 
$$
\overline{d}(R\triangle B):=\limsup_{N\to\infty}\frac{|(R\triangle B)\cap\{1,\ldots,N\}|}{N}< \epsilon.
$$
Examples of rational sets include many classical sets of number-theoretical origin such as the set of squarefree numbers, the set of abundant numbers, or sets of the form $\Phi_x:=\{n\in\N: \frac{\tot(n)}{n}<x\}$, where $x\in[0,1]$ and $\tot$ is Euler's totient function.
We investigate the combinatorial and dynamical properties of rational sets and obtain new results in ergodic Ramsey theory.
Among other things, we show that if $R\subset \N$ is a rational set with $\overline{d}(R)>0$, then the following are equivalent:
\begin{enumerate}[(a)~~]
\item\label{itm:pmrar-a}
$R$ is divisible, i.e. $\overline{d}(R\cap u\N)>0$ for all $u\in\N$.
\item\label{itm:pmrar-b}
$R$ is an averaging set of polynomial single recurrence.
\item\label{itm:pmrar-c}
$R$ is an averaging set of polynomial multiple recurrence.
\end{enumerate}
As an application, we show that if $R\subset \N$ is rational and divisible, then for any set $E\subset \N$ with $\overline{d}(E)>0$
and any polynomials $p_i\in\Q[t]$, $i=1,\ldots,\ell$,
which satisfy
$p_i(\Z)\subset\Z$ and $p_i(0)=0$ for all $i\in\{1,\ldots,\ell\}$,
there exists $\beta>0$ such that the set
$$
\left\{n\in R:\overline{d}\Big(
E\cap (E-p_1(n))\cap \ldots\cap(E-p_\ell(n))
\Big)>\beta \right\}
$$
has positive lower density.

Ramsey-theoretical applications naturally lead 
to problems in symbolic dynamics, which involve \emph{rationally almost periodic sequences} (sequences whose level-sets are rational).
We prove that if $\ca$ is a finite alphabet, $\eta\in\ca^\N$ is rationally almost periodic, $S$ denotes the left-shift on $\ca^\Z$ and 
$$X:=\{y\in \ca^\Z : \text{each finite word appearing in $y$ appears in }\eta\},$$
then $\eta$ is a generic point for an $S$-invariant probability measure $\nu$ on $X$ such that the measure preserving system $(X,\nu,S)$ is ergodic and has rational discrete spectrum.
\end{abstract}
\small

\newpage

\tableofcontents
\normalsize

\section{Introduction}
\label{sec_intro}

The celebrated Szemer{\'e}di theorem
on arithmetic progressions \cite{MR0369312}
states that any set $S\subset\N$
having positive upper density
$\overline{d}(S)=\limsup_{N\to\infty}
\frac{|S\cap\{1,\ldots,N\}|}{N}>0$ contains
arbitrarily long arithmetic progressions. 
%\footnote{As a matter of fact, Szemer\'edi's theorem and its polynomial generalizations  hold for subsets having positive upper Banach density.}
A (one-dimensional special case of a) polynomial generalization of Szemer{\'e}di's theorem
proved in \cite{MR1325795} states that for any
$S\subset \N$ with $\overline{d}(S)>0$
and any polynomials $p_i\in\Q[t]$, $i=1,\ldots,\ell$,
which satisfy
$p_i(\Z)\subset\Z$ and $p_i(0)=0$ for all $i\in\{1,\ldots,\ell\}$,
the set $S$ contains (many) polynomial progressions
of the form $\{a,a+p_1(n),\ldots,a+p_\ell(n)\}$.
The proof of the polynomial extension of Szemer{\'e}di's
theorem given in \cite{MR1325795} is obtained with the help
of an ergodic approach introduced by Furstenberg
(see \cite{MR0498471,Furstenberg81}). In particular, the formulated above one-dimensional polynomial Szemer{\'e}di theorem follows from the fact that for any probability space $\xbm$, any invertible measure preserving transformation $T\colon X\to X$, 
%probability space and ${\rm Aut}\xbm$ stands for the group of invertible measure-preserving transformations (automorphisms) of $\xbm$,
any $A\in\cb$ with $\mu(A)>0$
and any $\ell$ polynomials $p_i\in\Q[t]$ satisfying
$p_i(\Z)\subset\Z$ and $p_i(0)=0$, $i\in\{1,\ldots,\ell\}$,
there exist arbitrarily large $n\in\N$
such that
$\mu\big(A\cap T^{-p_1(n)}A\cap\ldots\cap T^{-p_\ell(n)}A\big)>0$.
As a matter of fact, one can show\footnote{We remark
that the original proof in \cite{MR1325795} established
only
\begin{equation*}
\label{eq:poly-sz-liminf}
\liminf_{N\to\infty}\frac{1}{N}\sum_{n=1}^N
\mu\Big(A\cap T^{-p_1(n)}A\cap\ldots\cap T^{-p_\ell(n)}A\Big)>0,
\end{equation*}
whereas the existence of the limit in \eqref{eq:poly-sz} was obtained later, see \cite{MR2191208,MR2151605}.}
that
\begin{equation}
\label{eq:poly-sz}
\lim_{N\to\infty}\frac{1}{N}\sum_{n=1}^N
\mu\Big(A\cap T^{-p_1(n)}A\cap\ldots\cap T^{-p_\ell(n)}A\Big)>0.
\end{equation}

%One of the goals of this paper is to refine \eqref{eq:poly-sz} by studying the Ces{\'a}ro averages of polynomial correlation functions $\mu\big(A\cap T^{-p_1(n)}A\cap\ldots\cap T^{-p_\ell(n)}A\big)$ when one restricts the variable $n$ to certain sets of arithmetic origin called \emph{rational sets}, which were introduced in \cite{MR1954690} (see \cref{def_rational} below).
%More precisely, we consider multiple ergodic averages of the from
%\begin{equation}\label{eq:rest-poly-sz}
%\lim_{N\to\infty}\frac{1}{|R\cap[1,N]|}\sum_{n=1}^N\1_R(n)\mu\Big(A\cap T^{-p_1(n)}A\cap\ldots\cap T^{-p_\ell(n)}A\Big),
%\end{equation}
%and show that for any rational set $R$ the limit in \eqref{eq:rest-poly-sz} exists and give a necessary and sufficient condition for this limit to be positive.
%The ergodic-theoretic results obtained this way allow us, in turn, to derive new refinements of the polynomial Szemer{\'e}di theorem, some of which we state at the end of this introduction.

One of the goals of this paper is to refine \eqref{eq:poly-sz} by considering multiple ergodic averages of the from
\begin{equation}\label{eq:rest-poly-sz}
\lim_{N\to\infty}\frac{1}{|R\cap[1,N]|}\sum_{n=1}^N\1_R(n)\mu\Big(A\cap T^{-p_1(n)}A\cap\ldots\cap T^{-p_\ell(n)}A\Big),
\end{equation}
for certain sets $R$ of arithmetic origin called \emph{rational sets}, which were introduced in \cite{MR1954690} (see \cref{def_rational} below).
We show that for any rational set $R$ the limit in \eqref{eq:rest-poly-sz} exists. Furthermore, we give necessary and sufficient conditions on $R$ for this limit to be positive.
%The ergodic-theoretic results obtained \textcolor{blue}{this way} allow us, \textcolor{blue}{in turn}, 
This, in turn, allows us to obtain new refinements of the polynomial Szemer{\'e}di theorem, some of which we state at the end of this introduction.

To present the main results of our paper we need to
introduce some definitions first.

\begin{Def}[Rationally almost periodic sequences and rational sets]
\label{def_rational}
Let $\ca$ be a finite set.
We endow the space $\ca^{\N}$ with the Besicovitch pseudo-metric $d_B$ (cf. \cite{MR1512943,MR0068029}),
\begin{equation}
\label{eqn:B-metric-on-0-1}
d_B(x,y):=
\limsup_{N\to\infty}
\frac{|\{1\leq n\leq N: x(n)\neq y(n)\}|}{N}.
\end{equation}
%\begin{itemize}
%\item

A sequence $x\in \ca^{\N}$ is called
\emph{(Besicovitch) rationally almost periodic} or, for short, RAP
if for every $\vep>0$ there exists a periodic sequence
$y\in\ca^{\N}$
such that $d_B(x,y)<\vep$. (A more general definition of the Besicovitch pseudo-metric $d_B$ and of rationally almost periodic sequences will be introduced in Subsection \ref{sec_convergence} (see page \pageref{eqn:B-metric-on-C}) and in Subsection
\ref{sec_rational-subshifts} (see \cref{d:rational-subshifts}).)
%\item

A set $R\subset \N$ is called \emph{rational} if
the sequence $\1_R$ (viewed as a sequence in $\{0,1\}^\N$) is RAP, see \cite[Definition 2.1]{MR1954690}.
%\end{itemize}
%The above notions can be easily adapted to two-sided sequences and subsets of $\Z$.
\end{Def}

Here are some examples of rational sets:
%Examples of rational sets and RAP sequences include many classical objects.
\begin{itemize}
\item
The set $\sqfree$ of \emph{squarefree numbers} (see \cite[Lemma 2.7]{MR1954690}).
%More generally, for many $\sB\subset\N\setminus\{1\}$, the set $\cf_{\sB}$ of {\em$\mathscr{B}$-free numbers}\footnote{Given $\mathscr{B}\subset\N\setminus\{1\}$ the {\em set of $\mathscr{B}$-free numbers} is defined as $\cf_{\mathscr{B}}:=\Z\setminus\bigcup_{b\in\mathscr{B}}b\Z$.\label{ftn:3}} is a rational set (see \cref{seq_applications-B-free} for more details).
%However, this is merely a special case of a more general phenomenon. Indeed, the set $\sqfree$ of squarefree numbers belongs to the class of \emph{sets of $\mathscr{B}$-free numbers}\footnote{Given $\mathscr{B}\subset\N\setminus\{1\}$ the {\em set of $\mathscr{B}$-free numbers} is defined as $\cf_{\mathscr{B}}:=\Z\setminus\bigcup_{b\in\mathscr{B}}b\Z$.\label{ftn:3}} and in \cref{seq_applications-B-free} we show that for many $\sB\subset\N\setminus\{1\}$ the set $\cf_\sB$ of $\sB$-free numbers is also rational.
%For instance, the set  $\sqfree$ of \emph{squarefree numbers} is rational.
%Note that the set of squarefree numbers belongs to the class of
% \emph{$\sB$-free numbers}\footnote{If $\mathscr{B}\subset\N\setminus\{1\}$ then $\cf_{\mathscr{B}}:=\Z\setminus\bigcup_{b\in\mathscr{B}}b\Z$ is called the {\em set of $\mathscr{B}$-free numbers}.\label{ftn:3}}.
%In \cref{seq_applications-B-free} we show that for many $\sB\subset\N$ the set of $\mathscr{B}$-free numbers is also rational. 
\item 
%Other examples of rational sets coming from number theory are
The set $\abundantnumbers$ of \emph{abundant numbers}\footnote{Let $\sigma(n)=\sum_{d\mid n} d$
denote the classical \emph{sum of divisors function}.
The set of \emph{abundant numbers} and the set of \emph{deficient numbers} are defined, respectively, as $\abundantnumbers:=\{n\in\N: \sigma(n)>2n\}$ and $\deficientnumbers:=\{n\in\N: \sigma(n)<2n\}$.
%is called the set of \emph{abundant numbers},
%$\perfectnumbers:=\{n\in\N: \sigma(n)=2n\}$ the set of 
%\emph{perfect numbers} and
%$\deficientnumbers:=\{n\in\N: \sigma(n)<2n\}$
 %the set of \emph{deficient numbers}.
 %A natural number $n\in\N$ is called \emph{abundant} if 
%$\sigma(n)>2n$, it is called \emph{perfect} if $\sigma(n)=2n$
%and it is called \emph{deficient} if $\sigma(n)<2n$.
(The classical set of \emph{perfect numbers} is defined as $\perfectnumbers:=\{n\in\N: \sigma(n)=2n\}$.)
\label{ftn:4}} and the set $\deficientnumbers$ of \emph{deficient numbers} (see \cref{c_rat}).
\item
For any $x\in[0,1]$, the set $\Phi_x:=\{n\in\N: \frac{\tot(n)}{n}<x\}$, where $\tot$ is Euler's totient function (also see \cref{c_rat}).
\end{itemize}
The above examples are special cases of {\em sets of multiples} and sets of {\em$\sB$-free numbers}. For $\sB\subset\N\setminus\{1\}$ the corresponding sets of multiples and $\sB$-free numbers are defined as $\cm_\sB:=\bigcup_{b\in \sB}b\N$ and $\cf_\sB:=\N\setminus \cm_\sB$, respectively. The abundant numbers (as well as the union of the abundant and perfect numbers) form a set of multiples, the deficient numbers yield an example of a $\sB$-free set and $\Phi_x$ is a set of multiples.
In \cref{seq_applications-B-free} we show that for $\sB\subset\N\setminus\{1\}$ the set $\cf_{\sB}$ is a rational set if and only if the {\em density} $d(\cf_\sB):=\lim_{N\to\infty}\frac1N|\cf_\sB\cap[1,N]|$ exists (see \cref{c_rat-0}).

A natural way of obtaining rational sets is via level-sets of RAP sequences: if $\ca=\{a_1,a_2,\ldots,a_r\}$ is a finite set and $x\in\ca^\N$ is a RAP sequence then the sets $\{n\in\N:x(n)=a_1\}, \ldots,\{n\in\N:x(n)=a_r\}$ are rational. As a matter of fact, $x\in\ca^\Z$ is RAP if and only if all its level-sets are rational. 
Examples of RAP sequences include \emph{regular Toeplitz sequences},
or, more generally, \emph{Weyl rationally almost periodic sequences} (for definitions see \cref{sec:drs.e}). In particular, \emph{paperfolding sequences}\footnote{Given an infinite binary sequence $i\in\{0,1\}^\N$, we inductively define the \emph{paperfolding sequence} $t\in\{0,1\}^\N$ with \emph{``folding instructions''} $i(1),i(2),i(3),\ldots$ as follows: set $t(1):=i(1)$ and, whenever $t(n)$ has already been defined for $n\in\{1,2,\ldots,2^k-1\}$, we define $t(n)$ for $n\in\{2^k,2^k+1,\ldots,2^{k+1}-1\}$ as $t(2^k):=i(k)$ and $t(n):=t(2^{k+1}-n)$ for $2^k<n<2^{k+1}$.
For more information on paperfolding sequences see \cite{MR1170439,MR684028}.
%Given an infinite binary sequence $i\in\{0,1\}^\N$, the \emph{paperfolding sequence} $t\in\{0,1\}^\N$ with \emph{``folding instructions''} $i(1),i(2),i(3),\ldots$ is defined recursively as $t(4n)=i(1)$, $t(4n+2)=1-i(1)$ and $t(2n+1)=t'(n)$ for all $n\in\N$, where $t'(n)$ is the paperfolding sequence with folding instructions $i(2),i(3),i(4),\ldots$ (see \cite[Section 2]{MR1170439}). More information on paperfolding sequences can also be found in \cite{MR684028}.
\label{ftn:paperfolding}
}
%It is straight forward to verify that $(t(n))_{n\in\N}$ is a regular Toeplitz sequence and hence rational (cf. \cref{sec:drs.e}). \textcolor{blue}{Rudin-Shapiro sequence}
as well as \emph{automatic sequences coming from synchronized automata} are RAP sequences (see \cref{sec:drs.e} and \cref{SAS} for definitions and more details).

%\textcolor{blue}{The class of rational sets encompasses many classical objects.} For instance, it contains the set $\sqfree$ of squarefree numbers and, more generally, a rather large subfamily of sets of $\sB$-free numbers\footnote{If $\mathscr{B}\subset\N\setminus\{1\}$ then $\cf_{\mathscr{B}}:=\Z\setminus\bigcup_{b\in\mathscr{B}}b\Z$ is called the {\em set of $\mathscr{B}$-free numbers}.
%In \cref{seq_applications-B-free} we show that for many $\sB$ the set of $\mathscr{B}$-free numbers is RAP. \label{ftn:3}} (see \cite{Davenport1936,MR0043835}).
%On the other hand, the class of RAP sequences includes regular Toeplitz sequences \cite{MR0255766} or, more generally, rationally Weyl almost periodic sequences, see Section~\ref{sec_rds}. In particular, automatic sequences coming from synchronized automata\todo{say where in paper this appears+appendix} (cf.\ \cite{De-Dr-Mu}) are RAP.

\begin{Def}[cf. {\cite[Definition 1.5]{MR1412598}}]
\label{def_ave-set-of-rec-2}
We say that $R\subset\N$ is an
\emph{averaging set of polynomial multiple
recurrence}
if for any invertible measure preserving system $\xbmt$,
$A\in\mathcal{B}$ with $\mu(A)>0$,
$\ell\in\N$ and any polynomials
$p_i\in\Q[t]$, $i=1,\ldots,\ell$, with $p_i(\Z)\subset\Z$ and $p_i(0)=0$ for all $i\in\{1,\ldots,\ell\}$,
the limit in \eqref{eq:rest-poly-sz} exists 
%\footnote{The notion of an
%averaging set of recurrence was
%introduced in~\cite{MR1412598}.
%a similar
%and in slightly modified from in \cite{MR2353898}; note that in \cite{MR2353898}
%the existence of the limit in \eqref{eq:rest-poly-sz}
%was not required but replaced with a $\limsup$.
%}
and is positive.
If $\ell=1$ then we speak of an \emph{averaging set of polynomial single recurrence}.
\end{Def}

An averaging set of (single or multiple) polynomial recurrence $R\subset \N$ must also be a {\em set of recurrence}, i.e. for each measure preserving system
$\xbmt$ and each $A\in\cb$ with $\mu(A)>0$ there exists $n\in R$ such that $\mu(A\cap T^{-n}A)>0$.
If we assume that the density $d(R)=\lim_{N\to\infty}\frac1N|R\cap[1,N]|$ exists and is positive
then it follows -- by considering cyclic rotations on finitely many points --
that the density of $R\cap u\N$ also exists and is positive
for any positive integer $u$. This divisibility property is a rather trivial but necessary condition for a positive density set to be ``good'' for averaging recurrence.
This leads to the following definition.

\begin{Def}
\label{def_divisibility-property}
Let $R\subset\N$. We say that $R$ is \emph{divisible}
if $d(R\cap u\N)$ exists and is positive for all $u\in\N$.
\end{Def}
Note that for rational sets the existence of $d(R)$ and $d(R\cap u\N)$ is
automatic
(cf.\ \cref{l:asympdenswords} below).
Therefore, to verify divisibility, it suffices to check the positivity of $d(R\cap u\N)$ for all $u\in\N$.

One of our main theorems asserts that for rational sets
divisibility is not only a necessary
but also sufficient condition for averaging recurrence:
%Here is the precise formulation.

\begin{Th}
\label{thm_poly-multi-rec-along-rat}
Let $R\subset \N$ be a rational set and assume $d(R)>0$.
The following are equivalent:
\begin{enumerate}[(a)~~]
\item\label{itm:pmrar-a}
$R$ is divisible.
\item\label{itm:pmrar-b}
$R$ is an averaging set of polynomial single recurrence.
\item\label{itm:pmrar-c}
$R$ is an averaging set of polynomial multiple recurrence.
\end{enumerate}
\end{Th}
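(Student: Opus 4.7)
My plan is to prove the cycle (\ref{itm:pmrar-c})$\Rightarrow$(\ref{itm:pmrar-b})$\Rightarrow$(\ref{itm:pmrar-a})$\Rightarrow$(\ref{itm:pmrar-c}). The first implication is trivial ($\ell=1$). For (\ref{itm:pmrar-b})$\Rightarrow$(\ref{itm:pmrar-a}), I test the averaging single recurrence hypothesis against the cyclic rotation $(\Z/u\Z,+1,\text{uniform})$ with $A=\{0\}$ and $p(n)=n$: since $\mu(A\cap T^{-n}A)=u^{-1}\1_{u\divides n}$, positivity of the $R$-restricted average forces $\lim_N|R\cap u\N\cap[1,N]|/|R\cap[1,N]|>0$, which, together with the automatic existence of $d(R)$ and $d(R\cap u\N)$ for rational $R$ (\cref{l:asympdenswords}), gives $d(R\cap u\N)>0$.

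The substantive implication is (\ref{itm:pmrar-a})$\Rightarrow$(\ref{itm:pmrar-c}). Set $a_n:=\mu(A\cap T^{-p_1(n)}A\cap\ldots\cap T^{-p_\ell(n)}A)\in[0,1]$. For convergence of $\frac{1}{N}\sum_{n\leq N}\1_R(n)a_n$ I approximate $\1_R$ in $d_B$ by indicators $\1_{B_k}$ of periodic sets $B_k$ of period $q_k$; splitting the sum for $B_k$ by residue classes $q_k\N+j$ and substituting $n=q_km+j$ yields finitely many polynomial multiple ergodic averages with shifted polynomials $\tilde p_i(m):=p_i(q_km+j)\in\Q[m]$, each of which converges by Bergelson--Leibman / Host--Kra--Leibman polynomial convergence. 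Since
\[
\Big|\tfrac{1}{N}\sum_{n\leq N}\1_R(n)a_n-\tfrac{1}{N}\sum_{n\leq N}\1_{B_k}(n)a_n\Big|\leq\tfrac{|(R\triangle B_k)\cap[1,N]|}{N}
\]
has $\limsup_N\leq\ov{d}(R\triangle B_k)\to 0$, the $R$-weighted average is Cauchy and converges to $L(R):=\lim_k L(B_k)$.

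For positivity I would first exploit divisibility by arranging the approximants so that $q_k\N\subseteq B_k$: starting from any periodic $B_k^0$ with $\ov{d}(R\triangle B_k^0)<\vep_k/2$, view $B_k^0$ as having a sufficiently large multiple period $q_k\geq 2/\vep_k$ (without altering the set) and set $B_k:=B_k^0\cup q_k\N$, so that $\ov{d}(R\triangle B_k)<\vep_k$. The contribution to $L(B_k)$ from the single residue class $q_k\N$ then equals
\[
\tfrac{1}{q_k}\lim_M\tfrac{1}{M}\sum_{m=1}^M\mu\bigl(A\cap T^{-p_1(q_km)}A\cap\ldots\cap T^{-p_\ell(q_km)}A\bigr),
\]
which is strictly positive by the Furstenberg--Katznelson polynomial recurrence theorem applied to $\tilde p_i(m):=p_i(q_km)$ (all satisfying $\tilde p_i(0)=0$). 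This shows $L(B_k)>0$ for every $k$, but the bound $L(B_k)\geq L_{q_k,0}/q_k$ degenerates as $q_k\to\infty$.

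The main difficulty is therefore to upgrade this into a uniform lower bound, so that $L(R)=\lim_k L(B_k)$ stays positive. I expect the cleanest route to be via the structure theorem for RAP sequences (the third theorem in the abstract), which realises $\1_R$ as the coding of a procyclic rotation: $\1_R(n)=g(\alpha^n)$ for a $\{0,1\}$-valued $g$ on a procyclic group $G$ with generator $\alpha$. A joint equidistribution theorem for polynomial orbits paired with a procyclic rotation should then write $L(R)$ as an integral of $g$ against the $G$-marginal of the polynomial correlation measure on $G\times X^{\ell+1}$. Divisibility forces $g$ to equal $1$ on a neighbourhood of the identity of $G$, and when combined with Furstenberg--Katznelson positivity along the $X$-fibres sitting over this neighbourhood, it keeps the integral uniformly positive and yields $L(R)>0$.
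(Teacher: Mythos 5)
Your reduction of convergence to periodic weights, and your treatment of the two easy implications, are sound and essentially identical to the paper's (\cref{prop_conv-of-erg-ave} and the remark preceding \cref{def_divisibility-property}). The gap is in the positivity step \eqref{itm:pmrar-a}$\Rightarrow$\eqref{itm:pmrar-c}: you have correctly located where the difficulty sits, but the proposed fix rests on a false assertion. A neighbourhood of the identity in the procyclic group $G$ is an open subgroup $H=\ker(G\to\Z/q\Z)$, and ``$g\equiv 1$ a.e.\ on $H$'' would mean that $R$ contains $q\N$ up to density zero; divisibility only gives that $\{g=1\}$ meets every such $H$ in a set of positive Haar measure. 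For instance $R=\sqfree-r$ with $r\in\sqfree$ is rational and divisible, yet for every $q$ the set $q\N\setminus R$ has positive density (for any prime $p$ coprime to $q$, the $m$ with $p^2\divides qm+r$ form a progression of density $p^{-2}$), so $g$ is not a.e.\ equal to $1$ on any neighbourhood of the identity. A secondary issue is that \cref{thm_orb-clos-of-rat} gives genericity for a rational-discrete-spectrum measure, hence a coding $\1_R(n)=g(\alpha^n)$ only up to density-zero corrections rather than pointwise, though that could be patched.

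Even with the correct weaker statement, your scheme still needs exactly the uniformity you flagged: a lower bound on $\lim_{N\to\infty}\frac1N\sum_{n=1}^{N}\varphi(un)$, where $\varphi(n)=\mu\big(A\cap T^{-p_1(n)}A\cap\ldots\cap T^{-p_\ell(n)}A\big)$, that does not degenerate as $u\to\infty$. The paper supplies this as a separate result (\cref{Cor-uniformity}, a uniform version of the polynomial Szemer\'edi theorem proved in the appendix by a finitary colour-focusing argument), and this is the ingredient that no amount of equidistribution on the procyclic factor can replace. With that uniform $\delta$ in hand, the paper's route is: replace $\varphi$ by a nilsequence (\cref{thm_4.1}), approximate it uniformly by a basic nilsequence $f(T_g^nx)$, choose a single $u$ so that $T_{g^u}$ is totally ergodic on the connected component of $x$ (\cref{prop_2.1}), and use the fact that totally equidistributed nil-orbits do not correlate with rational sets (\cref{l1}) together with $d(R/u)>0$ from divisibility. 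You should either prove the uniform recurrence theorem or cite it; without it the argument cannot close.
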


It was proved in \cite{MR1954690} that every self-shift of the set $\sqfree$ of squarefree numbers, i.e., any set of the form $\sqfree-r$ for $r\in \sqfree$, is divisible and hence satisfies the hypothesis of \cref{thm_poly-multi-rec-along-rat}. Moreover, it follows from \cite{MR1954690} that a shift $\sqfree-r$ for $r\in\N$ is divisible if and only if $r\in \sqfree$. The following theorem establishes a result of similar nature for sets of $\sB$-free numbers.%(see footnote\ref{ftn:3}).
\begin{Th}
\label{th:b-free-sumset-combi}
Let $\sB\subset\N\setminus\{1\}$ and
assume that $d(\cf_\sB)$
exists and is positive.
Then there exists a set $D\subset \cf_\sB$ with
$d(\cf_\sB\setminus D)=0$
such that the set $\cf_\sB-r$
is an averaging set of polynomial multiple recurrence if and only if $r\in D$.
\end{Th}

\begin{Remark}
A detailed discussion
of criteria for the existence of $d(\cf_\sB)$
will be provided in \cref{seq_applications-B-free}, see
\cref{def:bes} and \cref{daer}.
In \cref{sec_revisit} we obtain a version
of \cref{th:b-free-sumset-combi} for the case when
$d(\cf_\sB)$ does not necessarily exist, see
\cref{c_ave-rec-for-B-free-along-subseq}.
\end{Remark}

In \cref{seq_applications-B-free} we also show that in \cref{th:b-free-sumset-combi} one has $D=\cf_\sB$ if and only if the set $\sB$ is \emph{taut} (see \cref{d_taut} and \cref{cor:vb-iff}).

\cref{thm_poly-multi-rec-along-rat} motivates closer interest in RAP sequences
as an independent object.
In \cref{sec_rds} we take a dynamical approach to study RAP sequences more closely. To formulate our results in this direction, let us first recall some basic notions of symbolic dynamics.

As before, let $\ca$ be a finite set (alphabet) and let $S\colon \ca^\Z\to\ca^\Z$ denote the left-shift on $\ca^\Z$, i.e., $Sx=y$ where $x\in\ca^\Z$ and $y(n)=x(n+1)$ for all $n\in\Z$.
For $x\in \ca^\Z$ (or $x\in \ca^\N$) and $n<m$ we call
$x[n,m]=(x(n),x(n+1),\ldots,x(m))$ a
\emph{word appearing in $x$}.
Given $\eta\in \ca^\N$, let
\begin{eqnarray*}
X_\eta &:=&\{x\in\ca^\Z : (\forall n<m)(\exists k\in\N)\;\; x[n,m]=\eta[k,k+m-n-1]\}
\\
&=&\{x\in \ca^\Z : \text{each word appearing in $x$ appears in }\eta\}.
\end{eqnarray*}
%\begin{equation}
%\label{eq:x_eta}
%X_\eta:=\bigcap_{N\geq 0} \overline{\{S^n\tilde\eta: n\geq N\}}.
%%\{x\in\ca^\Z : (\forall n,m\in\N)(\exists k\in\N)(\forall j\in\{0,1,\ldots,m-1\})\;\; x(n+j)=\eta(k+j)\}
%\end{equation}
Clearly, $X_\eta$ is a closed and $S$-invariant subset of $\ca^\Z$ (usually referred to as the \emph{subshift determined by $\eta$}).\footnote{When $\eta$ is (topologically) recurrent, that is, any finite word appearing in $\eta$ reappears infinitely often, then there is $\widetilde{\eta}\in\ca^{\Z}$ such that $\widetilde{\eta}[1,\infty)=\eta$ and
$X_\eta=\overline{\{S^k\widetilde{\eta} : k\in\Z\}}$.
%Indeed, let $x\in \ca^\Z$ be given by $x[1,\infty)=\eta$ and $x(n)$ defined arbitrarily for $n\leq 0$. Let $k_i\geq 1$ be such that $S^{k_i}x[1,i]=\eta[1,i]$. Any $\widetilde{\eta}$ that is a limit point of $\{S^{k_i}x : i\geq 1\}$, satisfies our claim.
(Cf.\ \cite{Downar}, pp.\ 189-190.) Note, however, that not all RAP sequences are recurrent.}
%For $\eta\in\{0,1\}^\N$ let $\tilde\eta\in\{0,1\}^\Z$ be defined as $\tilde\eta(n)=\eta(n)$ for $n\in\N$ and $\tilde\eta(n)=0$ for $n\in\Z\setminus\N$.
A sequence $\eta\in \ca^\N$ %(or $x\in\{0,1\}^{\N}$)
is called \emph{generic} for an $S$-invariant Borel probability measure $\mu$ on $\ca^{\Z}$ if
$$
\lim_{N\to\infty}\frac{1}{N}\sum_{n=0}^{N-1} f(S^n \tilde\eta)= \int_{\ca^{\Z}} f\, d\mu
$$
for all continuous functions $f\in C(\ca^\Z)$, where $\tilde\eta\in\ca^\Z$ denotes any two sided sequence extending $\eta\in\ca^\N$. Note that the above definition does not depend on the choice of the two sided extension $\tilde\eta$ of $\eta$.

For a RAP sequence $\eta\in\ca^\N$ we call the corresponding symbolic dynamical system $(X_\eta,S)$ a {\em rational subshift}.
We show in \cref{sec_rds}
that any rational sequence $\eta$ is generic for an ergodic measure $\nu$ such that $(X_\eta, \nu,S)$ has rational discrete spectrum (i.e.\ the span of all eigenfunctions of $T$ is dense in $L^2\xbm$ and all the corresponding eigenvalues are roots of unity),
a result which we believe is of independent interest:

\begin{Th}
\label{thm_orb-clos-of-rat}
Let $\eta\in\ca^\N$ be RAP.
Then there exists an $S$-invariant
Borel probability measure $\nu$ on $X_\eta$
such that $\eta$ is generic for
$\nu$
and the measure preserving system
$(X_\eta,\nu, S)$ is ergodic
and has rational discrete spectrum.
\end{Th}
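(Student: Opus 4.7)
The plan is to approximate $\eta$ by periodic sequences in the Besicovitch pseudo-metric and then read off the full dynamical structure from a spectral analysis of the resulting correlation sequences. By the RAP hypothesis, fix periodic $y_k\in\ca^\N$ with periods $p_k$ and $d_B(\eta,y_k)<\vep_k\to 0$; replacing each $p_k$ by $\lcm(p_1,\ldots,p_k)$, I may assume $p_k\mid p_{k+1}$. Let $Y_k\in\ca^\Z$ be the two-sided periodic extension of $y_k$ and $\nu_k$ the uniform measure on the finite $S$-orbit of $Y_k$, so that each $\nu_k$ is a cyclic rotation of order $p_k$ with rational discrete spectrum. A disagreement $\eta(m)\ne y_k(m)$ at a single position can affect the indicator of any word $w\in\ca^L$ at no more than $L$ positions, so the frequencies of $w$ in $\eta$ and in $y_k$ along $\{1,\dots,N\}$ differ by at most $L\cdot|\{m\le N:\eta(m)\ne y_k(m)\}|$. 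Dividing by $N$, letting $N\to\infty$ and then $k\to\infty$ forces $\limsup$ and $\liminf$ of the frequency of $w$ in $\eta$ to coincide, so $f_\eta(w):=\lim_N N^{-1}|\{n\le N-L+1:\eta[n,n+L-1]=w\}|$ exists. Defining $\nu$ on cylinders by $\nu([w]):=f_\eta(w)$ and extending by Kolmogorov yields an $S$-invariant Borel probability measure on $\ca^\Z$, supported in $X_\eta$, for which $\eta$ is generic (by Stone--Weierstrass), and moreover $\nu_k\to\nu$ in the weak-$*$ topology.

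The heart of the argument is the spectral analysis of a cylinder indicator $f=\1_{[w]}\in L^2(\nu)$. Write $c(n):=\langle S^nf,f\rangle=\nu([w]\cap S^{-n}[w])$ and $c_k(n):=\nu_k([w]\cap S^{-n}[w])$; the same window argument applied jointly at positions $m$ and $m+n$ gives $\|c-c_k\|_\infty\le 2L\vep_k$, so $c$ is a uniform limit of functions of period $p_k$ with $p_k\mid p_{k+1}$. Let $\sigma$ be the Herglotz--Bochner spectral measure of $f$, i.e.\ $c(n)=\int_{\mathbb S^1}\lambda^n\,d\sigma(\lambda)$. For $\lambda\in\mathbb S^1$ with $\lambda^{p_k}\ne1$ for every $k$, the atom
\[
\sigma(\{\lambda\})=\lim_{N\to\infty}\frac1N\sum_{n<N}\bar\lambda^n c(n)
\]
is evaluated by splitting $c=c_k+(c-c_k)$: the periodic piece is the $\lambda$-Fourier coefficient of the $p_k$-periodic $c_k$, which vanishes whenever $\lambda^{p_k}\ne1$, and the remainder is uniformly $O(\vep_k)$; letting $k\to\infty$ gives $\sigma(\{\lambda\})=0$. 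Character orthogonality $\sum_{\lambda^{p_k}=1}\bar\lambda^n=p_k\mathbf 1[p_k\mid n]$ lets me compute the total mass of $\sigma$ on the $p_k$-th roots of unity as $\lim_N(p_k/N)\sum_{m:mp_k<N}c(mp_k)$, which by $p_k$-periodicity of $c_k$ equals $c_k(0)+O(\vep_k)=\nu_k([w])+O(\vep_k)$ and tends to $\nu([w])=c(0)=\sigma(\mathbb S^1)$ as $k\to\infty$. Hence $\sigma$ is purely atomic with support in the group of roots of unity of order dividing some $p_k$.

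Since cylinder indicators span a dense subspace of $L^2(\nu)$, the preceding shows that $L^2(\nu)$ is the closed linear span of eigenfunctions of $S$ with root-of-unity eigenvalues, so $(X_\eta,\nu,S)$ has rational discrete spectrum. Ergodicity follows by the same technique applied to the atom at $\lambda=1$: a direct double-count gives $p_k^{-1}\sum_{n=0}^{p_k-1}c_k(n)=\nu_k([w])^2$, so $\sigma(\{1\})=\lim_k\nu_k([w])^2=\nu([w])^2=\bigl|\int f\,d\nu\bigr|^2$, and via Cauchy--Schwarz this forces the orthogonal projection of $f$ onto $S$-invariants to equal $\int f\,d\nu\cdot\mathbf 1$; density in $L^2(\nu)$ then gives that the invariant subspace is one-dimensional. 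The main obstacle is the spectral step, specifically ruling out a continuous component of $\sigma$: weak-$*$ limits of atomic measures need not remain atomic, and the argument works here only because the Besicovitch approximation delivers \emph{uniform} (not merely pointwise) convergence of the correlation sequences, which prevents spectral mass from escaping to non-roots of unity.
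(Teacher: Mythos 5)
Your proof is correct, but the route you take to rational discrete spectrum is genuinely different from the paper's. The first stage (word frequencies of $\eta$ and of the periodic approximants $y_k$ differ by at most $L\,d_B(\eta,y_k)$ per coordinate of the cylinder, hence $\eta$ is generic for an ergodic $\nu$) is essentially the paper's \cref{l:asympdenswords}, \cref{qge2} and \cref{p:Benji1}. For the spectral step, however, the paper proceeds structurally (\cref{l:benmar1}): it takes the diagonal point $(\eta,y_1,y_2,\dots)$, extracts a joining $\ov{\nu}$ of $(\ca^\Z,\nu,S)$ with the cyclic systems $(\ca^\Z,\nu_k,S)$, shows via the same uniform estimate that the first-coordinate $\sigma$-algebra is contained mod $\ov{\nu}$ in the product of the remaining ones, and concludes that $(X_\eta,\nu,S)$ is a factor of an ergodic joining of cyclic rotations --- a class closed under factors and manifestly of rational discrete spectrum. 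You instead work directly with the spectral measures $\sigma_f$ of cylinder indicators: the uniform-in-$n$ bound $\sup_n|c(n)-c_k(n)|\leq 2L\vep_k$ plus Wiener's lemma shows all atoms sit at roots of unity of order dividing some $p_k$, and the mass count over the $p_k$-th roots shows $\sigma_f$ is purely atomic; ergodicity drops out of the atom at $1$ via Cauchy--Schwarz rather than via the correlation argument of \cref{p:Benji1}. Your version is more elementary and self-contained (no joinings, no ergodic decomposition, no disjointness input), and it isolates exactly why a continuous spectral component cannot survive --- the approximation of correlations is uniform in $n$, not merely pointwise. The paper's version buys a stronger structural conclusion (the system is a factor of an inverse limit of cyclic rotations, i.e.\ of an odometer), which is the form that gets reused for the quasi-generic, along-subsequence generalization in \cref{t:Benji}; your argument would also adapt to that setting, since once $\nu$ is fixed the Wiener averages defining the atoms are ordinary Ces\`aro averages independent of the subsequence used to produce $\nu$.
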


In light of \cref{thm_orb-clos-of-rat}, the following result (obtained in \cref{sec_revisit}) can be viewed as a ``dynamical'' generalization of \cref{thm_poly-multi-rec-along-rat}.

\begin{Th}
\label{thm_poly-multi-rec-along-disc-rat-spec}
Let $R\subset \N$ with $d(R)>0$ and suppose $\eta:=\1_R$
is generic for a Borel probability measure $\nu$ on $X_\eta\subset\{0,1\}^\Z$
such that $(X_\eta,\nu, S)$ has rational discrete spectrum.
Then there exists an increasing sequence of natural numbers $\Nk$ such that the following are equivalent:
%\begin{enumerate}[(A)~~]
%\item\label{itm:pmradrs-a}
%$R$ is divisible.
%\item\label{itm:pmradrs-b}
%$R$ is an averaging set of polynomial single recurrence.
%\item\label{itm:pmradrs-c}
%$R$ is an averaging set of polynomial multiple recurrence.
\begin{enumerate}[(A)~~]
\item\label{itm:pmradrs-aa}
$R$ is divisible along $(N_k)_{k\geq1}$, that is, for all $u\in\N$
$$
d^{(N_k)}(R\cap u\N):=\lim_{k\to\infty}\frac{|R\cap u\N
\cap \{1,\ldots,N_k\}|}{N_k}>0.
$$
\item\label{itm:pmradrs-cc}
$R$ is an averaging set of polynomial multiple recurrence along
$\Nk$, that is,
for all invertible measure preserving systems
$\xbmt$, $\ell\in\N$, $A\in\mathcal{B}$ with $\mu(A)>0$
and for all polynomials $p_i\in\Q[t]$, $i=1,\ldots,\ell$,
with $p_i(\Z)\subset\Z$ and $p_i(0)=0$ for $i\in\{1,\ldots,\ell\}$, one has
$$
\lim_{k\to\infty}\frac{1}{N_k}\sum_{n=1}^{N_k}
\1_R(n)\mu\Big(A\cap T^{-p_1(n)}A\cap\ldots\cap T^{-p_\ell(n)}A\Big)
>0.
$$
\end{enumerate}
\end{Th}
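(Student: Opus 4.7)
The direction \eqref{itm:pmradrs-cc}~$\Rightarrow$~\eqref{itm:pmradrs-aa} is the easy one: for each $u\in\N$, test \eqref{itm:pmradrs-cc} against the cyclic rotation $T\colon n\mapsto n+1$ on $\Z/u\Z$ with normalized counting measure, $A=\{0\}$, $\ell=1$ and $p_1(t)=t$. Since $\mu(A\cap T^{-n}A)=u^{-1}\1_{u\mid n}$, the hypothesis reduces to $u^{-1}d^{(N_k)}(R\cap u\N)>0$. For the converse \eqref{itm:pmradrs-aa}~$\Rightarrow$~\eqref{itm:pmradrs-cc}, the plan is to adapt the proof of \cref{thm_poly-multi-rec-along-rat}, replacing genuine Besicovitch approximation of $\1_R$ by periodic sequences with ``Besicovitch approximation along $(N_k)$''. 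This is enabled by the rational discrete spectrum: Halmos--von~Neumann identifies $(X_\eta,\nu,S)$ measure-theoretically with a rotation by a topological generator on a procyclic group $G=\varprojlim_{m\in\cm}\Z/m\Z$, where $\cm\subset\N$ is divisor-closed; writing $\pi_m\colon X_\eta\to\Z/m\Z$ for the factor maps and $g_m:=\E(\1_{[1]}\mid\pi_m^{-1}\cb)$, Doob's martingale convergence yields $g_m\circ\pi_m\to\1_{[1]}$ in $L^1(\nu)$ along cofinal $m\in\cm$, with $\vep_m:=\|\1_{[1]}-g_m\circ\pi_m\|_{L^1(\nu)}\to 0$.

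I build $(N_k)$ by a Cantor diagonal argument. Using genericity of $\eta$ combined with continuous upper and lower envelopes of arbitrarily small $L^1(\nu)$-gap to handle the non-continuous $g_m\circ\pi_m$, each of the countably many limits $\lim_N N^{-1}|R\cap u\N\cap[1,N]|$ ($u\in\N$) and $\lim_N N^{-1}\sum_{n\le N}|\1_R(n)-g_m(c_m+n\bmod m)|$ (where $m\in\cm$ and $c_m:=\pi_m(\tilde\eta)$) exists along some subsequence; the diagonal produces one $(N_k)$ realising all of them simultaneously, with values $d^{(N_k)}(R\cap u\N)$ and $\vep_m$ respectively. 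The same envelope argument gives the key identity $g_m(c_m)=m\cdot d^{(N_k)}(R\cap m\N)$, positive by \eqref{itm:pmradrs-aa}.

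Assuming \eqref{itm:pmradrs-aa}, fix $(X,\cb,\mu,T)$, $A\in\cb$ with $\mu(A)>0$, and polynomials $p_i\in\Q[t]$ with $p_i(\Z)\subset\Z$, $p_i(0)=0$. Put $\Phi(n):=\mu(A\cap\bigcap_{i} T^{-p_i(n)}A)\in[0,1]$ and $h_m(n):=g_m(c_m+n\bmod m)$, a period-$m$ weight. The Besicovitch bound reads
$$\Big|\tfrac{1}{N_k}\sum_{n=1}^{N_k}\1_R(n)\Phi(n)-\tfrac{1}{N_k}\sum_{n=1}^{N_k}h_m(n)\Phi(n)\Big|\le\tfrac{1}{N_k}\sum_{n=1}^{N_k}|\1_R(n)-h_m(n)|\xrightarrow{k\to\infty}\vep_m,$$
and the periodic term splits by residues $r\bmod m$. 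For $r=0$, substituting $n=k'm$ and applying the polynomial Szemer\'edi theorem of Bergelson--Leibman (with existence of the limit, cf.\ \cite{MR2191208,MR2151605}) to the polynomials $\tilde p_i(k'):=p_i(k'm)$ (which inherit $\tilde p_i(0)=0$ and $\tilde p_i(\Z)\subset\Z$) yields a positive limit $\gamma_m>0$, while the other residues contribute non-negatively. Together with $g_m(c_m)=m\cdot d^{(N_k)}(R\cap m\N)>0$ this gives
$$\liminf_k\tfrac{1}{N_k}\sum_{n=1}^{N_k}\1_R(n)\Phi(n)\ \ge\ d^{(N_k)}(R\cap m\N)\cdot\gamma_m-\vep_m.$$

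The principal technical obstacle is producing $m\in\cm$ for which $\vep_m<d^{(N_k)}(R\cap m\N)\gamma_m$: since $\gamma_m$ may decay in $m$ (roughly $\sim m^{-1}$ in the degree-one case, from the ergodic decomposition of $T^m$) and the trivial bound $d^{(N_k)}(R\cap m\N)\le 1/m$ forces the right-hand side to be at most $\sim m^{-2}$, one must ensure $\vep_m$ decays faster. My plan is to fold this control into the diagonal construction of $(N_k)$: pre-select a cofinal sequence $m_j\in\cm$ matched to the \emph{universal} (system-independent) Bergelson--Leibman decay rate, which depends only on $\ell$ and $\deg p_i$, and refine $(N_k)$ so that $\vep_{m_j}$ decays at a strictly faster rate. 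A more robust alternative, if needed, is to invoke the multi-colour polynomial Szemer\'edi theorem and sum over all residues $r$ with $g_m(c_m+r)>0$, producing the sturdier lower bound proportional to $d^{(N_k)}(R)\gamma_m-\vep_m$.
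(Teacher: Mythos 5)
Your reduction of \eqref{itm:pmradrs-cc}~$\Rightarrow$~\eqref{itm:pmradrs-aa} to cyclic rotations is correct, and your diagonal construction of $\Nk$ is in the spirit of what is actually needed. The gap is exactly where you locate it, and neither of your two proposed repairs closes it. First, the quantity $\vep_m=\|\1_{[1]}-\E(\1_{[1]}\mid\ck_m)\|_{L^1(\nu)}$ is intrinsic to $\nu$: your own envelope argument shows that along \emph{any} subsequence where the Besicovitch averages converge, the limit is forced to equal this value, so ``refining $(N_k)$ so that $\vep_{m_j}$ decays faster'' is not possible --- passing to a subsequence changes which limits exist, not their values. (Incidentally, your heuristic $\gamma_m\sim m^{-1}$ is wrong in the favourable direction: by \cref{Cor-uniformity} one has $\gamma_m\geq\delta>0$ uniformly in $m$, with $\delta$ depending only on $\ell$, the $p_i$ and a lower bound for $\mu(A)$. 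Even so, you still need $\vep_m<\delta\, d^{(N_k)}(R\cap m\N)\leq\delta/m$ for some $m$ in the divisor-closed set $\cm$, and nothing forces $\vep_m$ to decay faster than $1/m$ there; e.g.\ a spectrum generated by the $e^{2\pi i/2^j}$ with $\vep_{2^j}\sim j^{-1/2}$ defeats every admissible $m=2^j$ once $\delta$ is small.) Second, the multi-colour variant does not rescue the other residues: for $T$ the rotation on $\Z/m\Z$, $A=\{0\}$, $\ell=1$, $p_1(t)=t$, one has $\Phi(k'm+r)=0$ identically for $0<r<m$, so residues $r\neq 0$ can contribute exactly zero and no lower bound proportional to $d^{(N_k)}(R)$ is available.

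The structural reason your scheme cannot win this race is that you approximate the \emph{weight} $\1_R$ by period-$m$ sequences and retain only the residue class $0\bmod m$ of the target average, which costs a factor $1/m$ that must then beat $\vep_m$. The paper inverts the roles: it approximates the \emph{correlation sequence} $\varphi(n)=\mu(A\cap T^{-p_1(n)}A\cap\ldots\cap T^{-p_\ell(n)}A)$ by a nilsequence (\cref{thm_4.1}), uniformly within $\delta/4$ of a basic nilsequence $f(T_g^nx)$, where $\delta$ is the uniform constant of \cref{Cor-uniformity}. The modulus $u$ is then dictated by the connected components of the nilmanifold (\cref{prop_2.1}), and the crucial point is that $\lim_k\frac{u}{M_k}\sum_{n\leq M_k/u}\1_{R/u}(n)f(T_{g^u}^nx)$ is evaluated \emph{exactly} as $d^{(M_k/u)}(R/u)\int_Y f\,d\mu_Y$ by a disjointness argument (\cref{l1-2u}): a system with rational discrete spectrum admits only the product joining with a totally ergodic nilsystem, after a diagonalization (as in your construction) guaranteeing that each $\1_{(R-j)/u}$ is quasi-generic for a rational-discrete-spectrum measure along $(M_k/u)$. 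No periodic approximation of $\1_R$ enters, so there is no error term competing with $d(R\cap u\N)$; both the main term and the $\delta/4$-loss scale like $d(R/u)$, and positivity survives. If you wish to keep a Kronecker-type decomposition, it must be performed on the target system (as in \cref{l30}) rather than on the weight.
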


In Sections \ref{seq_applications-B-free} and \ref{sec_revisit}
we give various examples of (classes of) rational sets for which Theorems
\ref{thm_poly-multi-rec-along-rat} and \ref{thm_poly-multi-rec-along-disc-rat-spec} hold. 

%In Sections \ref{seq_applications-B-free} and \ref{sec_revisit} we give various examples of (classes of) rational sets for which \cref{thm_poly-multi-rec-along-rat} holds. 

With the help of Furstenberg's correspondence principle (see \cref{prop:fc}) we have the following combinatorial corollary of \cref{thm_poly-multi-rec-along-rat}.

\begin{Th}
\label{c:fc-rat-without-intersectivitiy}
Let $R\subset \N$ be rational and divisible.
Then for any set $E\subset \N$ with $\overline{d}(E)>0$
and any polynomials $p_i\in\Q[t]$, $i=1,\ldots,\ell$,
which satisfy
$p_i(\Z)\subset\Z$ and $p_i(0)=0$ for all $i\in\{1,\ldots,\ell\}$,
there exists $\beta>0$ such that the set
$$
\left\{n\in R:\overline{d}\Big(
E\cap (E-p_1(n))\cap \ldots\cap(E-p_\ell(n))
\Big)>\beta \right\}
$$
has positive lower density.
\end{Th}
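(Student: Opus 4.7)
The plan is to reduce the combinatorial statement to the ergodic one via Furstenberg's correspondence principle (referenced as \cref{prop:fc} in the paper) and then invoke \cref{thm_poly-multi-rec-along-rat} together with a standard Chebyshev-type argument.

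First, applying Furstenberg's correspondence principle to $E\subset\N$ with $\overline{d}(E)>0$ produces an invertible measure preserving system $\xbmt$ and a set $A\in\cb$ with $\mu(A)=\overline{d}(E)>0$ such that for all choices of integers $n_1,\ldots,n_\ell$,
$$
\overline{d}\bigl(E\cap(E-n_1)\cap\cdots\cap(E-n_\ell)\bigr)\,\geq\,\mu\bigl(A\cap T^{-n_1}A\cap\cdots\cap T^{-n_\ell}A\bigr).
$$
Since $R$ is rational and divisible with $d(R)>0$, the equivalence \ref{itm:pmrar-a}$\Leftrightarrow$\ref{itm:pmrar-c} of \cref{thm_poly-multi-rec-along-rat} says that $R$ is an averaging set of polynomial multiple recurrence. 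Writing $f(n):=\mu\bigl(A\cap T^{-p_1(n)}A\cap\cdots\cap T^{-p_\ell(n)}A\bigr)\in[0,1]$, this gives
$$
c\,:=\,\lim_{N\to\infty}\frac{1}{|R\cap[1,N]|}\sum_{n=1}^{N}\1_R(n)\,f(n)\,>\,0,
$$
and since $|R\cap[1,N]|/N\to d(R)>0$, we obtain $\lim_{N\to\infty}\frac{1}{N}\sum_{n=1}^{N}\1_R(n)f(n)=c\,d(R)>0$.

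Second, I set $\beta:=c/2$ and $B:=\{n\in R:f(n)>\beta\}$. The trivial bound $0\leq f\leq 1$ yields
$$
\frac{1}{N}\sum_{n=1}^{N}\1_R(n)\,f(n)\,\leq\,\frac{|B\cap[1,N]|}{N}\,+\,\beta\cdot\frac{|R\cap[1,N]|}{N}.
$$
Taking $\liminf$ as $N\to\infty$ of both sides gives $\underline{d}(B)\geq(c-\beta)\,d(R)=\tfrac{c}{2}d(R)>0$. By the correspondence inequality applied with $n_i=p_i(n)$, every $n\in B$ satisfies $\overline{d}\bigl(E\cap(E-p_1(n))\cap\cdots\cap(E-p_\ell(n))\bigr)\geq f(n)>\beta$. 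Hence the set displayed in the statement contains $B$ and therefore has positive lower density.

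There is really no serious obstacle once \cref{thm_poly-multi-rec-along-rat} is in hand: all the substance is packed into the existence and positivity of the averaged limit along $R$, and what remains is the transfer from measure-theoretic recurrence to combinatorial intersectivity (standard Furstenberg correspondence) and a one-line Markov/Chebyshev splitting to upgrade ``positive average'' to ``positive lower density of good $n$''.
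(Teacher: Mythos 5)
Your proof is correct and follows the paper's own route exactly: Furstenberg's correspondence principle combined with Theorem \ref{thm_poly-multi-rec-along-rat} (which applies since divisibility with $u=1$ gives $d(R)>0$), followed by the Markov/Chebyshev splitting that the paper leaves implicit in Proposition \ref{c:fc-w-i}. The only cosmetic discrepancy is that the correspondence principle as stated gives $\mu(A)\geq\overline{d}(E)$ rather than equality, which changes nothing in the argument.
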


We note that \cref{th:b-free-sumset-combi}
also yields combinatorial corollaries in the spirit of
\cref{c:fc-rat-without-intersectivitiy}, which are formulated and proved
in \cref{seq_applications-combinatorics}.

We conclude this introduction with stating an amplified version of
\cref{c:fc-rat-without-intersectivitiy}, a proof of which is also contained
in \cref{seq_applications-combinatorics}.

\begin{Th}
\label{c:fc-rat}
Let $R\subset \N$ be rational and divisible.
Then for any $E\subset \N$ with $\overline{d}(E)>0$
and any polynomials $p_i\in\Q[t]$, $i=1,\ldots,\ell$,
which satisfy
$p_i(\Z)\subset\Z$ and $p_i(0)=0$, for all $i\in\{1,\ldots,\ell\}$,
there exists a subset $R'\subset R$ satisfying $\overline{d}(R')>0$
and such that for any finite subset
$F\subset R'$, we have
$$
\overline{d}\left(\bigcap_{n\in F}\Big(
E\cap \big(E-p_1(n)\big)\cap \ldots\cap\big(E-p_\ell(n)\big)
\Big)\right)>0.
$$
\end{Th}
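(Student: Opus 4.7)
The plan is to reduce the combinatorial statement to a dynamical one via Furstenberg's correspondence principle, apply \cref{thm_poly-multi-rec-along-rat} iteratively to obtain finite-level positivity, and then extract the set $R'$ of positive upper density by leaning on the structural information from \cref{thm_orb-clos-of-rat}.

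First, by \cref{prop:fc}, I would obtain an invertible measure-preserving system $\xbmt$ and $A \in \cb$ with $\mu(A) = \overline{d}(E)$ such that
$$
\overline{d}\Big(E \cap (E-m_1) \cap \cdots \cap (E-m_k)\Big)
\geq
\mu\Big(A \cap T^{-m_1} A \cap \cdots \cap T^{-m_k} A\Big)
$$
for every tuple $(m_1, \ldots, m_k) \in \Z^k$. The problem reduces to finding $R' \subset R$ with $\overline{d}(R') > 0$ such that $\mu\big(A \cap \bigcap_{n \in F}\bigcap_{i=1}^\ell T^{-p_i(n)} A\big) > 0$ for every finite $F \subset R'$. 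Writing $A_n := A \cap \bigcap_i T^{-p_i(n)} A$ and $B_F := \bigcap_{n \in F} A_n$, the containment $B_{F \cup \{n\}} \supseteq B_F \cap \bigcap_i T^{-p_i(n)} B_F$ shows that whenever $\mu(B_F) > 0$, an application of \cref{thm_poly-multi-rec-along-rat} to $(X,\cb,\mu,T)$ with $A$ replaced by $B_F$ yields a positive lower density set of $n \in R$ extending $F$ while preserving $\mu(B_{F \cup \{n\}}) > 0$.

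The main obstacle is upgrading this level-by-level admissibility into a single set $R'$ of positive upper density that works uniformly for every finite $F$. I would resolve this by invoking \cref{thm_orb-clos-of-rat}: since $\eta = \1_R$ is generic for an ergodic measure with rational discrete spectrum, $R$ is arbitrarily well approximated in the Besicovitch pseudo-metric by finite unions of arithmetic progressions, and by divisibility each coset $a + N\Z$ that meets $R$ nontrivially contributes positive density to $R$. Carrying out the iterative construction while localizing to a single progression $a + N\Z$ (with $N$ chosen large enough that the rational discrete spectrum factor is resolved up to arbitrarily small error, and compatible with the moduli arising from the polynomials $p_i$), one arranges that the admissible set at each level still occupies a uniformly positive fraction of $R \cap (a + N\Z)$. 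The desired $R'$ is then taken to be the residual set of elements passing all finite-level tests simultaneously, which inherits positive upper density from the coset, and the positivity of $\mu(B_F)$ for any finite $F \subset R'$ follows by construction, completing the proof via the Furstenberg correspondence bound.
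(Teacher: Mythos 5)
Your setup via Furstenberg's correspondence principle and the reduction to showing $\mu\bigl(\bigcap_{n\in F}A_n\bigr)>0$ for $A_n:=A\cap\bigcap_{i=1}^\ell T^{-p_i(n)}A$ is correct, and you have correctly identified the crux: passing from ``for each finite $F$ there are many admissible extensions'' to a single set $R'$ of positive upper density all of whose finite subsets work. But your proposed resolution of that crux does not constitute a proof. The ``residual set of elements passing all finite-level tests simultaneously'' is an intersection of infinitely many sets (one for each finite $F$ already selected), each of which merely has positive density; such an intersection can easily have zero density, and since the admissible set at each level depends on the previously chosen elements, the family is not even a well-defined nested sequence whose densities you could control. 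Localizing to a coset $a+N\Z$ and invoking \cref{thm_orb-clos-of-rat} does not help here: the rational-spectrum structure of $R$ plays no role in this combinatorial bottleneck, and no mechanism is given for why the per-level admissible fractions should not decay to $0$ as $|F|$ grows.

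The paper closes exactly this gap with Bergelson's intersectivity lemma (\cref{l:VB-thm1.1}): if $\mu(A_n)\geq\delta>0$ for all $n$, then there is a set $P\subset\N$ with $\overline{d}(P)\geq\delta$ such that \emph{every} finite subfamily indexed by $P$ has intersection of positive measure. With this tool no iteration is needed at all: a single application of \cref{thm_poly-multi-rec-along-rat} (via the averaging recurrence property of $R$) produces $\delta>0$ and a positive lower density set $D=\{n\in R:\mu(A_n)>\delta\}$; enumerating $D=\{n_1,n_2,\ldots\}$ and applying \cref{l:VB-thm1.1} to the sequence $(A_{n_i})$ yields $P$ with $\overline{d}(P)\geq\delta$, and $R':=\{n_i: i\in P\}$ has positive upper density because $D$ has positive lower density. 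Without \cref{l:VB-thm1.1} (or a proof of an equivalent statement, which is itself a nontrivial Hilbert-space argument), your argument does not go through.
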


\paragraph{Structure of the paper:}
\cref{sec_recurrence} is divided into four subsections:
In Subsection \ref{sec_convergence} we show that RAP sequences are good weights for polynomial multiple convergence. In Subsection \ref{sec_single-rec}, we prove the equivalence \eqref{itm:pmrar-a} $\Leftrightarrow$ \eqref{itm:pmrar-b} of \cref{thm_poly-multi-rec-along-rat}. In Subsection \ref{sec_multi-rec} we give a proof of the equivalence \eqref{itm:pmrar-a} $\Leftrightarrow$ \eqref{itm:pmrar-c}. Finally, in Subsection \ref{seq_applications-B-free}, we provide more examples of rational sets, and discuss some of their properties.
This includes a discourse on $\sB$-free numbers and a proof of \cref{th:b-free-sumset-combi}.

In \cref{sec_rds} we define rational subshifts and study their dynamical properties. In particular, \cref{sec_rds} contains a proof of \cref{thm_orb-clos-of-rat}.

In \cref{sec_revisit}
 we give a proof of a strenghtening of \cref{thm_poly-multi-rec-along-rat} and in \cref{seq_applications-combinatorics} we provide various combinatorial applications of it via Furstenberg's correspondence principle.

In \cref{SAS} we prove that systems generated by Weyl rationally almost periodic sequences (see page \pageref{WRAP} for the definition) satisfy Sarnak's conjecture.

Finally, in the appendix we establish a uniform version of the polynomial multiple recurrence theorem obtained in \cite{MR1325795}, which is needed for the proof of \cref{thm_poly-multi-rec-along-rat}.

\paragraph{Acknowledgement:} We thank the anonymous referee for many helpful comments.
%and also for suggestions on improving and streamlining the exposition of the appendix.

\section{Rationality and recurrence}
\label{sec_recurrence}

\subsection{Rational sequences are good weights for polynomial multiple convergence}
\label{sec_convergence}
The purpose of this subsection is to show that for rational sets
$R$ with $d(R)>0$, the limit in \eqref{eq:rest-poly-sz} always exists.

First, we make the following observation:
If $d(R)$ exists and is positive
then the limit in \eqref{eq:rest-poly-sz}
exists and is positive if and only if the limit
\begin{equation}\label{eqn:ac-2.2}
\lim_{N\to\infty}\frac{1}{N }\sum_{n=1}^N
\1_R(n)\mu\big(A\cap T^{-p_1(n)}A\cap\ldots\cap T^{-p_\ell(n)}A\big)
\end{equation}
exists and is positive.
Since throughout this paper we mostly consider sets $R$ for
which $d(R)$ exists (except in \cref{sec_revisit}) and is positive,
it suffices to study the ergodic averages given by \eqref{eqn:ac-2.2}
instead of \eqref{eq:rest-poly-sz}.

For the special case where $\ell=1$ and $p_1(t)=t$,
the existence of the limit in \eqref{eqn:ac-2.2}
follows from the work of Bellow and Losert in \cite{MR773063}.
To better describe what is known in this case,
we need to introduce first the following extended form of \cref{def_rational}.

\begin{Def}
\label{def:eqn:B-metric-on-C}
Given $x,y\colon\N\to\C$, we define
\begin{equation}\label{eqn:B-metric-on-C}
d_B(x,y):=\limsup_{N\to\infty}\frac{1}{N}\sum_{n=1}^N |x(n)-y(n)|.
\end{equation}
A sequence $x\colon\N\to\C$ is called
\emph{Besicovitch almost periodic} (BAP) \cite{MR1512943,MR0068029}
if for every $\vep>0$
there exists a trigonometric polynomial $P(t)=\sum_{j=1}^M c_j e^{2\pi i \lambda_j t}$ with $c_1,\ldots,c_M\in\C$ and $\lambda_1,\ldots,\lambda_M\in\R$ such that
$d_B(x,P)=d_B((x(n))_{n\in\N},(P(n))_{n\in\N})<\vep$.
If, for each $\vep>0$, one can choose $\lambda_1,\ldots,\lambda_M\in\Q$ -- which is equivalent to
the assertion that the sequence $(P(n))$ is periodic -- then
we call $x$ \emph{(Besicovitch) rationally almost periodic}, or RAP.
% which is a slight extension  of \cref{def_rational}, where we have considered $\{0,1\}$-sequences.
In particular, RAP sequences are a special type of BAP sequences.
\end{Def}

It is shown in \cite[Section 3]{MR773063} that for any bounded
BAP sequence $x\colon\N\to\C$,
the ergodic averages
$$
\lim_{N\to\infty}\frac{1}{N}\sum_{n=1}^N x(n) T^n f
$$
converge almost everywhere for any function $f\in L^1\xbm$.
From this, the existence of the limit in \eqref{eqn:ac-2.2}
for $\ell=1$ and $p_1(t)=t$ follows immediately.

\begin{Def}
\label{def:goodweights}
A sequence $x\in\{0,1\}^\N$ is called a \emph{good weight for polynomial multiple convergence} if for every invertible
measure preserving system $\xbmt$,
for all $f_1,\ldots,f_\ell \in L^\infty(X,\mu)$
and for all polynomials $p_i\in\Q[t]$, $p_i(\Z)\subset\Z$, $i\in\{1,\ldots,\ell\}$, the limit
\begin{equation}\label{eqn:ac-3}
\lim_{N\to\infty}\frac{1}{N }\sum_{n=1}^N
x(n)\prod_{i=1}^\ell  T^{p_i(n)}f_i
\end{equation}
exists in $L^2\xbm$.
\end{Def}

The following proposition shows that 
the limit in \eqref{eqn:ac-2.2} exists in general.

\begin{Prop}
\label{prop_conv-of-erg-ave}
Let $x\in\{0,1\}^\N$ be RAP.
Then $x$ is a good weight for polynomial multiple convergence.
\end{Prop}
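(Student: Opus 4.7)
The natural strategy is a Besicovitch approximation argument: reduce the $L^2$-convergence of the $x$-weighted polynomial averages to the corresponding statement for \emph{periodic} weights (which follows from known convergence theorems for polynomial multiple averages), and transfer via a Cauchy argument, exploiting that the $d_B$-distance controls the $L^2$-distance between the averages uniformly in $N$ and in the measure preserving system.

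First I would handle the case when $x$ is itself periodic with period $P$. Splitting $\{1,\ldots,N\}$ into residue classes modulo $P$ and substituting $n=Pm+r$, the weighted average becomes
\[
\frac{1}{N}\sum_{n=1}^N x(n)\prod_{i=1}^\ell T^{p_i(n)}f_i = \sum_{r=0}^{P-1}\frac{x(r)\,M_r}{N}\cdot\frac{1}{M_r}\sum_{m=0}^{M_r-1}\prod_{i=1}^\ell T^{q_{i,r}(m)}f_i,
\]
where $M_r:=|\{1\leq n\leq N:n\equiv r\pmod P\}|$ and $q_{i,r}(m):=p_i(Pm+r)$ is again an integer-valued polynomial. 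Each inner polynomial multiple average converges in $L^2\xbm$ by the Host--Kra--Leibman $L^2$-convergence theorem for polynomial multiple averages, and since $M_r/N\to 1/P$ as $N\to\infty$, the whole sum converges in $L^2$. Thus every periodic sequence is a good weight for polynomial multiple convergence.

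For general RAP $x$, fix $\vep>0$ and, using \cref{def_rational}, choose a periodic $y\in\{0,1\}^\N$ with $d_B(x,y)<\vep$. Since $T$ is measure preserving, $\|T^{p_i(n)}f_i\|_\infty=\|f_i\|_\infty$, whence
\[
\Bigl\|\tfrac{1}{N}\sum_{n=1}^N(x(n)-y(n))\prod_{i=1}^\ell T^{p_i(n)}f_i\Bigr\|_{L^2}\leq\prod_{i=1}^\ell\|f_i\|_\infty\cdot\frac{1}{N}\sum_{n=1}^N|x(n)-y(n)|,
\]
whose $\limsup$ is at most $\vep\prod_i\|f_i\|_\infty$. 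Denoting by $A_N$ the $x$-average and by $B_N$ the $y$-average, the triangle inequality together with the convergence of $(B_N)$ established in the previous step yields
\[
\limsup_{N,M\to\infty}\|A_N-A_M\|_{L^2}\leq 2\vep\prod_{i=1}^\ell\|f_i\|_\infty.
\]
Since $\vep>0$ was arbitrary, $(A_N)$ is Cauchy in $L^2\xbm$, hence convergent.

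I expect no serious obstacle: the only substantive input is the $L^2$-convergence theorem for polynomial multiple averages invoked for the periodic weight, and with that as a black box the Besicovitch approximation reduces the general case to the routine Cauchy argument above.
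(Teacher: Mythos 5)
Your proposal is correct and follows essentially the same route as the paper: both reduce to periodic weights via the Host--Kra--Leibman $L^2$-convergence theorem applied to the polynomials $n\mapsto p_i(Pn+r)$ (equivalently, to weights $\1_{a\N+b}$), and then transfer to general RAP weights by a Besicovitch approximation and a Cauchy argument controlled by $d_B(x,y)\prod_i\|f_i\|_\infty$. The only cosmetic difference is that the paper first shows the limits $L_m$ for the approximating periodic weights form a Cauchy sequence and then identifies the limit, whereas you show directly that the $x$-weighted averages are Cauchy; the two are interchangeable.
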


%\textcolor{blue}{In the case where $p_i(t)=it$, $i=1,\ldots,\ell$, Host and Kra (cf.\ \cite[Theorem 2.24]{MR2544760}) give sufficient conditions on an arbitrary sequence $(x(n))$ for the limit in \eqref{eqn:ac-3} to exist and one can show that all BAP sequences -- and in particular all RAP sequences -- satisfy these conditions. If needed, an analogous result for general polynomials $p_1,\ldots,p_\ell$ can be derived from the works of Leibman \cite{MR2122919,MR2643713} and this is one possible way of obtaining a proof of \cref{prop_conv-of-erg-ave}. However, we take a more direct approach, which works for RAP sequences.}

\begin{proof}%[Proof of \cref{prop_conv-of-erg-ave}]
It follows from the results of Host, Kra~\cite{MR2191208} and
Leibman~\cite{MR2151605} that the sequence
$$
\frac1N\sum_{n=1}^N T^{q_1(n)}f_1\cdot\ldots\cdot T^{q_\ell(n)}f_\ell,\;N\geq1,
$$
converges in $L^2$, for any $q_i\in\Q[t]$, $q_i(\Z)\subset\Z$, $i=1,\ldots,\ell$. In particular, given arbitrary $a\in \N,b\in\Z$ the averages
\begin{equation*}
\label{eq:gwfpmr-1}
\frac1N\sum_{n=1}^N T^{p_1(an+b)}f_1\cdot\ldots\cdot T^{p_\ell(an+b)}f_\ell,
\end{equation*}
converge in $L^2$ as $N\to\infty$.
%, because $q_1(t)=p_1(at+b),\ldots,q_\ell(t)=p_\ell(at+b)\in \Q[t]$ are polynomials that preserve $\Z$.
Equivalently, the limit
\begin{equation}
\label{eq:gwfpmr-2}
\lim_{N\to\infty}\frac1{N}\sum_{n=1}^N \1_{a\N+b}(n)
T^{p_1(n)}f_1\cdot\ldots \cdot T^{p_\ell(n)}f_\ell
\end{equation}
exists.
Observe that any periodic sequence
can be written as a finite linear combination of
infinite arithmetic progressions $\1_{a\N+b}$.
Therefore, it follows from
\eqref{eq:gwfpmr-2} that
for any periodic sequence $y\in\{0,1\}^\N$ the limit
$$
\lim_{N\to\infty}\frac1{N}\sum_{n=1}^Ny(n) T^{p_1(n)}f_1\cdot\ldots \cdot T^{p_\ell(n)}f_\ell
$$
also exists in $L^2$.

Since any RAP sequence $x$
can be approximated by periodic sequences,
we can find periodic sequences $y_m$, $m\in\N$,
satisfying $d_B(y_m,x)\to 0$ as $m\to\infty$. Define
$$
L_{m}:=\lim_{N\to\infty}\frac1{N}\sum_{n=1}^Ny_m(n) T^{p_1(n)}f_1\cdot\ldots \cdot T^{p_\ell(n)}f_\ell.
$$
Then
$$
\| L_{m_1}-L_{m_2}\|_{L^2}\leq
d_B\left({y_{m_1}},{y_{m_2}}\right) \Vert f_1\Vert_{L^\infty}\cdot\ldots\cdot \Vert f_\ell\Vert_{L^\infty},$$
which shows that $(L_m)$ is a Cauchy sequence, whence the limit
$L:=\lim_{m\to\infty}L_m$ exists.
Moreover,
$$
\limsup_{N\to\infty}
\left\| L_{m} - \frac1{N}\sum_{n=1}^Nx(n) T^{p_1(n)}f_1\cdot\ldots \cdot T^{p_\ell(n)}f_\ell\right\|_{L^2}
$$
can be bounded from above by
$d_B\left(x,{y_{m}}\right) \Vert f_1\Vert_{L^\infty}\cdot\ldots\cdot \Vert f_\ell\Vert_{L^\infty}$,
which converges to zero as $m\to\infty$.
Therefore, the limit in~\eqref{eqn:ac-3} exists and equals $L$.
\end{proof}

\subsection{Averaging single recurrence}
\label{sec_single-rec}

In this subsection we provide a proof of the equivalence
\eqref{itm:pmrar-a} $\Leftrightarrow$ \eqref{itm:pmrar-b} in \cref{thm_poly-multi-rec-along-rat}. Of course, this equivalence is
a special case of the more general equivalence
\eqref{itm:pmrar-a} $\Leftrightarrow$ \eqref{itm:pmrar-c}.
We include a separate proof of this simpler case because, on the one hand,
this proof is more elementary and self-contained
and, on the other hand, it contains in embryonic form the ideas needed for the proof of the general case.
Let us state the non-trivial implication, namely
\eqref{itm:pmrar-a} $\Rightarrow$ \eqref{itm:pmrar-b}, as an independent theorem.

\begin{Th}\label{p5newnewA}
Assume that $R\subset \N$ is rational and divisible.
Then $R$ is an averaging set of polynomial single recurrence.
\end{Th}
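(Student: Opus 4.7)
The limit defining
\[
L := \lim_{N\to\infty}\tfrac{1}{N}\sum_{n=1}^N \1_R(n)\,\mu\bigl(A\cap T^{-p(n)}A\bigr)
\]
exists by \cref{prop_conv-of-erg-ave}; since divisibility with $u=1$ yields $d(R)>0$, the limit in \eqref{eq:rest-poly-sz} equals $L/d(R)$, so it suffices to show $L>0$. My plan is to reduce the problem to an autocorrelation estimate on the \emph{rational Kronecker factor} of $\xbmt$, where the divisibility hypothesis becomes directly usable. By ergodic decomposition we may assume $T$ is ergodic and $\mu(A)>0$.

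Decompose $\1_A = f + h$ with $f := \E[\1_A \mid \mathcal{K}]$ the projection onto the Kronecker factor $\mathcal{K}$; then
\[
\mu\bigl(A\cap T^{-p(n)}A\bigr) = \langle T^{p(n)}f,f\rangle + \langle T^{p(n)}h,h\rangle.
\]
The spectral measure $\sigma_h$ of $h$ is atomless, so $\langle T^{p(n)}h,h\rangle = \int_{\mathbb{S}^1} z^{p(n)}\,d\sigma_h(z)$. For $z=e^{2\pi i\beta}$ with $\beta\notin\Q$, approximate $\1_R$ by a periodic $y_\vep$ of period $T_\vep$ and expand $y_\vep$ in its Fourier series on $\Z/T_\vep\Z$; each term becomes a polynomial exponential sum with irrational leading coefficient, so Weyl equidistribution gives $\tfrac{1}{N}\sum_n y_\vep(n) z^{p(n)} \to 0$, while the $d_B$-error contributes $O(\vep)$. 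Dominated convergence in $\sigma_h$ eliminates the $h$-contribution, so
\[
L = \lim_{N\to\infty}\tfrac{1}{N}\sum_{n=1}^N \1_R(n)\langle T^{p(n)}f, f\rangle.
\]

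On $\mathcal{K}$, $T$ acts as rotation by some $\alpha \in G$ on a compact abelian group $G$ with Haar measure, and $f : G \to [0,1]$ satisfies $\int_G f = \mu(A)$. Set $\Phi(g) := \int_G f(x+g)f(x)\,dx$; then $\Phi\ge 0$, $\Phi(0) = \|f\|_2^2 \ge \mu(A)^2 > 0$, and $L = \lim_N \tfrac{1}{N}\sum_n \1_R(n)\Phi(p(n)\alpha)$. Let $G_0$ be the connected component of $G$, let $G_r := G/G_0$ (profinite), let $\alpha_r$ be the image of $\alpha$ in $G_r$, and set $\bar\Phi(g_r) := \int_{G_0}\Phi(\tilde g_r + g_0)\,dg_0$ (independent of the chosen lift). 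Then $\bar\Phi\ge 0$ on $G_r$ and $\bar\Phi(0)\ge\mu(A)^2$. Plancherel-expanding $\Phi = \sum_\chi |\hat f(\chi)|^2 \chi$ writes $\Phi - \bar\Phi\circ\pi$ as a series over characters $\chi$ of $G$ that are non-trivial on $G_0$; ergodicity of the rotation by $\alpha$ forces $\chi(\alpha)$ to not be a root of unity for each such $\chi$. A second application of the Weyl/RAP argument above, combined with dominated convergence in the discrete sum over $\chi$ (justified by $\sum_\chi |\hat f(\chi)|^2 < \infty$), yields
\[
L = \lim_{N\to\infty}\tfrac{1}{N}\sum_{n=1}^N \1_R(n)\,\bar\Phi(p(n)\alpha_r).
\]

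Finally, $\bar\Phi$ is continuous on the profinite $G_r$ with $\bar\Phi(0)>0$, so there is a finite-index open subgroup $H\le G_r$ on which $\bar\Phi \ge \bar\Phi(0)/2$. Let $M$ be the order of the image of $\alpha_r$ in $G_r/H$. Expanding $p$ in the integer-valued binomial basis $\binom{t}{k}$, the integer $q := M\cdot(\deg p)!$ satisfies $M \mid \binom{qt}{k}$ for every $1\le k\le\deg p$ and every $t$, hence $n \in q\N \Rightarrow M \mid p(n) \Rightarrow p(n)\alpha_r\in H$. By divisibility $d(R\cap q\N)>0$; since $\bar\Phi\ge 0$ on $G_r$,
\[
L \ge \lim_{N\to\infty}\tfrac{1}{N}\sum_{n=1}^N \1_{R\cap q\N}(n)\,\bar\Phi(p(n)\alpha_r) \ge \tfrac{\bar\Phi(0)}{2}\,d(R\cap q\N) > 0.
\]
The main obstacle is the second reduction, from $G$ to the profinite factor $G_r$: the extension $0 \to G_0 \to G \to G_r \to 0$ need not split, so one cannot just work on $G_r$. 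The resolution is to remain on $G$ with $\Phi$ and average out the $G_0$-direction by a second Weyl-equidistribution argument character-by-character, which crucially exploits ergodicity (to preclude non-torsion characters mapping $\alpha$ to a root of unity) together with the rational-spectrum character of $\1_R$.
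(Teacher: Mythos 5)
Your proof is correct. It follows the same high-level strategy as the paper --- everything outside the rational Kronecker factor is annihilated by the weighted polynomial average, and on that factor divisibility yields positivity --- but the execution is genuinely different. The paper proves an abstract reduction (\cref{l30}): an orthogonal decomposition $f=g^{(1)}+g^{(2)}+g^{(3)}$ relative to $\ck_{rat}$ and its finite sub-factors $\ck_{m!}$ (with a martingale argument making $g^{(3)}$ small), which reduces the theorem to condition \eqref{ten} for $g$ with $\E(g|\ck_{rat})=\int g\, d\mu$; that condition is then verified by quoting the characteristic-factor identity \eqref{eq_tot-erg-char-factor} and transferring it to the weight $\1_{R\cap u\N}$ via approximation by arithmetic progressions. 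You instead reduce to the ergodic case (a step the paper avoids), realize the Kronecker factor as a concrete group rotation, and kill the non-rational contributions by hand: the weakly mixing part via atomlessness of its spectral measure plus Weyl equidistribution of $\beta p(n)$ for irrational $\beta$, and the connected part of $G$ by the same Weyl/RAP argument run character by character (ergodicity correctly rules out root-of-unity values of characters nontrivial on $G_0$). Your endgame --- continuity of $\bar\Phi$ at $0$ on the profinite quotient together with the binomial-coefficient computation showing $M\mid p(n)$ for $n\in M\cdot(\deg p)!\,\N$ --- is a concrete substitute for the paper's choice $u=m!$ inside \cref{l30}. Your route is more self-contained (no appeal to the characteristic-factor theorem from \cite{Bergelson96}) and makes the role of divisibility transparent; the paper's is shorter and needs no ergodic decomposition. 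Two harmless imprecisions: for $\deg p=1$ the relevant irrational Weyl coefficient is $j/T_\vep+\beta c_1$ rather than a ``leading coefficient'' of $\beta p$, and the degenerate case $p\equiv 0$ should be set aside at the outset.
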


The proof of \cref{p5newnewA} is comprised of two parts.
First, we prove the assertion for totally ergodic systems. Recall that $\xbmt$ is called \emph{totally ergodic}
if $T^m$ is ergodic for all $m\in\N$.
Equivalently, $T$ is ergodic and the spectrum of the unitary operator associated with $T$ contains no non-trivial roots of unity.

\cref{l30} below, which is the second ingredient in the proof of \cref{p5newnewA}, allows us to reduce the case of general ergodic systems to those which are totally ergodic.
%can close the gap between totally ergodic systems and ergodic systems.
This is done by replacing $T^{p(n)}$ with $T^{p(un)}$ for a highly divisible natural number $u$. Since $p(0)=0$, this allows us to identify $T^{p(un)}$ with $T^{q(n)}$ for some other polynomial $q$.
This procedure annihilates the rational part of the spectrum in the sense that will be made precise below.

In the following, we use $\ck_{rat}$ to denote
the \emph{rational Kronecker factor} of $\xbmt$, which is defined
as the smallest sub-$\sigma$-algebra of $\cb$
for which all eigenfunctions with roots of unity as eigenvalues are measurable.
Equivalently, the rational Kronecker factor is the largest factor of $T$ which has rational discrete spectrum. It is also a characteristic factor for
ergodic averages along polynomials.
This means that for any function $f\in L^2$ and any polynomial $p\in \Q[t]$, $p(\Z)\subset\Z$, one has
\begin{equation}
\label{eq_tot-erg-char-factor}
\lim_{N\to\infty}
\left\|
\frac{1}{N}\sum_{n=1}^N \Big(T^{p(n)}f-
T^{p(n)}\E(f|\ck_{rat})\Big)\right\|_{L^2}=0,
\end{equation}
where $\E(f|\ck_{rat})$ denotes the \emph{conditional expectation of $f$ with respect to $\ck_{rat}$}, i.e., the unique function in $L^2\xbm$ such that $\E(f|\ck_{rat})$ is $\ck_{rat}$-measurable and $\int_A\E(f|\ck_{rat}) d\mu =\int_A fd\mu$ for all $A\in\ck_{rat}$.
A proof of \eqref{eq_tot-erg-char-factor} can be found in \cite[Section 2]{Bergelson96}.

\begin{Lemma}\label{l30}
Let $\xbmt$ be an invertible measure preserving system and let $R\subset\N$ with $d(R)>0$. Also, let $p\in \Q[t]$ satisfy $p(\Z)\subset\Z$ and $p(0)=0$.
Assume that for each real-valued $g\in L^2\xbm$ with
$\E(g|\ck_{rat})=\int g\, d\mu >0$ there exists some $\delta>0$ such that
\begin{equation}\label{ten}
\overline{d}\big(D_{\delta}(g)\cap u\N\big)>0,\quad\forall u\in\N,
\end{equation}
where $D_{\delta}(g):=\big\{n\in R: \langle  T^{p(n)}g,g\rangle>\delta\big\}$.
Then for all non-negative $f\in L^2(X,\cb,\mu)$ with $\int_X f\, d\mu>0$, we have
\begin{equation}\label{rr:-3}
\limsup_{N\to\infty}\frac{1}{N}\sum_{n=1}^{N} \1_R(n)\langle  T^{p(n)}f,f\rangle~>~0.
\end{equation}
\end{Lemma}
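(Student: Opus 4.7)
Set $c:=\int_X f\,d\mu>0$ and $\bar f:=\E(f\mid\ck_{rat})$, and define
$$g^*:=f-\bar f+c.$$
Then $g^*\in L^2\xbm$ is real-valued with $\E(g^*\mid\ck_{rat})=c=\int g^*\,d\mu>0$, so the hypothesis provides some $\delta>0$ such that $\overline{d}(D_\delta(g^*)\cap v\N)>0$ for every $v\in\N$. The plan is to transfer this positive-density recurrence for $g^*$ back to $f$ through a finite-dimensional spectral approximation of $\ck_{rat}$, combined with passage to an arithmetic progression on which $T^{p(n)}$ acts trivially on the approximating subspace.

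For each $u\in\N$ let $E_u\subset L^2\xbm$ denote the closed linear span of those $T$-eigenfunctions whose eigenvalues are $m$-th roots of unity for some $m\mid u$. Each $E_u$ is a $T$-invariant subspace on which $T^u$ acts as the identity, and $\bigcup_u E_u$ is dense in $L^2(\ck_{rat})$. Put $\bar f_u:=P_{E_u}f$ and $g_u:=f-\bar f_u+c$. Since $\bar f_u\to\bar f$ in $L^2$ along highly divisible $u$ while $\|g_u\|,\|g^*\|$ stay uniformly bounded, I can pick $u$ so that
$$\sup_n\bigl|\langle T^{p(n)}g_u,g_u\rangle-\langle T^{p(n)}g^*,g^*\rangle\bigr|\leq\|\bar f-\bar f_u\|\bigl(\|g_u\|+\|g^*\|\bigr)<\delta/2.$$
Because $p\in\Q[t]$ is integer-valued with $p(0)=0$, the sequence $(p(n)\bmod u)_{n\in\N}$ is periodic and vanishes at $n=0$, so there exists $v\in\N$ with $u\mid p(n)$ for every $n\in v\N$; for such $n$ we have $T^{p(n)}\bar f_u=\bar f_u$.

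For $n\in v\N$, expand $\langle T^{p(n)}f,f\rangle$ using $f=g_u+(\bar f_u-c)$. The two cross terms both reduce to $\langle g_u,\bar f_u-c\rangle$ (using $T^{\pm p(n)}(\bar f_u-c)=\bar f_u-c$), and this inner product vanishes by direct computation from $P_{E_u}f=\bar f_u$ and $1\in E_u$. Hence
$$\langle T^{p(n)}f,f\rangle=\langle T^{p(n)}g_u,g_u\rangle+\|\bar f_u-c\|^2\geq\langle T^{p(n)}g_u,g_u\rangle,$$
and combining with the approximation yields $\langle T^{p(n)}f,f\rangle>\delta/2$ for every $n\in D_\delta(g^*)\cap v\N\subset R$. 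Since $\langle T^{p(n)}f,f\rangle\geq0$ by $f\geq0$, dropping the remaining non-negative terms gives
$$\limsup_{N\to\infty}\frac{1}{N}\sum_{n=1}^N\1_R(n)\langle T^{p(n)}f,f\rangle\geq\frac{\delta}{2}\,\overline{d}\bigl(D_\delta(g^*)\cap v\N\bigr)>0,$$
which is \eqref{rr:-3}. The main delicate point I anticipate is coordinating the spectral approximation with the choice of $v$: working with the finite-dimensional $E_u$ (on which $T^u=\mathrm{Id}$) rather than with the whole $L^2(\ck_{rat})$ is forced by the need to find an arithmetic progression $v\N$ such that $T^{p(n)}\bar f_u=\bar f_u$ holds pointwise in $n$, and this in turn is what makes the cross terms in the expansion collapse.
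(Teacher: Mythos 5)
Your proof is correct and follows essentially the same route as the paper's: both hinge on applying the hypothesis to $g^{(1)}=f-\E(f\mid\ck_{rat})+\int f\,d\mu$, approximating the rational Kronecker part by a finite-order piece that $T^{p(n)}$ fixes along an arithmetic progression, and using orthogonality of the pieces to transfer the lower bound back to $\langle T^{p(n)}f,f\rangle$. The differences are minor — you use a two-term decomposition plus a perturbation estimate where the paper uses the three-term orthogonal decomposition $f=g^{(1)}+g^{(2)}+g^{(3)}$ — and your passage to a progression $v\N$ on which $u\mid p(n)$ is in fact slightly more careful than the paper's use of $u\N$, which tacitly assumes $u\mid n\Rightarrow u\mid p(n)$ (automatic for $p\in\Z[t]$ with $p(0)=0$ but not for general integer-valued $p\in\Q[t]$).
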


\begin{proof} Fix $f\in L^2\xbm$, $f\geq0$ with $\int_X f\,d\mu>0$.
Then the function $$g^{(1)}:=f-\E(f|\ck_{rat})+\int_X f\, d\mu$$ is real-valued and satisfies $\E(g^{(1)}|\ck_{rat})=\int_X g^{(1)}\, d\mu >0$. Therefore, we can find some
$\delta>0$ such that \eqref{ten} holds for $g=g^{(1)}$.
Pick $0<\epsilon < \sqrt{\delta}$.
Let $\ck_u$ stand for the factor of $\ck_{rat}$ that is generated by eigenfunctions corresponding to eigenvalues which are roots of unity of degree at most $u$. Note that $\ck_{1!}\subset \ck_{2!}\subset
\ck_{3!}\subset \ldots$ and
$$
\ck_{rat}=\bigvee_{m\in\N} \ck_{m!}.
$$
Hence, using Doobs' martingale convergence theorem (see \cite[Section 3.4]{MR833286}),
we can find $m\in\N$ such that $\|\E(f|\ck_{rat})-\E(f|\ck_{m!})\|_{L^2}< \epsilon$.
Take $u=m!$. Define
\begin{align*}
g^{(2)}&= \E(f|\ck_u)-\int_X f d\mu,\\
g^{(3)}&=\E(f|\ck_{rat})-\E(f|\ck_u),
\end{align*}
so that $f=g^{(1)}+g^{(2)}+g^{(3)}$.
A simple calculation shows that
\begin{equation*}
\langle T^n g^{(i)},g^{(j)}\rangle=0,\quad\text{for all $i,j\in\{1,2,3\}$ with $i\neq j$ and for all $n\in \Z$}.
\end{equation*}
It follows that
\begin{align*}
\langle T^{p(n)}f,f\rangle&=\langle T^{p(n)}(g^{(1)}+g^{(2)}+g^{(3)}),g^{(1)}+g^{(2)}+g^{(3)}\rangle\\
&=\langle T^{p(n)}g^{(1)},g^{(1)}\rangle + \langle T^{p(n)}g^{(2)},g^{(2)}\rangle + \langle T^{p(n)}g^{(3)},g^{(3)}\rangle.
\end{align*}
Then, using $T^{p(n)}g^{(2)}=g^{(2)}$ for all $n\in u\N$ and $\|g^{(3)}\|_{L^2}<\vep$,
we get that for every $n\in D_{\delta}(g^{(1)})\cap u\N$,
\begin{align*}
\langle T^{p(n)}f,f\rangle&=\langle T^{p(n)}g^{(1)},g^{(1)}\rangle +\langle T^{p(n)}g^{(2)},g^{(2)}\rangle +\langle T^{p(n)}g^{(3)},g^{(3)}\rangle \\
&\geq \langle T^{p(n)}g^{(1)},g^{(1)}\rangle +\langle g^{(2)},g^{(2)}\rangle - \vep^2\\
&\geq \delta-\vep^2>0.
\end{align*}
To complete the proof, it suffices to notice that
\begin{align*}
\limsup_{N\to\infty}\frac{1}{N}\sum_{n=1}^N\1_R(n)\langle T^{p(n)}f,f \rangle
&\geq
\limsup_{N\to\infty}\frac{1}{N}\sum_{n \in D_{\delta}(g^{(1)})\cap u\N\cap\{1,\ldots,N\}} \1_{R}(n)\langle T^{p(n)}f,f\rangle\\
&\geq(\delta-\vep^2) ~\overline{d}\big(D_{\delta}(g^{(1)})\cap u\N\big)
\end{align*}
and apply~\eqref{ten} for $g^{(1)}$.
\end{proof}

\begin{proof}[Proof of \cref{p5newnewA}]
Suppose $R\subset\N$ with $d(R)>0$ is both rational and divisible.
We want to show that $R$ is a set of averaging polynomial single recurrence, i.e.\ we want to show that for
any invertible measure preserving system $\xbmt$,
$p\in\Q[t]$, $p(\Z)\subset\Z$, with $p(0)=0$ and $A\in\cb$ with $\mu(A)>0$, the limit
\begin{equation}
\label{eq_poly-rec-ave-1}
\lim_{N\to\infty}\frac{1}{N}\sum_{n=1}^N \1_R(n) \mu(A\cap T^{-p(n)}A)
\end{equation}
is positive. Note that the
limit in \eqref{eq_poly-rec-ave-1} exists by Proposition~\ref{prop_conv-of-erg-ave}.

In view of \cref{l30}, to show that \eqref{eq_poly-rec-ave-1}
is positive it suffices to show that~\eqref{ten} holds
for all real-valued $g\in L^2\xbm$ with $\E(g|\ck_{rat})=\int_X g\, d\mu >0$.
However, for any such $g$, it follows from
\eqref{eq_tot-erg-char-factor} that
\[
\lim_{N\to \infty}\frac1{N}\sum_{n=1}^N\langle T^{q(n)}g,g\rangle=\Big(\int_X g\, d\mu\Big)^2
\]
for all polynomials $q\in \Q[t]$, $q(\Z)\subset\Z$.
In particular, we can pick $q(n)=p(u(an+b))$  and obtain
\begin{equation*}\label{rr:-1}
\lim_{N\to\infty}\frac1{N}\sum_{n=1}^N\langle T^{p(u(an+b))}g,g\rangle=\Big(\int_X g\, d\mu\Big)^2 \text{ for all }a,u\in\N,~b\in\N\cup\{0\}.
\end{equation*}
This can be rewritten as
\begin{equation}\label{rr:-2}
\lim_{N\to\infty}\frac1{N}\sum_{n=1}^N\1_{a\N+b}(n)\langle T^{p(un)}g,g\rangle=\frac{1}{a}\Big(\int_X g\, d\mu\Big)^2 \text{ for all }a,u\in\N,~b\in\N\cup\{0\}.
\end{equation}
Now, if $E\subset \N$ is a finite union of infinite arithmetic
progressions then $\1_E$ can be written as a finite linear combination of functions of the form $\1_{a\N+b}$ and it follows from \eqref{rr:-2} that for any such set $E$, we have
$$
\lim_{N\to \infty}\frac{1}{N} \sum_{n=1}^N\1_E(n)\langle T^{p(un)}g,g \rangle=d(E)\Big(\int_X g\, d\mu\Big)^2.
$$
Finally, since $R\cap u\N$ is rational for all $u\in\N$
and every rational set can be approximated by
finite unions of infinite arithmetic progressions, we deduce that
\begin{equation}\label{rr:-2.2}
\lim_{N\to \infty}\frac{1}{N} \sum_{n=1}^N\1_{R\cap u\N}(n)\langle T^{p(un)}g,g \rangle=d(R\cap u\N)\Big(\int_X g\, d\mu\Big)^2,
\qquad\forall u\in\N.
\end{equation}
Choose $\delta>0$ so that $\delta(1+\|g\|_{L^2}^2)<\big(\int_X g\, d\mu \big)^2$.
It is now an immediate consequence of \eqref{rr:-2.2} that
$$
\overline{d}\Big(\big\{n\in R\cap u\N: \langle  T^{p(n)}g,g\rangle>\delta\big\}\Big)\geq
d(R\cap u\N)\delta.
$$
From this it follows that~\eqref{ten} holds.
\end{proof}

\subsection{Averaging multiple recurrence}
\label{sec_multi-rec}
In this subsection we prove
\eqref{itm:pmrar-a} $\Rightarrow$ \eqref{itm:pmrar-c} in
\cref{thm_poly-multi-rec-along-rat}.
Since \eqref{itm:pmrar-c} $\Rightarrow$ \eqref{itm:pmrar-a}
is trivial, this will complete the proof of \cref{thm_poly-multi-rec-along-rat}.
Let us state the implication that we want to prove as a separate theorem.

\begin{Th}\label{p5new}
Assume $R\subset \N$ is rational and divisible.
Then $R$ is an averaging set of polynomial multiple recurrence.
\end{Th}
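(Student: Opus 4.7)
My plan is to mirror the strategy used for Theorem~\ref{p5newnewA}, upgrading each spectral computation there to a multilinear statement. As before, Proposition~\ref{prop_conv-of-erg-ave} guarantees the limit in \eqref{eq:rest-poly-sz} exists, so only positivity remains. By an ergodic decomposition it suffices to handle the case when $\xbmt$ is ergodic, and setting $f=\1_A$ it is enough to show that the weighted multilinear correlation
$$
\frac{1}{N}\sum_{n=1}^N \1_R(n)\int f\cdot T^{p_1(n)}f\cdots T^{p_\ell(n)}f\,d\mu
$$
has a positive liminf.

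I would first establish a multilinear analog of Lemma~\ref{l30}: assuming that for every real-valued $g\in L^2(\mu)$ with $\E(g|\ck_{rat})=\int g\,d\mu>0$, the set of $n\in R\cap u\N$ for which $\int g\cdot T^{p_1(n)}g\cdots T^{p_\ell(n)}g\,d\mu$ exceeds some $\delta>0$ has positive upper density for all $u$, one can conclude averaging multiple recurrence. The argument follows the same three-piece decomposition $f=g^{(1)}+g^{(2)}+g^{(3)}$ as in Lemma~\ref{l30}, but the multilinear expansion of $\int\prod_i T^{p_i(n)}(g^{(1)}+g^{(2)}+g^{(3)})\,d\mu$ produces cross terms that are not killed by mere orthogonality. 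These terms must be handled via the characteristic factor theory of Host--Kra \cite{MR2191208} and Leibman \cite{MR2151605}: the contributions involving the $\ck_{rat}$-orthogonal piece $g^{(1)}$ vanish in the limit when averaged against any good weight in the sense of Definition~\ref{def:goodweights}, while the $L^2$-norm of $g^{(3)}$ is made arbitrarily small by taking $u=m!$ large, via Doob's martingale theorem.

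The quantitative heart of the argument is the uniform polynomial multiple recurrence theorem established in the appendix, which replaces the elementary spectral computation \eqref{rr:-2} from the single-recurrence case. It furnishes a positive constant $c$, depending only on $\mu(A)$ and the polynomials $p_i$, such that for every $a,u\in\N$ and every $b\in\N\cup\{0\}$,
$$
\liminf_{N\to\infty}\frac{1}{N}\sum_{n=1}^N\1_{a\N+b}(n)\,\mu\big(A\cap T^{-p_1(un)}A\cap\ldots\cap T^{-p_\ell(un)}A\big)\ \geq\ \frac{c}{a}.
$$
Using the rationality of $R\cap u\N$ to approximate it (in $d_B$) by finite unions of arithmetic progressions $\bigcup_i(a_i\N+b_i)$ and summing the above inequality over these pieces, one obtains a lower bound of approximately $c\cdot d(R\cap u\N)$, which is strictly positive by divisibility. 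Re-indexing $n\mapsto un$ and combining with the $f=g^{(1)}+g^{(2)}+g^{(3)}$ decomposition then yields the positivity required in Definition~\ref{def_ave-set-of-rec-2}, completing the proof.

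The main obstacle is the uniform multiple recurrence theorem itself. For single recurrence the analogous uniformity is nearly automatic once one knows that $\ck_{rat}$ is characteristic for polynomial averages, as exploited in the proof of Theorem~\ref{p5newnewA}. For $\ell\ge 2$, however, the relevant characteristic factor is a Host--Kra nilfactor rather than the Kronecker factor, and extracting uniformity across arbitrary arithmetic progressions calls for the structure theory of nilsystems together with the quantitative equidistribution of polynomial sequences on nilmanifolds; these are precisely the ingredients that the appendix will have to supply.
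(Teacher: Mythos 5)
There is a genuine gap, concentrated in the two places where you try to upgrade the single-recurrence argument verbatim. First, the ``multilinear analog of \cref{l30}'' does not survive the upgrade: your claim that the contributions of the $\ck_{rat}$-orthogonal piece $g^{(1)}$ vanish when averaged against a good weight is false for $\ell\geq 2$, precisely because $\ck_{rat}$ is not a characteristic factor for multiple polynomial averages (the relevant factors are Host--Kra nilfactors, which are strictly larger). Concretely, for an irrational rotation ($\ck_{rat}$ trivial) and $f=1+\epsilon\,\mathrm{Re}(e(x))+\epsilon\,\mathrm{Re}(e(2x))$, the resonance $e(-x)\cdot T^{n}e(2x)\cdot T^{2n}e(-x)\equiv 1$ shows that $\frac1N\sum_n\int f\cdot T^nf\cdot T^{2n}f\,d\mu$ does not converge to $\big(\int f\big)^3$; so the multilinear analogue of \eqref{rr:-2} simply fails. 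Second, the inequality you call the quantitative heart, $\liminf_N\frac1N\sum_n\1_{a\N+b}(n)\,\mu\big(A\cap T^{-p_1(un)}A\cap\ldots\big)\geq c/a$ for \emph{all} $b$, is false: take $T$ the rotation on $\Z/2\Z$, $A$ a single point, $p_1(n)=n$, $u=1$, $a=2$, $b=1$; the left-hand side is identically $0$. This is not an accident: $p_i(u(an+b))$ is a polynomial in $n$ that does not vanish at $n=0$ when $b\neq 0$, so no recurrence theorem applies. The appendix (\cref{Cor-uniformity}) only gives uniformity over the dilations $n\mapsto sn$, i.e.\ over progressions through the origin, and your plan to approximate $R\cap u\N$ by arbitrary progressions $a_i\N+b_i$ and sum the (false) bound over the pieces therefore collapses.

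The paper's proof avoids both traps by never decomposing $f$ at all. It works directly with the nonnegative sequence $\varphi(n)=\mu(A\cap T^{-p_1(n)}A\cap\ldots\cap T^{-p_\ell(n)}A)$: \cref{Cor-uniformity} gives $\lim_N\frac1N\sum_n\varphi(un)>\delta$ for every $u$; Leibman's theorem (\cref{thm_4.1}) replaces $\varphi$ by a nilsequence $\psi$ with $d_B(\varphi,\psi)=0$, uniformly approximated by $f(T_g^nx)$; \cref{prop_2.1} produces a $u$ for which $T_{g^u}$ acts totally ergodically on the connected component through $x$; and \cref{l1} shows that the rational set $R/u$ does not correlate with this totally equidistributed orbit, so $\frac1N\sum_n\1_{R/u}(n)f(T_{g^u}^nx)\to d(R/u)\int_Y f\,d\mu_Y$. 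The positivity then follows from restricting the sum to $R\cap u\N$. The role of rationality is thus non-correlation with totally ergodic nilsequences, not a termwise lower bound along every arithmetic progression; you would need to replace your two false steps by this mechanism for the argument to go through.
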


For the proof of \cref{p5new}, we rely on a series of known results.
We recall first some fundamental properties of
nilsystems.

Let $G$ be a nilpotent Lie group and let $\Gamma$ be
a uniform and discrete subgroup of $G$.
The compact manifold $X:=G/\Gamma$ is called a
\emph{nilmanifold}.
%Let $(T_g)_{g\in G}$ denote the natural action of $G$ on $X$ by left multiplication.
$G$ acts naturally on $X$.
More precisely, if $g,y\in G$ and $x=y\Gamma\in X$
then $T_gx$ is defined as $(gy)\Gamma$.
For a fixed $g\in G$ the topological dynamical system
$(X,T_g)$ is called a \emph{nilsystem}.
Every nilmanifold $X=G/\Gamma$
possesses a unique $G$-invariant probability measure $\mu_X$,
called the \emph{Haar measure} of $X$.
%Obviously, $T_g\in {\rm Aut}(X,\mu_X)$ for all $g\in G$.

A bounded function $\phi\colon \N\rightarrow \C$ is called a
\emph{basic nilsequence} if there exist a nilmanifold $X=G/\Gamma$, a point
$x\in X$, an element $g\in G$ and a continuous function $f\in C(X)$ such that
$\phi(n)=f(T_g^n x)$ for all $n\in\N$.
Here, $T_g^nx$ coincides with $T_{g^n}x$.
A function $\psi\colon \N\rightarrow \C$ is called a \emph{nilsequence}
if for each $\vep>0$ there exists a basic nilsequence $(\phi(n))$
such that $|\psi(n)-\phi(n)|<\vep$ for all $n\in\N$.

An important tool in the proof of \cref{p5new}
is a theorem of Leibman that allows us to replace
multiple ergodic averages along polynomials with Birkhoff sums of nilsequences.
%\textcolor{blue}{Roughly speaking, this means that instead of studying multiple recurrence in set-dynamics of measure-preserving systems, it suffices to study single recurrence in point-dynamics of nilmanifolds.}

\begin{Th}[cf. {\cite[Theorem 4.1]{MR2643713}} and
{\cite[Proposition 3.14]{MR2122919}}]\label{thm_4.1} Assume that $\xbmt$ is an invertible measure preserving system, let $f_1,\ldots,f_\ell\in L^{\infty}\xbm$,
$p_1,\ldots,p_\ell\in \Q[t]$ ($p_i(\Z)\subset\Z$ for $i=1,\ldots,\ell$) and set
$\varphi(n):=\int_X T^{p_1(n)}f_1\cdot\ldots \cdot T^{p_\ell(n)}f_\ell\, d\mu$, $n\in\Z$.
Then there exists a nilsequence $(\psi(n))$ such that
$$
\limsup_{N-M\to\infty}\frac{1}{N-M}\sum_{n=M}^{N-1}
|\varphi(n)-\psi(n)|~=~0.
$$
In particular, $d_B(\varphi,\psi)=0$.
\end{Th}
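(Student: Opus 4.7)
The plan is to combine the Host--Kra--Ziegler structure theorem with Leibman's theorem on polynomial orbits on nilmanifolds. Since the $d_B=0$ conclusion and the $\limsup$ conclusion in the statement are equivalent reformulations, it suffices to exhibit, for every $\vep>0$, a basic nilsequence $\psi_\vep$ with $d_B(\varphi,\psi_\vep)<\vep$, and then to assemble these into a single nilsequence $\psi$ with $d_B(\varphi,\psi)=0$ by a standard diagonalization argument using the fact that uniform limits of basic nilsequences are, by definition, nilsequences.

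I would first perform a reduction to nilsystems. Fixing an integer $d$ sufficiently large in terms of $\ell$ and the degrees $\deg p_i$, the polynomial refinement (due to Host--Kra, Ziegler, and Leibman) of the characteristic factor theorem asserts that the $d$-step pronilfactor $Z_d$ of $\xbmt$ is characteristic in $L^2$ for averages of the shape $\frac{1}{N-M}\sum_{M\leq n<N} T^{p_1(n)}f_1\cdots T^{p_\ell(n)}f_\ell$. Consequently, replacing each $f_i$ by $\E(f_i\mid Z_d)$ alters $\varphi(n)$ by a quantity whose Besicovitch pseudo-distance to $0$ vanishes. Since $Z_d$ is an inverse limit of $d$-step nilsystems, a further $L^\infty$-approximation of each $\E(f_i\mid Z_d)$, absorbed via Cauchy--Schwarz into an $O(\vep)$ error in $d_B$, allows one to assume $\xbmt=(G/\Gamma, T_g)$ is a finite-step nilsystem and that $f_1,\ldots,f_\ell\in C(G/\Gamma)$.

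On such a nilsystem, the substitution $x\mapsto T_g^{-p_1(n)}x$ together with the shorthand $q_i:=p_i-p_1$ yields
$$
\varphi(n)=\int_{G/\Gamma} f_1(x)\,f_2\bigl(T_g^{q_2(n)}x\bigr)\cdots f_\ell\bigl(T_g^{q_\ell(n)}x\bigr)\,d\mu(x).
$$
The sequence $n\mapsto (g^{q_2(n)},\ldots,g^{q_\ell(n)})$ is a polynomial sequence in the nilpotent Lie group $G^{\ell-1}$ in Leibman's sense. Projecting it to $(G/\Gamma)^{\ell-1}$ and integrating the continuous function $(y_2,\ldots,y_\ell)\mapsto \int_{G/\Gamma} f_1(x)f_2(y_2 x)\cdots f_\ell(y_\ell x)\,d\mu(x)$ against this orbit expresses $\varphi(n)$ as the evaluation of a continuous function along a polynomial orbit on a nilmanifold, which is precisely a basic nilsequence. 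The main obstacle is the first step: the polynomial characteristic factor theorem of Host--Kra--Ziegler--Leibman is genuinely deep, leaning on uniformity seminorms, dual functions, and the structure theory of nilpotent Lie groups. Once that structural black box is granted, the remaining ingredients --- approximation within a pronilsystem to pass to a single finite-step nilmanifold, and the unpacking of the definition of a basic nilsequence through Leibman's polynomial equidistribution theorem on nilmanifolds --- are comparatively soft.
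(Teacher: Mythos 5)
This theorem is not proved in the paper at all: it is imported from Leibman \cite{MR2643713} (and, in the linear case, from Bergelson--Host--Kra \cite{MR2122919}), so there is no internal argument to measure your sketch against. Your outline does follow the strategy of those references --- pass to the pronilfactor, approximate by a finite-step nilsystem with continuous functions, recognize the resulting correlation sequence as a basic nilsequence, and take a uniform limit --- but two of your steps contain genuine gaps, and they are exactly the two places where the cited proofs do their real work.

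First, the word ``Consequently'' in your reduction to $Z_d$ conceals the central difficulty. Being characteristic in $L^2$ means that $\bigl\|\frac{1}{N-M}\sum_{M\leq n<N}\bigl(\prod_i T^{p_i(n)}f_i-\prod_i T^{p_i(n)}\E(f_i\mid Z_d)\bigr)\bigr\|_{L^2}\to 0$, which (after integrating against $1$) controls only the \emph{signed} averages $\frac{1}{N-M}\sum_{n}(\varphi(n)-\varphi_Z(n))$. The conclusion you need is $\frac{1}{N-M}\sum_{n}|\varphi(n)-\varphi_Z(n)|\to 0$, and the absolute values are not free: writing $|\varphi(n)-\varphi_Z(n)|=\epsilon_n(\varphi(n)-\varphi_Z(n))$ with $\epsilon_n=\pm1$ forces you to bound the same ergodic averages \emph{weighted by an arbitrary bounded sequence}, which requires the van der Corput/Gowers--Host--Kra seminorm estimates rather than the unweighted characteristic-factor statement. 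This is precisely what separates ``the error has Ces\`aro mean zero'' from ``the error tends to zero in uniform density,'' i.e.\ it is the content of the theorem. Second, the function $\Phi(y_2,\ldots,y_\ell):=\int_X f_1(x)\,f_2(y_2x)\cdots f_\ell(y_\ell x)\,d\mu(x)$ does \emph{not} descend to $(G/\Gamma)^{\ell-1}$: for $\gamma\in\Gamma$ one has $f_i(y_i\gamma x)\neq f_i(y_i x)$ in general, so $\Phi$ lives only on $G^{\ell-1}$ and is not right-$\Gamma^{\ell-1}$-invariant, and ``evaluating it along the projected orbit'' does not parse. (The alternative reading --- integrating over $x$ a family of basic nilsequences with varying base point --- is equally not obviously a nilsequence.) The correct construction works on $G^{\ell}/\Gamma^{\ell}$ with the push-forward of $\mu$ under the diagonal embedding and requires the ``group-theoretic interlude'' of \cite{MR2122919} to exhibit $\varphi(n)$ as a continuous function on a suitable quotient nilmanifold evaluated along a polynomial orbit. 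A further omission: the statement carries no ergodicity hypothesis, whereas pronilfactors are defined system-by-system over the ergodic decomposition, so an additional (measurable-selection) argument is needed there as well.
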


If $(x_n)_{n\geq 1}$ is a sequence of points from a nilmanifold $X=G/\Gamma$
such that
\[
\lim_{N\rightarrow\infty}\frac{1}{N}\sum_{n=1}^N f(x_n) = \int_X f~d\mu_X
\]
for all continuous functions $f\in C(X)$, then we call such a sequence \emph{uniformly distributed}.
If $(x_n)_{n\geq 1}$ has the property that $(x_{an+b})_{n\in\N}$
is uniformly distributed for all $a,b\in\N$, then we call
this sequence \emph{totally equidistributed}.
It is well known that for any nilsystem $(X,T_g)$
the following are equivalent (see, for instance, \cite{AGH63,Parry69}
in the case of
connected $G$ and \cite{MR2122919} in the general case):
\begin{itemize}
\item
The sequence $(T_g^n x)_{n\in\N}$ is totally equidistributed for all $x\in X$;
\item
The system $(X,\mu_X,T_g)$ is totally ergodic.
\end{itemize}

Any nilmanifold has finitely many connected components (and each such component is a sub-nilmanifold). Moreover,
 since any ergodic nilrotation $T_g$ permutes these components in a cyclical fashion,
we deduce that for some $u\in\N$ the nilrotation $T_{g^u}$ fixes
each connected component. The next proposition asserts that in this case
the action of $T_{g^u}$ on each of these connected components is totally ergodic.

\begin{Prop}[see {\cite[Proposition 2.1]{MR2415080}}]
\label{prop_2.1}
Let $X=G/\Gamma$ be a nilmanifold,  $g\in G$
and assume that the nilrotation $T_g$ is ergodic.
Fix $x\in X$ and let $Y$ denote the connected component of
$X$ containing $x$. Then there exists $u\in\N$ such that
$Y$ is $T_{g^u}$-invariant and
$(Y,\mu_Y,T_{g^u})$ is totally ergodic.
\end{Prop}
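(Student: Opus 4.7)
The plan is to combine a purely topological observation about the cyclic permutation of connected components with a spectral observation about ergodic rotations on connected nilmanifolds. I would choose $u$ to be the number of connected components $k$ of $X$. Since a nilmanifold has only finitely many connected components and $T_g$ is a homeomorphism, it permutes them; ergodicity forces this permutation to form a single $k$-cycle, for otherwise the union of the orbit of a proper sub-collection would be a nontrivial $T_g$-invariant set with $\mu_X$-measure strictly between $0$ and $1$. Hence $T_{g^k}$ preserves each component, and an analogous argument shows that $T_{g^k}$ is ergodic on each $(Y,\mu_Y)$: a $T_{g^k}$-invariant subset $A\subset Y$ of intermediate measure would yield a $T_g$-invariant subset $\bigcup_{j=0}^{k-1}T_g^{j}A$ of $X$ of intermediate $\mu_X$-measure.

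The main step is to upgrade ergodicity of $(Y,\mu_Y,T_{g^k})$ to total ergodicity. As recalled in the excerpt, total ergodicity is equivalent to the unitary operator associated with $T_{g^k}$ having no nontrivial root-of-unity eigenvalue, so it suffices to analyze the Kronecker factor of $(Y,T_{g^k})$. I would invoke the classical structure theory of nilsystems (as in the works of Auslander--Green--Hahn and Parry cited in the excerpt) to identify this Kronecker factor with a rotation on a torus $(\T^d,R_\alpha)$, where $\alpha$ satisfies no nontrivial rational linear relation thanks to ergodicity of $T_{g^k}$ on the connected nilmanifold $Y$. Every eigenvalue of $(Y,T_{g^k})$ then takes the form $e^{2\pi i\langle \mathbf{n},\alpha\rangle}$ for some $\mathbf{n}\in\Z^d$, and such a number is a root of unity only if $\langle \mathbf{n},\alpha\rangle\in\Q$, which forces $\mathbf{n}=\mathbf{0}$. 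Consequently the only root-of-unity eigenvalue is $1$, so $T_{g^k}$ is totally ergodic on $Y$, and $u=k$ is the required integer.

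The main obstacle is the identification of the Kronecker factor with a toral rotation, which is precisely where connectedness of $Y$ is essential: one uses that the abelianization of the connected nilpotent Lie group $G_0$ (the identity component of $G$) is a connected compact abelian Lie group, hence a torus. This rests on the classical fact that $[G_0,G_0]$ is closed in $G_0$ for a connected nilpotent Lie group, which together with the discreteness of $\Gamma\cap G_0$ makes $G_0/[G_0,G_0](\Gamma\cap G_0)$ a torus onto which $T_{g^k}$ descends as a rotation. I would appeal to this structural fact rather than reprove it.
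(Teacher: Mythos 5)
The paper itself offers no proof of \cref{prop_2.1}: it is quoted directly from \cite[Proposition 2.1]{MR2415080}, so there is no internal argument to compare yours against. That said, your sketch is essentially the standard proof of this fact and is correct. The first two steps are fine: ergodicity forces the finitely many components to be permuted in a single $k$-cycle, and the lifting argument $A\mapsto\bigcup_{j=0}^{k-1}T_g^jA$ correctly shows that $T_{g^k}$ is ergodic on each component (the union has measure $k\mu_X(A)$, hence is of intermediate measure whenever $A$ is). The heart of the matter is the upgrade to total ergodicity, and your key observation is exactly the right one: on a \emph{connected} torus an ergodic rotation is automatically totally ergodic, since $\langle \mathbf{n},\alpha\rangle=p/q$ with $\mathbf{n}\neq\mathbf{0}$ would give $\langle q\mathbf{n},\alpha\rangle\in\Z$ and destroy ergodicity; combined with the Auslander--Green--Hahn/Parry identification of the Kronecker factor of an ergodic nilsystem on a connected nilmanifold with its maximal torus factor, this kills all nontrivial root-of-unity eigenvalues. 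One technical point deserves to be made explicit rather than absorbed into the appeal to structure theory: after restricting to the component $Y\cong G_0/(G_0\cap\Gamma)$, the map $T_{g^k}$ is in general not a translation by an element of $G_0$, because $g^k$ need only lie in $G_0\Gamma$; writing $g^k=h\gamma$ with $h\in G_0$, $\gamma\in\Gamma$, the restricted map is the \emph{affine} transformation $a\mapsto(g^kag^{-k})h$ of $G_0/(G_0\cap\Gamma)$. This is still covered by the classical theory you cite (Parry's results are stated for affine transformations of nilmanifolds, and such affine nilsystems can be re-realized as genuine nilrotations on a possibly larger connected nilmanifold), so the argument goes through, but as written your reduction to ``an ergodic rotation on a connected nilmanifold'' silently passes over this step.
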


The next lemma is important for the proof of
\cref{thm_poly-multi-rec-along-rat} and asserts that
linear sequences coming from totally ergodic nilrotations
(or equivalently, totally equidistributed sequences)
do not correlate with RAP sequences.

\begin{Lemma}\label{l1}
Suppose $R\subset\N$ is rational and $T_g$
is a totally ergodic nilrotation on a nilmanifold $X=G/\Gamma$.
Then, for all $x\in X$ and $f\in C(X)$, we have
$$
\lim_{N\to\infty}\frac1{N}\sum_{n=1}^N
\1_{R}(n)f(T_g^{n}x)=d(R)\int_Xf\,d\mu_X.
$$
\end{Lemma}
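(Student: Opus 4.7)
The plan is to reduce the statement to the case of periodic weights, where the result follows from total equidistribution, and then to pass from periodic weights to RAP weights via approximation in the Besicovitch pseudo-metric.

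First I would handle the periodic case. Suppose $y\in\{0,1\}^\N$ is periodic with period $p$. Since $T_g$ is a totally ergodic nilrotation, the sequence $(T_g^n x)_{n\in\N}$ is totally equidistributed; equivalently, for every $a\in\N$ and $b\in\{0,1,\ldots,a-1\}$,
\[
\lim_{N\to\infty}\frac1N\sum_{n=1}^N \1_{a\N+b}(n)\, f(T_g^n x)=\frac1a\int_X f\,d\mu_X.
\]
Writing $y=\sum_{b=0}^{p-1} y(b)\,\1_{p\N+b}$ as a finite linear combination and summing, we obtain
\[
\lim_{N\to\infty}\frac1N\sum_{n=1}^N y(n)\, f(T_g^n x)=\overline{y}\int_X f\,d\mu_X,
\]
where $\overline{y}:=\frac1p\sum_{b=0}^{p-1}y(b)=d(\{n\in\N: y(n)=1\})$.

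Next I would pass to the RAP case by approximation. Since $R$ is rational, for any $\varepsilon>0$ there exists a periodic $y\in\{0,1\}^\N$ with $d_B(\1_R,y)<\varepsilon$. Note that $|d(R)-\overline{y}|\leq d_B(\1_R,y)<\varepsilon$. Bounding the tail of the error by sup-norm and using the definition of $d_B$,
\[
\limsup_{N\to\infty}\left|\frac1N\sum_{n=1}^N \bigl(\1_R(n)-y(n)\bigr)f(T_g^n x)\right|\leq \|f\|_\infty\, d_B(\1_R,y)<\|f\|_\infty\,\varepsilon.
\]
Combining this with the periodic case and the estimate on $|d(R)-\overline{y}|$,
\[
\limsup_{N\to\infty}\left|\frac1N\sum_{n=1}^N \1_R(n) f(T_g^n x)-d(R)\int_X f\,d\mu_X\right|\leq 2\|f\|_\infty\,\varepsilon.
\]
Letting $\varepsilon\to 0$ yields the claimed convergence.

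There is no substantive obstacle: the whole argument rests on two ingredients already available, namely the characterization of totally ergodic nilrotations via total equidistribution (which supplies the arithmetic progression case) and the definition of rationality as Besicovitch approximability by periodic sequences (which supplies the approximation step). The only mild care required is to note that $d(R)$ exists and coincides with the limit of the averages $\overline{y}$ for the approximating periodic sequences, which is immediate from $|d(R)-\overline{y}|\leq d_B(\1_R,y)$.
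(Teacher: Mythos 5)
Your proof is correct and follows essentially the same route as the paper: the paper likewise derives the arithmetic-progression case from total equidistribution, extends linearly to finite unions of progressions (equivalently, periodic $\{0,1\}$-sequences), and then passes to $R$ by $d_B$-approximation, with the same $\|f\|_\infty\,d_B(\1_R,y)$ error bound. The only difference is cosmetic (periodic sequences versus finite unions of arithmetic progressions, which coincide for $\{0,1\}$-valued weights), and your explicit remark that $|d(R)-\overline{y}|\leq d_B(\1_R,y)$ is a welcome clarification the paper leaves implicit.
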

\begin{proof}
Since $T_g$ is totally ergodic, we deduce that the sequence
$(T_g^n x)_{n\in\N}$ is totally equidistributed for each $x\in X$.
Therefore,  for all $a\in\N$ and $b\in\N\cup\{0\}$, we obtain
\begin{equation*}\label{rr:3-1}
\lim_{N\to\infty}\frac1{N}\sum_{n=1}^N
f(T_g^{an+b}x)=\int_Xf\,d\mu_X.
\end{equation*}
This can be rewritten as
\begin{equation}\label{rr:3-2}
\lim_{N\to\infty}\frac1{N}\sum_{n=1}^N
\1_{a\N+b}(n)f(T_g^{n}x)=\frac{1}{a}\int_Xf\,d\mu_X.
\end{equation}
If $E\subset \N$ is a finite union of infinite arithmetic
progressions then $\1_E$ can be written as a finite linear combination of functions of the form $\1_{a\N+b}$. It now follows directly from \eqref{rr:3-2} that for any such set $E$, one has
\begin{equation}\label{rr:3-3}
\lim_{N\to\infty}\frac1{N}\sum_{n=1}^N
\1_E(n)f(T_g^{n}x)=d(E)\int_Xf\,d\mu_X.
\end{equation}
Finally, since $R$ is rational, it can be approximated in the $d_B$
pseudo-metric by finite unions of infinite arithmetic progressions and so,
using \eqref{rr:3-3}, we obtain
$$
\lim_{N\to\infty}\frac1{N}\sum_{n=1}^N
\1_{R}(n)f(T_g^{n}x)=d(R)\int_Xf\,d\mu_X.
$$
\end{proof}

\begin{proof}[Proof of \cref{p5new}]
Let $\xbmt$ be an invertible measure preserving system and assume that $R\subset\N$ is rational and divisible.
Take any $A\in\cb$ with $\mu(A)>0$ and let $p_1,\ldots,p_\ell\in\Q[t]$ with $p_i(\Z)\subset\Z$, $p_i(0)=0$, $i=1,\ldots,\ell$, be arbitrary.
We will show that
\begin{equation}\label{rr20}
\lim_{N\to\infty}\frac{1}{N}\sum_{n=1}^N
\1_R(n)\varphi(n)>0,
\end{equation}
where $\varphi(n)=\mu\big(A\cap T^{-p_1(n)}A\cap\ldots\cap T^{-p_\ell(n)}A\big)$. This, in view of \eqref{eqn:ac-2.2}, suffices to conclude that $R$ is an averaging set of polynomial multiple recurrence.
The existence of the limit in \eqref{rr20}
follows immediately from \cref{prop_conv-of-erg-ave}, hence it
only remains to show its positivity.

By \cref{Cor-uniformity}, there exists $\delta>0$ such that
\begin{equation}\label{rr20-1}
\lim_{N\to\infty}\frac{1}{N}\sum_{n=1}^N \varphi(un)>\delta
\text{ for all }u\in\N.
\end{equation}
Using Theorem \ref{thm_4.1}, we can find a nilsequence $(\psi(n))$ such that \eqref{rr20} holds if and only if
\begin{equation}\label{rr21}
\lim_{N\to\infty}\frac{1}{N}\sum_{n=1}^N \1_R(n)\psi(n)>0.
\end{equation}
Moreover, since $d_B(\varphi,\psi)=0$, it follows from \eqref{rr20-1} that
\begin{equation}\label{rr20-2}
\lim_{N\to\infty}\frac{1}{N}\sum_{n=1}^N\psi(un)>\delta \text{ for all }u\in\N.
\end{equation}

By definition, every nilsequence can be uniformly approximated by basic nilsequences. For us this means that there exist
a nilpotent Lie group $G$, a uniform and discrete subgroup $\Gamma$, $x\in X=G/\Gamma$ and $f\in C(X)$ such that $\vert \psi(n)- f(T_g^n x)\vert \leq \delta/4 $ for all $n\in\N$. We can assume without loss
of generality that $T_g$ is ergodic and, since $\varphi(n)\in[0,1]$ and $d_B(\varphi,\psi)=0$, that $0\leq f \leq 1$.
It follows from \eqref{rr20-2} and the inequalities $\vert \psi(n)- f(T_g^n x)\vert \leq \delta/4$, $n\in\N$, that
\begin{equation}\label{rr20-3}
\lim_{N\to\infty}\frac{1}{N}\sum_{n=1}^N f(T_g^{un} x)>\frac{3\delta}{4},\quad\text{for all $u\in\N$.}
\end{equation}

Using \cref{prop_2.1}, we can find $u\in\N$ and a sub-nilmanifold $Y\subset X$ containing $x$
such that $(Y,\mu_Y,T_{g^u})$ is totally ergodic.
In the following, we identify $f$ with $f|_Y$.
Since $R$ is rational, it is straightforward  that
the set $R/u:=\{n\in\N: nu\in R\}$ is also rational. Thus, we can invoke
\cref{l1} and obtain
\begin{equation}\label{rr21-1}
\lim_{N\to\infty}\frac1{N}\sum_{n=1}^N
\1_{R/u}(n)f(T_{g^u}^{n}x)=d(R/u)\int_Y f\,d\mu_Y.
\end{equation}

Finally, combining \eqref{rr21-1} with \eqref{rr20-3} (together with the ergodic theorem) and the fact that $\vert \psi(un)- f(T_{g^u}^n x)\vert \leq \delta/4$ for all $n\in\N$, we obtain
\begin{eqnarray*}
\lim_{N\to\infty}\frac1{N}\sum_{n=1}^N \1_{R}(n)\psi(n)
&\geq&
\lim_{N\to\infty}\frac1{N}\sum_{n=1}^N\1_{R\cap u\N}(n)\psi(n)
\\
&=&
\frac{1}{u}\left(
\lim_{N\to\infty}\frac1{N}\sum_{n=1}^N \1_{R/u}(n)\psi(un)\right)
\\
&\geq&
\frac{1}{u}\left(
\lim_{N\to\infty}\frac1{N}\sum_{n=1}^N \1_{R/u}(n) f(T_{g^u}^{n} x)-
\frac{\delta }{4}d(R/u)\right)
\\
&\geq&  \frac{1}{u}\left(\frac{3\delta }{4}d(R/u) - \frac{\delta }{4}d(R/u)\right)~>~0.
\end{eqnarray*}
This completes the proof.
\end{proof}

We would like to pose the following question describing
one possible way of extending \cref{thm_poly-multi-rec-along-rat}
to a more general version involving several commuting measure preserving transformations.

\begin{Question}
Assume $R\subset \N$ is rational and $d(R)>0$.
Are the following equivalent?
\begin{description}%[(r)~~]
\item[$(\alpha)$~]\label{itm:pmrar-a-Q}
$R$ is divisible.
\item[$(\beta)$~]\label{itm:pmrar-b-Q}
For all probability spaces $(X,\mathcal{B},\mu)$, all $\ell\in\N$, all $\ell$-tuples of
commuting invertible measure preserving transformations $T_1,\ldots,T_\ell $ on $(X,\mathcal{B},\mu)$, all $A\in\mathcal{B}$ with $\mu(A)>0$
and for all polynomials $p_i\in\Q[t]$, $i=1,\ldots,\ell$,
with $p_i(\Z)\subset\Z$ and $p_i(0)=0$, one has
$$
\lim_{N\to\infty}\frac{1}{N}\sum_{n=1}^N
\1_R(n)\mu\Big(A\cap T_1^{-p_1(n)}A\cap\ldots\cap T_\ell^{-p_\ell(n)}A\Big)
>0.
$$
\end{description}
\end{Question}

\subsection{Inner regular sets, W-rational sets and $\sB$-free numbers}
\label{seq_applications-B-free}

The set $\sqfree$ of squarefree numbers
is rational (see \cref{c_rat-0} below) but it is not divisible, as
$\sqfree\cap p^2\N=\emptyset$ for all primes $p$.
In particular,  $\sqfree$ is not a set of recurrence.
However, as it was mentioned in \cref{sec_intro}, it follows from results obtained in \cite{MR1954690} that $\sqfree-r$ is divisible (and hence -- by virtue of \cref{thm_poly-multi-rec-along-rat} --
an averaging set of polynomial multiple recurrence) if and only if $r\in \sqfree$.
%as a matter of fact, one can show that $\sqfree-r$ is divisible if and only if$r\in \sqfree$.

This raises the question whether every rational set can be shifted to become
divisible. In general, the answer to this question is negative.
For example, one can show that for a carefully chosen increasing sequence $a_0,a_1,a_2,\ldots\in\N$, the set $S=\N\setminus\bigcup_{n\geq 0}(a_n\N+ n)$ is rational. On the other hand, for any integer $n\geq 0$ one has $(S-n)\cap a_n\N=\emptyset$ (cf. \cite[Theorem 11.6]{MR564927} and \cite[Theorem 2.20]{MR2259058}).

%For instance, consider the set $S:=\N\setminus\bigcup_{n\geq 0}(a_n\N+ b_n)$.
%It is shown in  \cite[Theorem 11.6]{MR564927} and also in \cite[Theorem 2.20]{MR2259058} that for every $\epsilon>0$ there exists a suitable choice of numbers $a_0,b_0,a_1,b_1,a_2,b_2,\ldots\in\N$ such that the density $d(S)$ exists and satisfies $d(S)>1-\epsilon$ and for all $r\in \N$ there exists $u\in\N$ such that $(S-r)\cap uN=\emptyset$; in particular, no shift of $S$ is divisible. This example was initially provided by E. Strauss (as an answer ot a question posed by Erd{\H o}s \cite[p.105]{PErdosAsurveyofproblemsincombinatorialnumbertheory}) and is therefore often referred to as Strauss' example. It follows from the method of the proof given in \cite{MR564927,MR2259058} that any such set $S$ is rational.
%Clearly, for every $r\in\N\cup\{0\}$ the shift $(S-r)$ has empty intersection with $w_r\N$. In other words, no shift of $S$ is divisible. 

We will introduce now a rather natural family of rational sets with the property that for any set in this family there is a shift that is divisible.
\begin{Def}\label{def:wrap-0-1}
We define the \emph{Weyl pseudo-metric} $d_W$ on $\{0,1\}^{\N}$ as
\begin{equation}
\label{eq:d_W-1}
d_W(x,y)=\limsup_{N\to\infty}\sup_{\ell\geq1}\frac1N\left|\{\ell\leq n\leq \ell+N: x(n)\neq y(n)\}\right|.
\end{equation}
A set $R\subset\N$ is called \emph{W-rational} if $\1_R\in\{0,1\}^{\N}$ can be approximated by periodic sequences in the $d_W$ pseudo-metric.
\end{Def}

Note that every W-rational set is a rational set.

In Subsection \ref{sec:drs.e} below we will extend the definition of the $d_W$ pseudo-metric from $\{0,1\}^{\N}$ to $\ca^{\N}$ for arbitrary finite subsets $\ca$ and we also introduce the related notion of Weyl rationally almost periodic sequences (see page \pageref{WRAP}).

%Above, we observed that there are rational sets such that no shift of it is divisible. However, for a wide variety of rational sets we know how to characterize potential shifts for which the set becomes divisible. We have the following result about Weyl rational sets.
\begin{Prop}\label{p:olg}
Suppose $D\subset \N$ is W-rational and $d(D)>0$. There exists a shift of $D$ which is divisible.
\end{Prop}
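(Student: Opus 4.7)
The plan is to find $r\in\N$ lying in a ``good'' residue class modulo $u$ for every $u$ simultaneously. Write $\delta:=d(D)>0$. Since $D$ is W-rational it is in particular rational, so $d(D\cap(u\N+b))$ exists for every $u,b$, and I set
\[
G_u:=\{b\in\{0,\ldots,u-1\}:d(D\cap(u\N+b))>0\},\qquad F_u:=\bigcup_{b\notin G_u}(u\N+b).
\]
Summing $\sum_{b=0}^{u-1}d(D\cap(u\N+b))=\delta$ and bounding each summand by $1/u$ gives $|G_u|/u\geq\delta$. A shift $r$ makes $D-r$ divisible precisely when $r$ lies in
\[
D^{*}:=\{r\in\N:r\bmod u\in G_u\text{ for every }u\in\N\},
\]
because $d((D-r)\cap u\N)=d(D\cap(u\N+r))$, and this density exists since $D-r$ is itself W-rational. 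It thus suffices to show $D^{*}\neq\emptyset$; in fact I will obtain $\underline{d}(D\cap D^{*})>0$.

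Using W-rationality, I would pick periodic $B_k\subset\N$ of periods $q_k$ satisfying $q_1\mid q_2\mid\cdots$, every $u\in\N$ dividing $q_k$ for all sufficiently large $k$, and with $\epsilon_k:=d_W(\mathbf{1}_D,\mathbf{1}_{B_k})$ rapidly summable, say $\sum_k\epsilon_k<\delta/4$. Prune each $B_k$ to $B_k':=B_k\setminus F_{q_k}$. Since $B_k\cap F_{q_k}\subset(D\cap F_{q_k})\cup(B_k\triangle D)$, with $d(D\cap F_{q_k})=0$ (a finite union of zero-density APs) and $d(B_k\triangle D)\leq\epsilon_k$, the pruning costs at most $\epsilon_k$ in density, so $d_W(\mathbf{1}_D,\mathbf{1}_{B_k'})\leq 2\epsilon_k$. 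By construction $B_k'\cap F_{q_k}=\emptyset$; and since bad residues lift under divisibility (if $b\notin G_u$ and $u\mid u'$, then every $b'\equiv b\pmod u$ satisfies $b'\notin G_{u'}$), one has $F_u\subset F_{q_k}$ whenever $u\mid q_k$, so $B_k'\cap F_u=\emptyset$ as well.

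The key set-theoretic observation is that $D\setminus D^{*}\subset\liminf_k(D\triangle B_k')$: given $r\in D\setminus D^{*}$, pick $u$ with $r\bmod u\notin G_u$; then for every $k$ with $u\mid q_k$, $r\in F_u\subset F_{q_k}$, so $r\notin B_k'$, whence $r\in D\setminus B_k'\subset D\triangle B_k'$. The main obstacle is then to bound $\overline{d}(\liminf_k(D\triangle B_k'))$ by the summable series $2\sum_k\epsilon_k$. Naive $\sigma$-subadditivity of upper density fails in general (witness $A_k=\{1,\ldots,k\}$, for which $d(A_k)=0$ but $\overline{d}(\bigcup_k A_k)=1$), and this is where the Weyl pseudo-metric, as opposed to the Besicovitch one, is essential: it furnishes shift-uniform bounds $|(D\triangle B_k')\cap[\ell+1,\ell+N]|/N<2\epsilon_k+o_k(1)$ valid for every $\ell\geq 1$, while each tail $\bigcap_{k\geq K}(D\triangle B_k')$ is contained in $D\triangle B_K'$ of density $\leq 2\epsilon_K\to 0$. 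Combining the uniform-in-$\ell$ estimate for the head with the decaying tail-intersection bound yields $\overline{d}(D\setminus D^{*})\leq 2\sum_k\epsilon_k<\delta/2$, whence $\underline{d}(D\cap D^{*})\geq\delta-\delta/2>0$, and any $r\in D\cap D^{*}$ provides the required divisible shift $D-r$.
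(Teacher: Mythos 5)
Your reduction to showing $D^{*}\neq\emptyset$, the pruning of the periodic approximants to $B_k'=B_k\setminus F_{q_k}$, and the lifting of bad residue classes along $u\mid q_k$ are all sound (modulo the harmless convention that $F_u$ should be the full union of residue classes, not just $u\N+b$). The gap is in the last step, and it sits exactly where the real content of the proposition lies: the estimate $\overline{d}\bigl(\liminf_k(D\triangle B_k')\bigr)\leq 2\sum_k\epsilon_k$ does not follow from the two ingredients you cite. The general principle you are invoking --- that shift-uniform window bounds $\sup_{\ell}\frac1N|A_k\cap[\ell,\ell+N)|\leq\alpha_k+o_k(1)$ with $\sum_k\alpha_k$ small force $\overline{d}(\liminf_k A_k)\leq\sum_k\alpha_k$ --- is false. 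Take $A_k=[1,m_k]$ with $m_k\nearrow\infty$: each $A_k$ is finite, so its Weyl upper density is $0$, yet $\liminf_k A_k=\N$. The problem is the $\limsup_{N\to\infty}$ hidden in $d_W$: the window length $N_k$ beyond which the bound $2\epsilon_k$ kicks in depends on $k$, and when you count $\liminf_k(D\triangle B_k')$ inside a fixed interval $[1,N]$ you are forced to use indices $k$ that grow with $N$, precisely the regime where you have no control. Note also that your tail bound gives nothing extra: since $T_K:=\bigcap_{k\geq K}(D\triangle B_k')\subset D\triangle B_j'$ for \emph{every} $j\geq K$, each $T_K$ already has upper density $0$, so the whole difficulty is that an increasing union of null sets can have full upper density --- and neither the head estimate nor the tail estimate, as stated, rules this out. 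The same obstruction defeats the natural repair via $T_{K+1}\setminus T_K\subset B_K'\triangle B_{K+1}'$: a countable union of periodic sets whose densities sum to less than $\eta$ can still cover all of $\N$ (take the $K$-th set to be the residue class of $K$ modulo a huge $q_K$), which is exactly the paper's example of a rational set $\N\setminus\bigcup_n(a_n\N+n)$ with no divisible shift. So W-rationality must be used more delicately than through a summable-error bookkeeping.

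For comparison, the paper's proof avoids all density bookkeeping: assuming no shift is divisible, it produces for each $n$ a modulus $w_n$ with $d(\{s:n+sw_n\in D\})=0$, intersects finitely many of these density-one complement conditions to find, for every $K$, a block of $K$ consecutive integers missing $D$, and then observes that arbitrarily long gaps force any $d_W$-periodic approximant of $\1_D$ to be the zero sequence, contradicting $d(D)>0$. That is where the $\sup_\ell$ in the Weyl metric does its work: a single long gap is seen by \emph{some} window at \emph{some} position $\ell$, uniformly over the period of the approximant. If you want to salvage your approach, you need an argument of this flavor --- e.g., showing that $D\subset\bigcup_k F_{q_k}$ forces, in suitable windows, a local defect incompatible with the uniform lower bound $|D\cap[\ell,\ell+N)|\geq(\delta-4\epsilon)N$ that W-rationality provides --- rather than a countable-subadditivity estimate.
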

\begin{proof} Assume that no translation of $D$ is divisible. Hence, for each $n\geq0$ there exists $w_n\geq1$ such that if $C_n:=\{s\in \N : 
n+sw_n\in D^c\}$ then
\begin{equation}\label{olg1}
d(C_n)=1.\end{equation}
Fix $K\geq1$. Then by~\eqref{olg1}, also
$$
d\left(\{s\geq 0 : n+ sw_1\cdot\ldots\cdot w_K\in D^c\text{ for each }n=0,1,\ldots, K-1\}\right)=1.$$
It follows that for every $K\geq 1$ there exists $s\geq1$ such that $\1_D(n+sw_1\cdot\ldots\cdot w_K)=0$ for each $n=0,1\ldots,K-1$.
In other words, in the sequence $\1_D$ there appear arbitrarily long blocks of consecutive zeros. This implies that the only periodic sequence that approximates $\1_D$ in the $d_W$ pseudo-metric is the sequence $(0,0,0,\ldots)$, which contradicts $d(D)>0$.
\end{proof}

In order to give more examples of rational sets that possess shifts that are divisible,
%with the property that every self-shift is divisible,
we will now recall the notion of inner regular sets (see \cite[Definition 2.3]{MR1954690}). A subset $R\subset \N$ is called {\em inner regular} if for each $\vep>0$ there exists $m\geq 1$ such that for each $a\in\N\cup\{0\}$ the intersection $R\cap(m\Z+a)$ is either empty or has lower density $>(1-\vep)/m$.
It follows immediately that every inner regular set is rational. Also, it is shown in \cite[Lemma 2.7]{MR1954690}
that the set of squarefree numbers $\sqfree$ is inner regular.

\begin{Prop}\label{joma}Assume that $\emptyset\neq R\subset \N$ is inner regular. Then, for each $r\in R$, the set $R-r$ is divisible.
\end{Prop}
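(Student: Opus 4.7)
The plan is to show that for each $u\in\N$, $(R-r)\cap u\N$ has positive lower density. Translating indices by $r$, this amounts to proving $d(R\cap(u\Z+r))>0$; equivalently, since inner regularity is evidently shift-invariant, we may pass to $R':=R-r$ (still inner regular, and now containing $0$) and show $d(R'\cap u\Z)>0$.

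The core of the plan is to invoke the inner regularity of $R'$ at a modulus $m$ divisible by $u$, for some $\vep<1$. Granting this, since $0\in R'\cap m\Z$, the residue class $R'\cap m\Z$ is nonempty and therefore has lower density $>(1-\vep)/m>0$ by inner regularity. Since $u\mid m$ gives $m\Z\subset u\Z$, we conclude
\[
d(R'\cap u\Z)\;\ge\;d(R'\cap m\Z)\;>\;(1-\vep)/m\;>\;0,
\]
which finishes the argument.

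The main obstacle is the auxiliary claim that the modulus from the inner regularity definition can always be arranged divisible by the prescribed $u$. My approach would be to start from some modulus $m_0$ furnished by inner regularity for a parameter $\vep_0<1$, and then pass to $m:=\operatorname{lcm}(m_0,u)$, showing that inner regularity persists at $m$ with some $\vep<1$. Each class $m\Z+a$ is contained in the superclass $m_0\Z+(a\bmod m_0)$; if the superclass is empty so is the class, while otherwise the density $>(1-\vep_0)/m_0$ of the superclass is distributed among $k=m/m_0$ subclasses mod $m$, and the delicate step is to rule out subclasses that are nonempty but of density zero. For the naturally-occurring examples such as $\sqfree$ this refinement is transparent: the ``good'' moduli for inner regularity are of the form $\prod_{p\le P}p^2$, and can be made divisible by $u$ simply by enlarging $P$. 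For a general inner regular set one must propagate the uniform positive-density lower bound from the working modulus $m_0$ to the refined modulus $\operatorname{lcm}(m_0,u)$, which is the main technical content of the proof.
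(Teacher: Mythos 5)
Your plan reduces everything to the auxiliary claim that, for any prescribed $u$, the modulus furnished by inner regularity can be taken divisible by $u$ (with some $\vep<1$), and you yourself flag the verification of this claim -- ruling out nonempty subclasses of $\operatorname{lcm}(m_0,u)\Z+a$ of density zero -- as ``the main technical content of the proof.'' That step is never carried out, and the sketch you give for it (distributing the density of a superclass among its $k$ subclasses) does not contain the idea needed to finish: nothing in the inner regularity hypothesis at the single modulus $m_0$ with a single parameter $\vep_0$ prevents a nonempty subclass from being sparse. So as written the proposal has a genuine gap, and the gap sits exactly where the real work is.

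The missing idea is to exploit the freedom in $\vep$ \emph{quantitatively}: choose $\vep<1/u$ \emph{before} asking inner regularity for a modulus $m$. Then, since $r\in R$, the class $R\cap(m\Z+r)$ is nonempty and has lower density $>(1-\vep)/m$; rescaling by $m$, the set $S:=\{k\in\N: mk\in R-r\}$ has lower density $>1-\vep>1-1/u$, and hence $\underline{d}(S\cap u\N)\geq \underline{d}(S)+d(u\N)-1>1/u-\vep>0$. Mapping back via $k\mapsto mk$ gives $\underline{d}\big((R-r)\cap u\N\big)>0$, with no need to manufacture a modulus divisible by $u$ at all. (This same device, applied with $\vep<1/u$ and the elementary bound $\underline{d}(A\cap B)\geq\underline{d}(A)+d(B)-1$, is also what would prove your auxiliary claim if you insisted on your route -- which shows the refinement step is not a side issue but is equivalent to the whole problem.) One further small point: divisibility asks that $d\big((R-r)\cap u\N\big)$ \emph{exist} and be positive; positivity of the lower density suffices only because inner regular sets (and hence their shifts and their intersections with $u\N$) are rational, so the density exists automatically, as remarked after the definition of divisibility. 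It is worth saying this explicitly.
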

\begin{proof}
%First, we observe that any shift of an inner regular set is also inner regular. Therefore, instead of proving that for every inner regular set $R$ and any $r\in R$ the set $R-r$ is divisible, it suffices to prove that any inner regular set $R$ containing $0$ is divisible.
Suppose $u\geq 1$ is arbitrary.
Fix $\vep>0$ with $\vep<1/u$. We can find $m\geq 1$ so that for every $a\in\N\cup\{0\}$ the intersection
$R\cap (m\Z+a)$ is either empty or has lower density greater than $(1-\vep)/m$.
Since $r\in R$, the intersection $R\cap (m\Z+r)$ is not empty.
This means that the set $\{k\in\N:mk+r\in R\}=\{k\in\N:mk\in R-r\}$ has lower density greater than $1-\vep$.
Since $\vep<1/u$, the set $\{k\in\N:mk\in R-r\}\cap u\N$ has positive lower density. This means the set $(R-r)\cap um\N$ has positive lower density and the assertion follows.\end{proof}

We move the discussion now to sets of $\sB$-free numbers. The purpose of the remainder of this section is to prove a general form of \cref{th:b-free-sumset-combi} formulated in \cref{sec_intro}.
%We will show that a large class of sets of $\sB$-free numbers possesses the property of having a divisible shift.
%\textcolor{blue}{A family of rational sets that behave similarly to the set of squarefree numbers are sets of $\sB$-free numbers:}

Given $\sB\subset \N\setminus\{1\}$, we consider its \emph{set of multiples}
$\cm_\sB:=\bigcup_{b\in\sB}b\N$ and the corresponding set of \emph{$\sB$-free numbers}
$\cf_\sB:=\N\setminus \cm_\sB$, i.e., the set of integers without a divisor in $\sB$. Without loss of generality, we can assume
that $\sB$ is {\em primitive}, that is, no $b$ divides $b'$ for distinct $b,b'\in\sB$. Indeed,
for a general set $\sB$ one can find a primitive subset
$\sB_0\subset\sB$ such that $\cm_\sB=\cm_{\sB_0}$ and
$\cf_\sB=\cf_{\sB_0}$ (cf.\ \cite[Chapter 0]{MR1414678}).
%We will also tacitly assume that $\sB$ is infinite.
Note that if we take $\sB=\{p^2: p~\text{is prime}\}$, then the set of
$\sB$-free numbers equals the set $\sqfree$ of squarefree numbers.

Sets of $\sB$-free numbers make good candidates for rational sets.
Unfortunately, not every set of $\sB$-free numbers is a rational set,
since the density $d(\cf_\sB)$ of $\cf_\sB$
need not exist.
An example of a set $\sB$
for which the density of $\cm_\sB$ and $\cf_\sB$ does not exist was given by Besicovitch in \cite{MR1512943}.
This leads to the following definition.
\begin{Def}[cf. \cite{MR1414678}]
\label{def:bes}
 We say that $\sB$ is \emph{Besicovitch} if $d(\cm_{\sB})$ exists. (This is equivalent to the existence of $d(\cf_\sB)$.)
\end{Def}
%Each set $\mathscr{B}\subset\N\setminus\{1\}$ for which $\sum_{b\in\mathscr{B}}\1/b$ is finite is Besicovitch.
Davenport and Erd{\H o}s proved that the \emph{logarithmic density}
$$\bdelta(\cm_\sB):=\lim_{N\to\infty}\frac{1}{\log N}\sum_{n=1}^N \frac{1}{n}\1_{\cm_\sB}(n)$$
exists for all $\sB=\{b_1,b_2,\ldots\}\subset \N\setminus\{1\}$. This, of course, implies that
the logarithmic density $\bdelta(\cf_\sB)$ exists.
%We recall below their result in its full generality which we will need later.
For $m\geq 1$, consider the sets $\mathscr B(m)=\{b_1,b_2,\ldots,b_m\}$ and let $\cm_{\sB(m)}$ and
$\cf_{\mathscr B(m)}$ denote the corresponding set of multiples of $\mathscr{B}(m)$ and set of $\mathscr{B}(m)$-free numbers respectively.

\begin{Th}[see \cite{Davenport1936,MR0043835}]\label{daer}
For each $\sB\subset \N\setminus\{1\}$,  the logarithmic density $\bdelta(\cm_\sB)$ of $\cm_\mathscr{B}$ exists. Moreover,
$$
\bdelta(\cm_\sB)=\underline{d}(\cm_\sB)=\lim_{m\to\infty}d(\cm_{\sB(m)}).
$$
In particular, if $\sB$ is Besicovitch then $d(\cm_{\sB})=\lim_{m\to\infty}d(\cm_{\mathscr B(m)})$.
Analogous results hold for $\cf_\sB$ instead of $\cm_\sB$.
\end{Th}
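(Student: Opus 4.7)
The plan is to establish this classical Davenport--Erd\H{o}s result in four stages. First I would treat the finite-truncation case: each $\cm_{\sB(m)}$ is periodic with period $H_m=\lcm(b_1,\ldots,b_m)$, so its natural density $d_m:=d(\cm_{\sB(m)})$ exists and can in principle be computed by inclusion-exclusion. Since $\sB(m)\subset\sB(m+1)$ gives $\cm_{\sB(m)}\subset\cm_{\sB(m+1)}$, the sequence $(d_m)$ is nondecreasing and bounded by $1$; set $D:=\lim_{m\to\infty}d_m$.

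Second, I would dispatch the easy lower bounds. Because $\cm_{\sB(m)}\subset\cm_\sB$, the inequality $d_m=\underline{d}(\cm_{\sB(m)})\leq\underline{d}(\cm_\sB)$ holds for every $m$, so $D\leq\underline{d}(\cm_\sB)$. Combined with the general inequality $\underline{d}(A)\leq\underline{\bdelta}(A)\leq\overline{\bdelta}(A)$ (valid for any $A\subset\N$, by Abel summation applied to $\sum_{k\leq N}\1_A(k)/k$), this yields $D\leq\underline{\bdelta}(\cm_\sB)$.

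The heart of the proof is the matching upper bound $\overline{\bdelta}(\cm_\sB)\leq D$. My approach is to decompose $\cm_\sB$ disjointly by assigning each $k\in\cm_\sB$ to the smallest $b_j\in\sB$ dividing it:
\[
\cm_\sB=\bigsqcup_{j\geq 1}A_j,\qquad A_j:=b_j\N\cap\cf_{\sB(j-1)}.
\]
Each $A_j$ is a periodic set with some density $\gamma_j$, and partial sums telescope to $d_M=\sum_{j\leq M}\gamma_j$, so $\sum_{j}\gamma_j=D$. Fixing $\epsilon>0$ and choosing $m$ with $D-d_m<\epsilon$, I would split
\[
\sum_{k\leq N,\,k\in\cm_\sB}\frac{1}{k}=\sum_{k\leq N,\,k\in\cm_{\sB(m)}}\frac{1}{k}+\sum_{j>m}\sum_{k\leq N,\,k\in A_j}\frac{1}{k}.
\]
The first sum equals $d_m\log N+O_m(1)$ by periodicity. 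For the second, using $A_j\subset b_j\N$ one has $\sum_{k\leq N,\,k\in A_j}1/k\leq(\log N-\log b_j)/b_j+O(1/b_j)$ for $b_j\leq N$, while the stronger periodic-asymptotic bound $\gamma_j\log N+O(\log H_j)$ kicks in once $N$ dominates the period. Interpolating these two estimates---using the tight bound for $b_j\leq\sqrt{N}$ (say) and the crude one thereafter---and exploiting the critical convergence $\sum_{j>m}\gamma_j=D-d_m<\epsilon$, the tail contributes at most $(\epsilon+o_N(1))\log N$. Dividing by $\log N$, taking $\limsup$, and sending $\epsilon\to 0$ gives $\overline{\bdelta}(\cm_\sB)\leq D$.

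Combining the three estimates yields $\bdelta(\cm_\sB)=\underline{d}(\cm_\sB)=\lim_{m\to\infty}d(\cm_{\sB(m)})=D$. The ``in particular'' clause is then immediate: if $d(\cm_\sB)$ exists then $\overline{d}(\cm_\sB)=\underline{d}(\cm_\sB)=D$. The analogous statements for $\cf_\sB$ follow by passing to complements, using $\bdelta(\cf_\sB)=1-\bdelta(\cm_\sB)$ and likewise for upper/lower densities. The main obstacle is precisely the technical interpolation in the tail estimate, since $\sum_{j}1/b_j$ may well diverge---hence one cannot bound the tail term by term using $b_j\N$ alone, and the sparsity of $A_j$ inside $b_j\N$ (encoded by $\gamma_j b_j\leq 1$) must be genuinely exploited.
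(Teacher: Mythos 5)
First, note that the paper does not prove this statement at all: it is quoted verbatim from Davenport (1936) and Davenport--Erd\H{o}s (1951), so there is no internal proof to compare against, and your proposal has to stand on its own as a reconstruction of the classical argument. Your stages one and two are fine: the monotonicity of $d_m:=d(\cm_{\sB(m)})$, the chain $D:=\lim_m d_m\leq\underline{d}(\cm_\sB)\leq\underline{\bdelta}(\cm_\sB)$, the reduction of the whole theorem to the single inequality $\overline{\bdelta}(\cm_\sB)\leq D$, and the passage to complements for $\cf_\sB$ are all correct and standard.

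The problem is that your third stage --- the only nontrivial one --- does not close. The proposed interpolation at $b_j\leq\sqrt{N}$ fails in both regimes. In the ``tight'' regime you invoke $\sum_{k\leq N,\,k\in A_j}1/k=\gamma_j\log N+O(\log H_j)$ with $H_j=\lcm(b_1,\ldots,b_j)$; but the number of indices $j$ with $b_j\leq\sqrt{N}$ grows with $N$, and the accumulated error $\sum_{j:\,b_j\leq\sqrt{N}}\log H_j$ can dwarf $\log N$ (already for $\sB$ containing the primes one has $\log H_j\asymp b_j$, so this sum is of order $N/\log N$). In the ``crude'' regime you need $\sum_{\sqrt{N}<b_j\leq N}1/b_j$ to be small, which fails exactly when $\sum_j 1/b_j$ diverges --- for the primes this sum is $\log 2+o(1)$, contributing a fixed positive multiple of $\log N$. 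Nor can the gap be patched by soft means: it is simply false that a disjoint union of periodic sets $A_j$ with $\sum_j d(A_j)<\epsilon$ has upper logarithmic density $O(\epsilon)$ (finitely many progressions of huge modulus can cover an initial segment $[1,X]$ entirely while $\sum_j 1/H_j\leq 1/X$), so the specific arithmetic structure $A_j=b_j\N\cap\cf_{\sB(j-1)}$ must enter in an essential way. You acknowledge this yourself in your closing sentence, but acknowledging the obstacle is not the same as overcoming it: the bound $\overline{\bdelta}\big(\bigsqcup_{j>m}A_j\big)\leq D-d_m$ is precisely the content of the Davenport--Erd\H{o}s theorem (classically obtained via a Tauberian argument applied to $\sum_{n\in\cm_\sB}n^{-\sigma}$ as $\sigma\to 1^+$, or via Erd\H{o}s's later elementary argument exploiting that $\cm_\sB$ is closed under multiplication), and it is missing from your write-up.
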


From \cref{daer}, we obtain two useful corollaries.
\begin{Cor}\label{c_rat-0}
Let $\sB\subset\N\setminus\{1\}$. Then $\cm_\sB$ and $\cf_\sB$ are rational if and only if $\sB$ is Besicovitch.
\end{Cor}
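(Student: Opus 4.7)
The plan is to prove the two implications separately, with the harder direction being ``$\sB$ Besicovitch $\Rightarrow$ $\cm_\sB$ rational,'' for which Theorem \ref{daer} does essentially all of the work.

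For the ``only if'' direction, I would argue as follows. If $\cm_\sB$ is rational, then by the automatic-density property of rational sets (mentioned just after \cref{def_divisibility-property}, formally given by \cref{l:asympdenswords}), the density $d(\cm_\sB)$ exists, which is precisely the Besicovitch condition. The same observation works for $\cf_\sB = \N\setminus\cm_\sB$, since $\1_{\cf_\sB} = 1 - \1_{\cm_\sB}$ is RAP if and only if $\1_{\cm_\sB}$ is.

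For the ``if'' direction, suppose $\sB$ is Besicovitch. Enumerate $\sB = \{b_1, b_2, \ldots\}$ (if $\sB$ is finite the claim is trivial, as $\cm_\sB$ is then a finite union of arithmetic progressions, hence periodic) and set $\sB(m) = \{b_1, \ldots, b_m\}$. Then $\cm_{\sB(m)} = \bigcup_{i=1}^m b_i\N$ is a finite union of infinite arithmetic progressions, so $\1_{\cm_{\sB(m)}}$ is a periodic sequence. Moreover $\cm_{\sB(m)} \subseteq \cm_{\sB(m+1)} \subseteq \cm_\sB$, so in particular
\[
\cm_\sB \triangle \cm_{\sB(m)} = \cm_\sB \setminus \cm_{\sB(m)}.
\]
Since both $d(\cm_\sB)$ and $d(\cm_{\sB(m)})$ exist, the density of this difference exists and equals $d(\cm_\sB) - d(\cm_{\sB(m)})$. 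By Theorem \ref{daer}, $d(\cm_{\sB(m)}) \to d(\cm_\sB)$ as $m\to\infty$. Therefore
\[
d_B\bigl(\1_{\cm_\sB}, \1_{\cm_{\sB(m)}}\bigr) = \overline{d}\bigl(\cm_\sB \triangle \cm_{\sB(m)}\bigr) = d(\cm_\sB) - d(\cm_{\sB(m)}) \xrightarrow{m\to\infty} 0,
\]
which exhibits $\1_{\cm_\sB}$ as a $d_B$-limit of periodic sequences, proving $\cm_\sB$ is rational. Rationality of $\cf_\sB$ then follows because the complement of a rational set is rational (the approximating periodic sequences can simply be replaced by their complements within a period).

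There is essentially no obstacle here: all the real content is hidden in Theorem \ref{daer} (the Davenport--Erd\H{o}s theorem), which provides exactly the finite-truncation convergence needed. The only small point to be careful about is confirming that the density of the set difference $\cm_\sB \setminus \cm_{\sB(m)}$ exists (not merely its upper density), but this is immediate from the existence of densities of both sets together with the inclusion $\cm_{\sB(m)} \subseteq \cm_\sB$.
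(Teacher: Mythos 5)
Your proof is correct and follows essentially the same route as the paper: approximate $\1_{\cm_\sB}$ by the periodic sequences $\1_{\cm_{\sB(m)}}$ using the Davenport--Erd\H{o}s theorem (\cref{daer}) for the ``if'' direction, and use the automatic existence of densities of rational sets for the ``only if'' direction. You merely make explicit the symmetric-difference computation that the paper leaves implicit.
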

\begin{proof}
Note that for any $m\geq 1$, the sequence
$\1_{\cm_{\sB(m)}}$ is periodic.
Hence if $\sB\subset\N\setminus\{1\}$ is Besicovitch, then by \cref{daer} the sequence $\1_{\cm_{\sB}}$ can be approximated in the $d_B$-pseudo-metric by $\1_{\cm_{\sB(m)}}$ as $m\to\infty$, which proves that $\cm_\sB$ is rational. An analogous argument applies to $\cf_\sB$.

On the other hand, if $\cm_\sB$ is rational then the density of $\cm_\sB$ exists and hence, by definition, the set $\sB$ is Besicovitch.
\end{proof}

In the following,
let $\abundantnumbers$ denote the set of \emph{abundant numbers},
$\perfectnumbers$ the set of \emph{perfect numbers} and
$\deficientnumbers$ the set of \emph{deficient numbers} (for definitions, see \cref{ftn:4} on page \pageref{ftn:4}).

\begin{Cor}\label{c_rat}
Let $A\subset\N\setminus\{1\}$. Suppose $A$ satisfies the following two conditions:
\begin{enumerate}[(1)~~]
\item\label{property:c_rat-0}
 $d(A)$ exists, and
 \item\label{property:c_rat-1}
 $n A\subset A$ for all $n\in\N$.
 \end{enumerate}
Then $A$ is a rational set.
In particular, the set of abundant numbers
$\abundantnumbers$, the set of deficient numbers $\deficientnumbers$ and, for any $x\in[0,1]$, the set $\Phi_x:=\{n\in\N: \frac{\tot(n)}{n}<x\}$ are rational sets.
\end{Cor}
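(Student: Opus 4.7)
The strategy is to reduce the general statement directly to \cref{c_rat-0}. Condition \eqref{property:c_rat-1} is equivalent to the identity $A = \cm_A$: from $nA \subset A$ for every $n\in\N$ we get $a\N \subset A$ for each $a\in A$, and the reverse inclusion is trivial. Then condition \eqref{property:c_rat-0} says that $d(\cm_A)$ exists, i.e., that $A$ (regarded as a family $\sB\subset \N\setminus\{1\}$) is Besicovitch in the sense of \cref{def:bes}. Applying \cref{c_rat-0} to $\sB:=A$ therefore yields that $A=\cm_A$ is rational, which is the general part of the corollary.

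To treat the specific examples I verify both hypotheses. For $\abundantnumbers$, closure under multiplication follows from the observation that each divisor $d$ of $a$ produces a distinct divisor $nd$ of $na$, so $\sigma(na)\geq n\sigma(a)>2na$ whenever $\sigma(a)>2a$; and the existence of $d(\abundantnumbers)$ is the classical theorem of Davenport. For $\Phi_x$, the product formula $\tot(n)/n=\prod_{p\mid n}(1-1/p)$ yields $\tot(mn)/(mn)\leq \tot(n)/n$ for all $m\in\N$ (the primes dividing $mn$ contain those dividing $n$, and each factor lies in $(0,1)$), so $\Phi_x$ is closed under multiplication; the existence of $d(\Phi_x)$ is the classical distribution theorem of Schoenberg for $\tot(n)/n$.

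The set $\deficientnumbers$ does \emph{not} satisfy \eqref{property:c_rat-1} directly (for example $3\in\deficientnumbers$ while $6\in\perfectnumbers$). To handle it, I apply the first part of the corollary to the set $\abundantnumbers\cup\perfectnumbers$ instead: closure under multiplication follows from the same divisor argument with $\geq$ in place of $>$, and $d(\abundantnumbers\cup\perfectnumbers)=d(\abundantnumbers)$ exists because $d(\perfectnumbers)=0$ is classical. Hence $\abundantnumbers\cup\perfectnumbers$ is rational, and so is its complement $\deficientnumbers$, since the class of rational sets is trivially closed under complementation in $\N$ (the sequence $\1_R$ is RAP iff $1-\1_R$ is). The only non-trivial inputs in this plan are the two classical density statements of Davenport and Schoenberg; the rest is direct verification.
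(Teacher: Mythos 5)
Your proof is correct and follows essentially the same route as the paper: reduce the general statement to \cref{c_rat-0} via the identification $A=\cm_A$, then verify the two hypotheses for $\abundantnumbers$ and $\Phi_x$, and obtain $\deficientnumbers$ by complementing $\abundantnumbers\cup\perfectnumbers$ using $d(\perfectnumbers)=0$. You merely spell out details the paper leaves implicit (the divisor argument for $\sigma(na)\geq n\sigma(a)$, the product formula for $\tot(n)/n$, and the closure of rationality under complementation), which is welcome but not a different argument.
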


\begin{proof}
Set $\sB:=A$. It follows from property \eqref{property:c_rat-1} that $\cm_\sB=A$. Also, $\sB$ is Besicovitch because $d(\cm_\sB)=d(A)$ exists according to property \eqref{property:c_rat-0}. Hence, in view of \cref{c_rat-0}, the set $A=\cm_\sB$ is rational.

We now turn our attention to the set of abundant numbers. First, note that
%For the set of abundant numbers $\abundantnumbers$ we remark that
$n\abundantnumbers\subset \abundantnumbers$ for all $n\in\N$. Also, the fact that $d(\abundantnumbers)$ exists was proven by Davenport in \cite{Dav33}. Therefore $\abundantnumbers$ is rational.
Moreover, since $\N=\abundantnumbers~\dot{\cup}~\perfectnumbers~\dot{\cup}~ \deficientnumbers$ and $d(\perfectnumbers)=0$ (cf. \cite{MR0073618}), we conclude that $\deficientnumbers$ is also rational.

Finally, for any $x\in[0,1]$, the set $\Phi_x:=\{n\in\N: \frac{\tot(n)}{n}<x\}$ satisfies $n\Phi_x\subset\Phi_x$ and it was first shown in \cite{MR1544950} that $d(\Phi_x)$ exists. Hence $\Phi_x$ is rational.
\end{proof}

%\cref{c_rat} provides us with a large class of examples of rational sets.
As mentioned above, a shift of the set of squarefree numbers, $\sqfree-r$, is an averaging set of polynomial multiple
recurrence if and only if $r\in \sqfree$.
Our next goal is to show that a similar result holds
for other sets of $\sB$-free numbers.
Note that if $r\notin \cf_\sB$, i.e.\ $r\in\cm_{\sB}$,  then
$\cf_\sB-r$  is not a set of recurrence. Indeed, if it were a set of recurrence then, by considering the cyclic rotation on $r$ points, for some $u\geq1$ we would have
$ur\in\cf_{\sB}-r$ and therefore $(u+1)r\in\cf_{\sB}$, which is a contradiction. Hence,
$r\in \cf_\sB$ is a necessary condition for $\cf_\sB-r$ to be
good for recurrence.
As for the other direction,
we have the following result.

\begin{Th}
\label{c_ave-rec-for-B-free}
Suppose $\sB\subset\N\setminus\{1\}$ is Besicovitch. Then
`almost every' self-shift of $\cf_\sB$
is an averaging set of polynomial multiple recurrence.
More precisely, there exists a set $D\subset \cf_\sB$
with $d(\cf_\sB\setminus D)=0$ such that for all $r\in\N$
the following are equivalent:
\begin{itemize}
\item
$r\in D$;
\item
$\cf_\sB-r$ is divisible;
\item
$\cf_\sB-r$ is an averaging set of polynomial multiple recurrence.
\end{itemize}
\end{Th}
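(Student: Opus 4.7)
My plan proceeds in two steps: the structural equivalences (routine from \cref{thm_poly-multi-rec-along-rat}) and the density-zero assertion $d(\cf_\sB\setminus D)=0$, which is the main substantive point. Since $\sB$ is Besicovitch, \cref{c_rat-0} gives that $\cf_\sB$ is rational, and shift-invariance of the Besicovitch pseudo-metric gives that $\cf_\sB-r$ is rational with $d(\cf_\sB-r)=d(\cf_\sB)$ for every $r\in\N$. If $d(\cf_\sB)=0$ take $D=\emptyset$; otherwise set $D:=\{r\in\N:\cf_\sB-r\text{ is divisible}\}$. If $r\notin\cf_\sB$, pick $b\in\sB$ with $b\mid r$; then $b\mid n+r$ for every $n\in b\N$, so $(\cf_\sB-r)\cap b\N=\emptyset$ and $r\notin D$. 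Thus $D\subset\cf_\sB$, and \cref{thm_poly-multi-rec-along-rat} applied to the rational positive-density set $\cf_\sB-r$ yields the equivalence of the three conditions listed in the theorem.

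The density-zero assertion is the crux. The bijection $n\leftrightarrow n+r$ identifies $(\cf_\sB-r)\cap u\N$ with $\cf_\sB\cap(r+u\N)$, so $r\in D$ iff $\phi_u(r\bmod u):=d(\cf_\sB\cap(r+u\Z))>0$ for every $u\in\N$. This quantity depends only on the residue class modulo $u$, satisfies $\sum_{c\in\Z/u\Z}\phi_u(c)=d(\cf_\sB)$, and enjoys the monotonicity $\phi_{u'}(r\bmod u')\leq\phi_u(r\bmod u)$ whenever $u\mid u'$. For each fixed $u$, the set $\{r\in\cf_\sB:\phi_u(r\bmod u)=0\}$ has density $0$; the obstacle is that $\cf_\sB\setminus D$ is a countable union of such sets, and density is not countably subadditive, so an additional uniform argument is needed.

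To obtain the required uniform control I use the Kronecker factor of the subshift generated by $\eta:=\1_{\cf_\sB}$: by \cref{thm_orb-clos-of-rat} (a forward reference to \cref{sec_rds}), $\eta$ is generic for an ergodic measure $\nu$ on $X_\eta$ with rational discrete spectrum, so the topological factor map $\pi:X_\eta\to Z$ onto the profinite-group Kronecker factor $(Z,R,\mu_Z)$ is a measure isomorphism. Normalizing so that $\pi(\tilde\eta)=0$ and $R$ is rotation by a topological generator $1_Z$, and writing $A:=\{y:y(0)=1\}$, a direct genericity computation using $\1_{u\N}(n)=h_u(R^n\cdot 0)$ (where $h_u$ is the indicator of the index-$u$ closed subgroup of $Z$) yields
\[
\phi_u(r\bmod u)=\nu\bigl(A\cap\pi^{-1}(r\cdot 1_Z+u\cdot Z)\bigr),\qquad r\in\N.
\]
Because $Z$ is profinite, its finite-index subgroups separate points, so Doob's martingale convergence theorem along a cofinal sequence $u_n\to\infty$ gives $u_n\phi_{u_n}(z\bmod u_n)\to\bar{\1}_A(z)\in\{0,1\}$ for $\mu_Z$-a.e.\ $z$, where $\bar{\1}_A:=\1_A\circ\pi^{-1}$. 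By unique ergodicity of $R$ on $Z$ the orbit $\{r\cdot 1_Z\}_{r\in\N}$ is equidistributed for $\mu_Z$, so the $\mu_Z$-null exceptional set is hit in a set of $r$'s of natural density zero. Hence for $d$-a.e.\ $r\in\cf_\sB$ one gets $u_n\phi_{u_n}(r\bmod u_n)\to 1$, which forces $\phi_u(r\bmod u)>0$ for every $u\in\N$ by the monotonicity above, and therefore $d(\cf_\sB\setminus D)=0$.

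The delicate step is the pointwise identification $\bar{\1}_A(r\cdot 1_Z)=\1_{\cf_\sB}(r)$ along the orbit of $\tilde\eta$: the Kronecker representative $\bar{\1}_A$ is only defined $\mu_Z$-a.e., and matching it to the actual values of $\eta$ at integer points requires combining the equidistribution above with the genericity assertion of \cref{thm_orb-clos-of-rat}.
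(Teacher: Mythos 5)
Your reduction of the theorem to the single assertion $d(\cf_\sB\setminus D)=0$, with $D:=\{r:\cf_\sB-r\text{ is divisible}\}$, is correct and matches the paper, and you have correctly isolated the real difficulty: $\cf_\sB\setminus D$ is a countable union of density-zero sets, so a uniform argument is needed. The problem is that your mechanism for obtaining the uniformity does not work. The martingale convergence $u_n\phi_{u_n}(z\bmod u_n)\to\bar{\1}_A(z)$ holds only for $\mu_Z$-almost every $z\in Z$, while the set $\{r\cdot 1_Z:r\in\N\}$ at which you need it is countable, hence $\mu_Z$-null (Haar measure on an infinite profinite group is non-atomic). Equidistribution of this orbit controls averages of continuous functions, or of indicators of sets with null boundary; it does \emph{not} imply that the orbit meets a given null set in a set of $r$'s of density zero --- the orbit is itself a null set which the exceptional set of the martingale theorem could contain entirely. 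The same objection applies to the pointwise identification $\bar{\1}_A(r\cdot 1_Z)=\1_{\cf_\sB}(r)$, which your last paragraph flags as ``delicate'' but never establishes: $\bar{\1}_A$ is only defined $\mu_Z$-a.e., and a single generic point need not satisfy almost-everywhere statements. (A smaller issue: the Kronecker factor map $\pi$ is a measure-theoretic, not topological, factor map, so even the ``direct genericity computation'' of $\phi_u$ requires an argument along the lines of \cref{irg-l1} rather than plain genericity.)

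The paper closes this gap arithmetically rather than dynamically. By \cref{thm1} (quoted from \cite{Ba-Ka-Ku-Le}) there is a taut Besicovitch set $\sC$ with $\cf_\sC\subset\cf_\sB$ and $d(\cf_\sC)=d(\cf_\sB)$, and \cref{l:divisibility-of-taut-case} shows --- via the Behrend/tautness dichotomy of \cref{beh2} and \cref{beh3} applied to the auxiliary set $\sC'(a)=\{b/\gcd(b,a):b\in\sC\}$ --- that for taut $\sC$ every $a\in\cf_\sC$ satisfies $\bdelta\big((\cf_\sC-a)/u\big)>0$ for all $u$. Hence $D\supset\cf_\sC$, and the single density-zero set $\cf_\sB\setminus\cf_\sC$ envelopes all the exceptional sets for all $u$ simultaneously, which is exactly the uniformity your approach is missing. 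Some input playing the role of tautness appears unavoidable here; your observation that each individual set $\{r\in\cf_\sB:\phi_u(r\bmod u)=0\}$ has density zero, even combined with the monotonicity in $u$, is not enough, since an increasing union of density-zero sets can have positive upper density.
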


Note that \cref{th:b-free-sumset-combi} is now an immediate
consequence of \cref{c_ave-rec-for-B-free}.
We give a proof of \cref{c_ave-rec-for-B-free}
at the end of this subsection.
Let us remark that in most cases one can actually take $D=\cf_\sB$.
To distinguish between sets of $\sB$-free numbers for which
$D=\cf_\sB$ and for which
$D\subsetneq \cf_\sB$, we introduce the following notions.

\begin{Def}[cf. \cite{MR1414678}]
\label{d_taut}
Let $\sB\subset\N\setminus\{1\}$.
We call $\sB$ \emph{Behrend} if $\bdelta(\cm_\sB)=1$ (this is equivalent to the existence of the density of $\cm_\sB$ with $d(\cm_\sB)=1$).
We call $\sB$ \emph{taut} if for every $b\in \mathscr{B}$,
one has $\bdelta(\cm_\sB)> \bdelta(\cm_{\sB\setminus\{b\}})$.
\end{Def}

If $\sB$ is Behrend then $D=\emptyset$, because in this case
$\cf_{\sB}$ (and each of its translations) has zero density. Behrend sets are not taut (see Lemma~\ref{beh2} below) and
it will be clear from the proof of
\cref{c_ave-rec-for-B-free} that in the statement
of \cref{c_ave-rec-for-B-free} one can take $D=\cf_\sB$
if and only if $\sB$ is taut (see \cref{cor:vb-iff} below).

The remainder of this subsection is dedicated to proving
\cref{c_ave-rec-for-B-free}. We start with a series of
lemmas.

\begin{Lemma}[Corollary 0.14 in \cite{MR1414678}]\label{beh2}
$\mathscr{A}\cup \sB$ is Behrend if and only if at least one of $\mathscr{A}$ and $\sB$ is Behrend.
In particular, Behrend sets are not taut.
\end{Lemma}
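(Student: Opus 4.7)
The plan is to handle the two directions of the equivalence separately and then extract the tautness claim as a short corollary.

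For the easy direction, if say $\mathscr{A}$ is Behrend, then $\cm_{\mathscr{A}\cup\sB}=\cm_\mathscr{A}\cup\cm_\sB\supseteq\cm_\mathscr{A}$, so monotonicity of the logarithmic density forces $\bdelta(\cm_{\mathscr{A}\cup\sB})\geq\bdelta(\cm_\mathscr{A})=1$, making $\mathscr{A}\cup\sB$ Behrend.

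For the non-trivial direction I would invoke the classical Heilbronn--Rohrbach--Behrend inequality (see Chapter~0 of \cite{MR1414678}), which states that for any $\mathscr{A},\sB\subset\N\setminus\{1\}$,
\[
\bdelta(\cf_{\mathscr{A}\cup\sB})\;\geq\;\bdelta(\cf_\mathscr{A})\cdot\bdelta(\cf_\sB).
\]
Taking contrapositives, if $\mathscr{A}\cup\sB$ is Behrend then $\bdelta(\cf_{\mathscr{A}\cup\sB})=0$, which forces the right-hand side to vanish, so $\mathscr{A}$ or $\sB$ is Behrend. To reprove this inequality from the tools at hand, one would first truncate to finite sets $\mathscr{A}(m), \sB(m)$, where $\cf_{\mathscr{A}(m)}$ and $\cf_{\sB(m)}$ are periodic and natural densities exist, verify the inequality $d(\cf_{\mathscr{A}(m)\cup\sB(m)})\geq d(\cf_{\mathscr{A}(m)})\,d(\cf_{\sB(m)})$ via a positive-correlation (FKG-type) argument on the divisibility poset, and finally pass to the limit $m\to\infty$ using \cref{daer}.

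For the ``in particular'' part, fix any Behrend set $\sB$ and any $b\in\sB$. Decomposing $\sB=\{b\}\cup(\sB\setminus\{b\})$ and applying the equivalence just established, either $\{b\}$ or $\sB\setminus\{b\}$ is Behrend. The former is impossible since $\bdelta(\cm_{\{b\}})=1/b<1$ for $b\geq 2$, so $\sB\setminus\{b\}$ must be Behrend; in particular $\bdelta(\cm_{\sB\setminus\{b\}})=1=\bdelta(\cm_\sB)$, violating the strict inequality required by tautness. As $b\in\sB$ was arbitrary, $\sB$ is not taut.

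The only genuine obstacle is the inequality $\bdelta(\cf_{\mathscr{A}\cup\sB})\geq\bdelta(\cf_\mathscr{A})\,\bdelta(\cf_\sB)$: a naive union bound only gives $\bdelta(\cf_{\mathscr{A}\cup\sB})\geq\bdelta(\cf_\mathscr{A})+\bdelta(\cf_\sB)-1$, which is vacuous as soon as both densities are below $1/2$. One therefore has to exploit the multiplicative structure of sets of multiples (typically by working with logarithmic averaging, where $\1_{\cf_\mathscr{A}}$ and $\1_{\cf_\sB}$ turn out to be non-negatively correlated); once the inequality is granted, every remaining step in the lemma is immediate.
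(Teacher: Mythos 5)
The paper offers no proof of this lemma at all --- it is quoted verbatim as Corollary 0.14 of the cited monograph of Hall \cite{MR1414678} --- so there is no internal argument to compare against. Your proof is correct and follows essentially the route taken in that reference: the easy direction by monotonicity of $\bdelta$ under $\cm_{\mathscr{A}\cup\sB}\supseteq\cm_{\mathscr{A}}$, the nontrivial direction by the contrapositive of Behrend's inequality $\bdelta(\cf_{\mathscr{A}\cup\sB})\geq\bdelta(\cf_{\mathscr{A}})\,\bdelta(\cf_{\sB})$, and the tautness statement by splitting $\sB=\{b\}\cup(\sB\setminus\{b\})$ and noting that $\{b\}$ alone is never Behrend. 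The only point I would flag is your parenthetical sketch of how one would reprove Behrend's inequality: the FKG-type correlation claim for the periodic truncations is not routine (the events ``$n\notin\cm_{\mathscr{A}(m)}$'' and ``$n\notin\cm_{\sB(m)}$'' do not sit in any obvious product lattice on which monotonicity is clear), and the classical proofs in Hall's Chapter 0 require real work. Since the paper itself imports the entire lemma as a black box from that source, citing Behrend's inequality rather than claiming its proof is easy is the consistent --- and correct --- thing to do; just do not present the correlation argument as if it were a formality.
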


\begin{Lemma}[Corollary 0.19 in \cite{MR1414678}]\label{beh3}
$\mathscr{B}$ is taut if and only if it is primitive and does not contain a set of the form $c\mathscr{A}$, where $c\in\N$ and $\mathscr{A}\subset \N\setminus\{1\}$ is Behrend.
\end{Lemma}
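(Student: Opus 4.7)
The plan is to prove both directions by contraposition, leveraging the identity $\bdelta(\cm_\sB)-\bdelta(\cm_{\sB\setminus\{b\}})=\bdelta(b\N\setminus \cm_{\sB\setminus\{b\}})$, which is valid because all densities involved exist (by \cref{daer}) and log density is additive on disjoint unions whose pieces admit log density. I would first dispatch the primitivity half in one line: if $b\mid b'$ are distinct elements of $\sB$, then $b'\N\subset b\N\subset \cm_{\sB\setminus\{b'\}}$, so $\cm_\sB=\cm_{\sB\setminus\{b'\}}$ and $\sB$ fails to be taut. Thus in both directions I may assume $\sB$ primitive from the outset.

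For the forward direction (taut $\Rightarrow$ no Behrend substructure), I argue contrapositively. Assuming $c\mathscr{A}\subset\sB$ with Behrend $\mathscr{A}\subset\N\setminus\{1\}$, I pick any $a\in\mathscr{A}$ and set $b:=ca$. Since $c(\mathscr{A}\setminus\{a\})\subset\sB\setminus\{b\}$, one has $\cm_{\sB\setminus\{b\}}\supset c\cm_{\mathscr{A}\setminus\{a\}}$ and therefore
\[
b\N\setminus \cm_{\sB\setminus\{b\}}\ \subset\ ca\N\setminus c\cm_{\mathscr{A}\setminus\{a\}}\ =\ c\bigl(\cm_{\mathscr{A}}\setminus \cm_{\mathscr{A}\setminus\{a\}}\bigr).
\]
Using $\bdelta(cS)=\bdelta(S)/c$, the right-hand side has log density $(\bdelta(\cm_\mathscr{A})-\bdelta(\cm_{\mathscr{A}\setminus\{a\}}))/c$, which will vanish provided $\mathscr{A}\setminus\{a\}$ is again Behrend; I would secure this by invoking the auxiliary fact (Corollary~0.15 in \cite{MR1414678}) that Behrend-ness is preserved under removing any finite subset. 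The conclusion is $\bdelta(\cm_\sB)=\bdelta(\cm_{\sB\setminus\{b\}})$, contradicting tautness.

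For the backward direction (primitive $\sB$ without Behrend substructure $\Rightarrow$ taut), again contrapositively, I assume $\sB$ is primitive but some $b\in\sB$ satisfies $\bdelta(b\N\setminus \cm_{\sB\setminus\{b\}})=0$. Setting
\[
\mathscr{A}_0:=\{b'/\gcd(b,b') : b'\in \sB\setminus\{b\}\}\ \subset\ \N\setminus\{1\}
\]
(the inclusion uses primitivity), a direct divisibility check gives $bk\in\cm_{\sB\setminus\{b\}}\iff k\in\cm_{\mathscr{A}_0}$, whence $b\N\setminus \cm_{\sB\setminus\{b\}}=b\cf_{\mathscr{A}_0}$ and the hypothesis translates to $\bdelta(\cf_{\mathscr{A}_0})=0$, i.e.\ $\mathscr{A}_0$ is Behrend. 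I then group the elements of $\mathscr{A}_0$ by the value $d:=\gcd(b,b')$, which ranges only over the finite set of divisors of $b$:
\[
\mathscr{A}_0=\bigcup_{d\mid b}\mathscr{A}_d,\qquad \mathscr{A}_d:=\{b'/d : b'\in\sB\setminus\{b\},\ \gcd(b,b')=d\}\ \subset\ \N\setminus\{1\}.
\]
Since this is a \emph{finite} union with Behrend total, \cref{beh2} will produce a divisor $c\mid b$ for which $\mathscr{A}_c$ is Behrend, and then $c\mathscr{A}_c=\{b'\in\sB\setminus\{b\} : \gcd(b,b')=c\}\subset\sB$ supplies the prohibited Behrend substructure.

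The main obstacle is the auxiliary fact invoked in the forward direction, namely that $\mathscr{A}\setminus\{a\}$ remains Behrend whenever $\mathscr{A}$ is. This is a nontrivial consequence of \cref{daer}: one approximates $\mathscr{A}$ by finite sets $\mathscr{A}(m)\subset\mathscr{A}$ with $d(\cm_{\mathscr{A}(m)})\to 1$, notes that excising $a$ costs at most density $1/a$, and then augments the truncations by further elements of $\mathscr{A}\setminus\{a\}$ to recover density arbitrarily close to $1$. In practice I would simply cite Hall's \emph{Sets of Multiples}, Corollary~0.15.
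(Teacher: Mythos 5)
Your argument is correct, but note that the paper does not prove this lemma at all: it is imported verbatim as Corollary~0.19 of Hall's \emph{Sets of Multiples}, so you have supplied a proof where the authors only cite. Checking your steps: the additivity identity is legitimate because $\cm_{\sB\setminus\{b\}}\subset\cm_\sB$ and both logarithmic densities exist as limits by \cref{daer}, so the difference set inherits a logarithmic density; the divisibility computation $b'\mid bk\iff (b'/\gcd(b,b'))\mid k$ behind $b\N\setminus\cm_{\sB\setminus\{b\}}=b\,\cf_{\mathscr{A}_0}$ is right, and primitivity is exactly what guarantees $1\notin\mathscr{A}_0$; the finite grouping over divisors of $b$ combined with \cref{beh2} correctly extracts the Behrend piece $\mathscr{A}_c$ with $c\mathscr{A}_c\subset\sB$. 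The construction $\mathscr{A}_0=\{b'/\gcd(b,b')\}$ is, incidentally, the same device the paper uses (as $\sC'(a)$) in the proof of \cref{l:divisibility-of-taut-case}, so your proof sits naturally in the paper's toolkit. One simplification: the step you single out as ``the main obstacle'' --- that $\mathscr{A}\setminus\{a\}$ remains Behrend --- needs neither Hall's Corollary~0.15 nor the approximation argument via \cref{daer}; it is immediate from \cref{beh2} as quoted, since $\mathscr{A}=(\mathscr{A}\setminus\{a\})\cup\{a\}$ and the singleton $\{a\}$ with $a\geq 2$ has $\bdelta(a\N)=1/a<1$, hence is not Behrend, forcing $\mathscr{A}\setminus\{a\}$ to be.
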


\begin{Lemma}[Cf.\ the proof of Lemma 6.5 in~\cite{Ba-Ka-Ku-Le}]\label{postepy}
Let $\sC\subset \N$. For any $u\in\N$ and $a\in\N\cup\{0\}$ the logarithmic densities of $\mathcal{M}_\mathscr{C} \cap (u\N+a)$ and $\mathcal{F}_\mathscr{C} \cap (u\N+a)$ exist and satisfy
\begin{align*}
\bdelta(\mathcal{M}_\mathscr{C} \cap (u\N+a))=\un{d}(\mathcal{M}_\mathscr{C} \cap (u\N+a))&=\lim_{m\to \infty}d(\mathcal{M}_{\sC(m)}\cap (u\N+a)),\\
\bdelta(\mathcal{F}_\mathscr{C} \cap (u\N+a))=\ov{d}(\mathcal{F}_\mathscr{C} \cap (u\N+a))&=\lim_{m\to \infty}d(\mathcal{F}_{\sC(m)}\cap (u\N+a)).
\end{align*}
\end{Lemma}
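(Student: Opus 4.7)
The plan is to reduce the claim to the Davenport--Erd\H{o}s theorem (\cref{daer}) by a simple squeeze argument. Write $B := \cm_\sC \cap (u\N+a)$, $B_m := \cm_{\sC(m)} \cap (u\N+a)$, $F := \cf_\sC \cap (u\N+a)$ and $F_m := \cf_{\sC(m)} \cap (u\N+a)$. Since each $\sC(m)$ is finite, both $B_m$ and $F_m$ are periodic (with period dividing $\lcm(u,\lcm(\sC(m)))$), so the asymptotic densities $d(B_m)$ and $d(F_m)$ exist and coincide with the respective logarithmic densities. Moreover $B_m\subseteq B$, $F_m\supseteq F$, and the partition $(u\N+a)=B\sqcup F=B_m\sqcup F_m$ yields the identity $F_m=F\sqcup(B\setminus B_m)$, equivalently $\1_{F_m}=\1_F+\1_{B\setminus B_m}$.

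The one non-trivial input that I need is that $\ov{\bdelta}(B\setminus B_m)\to 0$ as $m\to\infty$. Since $B\setminus B_m\subseteq \cm_\sC\setminus \cm_{\sC(m)}$, it is enough to show $\bdelta(\cm_\sC\setminus\cm_{\sC(m)})\to 0$. By additivity of logarithmic density on disjoint unions (applicable because both $\bdelta(\cm_\sC)$ and $\bdelta(\cm_{\sC(m)})=d(\cm_{\sC(m)})$ exist), this difference equals $\bdelta(\cm_\sC)-d(\cm_{\sC(m)})$, and \cref{daer} tells me precisely that this tends to $0$ as $m\to\infty$.

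For the multiples claim, the decomposition $\1_B=\1_{B_m}+\1_{B\setminus B_m}$ yields $\ov{\bdelta}(B)\leq d(B_m)+\ov{\bdelta}(B\setminus B_m)$, which in the limit $m\to\infty$ becomes $\ov{\bdelta}(B)\leq\lim_m d(B_m)$; and $B\supseteq B_m$ gives $\un{d}(B)\geq d(B_m)\to\lim_m d(B_m)$. Combining with the universal inequalities $\un{d}(B)\leq\un{\bdelta}(B)\leq\ov{\bdelta}(B)$ forces all four quantities to agree with $\lim_m d(B_m)$. For the $\sB$-free claim, $F\subseteq F_m$ gives $\ov{d}(F)\leq d(F_m)\to\lim_m d(F_m)$, while the identity $\1_{F_m}=\1_F+\1_{B\setminus B_m}$ together with the vanishing of $\ov{\bdelta}(B\setminus B_m)$ gives $\un{\bdelta}(F)\geq d(F_m)-\ov{\bdelta}(B\setminus B_m)\to\lim_m d(F_m)$. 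Since $\un{\bdelta}(F)\leq\ov{\bdelta}(F)\leq\ov{d}(F)$, all four quantities agree with $\lim_m d(F_m)$ as required.

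The main (and essentially only) obstacle is recognising that \cref{daer} already furnishes the convergence $\bdelta(\cm_\sC\setminus\cm_{\sC(m)})\to 0$; once this is in hand, passing from $\N$ to the arithmetic progression $u\N+a$ is a bookkeeping exercise, because the part that is `lost' in the finite-$m$ approximation of $B$ has vanishing logarithmic density already at the level of $\N$, and therefore remains negligible after intersection with $u\N+a$.
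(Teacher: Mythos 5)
Your argument is correct, but it takes a more self-contained route than the paper does. The paper's own proof of this lemma is essentially a citation: the statement for $\cm_\sC\cap(u\N+a)$ is taken over from the proof of Lemma 6.5 in the reference by Balister--Kasjan--Ku\l aga-Przymus--Lema\'nczyk, and the statement for $\cf_\sC\cap(u\N+a)$ is then obtained in one line from $\cf_\sC=\N\setminus\cm_\sC$. You instead rederive everything from \cref{daer}: the observation that $\bdelta(\cm_\sC\setminus\cm_{\sC(m)})=\bdelta(\cm_\sC)-d(\cm_{\sC(m)})\to 0$ (valid by linearity of the logarithmic-averaging operator, since both logarithmic densities exist), followed by monotonicity of $\ov\bdelta$ under inclusion to localize to $u\N+a$, and the standard chain $\un d\leq\un\bdelta\leq\ov\bdelta\leq\ov d$ to squeeze all four quantities. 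All steps check out: $d(B_m)$ is nondecreasing and $d(F_m)$ nonincreasing so the limits exist, the disjoint decompositions $B=B_m\sqcup(B\setminus B_m)$ and $F_m=F\sqcup(B\setminus B_m)$ are exactly right, and the sub/superadditivity inequalities for upper/lower logarithmic density are used correctly. What your approach buys is independence from the external reference --- everything is reduced to the Davenport--Erd\H os theorem as quoted in the paper --- at the cost of a slightly longer write-up; note also that your argument handles both halves symmetrically, whereas the paper gets the $\cf$ half for free by complementation (which your identity $\1_{F_m}=\1_F+\1_{B\setminus B_m}$ is in effect reproducing).
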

\begin{proof}
The assertion concerning $\mathcal{M}_\mathscr{C} \cap (u\N+a)$ was covered in the proof of Lemma 6.5 in~\cite{Ba-Ka-Ku-Le}. The remaining part follows immediately, as $\cf_\sC=\N\setminus \cm_\sC$.
\end{proof}

\begin{Lemma}\label{l36}
Let $\mathscr{C}\subset \N\setminus\{1\}$ and let $a\in\N\cup\{0\}$. If $u\in\N$ is coprime to each element of $\mathscr{C}$ then
$$
\bdelta(\cf_\sC\cap (u\N+a))=\frac{1}{u}\cdot\bdelta(\cf_\sC).
$$
\end{Lemma}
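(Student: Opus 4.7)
The plan is to reduce to the finite case $\sC(m)=\{c_1,\dots,c_m\}$ via Lemma~\ref{postepy} and then exploit periodicity together with the coprimality of $u$ with $\lcm(c_1,\dots,c_m)$.

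First, I would note that for any fixed $m$, the indicator $\1_{\cf_{\sC(m)}}$ is periodic with period $L_m:=\lcm(c_1,\dots,c_m)$. Since $u$ is coprime to every $c_i$, it is coprime to $L_m$, so the map $n\mapsto un+a\pmod{L_m}$ is a bijection on $\Z/L_m\Z$. Therefore, writing $S_m\subset\{0,1,\dots,L_m-1\}$ for the set of residues corresponding to $\cf_{\sC(m)}$, and counting the elements of $u\N+a$ in $\{1,\dots,N\}$ that land in these residues as $n$ ranges through roughly $N/u$ values of the parameter, I would verify that
$$
d(\cf_{\sC(m)}\cap(u\N+a))=\frac{|S_m|}{u\,L_m}=\frac{1}{u}\,d(\cf_{\sC(m)}).
$$

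Next, applying Lemma~\ref{postepy} twice, once with parameters $(u,a)$ and once with parameters $(1,0)$, I would pass to the limit $m\to\infty$:
$$
\bdelta(\cf_\sC\cap(u\N+a))=\lim_{m\to\infty}d(\cf_{\sC(m)}\cap(u\N+a))=\frac{1}{u}\lim_{m\to\infty}d(\cf_{\sC(m)})=\frac{1}{u}\bdelta(\cf_\sC),
$$
which is the desired identity.

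There is no real obstacle here: the substance of the argument is the straightforward periodic counting in step one, and the only point that requires any care is checking that coprimality with each element of $\sC$ does indeed imply coprimality with $L_m=\lcm(c_1,\dots,c_m)$ (immediate from prime factorizations), so that the multiplication-by-$u$ map is a bijection on $\Z/L_m\Z$. The passage to the limit is then automatic from Lemma~\ref{postepy}.
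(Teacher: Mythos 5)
Your proposal is correct and follows essentially the same route as the paper: both arguments use the periodicity of $\cf_{\sC(m)}$ with period $\lcm(c_1,\dots,c_m)$, the coprimality of $u$ with that period (via CRT / bijectivity of $n\mapsto un+a$ modulo the period) to get $d(\cf_{\sC(m)}\cap(u\N+a))=\frac1u d(\cf_{\sC(m)})$, and then \cref{postepy} (together with \cref{daer}) to pass to the limit in $m$. The only cosmetic difference is that the paper phrases the finite-level computation in terms of $\cm_{\sC(m)}$ rather than $\cf_{\sC(m)}$, which changes nothing.
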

\begin{proof}
Suppose $\sC=\{b_1,b_2,\ldots\}$ and let
$\sC(m):=\{b_1,\ldots,b_m\}$.
The assertion of the lemma is clearly equivalent to
$
\bdelta(\cm_\sC\cap(u\N+a))=\frac{1}{u}\cdot\bdelta(\cm_\sC).
$
Since $u$ is coprime to each element of $\sC$, it follows by the Chinese Remainder Theorem that for any $m\geq 1$ and $r\in\N\cup\{0\}$ there exists $r'\in\N\cup\{0\}$ such that
$$
 (\lcm(b_1,\dots ,b_m)\N+r)\cap (u\N+a)= u\cdot\lcm(b_1,\dots,b_m)\N +r'.
$$
In particular,
$$
d\big((\lcm(b_1,\dots ,b_m)\N+r)\cap (u\N+a)\big)=\frac{1}{u\cdot\lcm(b_1,\dots,b_m)}.
$$
It follows that
\begin{equation}\label{toto}
d( \cm_{\sC(m)}\cap (u\N+a))=\frac{1}{u}\cdot d(\cm_{\sC(m)})
\end{equation}
since $\cm_{\sC(m)}$ is periodic with period $\lcm(b_1,\dots,b_m)$.
Using \cref{postepy}, \eqref{toto} and~\cref{daer}, we obtain
$$
\bdelta( \cm_\sC\cap (u\N+a))=\lim_{m\to \infty}d(\mathcal{M}_{\sC(m)}\cap (u\N+a))=\lim_{m\to \infty}\frac{1}{u}\cdot d(\cm_{\sC(m)})=\frac{1}{u}\cdot \bdelta(\cm_\sC),
$$
which completes the proof.
\end{proof}

\begin{Lemma}
\label{l:divisibility-of-taut-case}
Suppose $\mathscr{C}\subset\N\setminus\{1\}$ is  taut.
If $a\in\cf_{\mathscr{C}}$ then for every $u\in\N$, one has
$\bdelta\big((\cf_{\mathscr{C}}-a)/u\big)>0$.
\end{Lemma}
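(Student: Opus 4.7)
The plan is to translate the density statement via the bijection $n \mapsto un+a$, reduce to the case $\gcd(u,a)=1$ by a scaling, and conclude via the structural characterization of tautness in \cref{beh3}. Under the bijection, a short computation gives $\bdelta((\cf_\sC-a)/u) = u\cdot\bdelta(\cf_\sC \cap (u\N+a))$, so it suffices to prove $\bdelta(\cf_\sC \cap (u\N+a)) > 0$. Setting $v := \gcd(u,a)$, the facts that $v \mid m$ for every $m \in u\N+a$ and that $b \mid m \iff b/\gcd(b,v) \mid m/v$ combine to yield
\[
\cf_\sC \cap (u\N + a) \;=\; v\cdot\bigl(\cf_{\sC^{(v)}} \cap ((u/v)\N + a/v)\bigr),
\]
with $\sC^{(v)} := \{b/\gcd(b,v) : b \in \sC\}$; one checks that $a/v \in \cf_{\sC^{(v)}}$ and $\gcd(u/v, a/v) = 1$. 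Since $\bdelta(v\cdot A) = \bdelta(A)/v$, this reduces the problem to the coprime case with $u' := u/v$, $a' := a/v$, and $\sC$ replaced by $\sC^{(v)}$.

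In this reduced coprime case, any $b' \in \sC^{(v)}$ that contributes to $\cm_{\sC^{(v)}} \cap (u'\N+a')$ must satisfy $\gcd(b',u') \mid a'$, which together with $\gcd(u',a') = 1$ forces $\gcd(b',u') = 1$. Let $\sD := \{b' \in \sC^{(v)} : \gcd(b',u') = 1\}$. Then $\cf_{\sC^{(v)}} \cap (u'\N+a') = \cf_\sD \cap (u'\N+a')$, and \cref{l36} (applicable since $\sD$ is coprime to $u'$) gives $\bdelta(\cf_\sD \cap (u'\N+a')) = (1-\bdelta(\cm_\sD))/u'$. So positivity of $\bdelta(\cf_\sC \cap (u\N+a))$ reduces to the assertion that $\sD$ is \emph{not} Behrend.

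This last step is the main obstacle and the only place where the tautness hypothesis enters essentially. Suppose for contradiction that $\sD$ is Behrend. Since every $b' \in \sD$ is of the form $b/\gcd(b,v)$ for some $b \in \sC$, one obtains the covering
\[
\sD \;=\; \bigcup_{d \mid v}\sA_d, \qquad \sA_d := \{b/d : b \in \sC,\ \gcd(b,v)=d,\ \gcd(b/d,u')=1\}.
\]
As $v$ has only finitely many divisors, iterated application of \cref{beh2} (Behrend property passes to some member of any finite union) yields a $d^* \mid v$ for which $\sA_{d^*}$ is Behrend. By construction $d^*\cdot \sA_{d^*} \subseteq \sC$, contradicting \cref{beh3}: tautness of $\sC$ precludes $\sC$ from containing any integer multiple of a Behrend set.
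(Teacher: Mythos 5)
Your proof is correct and follows essentially the same route as the paper's: quotient the divisor set by a gcd (you use $\gcd(b,\gcd(u,a))$ where the paper uses $\gcd(b,a)$, yielding an exact identity rather than a one-sided inclusion), observe that divisors not coprime to the modulus cannot meet the progression, apply \cref{l36} to the coprime remainder, and rule out the Behrend alternative via \cref{beh2} and \cref{beh3} exactly as in the paper. The only point left implicit is that $1\notin\sA_{d^*}$ (needed for \cref{beh3}), but this already follows from your verification that $a/v\in\cf_{\sC^{(v)}}$.
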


\begin{proof}
Define
\[
\mathscr{C}'(a):=\Big\{\frac{b}{\gcd(b,a)}: b\in \mathscr{C}\Big\}.
\]
Notice that
\begin{equation}\label{ooo1}
\gcd(a,c)=1 \text{ for each } c\in\mathscr{C}'(a).
\end{equation}
Moreover, $\cm_{\sC'(a)}\supset \cm_{\sC}$, whence
\begin{equation}\label{ooo2}  \cf_{\sC'(a)}\subset \cf_{\sC}.\end{equation}

Since $\gcd(b,a)$ takes only finitely many values as $b\in\sC$ varies, we have
$$
\sC'(a)=\bigcup_{d\divides a}\Big\{  \frac{b}{d} : b\in\mathscr{C},\gcd(b,a)=d\Big\}.
$$
Suppose that $1\in \mathscr{C}'(a)$. Then for some $b\in \sC$, gcd$(b,a)=b$, whence $a\in \cm_{\sC}$, a contradiction. It follows that
\begin{equation}\label{ooo3}
1\notin \sC'(a).
\end{equation}
Suppose that $\sC'(a)$ is Behrend. Then, by \cref{beh2}, for some  $d_0\divides a$, the set $\mathscr{A}:=\Big\{  \frac{b}{d_0} :b\in\sC, \gcd(b,a)=d_0\Big\}$ is Behrend and we have $d_0\mathscr{A}\subset \sC$. However, this and~\eqref{ooo3}, in view of \cref{beh3}, contradict the tautness of $\sC$. Therefore, $\sC'(a)$ cannot be Behrend, i.e.\ $c:=\bdelta(\cf_{\sC'(a)})>0$. We will use this constant to prove that
$\bdelta\big((\cf_\sC-a)/u\big)>c$ for all $u\in\N$.

By \cref{postepy} and~\eqref{ooo2}, we have
\begin{multline*}
\bdelta((\cf_\sC-a)/u)=u\cdot \bdelta((\cf_\sC-a)\cap u\N)\\
\geq u\cdot \bdelta((\cf_{\sC'(a)}-a)\cap u\N)=u\cdot \bdelta(\cf_{\sC'(a)}\cap (u\N+a)).
\end{multline*}
Hence, it suffices to show that
$$
\bdelta(\cf_{\sC'(a)}\cap (u\N+a))\geq \frac1u \cdot\bdelta\big(\cf_{\mathscr{C}'(a)}\big).
$$
In order to verify this last claim, let us divide $\mathscr{C}'(a)$ into two pieces:
\begin{align*}
\mathscr{C}'_1(a,u)&:=\{c\in\mathscr{C}'(a): \gcd(u,c)>1\},\\
\mathscr{C}'_2(a,u)&:=\{c\in\mathscr{C}'(a): \gcd(u,c)=1\}.
\end{align*}
We claim that
\begin{equation}\label{ooo4}
\cf_{\sC'_1(a,u)}\cap (u\N+a)=u\N+a.
\end{equation}
Indeed,  take any $c\in \sC'_1(a,u)$. Then gcd$(u,c)>1$ and since~\eqref{ooo1} holds, gcd$(u,c)$ does not divide $a$. Hence $c\N\cap(u\N+a)=\emptyset$. It follows that
$\cm_{\sC'_1(a,u)}\cap(u\N+a)=\emptyset$ and~\eqref{ooo4} follows.

Therefore, using \eqref{ooo4} and additionally \cref{l36}, we obtain
$$
\bdelta(\cf_{\sC'(a)}\cap (u\N+a))=\bdelta\big(\cf_{\mathscr{C}_2'(a)}\cap (u\N+a)\big)=\frac{1}{u}\cdot\bdelta\big(\cf_{\mathscr{C}'_2(a)}\big)\geq \frac1u \cdot\bdelta\big(\cf_{\mathscr{C}'(a)}\big)
$$
and the result follows.
\end{proof}

Before we  present the proof of \cref{c_ave-rec-for-B-free},
one more theorem needs to be quoted.

\begin{Th}[{\cite[Theorem 4.5 and the proof of Lemma 4.11]{Ba-Ka-Ku-Le}}]\label{thm1}
Let $\sB\subset \N\setminus\{1\}$. Then there exists a taut set $\sC\subset \N\setminus\{1\}$ such that $\cf_{\sC}\subset \cf_{\sB}$ and $\bdelta(\cf_{\sC})=\bdelta(\cf_{\sB})$. Moreover,
if $\sB$ is Besicovitch, then $\sC$ is Besicovitch.
\end{Th}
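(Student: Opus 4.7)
The plan is to construct $\sC$ explicitly as
\[
\sC := \big\{c \in \N\setminus\{1\} : \bdelta(c\N \setminus \cm_\sB)=0 \text{ and no proper divisor of } c \text{ has this property}\big\},
\]
i.e., the primitive part of the collection of integers whose multiples are, up to logarithmic density zero, absorbed by $\cm_\sB$. The intuition is that $\sC$ records the ``essential'' divisibility obstructions that $\sB$ imposes, while collapsing any Behrend-type redundancies within $\sB$.

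The easy half of the theorem should then follow quickly. Since every $b\in\sB$ satisfies $b\N\subset\cm_\sB$, the defining condition is trivial for $b$; hence each $b\in\sB$ is a multiple of some $c\in\sC$, giving $\sB\subset\cm_\sC$, $\cm_\sB\subset\cm_\sC$, and therefore $\cf_\sC\subset\cf_\sB$. For the equality of logarithmic densities, I would write
\[
\cm_\sC\setminus\cm_\sB \;=\; \bigcup_{c\in\sC}(c\N\setminus\cm_\sB),
\]
observe that each summand has zero logarithmic density by definition of $\sC$, and conclude via $\sigma$-subadditivity of $\ov{\bdelta}$ combined with \cref{daer} applied to the set $\cm_\sC\cup\cm_\sB$ (so that $\bdelta$ of the union is realized as a limit over finite truncations).

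For tautness, $\sC$ is primitive by construction, so by \cref{beh3} I only need to rule out the inclusion $c\sA\subset\sC$ with $\sA\subset\N\setminus\{1\}$ Behrend. Assuming such an inclusion, every $a\in\sA$ satisfies $\bdelta(ca\N\setminus\cm_\sB)=0$; since $\bdelta(\cm_\sA)=1$ and $\bigcup_{a\in\sA}ca\N=c\cm_\sA$, the same $\sigma$-subadditivity argument yields $\bdelta(c\N\setminus\cm_\sB)=0$. Hence $c$ satisfies the defining condition for membership in $\sC$ before primitivity is imposed, which contradicts the primitivity of $\sC$ since $c$ properly divides every element of $c\sA$.

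The main obstacle I anticipate is the Besicovitch conclusion. Assuming $d(\cm_\sB)$ exists, \cref{daer} and $\cm_\sB\subset\cm_\sC$ together with the logarithmic equality give $\un{d}(\cm_\sC)\geq \bdelta(\cm_\sC)=d(\cm_\sB)$; what remains is the reverse inequality $\ov{d}(\cm_\sC)\leq d(\cm_\sB)$, which requires upgrading ``logarithmic density zero'' to ``upper density zero'' for $\cm_\sC\setminus\cm_\sB$. I would approach this by approximating via the finite truncations $\sC(m)$ and $\sB(m)$, exploiting periodicity of $\cm_{\sC(m)}$ and $\cm_{\sB(m)}$, and using the Besicovitch hypothesis to show that the tail contribution $\cm_\sC\setminus\cm_{\sC(m)}$ is uniformly small in density as $m\to\infty$. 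Controlling this density-level (rather than logarithmic-density-level) error in the presence of an infinite $\sC$ is where the argument becomes delicate and where the invocation of Lemma~4.11 of \cite{Ba-Ka-Ku-Le} does the real work.
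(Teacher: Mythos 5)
First, note that the paper does not prove this statement at all: \cref{thm1} is imported verbatim from the cited source, so there is no internal proof to compare against. Your construction, $\sC:=\{c:\bdelta(c\N\setminus\cm_\sB)=0\}$ reduced to its divisibility-minimal elements, is in fact equivalent (for non-Behrend $\sB$) to the standard tautening $(\sB\cup\{c: c\sA\subset\sB \text{ for some Behrend }\sA\})^{\mathrm{prim}}$ used in the literature, and your arguments for the first three assertions are essentially sound: the condition $\bdelta(c\N\setminus\cm_\sB)=0$ is well-posed by \cref{postepy}, the passage from ``each summand has $\bdelta=0$'' to ``the countable union has $\bdelta=0$'' is correctly rescued from the failure of countable subadditivity by \cref{daer} (finite truncations plus finite subadditivity), and the tautness argument via \cref{beh3} works once one adds the small missing step $\ov{\bdelta}(c\N\setminus c\cm_\sA)=\frac1c\bdelta(\cf_\sA)=0$ for Behrend $\sA$. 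One genuine (if degenerate) defect: when $\sB$ is Behrend your $\sC$ is the set of all primes, which is not taut (it contains $1\cdot\sA$ with $\sA$ Behrend); the theorem is then only salvageable by allowing $\sC=\{1\}$, and the paper sidesteps this by treating Behrend $\sB$ separately in its applications.

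The real gap is the Besicovitch clause, which you explicitly do not prove, and your proposed route cannot close. You need $\ov{d}(\cm_\sC\setminus\cm_\sB)=0$. Since $\ov{d}(\cf_\sC)=\bdelta(\cf_\sC)$ by \cref{daer}, one checks that $\ov{d}(\cm_\sC\setminus\cm_\sB)=d(\cf_\sB)-\un{d}(\cf_\sC)$, so the claim is \emph{exactly equivalent} to $\sC$ being Besicovitch; likewise, your plan of showing ``the tail contribution $\cm_\sC\setminus\cm_{\sC(m)}$ is uniformly small in density'' is again precisely the statement that $\sC$ is Besicovitch (compare $\ov{d}(\cm_\sC)\leq d(\cm_{\sC(m)})+\ov{d}(\cm_\sC\setminus\cm_{\sC(m)})$ with $d(\cm_{\sC(m)})\to\un{d}(\cm_\sC)$). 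The Besicovitch hypothesis on $\sB$ controls $\cm_\sB\setminus\cm_{\sB(k)}$, but that gives no handle on the new mass $\bigcup_{c\in\sC}(c\N\cap\cf_\sB)$: each term here does have upper \emph{density} zero (not merely $\ov{\bdelta}=0$, since $c\N\cap\cf_\sB=c\,\cf_{\sB_c}$ with $\sB_c$ Behrend and $\ov{d}(\cf_{\sB_c})=\bdelta(\cf_{\sB_c})=0$ by \cref{daer}), but a countable union of upper-density-zero sets need not be upper-density zero, and periodicity of the truncations does not supply the required uniformity. So the ``Moreover'' part genuinely requires the additional input of Lemma~4.11 of the cited work (which obtains it from the measure-theoretic analysis of $\1_{\cf_\sB}$), and deferring to it leaves that half of the theorem unproved.
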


\begin{proof}[Proof of \cref{c_ave-rec-for-B-free}]
Let $\sB\subset \N\setminus\{1\}$ be Besicovitch. If $\sB$ is Behrend then
$\cf_\sB$ has zero density and so no shift of
$\cf_\sB$ is divisible or good for averaging polynomial recurrence.
In this case we can put $D=\emptyset$ and we are done.
Thus, let us assume that $\sB$ is not Behrend.
In view of \cref{thm_poly-multi-rec-along-rat}, it suffices to
find a set $D\subset \cf_\sB$ with $d(\cf_\sB\setminus D)=0$ and
such that $\cf_\sB-r$ is divisible if and only if $r\in D$.
Pick $\sC\subset \N\setminus\{1\}$ taut with $\cf_{\sC}\subset \cf_{\sB}$ and $d(\cf_{\sC})=d(\cf_{\sB})$; the existence of $\sC$ is guaranteed by \cref{thm1}.

We make the claim that one can choose $D:=\cf_\sC$.
In particular, if $\sB$ is taut then one can choose $D=\cf_\sB$.

To verify this claim, we invoke \cref{l:divisibility-of-taut-case},
which tells us that
$\bdelta\big((\cf_{\mathscr{C}}-r)/u\big)>0$ if and only if $r\in\cf_\sC$.
Since $\sC$ is Besicovitch, we can replace logarithmic density
with density and conclude that $\cf_\sC-r$ is divisible if and only
if $r\in\cf_\sC$. Finally, to finish the proof, we observe that
$d(\cf_\sB\setminus \cf_\sC)=0$ and therefore
$\cf_\sB-r$ is divisible if and only if $r\in\cf_\sC$.
\end{proof}

\begin{Th}[Corollary of the proof of \cref{c_ave-rec-for-B-free}]
\label{cor:vb-iff}
In the statement of \cref{c_ave-rec-for-B-free} one has $D=\cf_\sB$ if and only if $\sB$ is taut.
\end{Th}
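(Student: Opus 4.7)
The plan is to handle the two directions of the equivalence separately. The direction asserting that tautness of $\sB$ implies $D = \cf_\sB$ follows immediately from the construction in the proof of \cref{c_ave-rec-for-B-free}: one invokes \cref{thm1} to select an auxiliary taut set $\sC$, but when $\sB$ itself is taut one may simply choose $\sC = \sB$, so that $D = \cf_\sC = \cf_\sB$. The substantive direction is the converse, which I will prove by contraposition: assuming $\sB$ is not taut, I will exhibit an explicit $r \in \cf_\sB$ for which $\cf_\sB - r$ fails to be divisible, so that $r \notin D$ and hence $D \subsetneq \cf_\sB$.

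I first dispatch the easy sub-case. If $\sB$ is Behrend, then the construction in the proof of \cref{c_ave-rec-for-B-free} yields $D = \emptyset$, while $\cf_\sB$ is nonempty (as $1 \in \cf_\sB$), so $D \neq \cf_\sB$; by \cref{beh2}, Behrend sets are not taut, and this case is consistent with the claim. I may therefore assume $\sB$ is non-Behrend. I may also assume $\sB$ is primitive, since passing to its primitive subset leaves $\cf_\sB$ unchanged. Now \cref{beh3} guarantees that the primitive non-taut $\sB$ contains a set of the form $c\mathscr{A}$, where $\mathscr{A} \subset \N \setminus \{1\}$ is Behrend. Moreover $c \geq 2$, for otherwise $\mathscr{A} \subset \sB$ would make $\sB$ itself Behrend by \cref{beh2}.

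The core step is to show that $r := c$ works. First I verify $c \in \cf_\sB$: if some $b \in \sB$ divided $c$, then $b$ would also divide $cm$ for every $m \in \mathscr{A}$, and since $m \geq 2$ one has $b \leq c < cm$, producing a pair of distinct comparable elements of $\sB$ and contradicting primitivity. Second, I take $u = c$ in the divisibility criterion. From $c\mathscr{A} \subset \sB$ one gets $\cm_\sB \supset c\cm_\mathscr{A}$, hence
\[
\cf_\sB \cap c\N \;\subset\; c\N \setminus c\cm_\mathscr{A} \;=\; c\cf_\mathscr{A},
\]
and since $\mathscr{A}$ is Behrend, $d(\cf_\mathscr{A}) = 0$, so $d(\cf_\sB \cap c\N) = 0$. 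Translating by $c$ yields $d((\cf_\sB - c) \cap c\N) = 0$, which shows that $\cf_\sB - c$ is not divisible and so $c \notin D$.

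The main obstacle is the verification that $c \in \cf_\sB$: it is the only place where primitivity of $\sB$ enters in an essential way, and one must simultaneously rule out $b = c$ (prevented by $c \mid cm$ for any $m \in \mathscr{A}$) and proper divisors $b < c$ of $c$ lying in $\sB$. Once $c \in \cf_\sB$ is established, the density computation depends only on the Behrend property of $\mathscr{A}$ and is routine.
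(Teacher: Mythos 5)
Your proof is correct, and for the substantive direction it takes a genuinely different route from the paper. The paper treats this theorem as a corollary of the proof of \cref{c_ave-rec-for-B-free}: there $D$ is identified with $\cf_\sC$ for the taut set $\sC$ supplied by \cref{thm1}, so the implicit argument for the ``only if'' direction is that $D=\cf_\sB$ forces $\cf_\sC=\cf_\sB$, hence $\cm_\sC=\cm_\sB$, hence (both sets being primitive) $\sC=\sB$, which is taut. You instead argue by contraposition and produce an explicit witness: using \cref{beh3} you extract $c\mathscr{A}\subset\sB$ with $\mathscr{A}$ Behrend and $c\geq 2$, check via primitivity that $c\in\cf_\sB$, and then show $d\big((\cf_\sB-c)\cap c\N\big)=0$, so $c\notin D$. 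This is self-contained modulo \cref{beh2}, \cref{beh3} and the characterization of $D$ by divisibility coming from the statement of \cref{c_ave-rec-for-B-free}; it avoids both \cref{thm1} and the uniqueness of the primitive set generating a given set of multiples, and it has the added value of exhibiting a concrete non-divisible shift. The ``if'' direction is handled exactly as in the paper (take $\sC=\sB$). One small caveat: your reduction ``I may assume $\sB$ is primitive'' is only harmless because the paper's standing convention already takes $\sB$ primitive -- if $\sB$ were genuinely non-primitive with a taut primitive part $\sB_0$, then $\sB$ would be non-taut while $D=\cf_{\sB_0}=\cf_\sB$, so the stated equivalence must be read under that convention; your argument, like the paper's, is correct under it.
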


\begin{Remark}
Let $\sB\subset \N\setminus\{1\}$ be Besicovitch and taut (hence $d(\cf_\sB)>0$).
Here is the summary of equivalent conditions that we obtained in this section.
%To summarize the above discussion, we present the following extensive list of equivalences:
\begin{enumerate}[(a)]
\item
$a\in \cf_\sB$,\label{R:a}
\item
$d(\mathcal{M}_\sB\cup a\N) > d(\mathcal{M}_\sB)$,\label{R:b}
\item
$\cf_\sB-a$ is divisible,\label{R:c}
\item
$(\cf_\sB-a)\cap u\N\neq \emptyset$ for all $u\in \N$,\label{R:d}
\item
$\cf_\sB-a$ is an averaging set of polynomial multiple recurrence,\label{R:e}
\item
$\cf_\sB-a$ is an averaging set of polynomial single recurrence,\label{R:f}
\end{enumerate}
The following diagram describes the logical connections between the above statements.
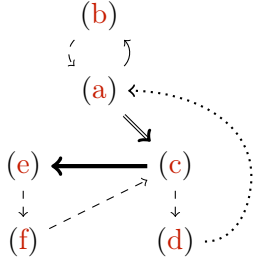
\begin{figure}[h]
\begin{center}
\begin{tikzpicture}[baseline=(current  bounding  box.center)]
%punkty
\node (A) at (0,0) {$\eqref{R:a}$};
\node (B) at (0,1) {$\eqref{R:b}$};
\node (C) at (1,-1) {$\eqref{R:c}$};
\node (D) at (1,-2) {$\eqref{R:d}$};
\node (E) at (-1,-1) {$\eqref{R:e}$};
\node (F) at (-1,-2) {$\eqref{R:f}$};
\node (G) at (-1,-3) {};
\node (left) at (-2,-1.5) {};
\node (right) at (2,-1) {};
\draw[->,dashed]
	(B) edge[bend right=45] (A)
	(E) edge node[auto] {} (F)
	(C) edge node[auto] {} (D)
	(F) edge node[auto] {} (C);
%\draw[->,dashed]
%	(B.west) to[out=180,in=90] (-1,0.5) to[out=-90,in=180] (A.west);
\draw[->]	
	(A) edge[bend right=45] node[auto] {} (B);
\draw[->, ultra thick]
	(C) edge node[auto] {} (E);
\draw[->]	
	(A) edge[double] node[auto] {} (C);
\draw[->,dotted,thick]	
	(D.east) to[out=0,in=-90] (right) to[out=90,in=0] (A.east);
%\draw[->,dotted,thick]	
%	(G.west) to[out=180,in=-90] (left) to[out=90,in=180] (A.west);
\end{tikzpicture}
\end{center}
\caption{Dashed arrows: trivial implications which hold for any $\sB\subset \N\setminus\{1\}$. The dotted arrow: this implication was explained in the paragraph before \cref{c_ave-rec-for-B-free}. The short plain arrow: implication follows from \cite{Ba-Ka-Ku-Le}. The thick arrow: this implication follows from \cref{thm_poly-multi-rec-along-rat}. The double arrow: implication proved in \cref{l:divisibility-of-taut-case}.}
\label{figure11}
\end{figure}
\end{Remark}

\section{Rational dynamical systems}
\label{sec_rds}

The purpose of \cref{sec_rds} is to give a proof of (slightly more general versions of) Theorems \ref{thm_orb-clos-of-rat} and \ref{thm_poly-multi-rec-along-disc-rat-spec}. In Subsection \ref{sec:drs.e} we define rational and W-rational subshifts and we give a variety of examples.
In Subsection \ref{sec_rational-subshifts} we extend the notion of rational subshifts to the notion of rational subshifts along increasing subsequences. Finally, in Subsections \ref{sec_inv-meas} and \ref{sec_revisit} we formulate and prove extensions of Theorems \ref{thm_orb-clos-of-rat} and \ref{thm_poly-multi-rec-along-disc-rat-spec}.

\subsection{Definition of rational subshifts. Examples}
\label{sec:drs.e}
In this section we define and give examples of symbolic dynamical systems
determined by RAP sequences. We will refer to such systems as \emph{rational subshifts}.

Consider the product space $\ca^\Z$, where $\ca$ is a finite set ({\em alphabet}). We endow $\ca$ with the discrete metric $\rho$ and $\ca^\Z$ with the product topology induced by $(\ca,\rho)$; in particular $\ca^\Z$ is compact and metrizable.
Let $S \colon \ca^\Z\to \ca^\Z$ be the left shift, i.e.\
$S((x(n))_{n\in\Z})=(y(n))_{n\in\Z}$, where $y(n)=x(n+1)$ for each $n\in\Z$.

Recall that for any closed and $S$-invariant subset $X\subset \ca^\Z$, the system $(X,S)$ is called a \emph{subshift} of $(\ca^\Z,S)$.
Recall that for $x\in \ca^\Z$ (or $x\in \ca^\N$) and $n<m$, $x[n,m]=(x(n),x(n+1),\ldots,x(m))$ is said to be a word appearing in $x$.

Given $\eta\in \ca^\N$, the set
$$X_\eta:=\{x\in\ca^\Z : (\forall n<m)(\exists k\in\N)\;\; x[n,m]=\eta[k,k+m-n-1]\}$$
is closed and $S$-invariant. It is the \emph{subshift determined by $\eta$}.

%Similarly to~\eqref{eqn:B-metric-on-0-1} and~\eqref{eqn:B-metric-on-C} we can consider
Recall, in \cref{def_rational} we introduced
the pseudo-metric of Besicovitch,
\begin{equation}
\label{eqn:B-metric-on-0-1-sec}
d_B(x,y):=\limsup_{N\to\infty}\frac1N\sum_{n=1}^N\rho(x(n),y(n))
\end{equation}
%and, in analogy with \cref{def_rational},
and defined {\em rationally almost periodic} (RAP) sequences, which are sequences that can be approximated in the $d_B$-pseudo-metric by periodic sequences.

\begin{Def}\label{d:rational-subshifts-0}
A subshift $(X,S)$ of $(\ca^\Z,S)$ is called
{\em rational} if there exists a RAP sequence
$\eta\in\ca^\N$ such that $X=X_\eta$.
\end{Def}

%We will give a more general definition of rational subshifts along a subsequence in the next subsection (see \cref{d:rational-subshifts}).

%To motivate our interest in the dynamics of such systems, let us give a variety of examples of well-known dynamical systems that are rational.

We now present some examples of rational subshifts.

\paragraph{The squarefree subshift.}
Consider the set $\sqfree$ of squarefree numbers and let
$X_{\sqfree}:=X_{\1_{\sqfree}}\subset\{0,1\}^\Z$.
The resulting topological dynamical system $(X_\sqfree,S)$ is called the \emph{squarefree subshift}
and has been studied in \cite{MR3055764,Peckner12,sarnak-lectures}.
Naturally, many combinatorial properties of $\sqfree$ are
encoded in the dynamics of $(X_{\sqfree},S)$, which
further motivates the study of this system.
This line of investigation is related to Sarnak's conjecture, see \cite{sarnak-lectures,2014arXiv1410} and \cref{secSar}.
%, which is one of many interesting ways of connecting dynamics and number theory (also, see \cite{2014arXiv1410} and \cref{secSar}).

\paragraph{$\sB$-free subshifts.}
For $\sB\subset\N\setminus\{1\}$ let $\cf_\sB$ denote the set of $\sB$-free numbers and let
$X_{\cf_\sB}:=X_{\1_{\cf_\sB}}\subset\{0,1\}^\Z$.
%$\eta:=\1_{\cf_\sB}$. Then
%$X_{\eta}=X_{\1_{\cf_\sB}}\subset\{0,1\}^\Z$
The system $(X_{\cf_\sB},S)$ is called the \emph{$\sB$-free subshift}. Such subhifts have been studied recently in
\cite{Ab-Le-Ru,Ba-Ka-Ku-Le,MR3356811}. If the set
$\sB$ is Besicovitch (see \cref{def:bes})
then it follows from \cref{c_rat-0} that $(X_{\cf_\sB},S)$
is a rational subshift. Note that the squarefree subshift $(X_Q,S)$ is an example of a $\sB$-free subshift.

\paragraph{Toeplitz systems.}
Following \cite{MR0255766}, a sequence $\eta \in\ca^{\N}$ is called \emph{Toeplitz},
if for each $n\geq0$ there is $p\geq1$ such that
\begin{equation}\label{toep01}\mbox{
$\eta(n)=\eta(n+s p)$ for all $s\in\N$.}\end{equation}
In this case the subshift $(X_\eta,S)$ is called a \emph{Toeplitz system}.
In \cite{MR2180227}, Downarowicz characterized Toeplitz dynamical systems as being exactly all symbolic,
minimal and almost 1-1 extensions of odometers.

If additionally
\begin{equation}\label{toep02}
\limsup_{p\to\infty}~\overline{d}\big(\{n\in\N:\eta(n)=\eta(n+s p)~\text{for all}~s\in\N\}\big)=1
\end{equation}
then the Toeplitz sequence $\eta$ is called {\em regular}.
%\textcolor{blue}{Regular Toeplitz systems can also be described as the quasi-uniform limit of periodic subshifts \cite{MR951081,MR0255766}.}
It follows from \eqref{toep02} that any regular Toeplitz sequence is RAP (in fact, it is straightforward to check that a Toeplitz sequence is regular if and only if it is RAP).
Therefore Toeplitz systems coming from regular Toeplitz sequences are rational subshifts.

\paragraph{Weyl almost periodic sequences and Weyl rational subshifts.}
%Consider another classical pseudo-metric $d_W$ on $\ca^{\N}$, the \emph{Weyl pseudo-metric} (cf. \eqref{eq:d_W-1} on page \pageref{eq:d_W-1}):
We recall the definition of the \emph{Weyl pseudo-metric} $d_W$ (see \cref{def:wrap-0-1}),
$$
d_W(x,y)=\limsup_{N\to\infty}\sup_{\ell\geq1}\frac1N\left|\{\ell\leq n\leq \ell+N: x(n)\neq y(n)\}\right|.
$$
Any $d_W$-limit of periodic sequences is called {\em Weyl rationally almost periodic sequence} (WRAP)\label{WRAP}.
The subshift $(X_\eta,S)$ determined by a WRAP sequence $\eta\in\ca^\N$ is called \emph{W-rational}.\footnote{W-rational subshifts are a special kind of \emph{Weyl almost periodic systems}, see \cite{MR951081,MR980516}.}
Each WRAP sequence is RAP and hence any W-rational subshift is a rational subshift. Note that the reverse implication does not hold. For example, the indicator function $\1_\sqfree$ of the squarefree numbers $\sqfree$ is a sequence that is RAP but nor WRAP.
It should also be mentioned that any regular Toeplitz sequence is WRAP, which can be shown easily using the definition of regular Toeplitz sequences.

\paragraph{Sequences generated by synchronized automata.}
Paperfolding sequences, which were introduced in \cref{sec_intro} (see footnote \ref{ftn:paperfolding}), provide examples of rational sequences generated by so-called synchronized automata.  

Let $k\in\N$, let $\alphB:=\{0,1,\ldots,k-1\}$, let $\ca$ be a finite alphabet, let $Q:=\{q_0,\ldots,q_r\}$ be a finite set, let $\tau:Q\to\ca$ and let $\delta\colon Q\times \alphB \to Q$.
The quintuple $\Automaton=(Q,\alphB,\delta,q_0,\tau)$ is called a \emph{complete deterministic finite automaton} with \emph{set of states} $Q$, \emph{input alphabet} $\alphB$, \emph{output alphabet} $\ca$, \emph{transition function} $\delta$, \emph{initial state} $q_0$ and \emph{output mapping} $\tau$.
Let $\alphB^*$ denote the collection of all finite words in letters from $\alphB$. There is a natural way of extending $\delta\colon Q\times \alphB \to Q$ to $\delta\colon Q\times \alphB^* \to Q$: for the empty word $\epsilon\in\alphB^*$ we define $\delta(q,\epsilon):=q$, $q\in Q$, and for a non-empty word $w=w_1\ldots w_n \in\alphB^*$ we define recursively $\delta(q,w_1\ldots w_n):=\delta(\delta(q,w_1\ldots w_{n-1}),w_n)$, $q\in Q$. 
This way, we can associate to each word $w\in\alphB^*$ an element $a\in\ca$ via $a=\tau(\delta(q_0,w))$.
%we can associate to each word $w\in\alphB^*$ either the number $1$ (which stands for \emph{accepted}) if $\delta(q_0,w)$ is contained in the set of accepted sates $F$, or the number $0$ (which stands for \emph{declined}) if $\delta(q_0,w)\in Q\setminus F$.
For more details on deterministic finite automata see \cite[Section 4.1]{MR1997038}.

%Let $B=\{0,1,\ldots,b-1\}$, where $b\geq 2$. Consider a complete deterministic automaton $\Automaton=(Q,B,i,\delta)$, where $\delta\colon Q\times B\to Q$ is the transition function and $i\in Q$ is the initial state. We denote by the same letter the natural extension of $\delta$ to $Q\times B^\ast$ ($\delta(q,\vep)=q$, where $\vep$ stands for the empty word), where $B^\ast=\bigcup_{j\geq0}B^j$.

%Let $\Lambda$ be the output alphabet with the output function $\tau\colon Q\to \Lambda$.
Given $n\in \N$, we consider its expansion in base $k$, i.e.
$$
n=\sum_{j\geq0}\vep_jk^j, \mbox{where $\vep_j\in\alphB$, $j\geq0$}.
$$
In this representation $\vep_j=0$ for all but finitely many $j\geq 0$. Let $j_n$ be the largest index such that $\vep_{j_n}\neq0$.
We then set $[n]_k:=(\vep_{j_n},\vep_{j_n-1},\ldots,\vep_0)$; note that $[n]_k\in\alphB^*$ for all $n\in\N$.
\begin{Def}
Following \cite{MR2590264}, we say that a sequence $a\in\ca^\N$ is {\em automatic} if there exists a complete deterministic finite automaton $\Automaton$ as above such that $a(n)=\tau(\delta(q_0,[n]_k))$ for all $n\in\N$.
\end{Def}

\begin{Def}\label{def_synchro}
Following \cite{MR2567477}, Part~4, an automaton $\Automaton=(Q,\alphB,\delta,q_0,\tau)$ is called {\em synchronized} if there exists a word $w\in \alphB^\ast$ such that $\delta(q,w)=\delta(q_0,w)$ for all $q\in Q$. In this case, the word $w$ is called a \emph{synchronizing word}.
\end{Def}

While not every automatic sequence is RAP\footnote{The Thue-Morse sequence (which was independently discovered by Thue \cite{Thue1,Thue2} and Morse \cite{MR1507944,MR0000745}) is known to be automatic (see \cite{MR1997038}), but it is not RAP as it is a generic point for a measure such that the corresponding dynamical system has no purely discrete spectrum\cite{Keane}, see \cref{thm_orb-clos-of-rat}.}, any automatic sequence coming from a synchronized automaton is not only RAP but also WRAP.
This result, which we state as a proposition below, has been shown implicitly in \cite{De-Dr-Mu}. For the sake of completeness, we give a proof of it in \cref{SAS}.

\begin{Prop}[see \cite{De-Dr-Mu}] \label{synch-ratio}
Each automatic sequence generated by a synchronized automaton is WRAP.
\end{Prop}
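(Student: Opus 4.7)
The key feature of a synchronizing word $w=w_1w_2\cdots w_L\in\alphB^L$ is that once the automaton consumes $w$, it enters the state $q^*:=\delta(q_0,w)$ independently of its prior state. The plan is to exploit this to construct, for every $M\geq 1$, a periodic sequence $\widetilde a_M\in\ca^\N$ of period $k^{M+L}$ which agrees with the automatic sequence $a$ on a set whose complement has density tending to $0$ with $M$. Because a periodic set has $d_W$-density equal to its natural density, this will yield $d_W(a,\widetilde a_M)\to 0$, proving that $a$ is WRAP.

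Write the base-$k$ digits of $n$ as $\vep_{j_n}\vep_{j_n-1}\cdots\vep_0$, which the automaton reads from left to right. Say that $w$ \emph{ends at position $r$} in the expansion of $n$ if $\vep_{r+L-1}\vep_{r+L-2}\cdots\vep_r=w_1w_2\cdots w_L$, and when such $r$ exist let $r(n)$ be the smallest one. By the synchronizing property applied at this position, the automaton is in state $q^*$ right after reading $\vep_{r(n)}$. Hence $a(n)=g_{r(n)}(n\bmod k^{r(n)})$, where $g_r\colon\{0,\ldots,k^r-1\}\to\ca$ is defined by feeding the length-$r$ base-$k$ expansion of its argument (padded with leading zeros) into the automaton starting from $q^*$ and returning $\tau$ of the resulting state.

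Fix $M\geq 1$ and put $T_M:=\{n\in\N:r(n)\leq M\}$. Both the predicate $n\in T_M$ and, when it holds, the values of $r(n)$ and of $n\bmod k^{r(n)}$ depend only on $\vep_0,\ldots,\vep_{M+L-1}$, i.e.\ on $n\bmod k^{M+L}$. One can therefore define a periodic sequence $\widetilde a_M\in\ca^\N$ of period $k^{M+L}$ by mimicking the formula $a(n)=g_{r(n)}(n\bmod k^{r(n)})$ on $T_M$ and setting $\widetilde a_M$ to an arbitrary fixed letter on $T_M^c$, so that $\{n:a(n)\neq\widetilde a_M(n)\}\subseteq T_M^c$. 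The set $T_M^c$ is periodic modulo $k^{M+L}$, hence $d_W(\1_{T_M^c},0)$ equals its natural density, which is the proportion of length-$(M+L)$ words over $\alphB$ in which $w$ occurs at none of the positions $0,1,\ldots,M$. Partitioning the available positions into $\lfloor M/L\rfloor+1$ disjoint length-$L$ windows and using independence of uniform digits in these windows, this proportion is at most $(1-k^{-L})^{\lfloor M/L\rfloor+1}\to 0$, whence $d_W(a,\widetilde a_M)\to 0$.

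The only delicate point is the structural identity $a(n)=g_{r(n)}(n\bmod k^{r(n)})$: one must check that \emph{any} ending position $r$ of $w$ in the expansion of $n$ yields the same value (so taking the smallest $r$ is convenient but not forced, since the synchronizing property re-resets the state to $q^*$ at each re-occurrence of $w$), and one must treat leading zeros in the suffix $\vep_{r(n)-1}\cdots\vep_0$ as genuine input symbols, so that $g_r$ is defined using the length-$r$ padded expansion rather than the minimal one. Once this bookkeeping is in place, the reduction to a combinatorial estimate for the density of length-$(M+L)$ words avoiding the pattern $w$ is completely routine.
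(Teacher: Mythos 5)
Your proof is correct and follows essentially the same route as the paper's: both approximate $a$ by a sequence that is periodic modulo a power of $k$, determined by the occurrence of a synchronizing word among the low-order base-$k$ digits, and both bound the exceptional set (itself periodic, hence with $d_W$-density equal to its natural density) by a factor-avoidance count. The paper phrases that count as ``at least $k^{n_1}(1-\vep)$ words of length $n_1$ are synchronizing'' (using that every extension of a synchronizing word is synchronizing), which is exactly the disjoint-window estimate you carry out explicitly.
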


%\cref{synch-ratio} shows that subshifts determined by automatic sequences generated by synchronized automata are rational. Subshifts of this kind have been studied recently in \cite{De-Dr-Mu} in connection with Sarnak's conjecture. We include a more detailed discussion on this topic in \cref{secSar}.
%\subsection{Basic properties of rational and W-rational subshifts} \label{sec_bprwrss}

As RAP sequences can be approximated by periodic sequences, one may be tempted to believe that the dynamics of rational subshifts is similar to the dynamics of certain low-complexity systems such as translations on compact groups.
However, rational subshifts exhibit a wide variety of dynamical properties. For instance:
\begin{itemize}
\item
Rational subshifts can have positive topological entropy (for the definition of topological entropy see for instance \cite[Section 6.3]{MR833286}); for example, rational $\mathscr{B}$-free subshifts can have positive topological entropy, see \cite{Ab-Le-Ru,Ba-Ka-Ku-Le,Peckner12}. Moreover, they can have many invariant measures \cite{MR3356811}.
\item
Rational subshifts can be proximal\footnote{A dynamical system $(Y,T)$ is {\em proximal} if any pair $y,z\in Y$ is proximal, i.e.\ there exists a sequence $(n_k)$ such that $d(T^{n_k}y,T^{n_k}z)\to0$. 
In particular, in any such system $y$ and $Ty$ are proximal and it follows that proximal systems have exactly one fixed point to which all other points are proximal.}; in fact, the
squarefree subshift is an example of a proximal rational subshift \cite{sarnak-lectures}.
\item
Rational subshifts can be topologically mixing (see \cref{topmixing} for a proof).
\item
If $(X,S)$ is a rational subshift of positive entropy, then there is 
%For each positive entropy rational subshift $(X,S)$ there are
$y\in X$ which is not RAP. As a matter of fact, no generic point for a measure of positive entropy is RAP (this follows from \cref{t:Benji} below).
%Moreover, each sequence $y\in\ca^\Z$ belongs to $X_\eta$ for some rational $\eta\in\ca^\N$ (by choosing $\ov{\eta}\in\ca^{\N}$ periodic and then replacing $\ov{\eta}[r_n,r_n+2n]$ by $y[-n,n]$, $n\geq1$, where $r_n+2n<r_{n+1}$, and $r_n$ increases very rapidly, to obtain $\eta$ which is RAP).
\end{itemize}

%There are, however, subclasses of rational subshifts consisting of rather structured and deterministic dynamical systems. For example, W-rational subshifts behave much more regularly, as \cref{prop-3.5} and \cref{l:flo} illustrate.

On the other hand, W-rational subshifts have much more regular properties than general rational subshifts. This is illustrated by the following two propositions.

\begin{Prop}[\cite{MR951081,MR0255766}]\label{prop-3.5}
If $x$ is WRAP then $(X_x,S)$ is uniquely ergodic and has zero topological entropy.\footnote{It is shown in \cite{MR951081} that for subshifts on finite alphabet the entropy function is $d_W$-continuous.} In particular, any non-trivial $W$-rational subshift is not proximal.
\end{Prop}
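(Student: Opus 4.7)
The plan is to transfer the properties of periodic subshifts -- which are trivially strictly ergodic with zero entropy -- to $X_x$ via the $d_W$-approximation. Fix periodic sequences $(y_n)$ with $d_W(x,y_n)\to 0$.

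First, I will establish that every finite word $w$ has a well-defined frequency in $x$ which is uniform in starting position. Each $y_n$ being periodic, every length-$k$ word $w$ has a uniform frequency $f_n(w)$ in $y_n$. Each disagreement between $x$ and $y_n$ at a position $j$ can only affect the length-$k$ word-occurrence indicator at one of the positions $j-k+1,\ldots,j$, so
\[
\sup_{\ell\geq 1}\Big|\frac{1}{N}\big|\{\ell\leq j\leq \ell+N-1:x[j,j+k-1]=w\}\big|-f_n(w)\Big|\leq k\cdot d_W(x,y_n)+\vep_n(N),
\]
where $\vep_n(N)\to 0$ as $N\to\infty$ for each fixed $n$ (using the periodicity of $y_n$ to obtain uniformity in $\ell$). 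Letting first $N\to\infty$ and then $n\to\infty$ produces a well-defined limit $f(w):=\lim_n f_n(w)$ attained by the $x$-frequencies uniformly in $\ell$.

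Next, this uniform-in-$\ell$ convergence implies that every $z\in X_x$ is generic for a common $S$-invariant Borel probability measure $\nu$ on $X_x$ determined by $\nu([w])=f(w)$; this yields unique ergodicity. For minimality, one argues that every word $w$ occurring in some $z\in X_x$ satisfies $f(w)>0$ and hence appears with bounded gaps in every $z\in X_x$, forcing the orbit closure of any $z$ to equal $X_x$.

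Zero topological entropy follows from the $d_W$-continuity of the entropy function on finite-alphabet subshifts (quoted in the footnote and proved in \cite{MR951081}) together with the fact that each periodic subshift has zero entropy. Finally, the ``in particular'' assertion is abstract: a proximal topological dynamical system has a unique minimal subset, which is necessarily a fixed point, so minimality combined with non-triviality precludes proximality. The delicate step is minimality, which requires a more refined use of the WRAP hypothesis than the mere convergence $d_W(x,y_n)\to 0$ -- this is precisely the feature that distinguishes W-rational subshifts from general rational subshifts (which, as the squarefree case shows, can fail to be minimal).
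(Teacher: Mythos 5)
The paper itself gives no argument for this proposition --- it is quoted from the two cited references --- so there is nothing internal to compare against; your proposal has to stand on its own. The parts of it that concern unique ergodicity and entropy do stand: the estimate $k\cdot d_W(x,y_n)$ for the effect of disagreements on length-$k$ word counts is correct and uniform in the starting position, it transfers to every $z\in X_x$ because every window of $z$ occurs in $x$, and uniform convergence of Birkhoff averages of cylinder functions yields unique ergodicity; zero entropy is an honest appeal to the $d_W$-continuity of entropy quoted in the footnote; and the deduction of non-proximality from minimality plus non-triviality is standard.

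The genuine gap is exactly where you flag the argument as ``delicate'': the assertion that every word occurring in some $z\in X_x$ satisfies $f(w)>0$ does not follow from the WRAP hypothesis, and is in fact false. Take $x=1\,0\,1\,0^2\,1\,0^3\,1\,0^4\cdots$, with $1$'s at the triangular numbers. Any window of length $N$ contains $O(\sqrt{N})$ ones, so $d_W(x,(0,0,0,\ldots))=0$ and $x$ is WRAP; yet the word $1$ occurs in $x$ while $f(1)=0$. Worse, since $0^a10^b$ occurs in $x$ for all $a,b$, the subshift $X_x$ contains both the two-sided zero sequence and the point with a single $1$, hence is a non-trivial, non-minimal (indeed proximal) subshift. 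So no argument can close this gap as stated: the obstruction is that $d_W$ is only a pseudo-metric, and $d_W(x,y)=0$ does not force $X_x=X_y$, so the language of $x$ can be strictly larger than that of its periodic approximants. To salvage minimality one needs an additional input --- e.g.\ that every word of $x$ occurs with positive uniform lower frequency (equivalently, syndetically), or one must replace $X_x$ by the topological support of the unique invariant measure you constructed. As written, your proof establishes unique ergodicity and zero entropy, but not strict ergodicity or the ``in particular'' clause.
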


\begin{Prop}[cf. {\cite[Lemma 4]{MR980516}}]\label{l:flo}
Let $x\in \ca^\N$ be WRAP and suppose
$z\in X_x$. Let $z|_{\N}\in\ca^\N$ denote the restriction of $z\in\ca^\Z$
to $\N$. Then $z|_\N$ is also WRAP.
\end{Prop}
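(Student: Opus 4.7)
The plan is to build a periodic approximant of $z|_\N$ by shifting a periodic approximant of $x$ by a ``phase'' determined by $z$. Given $\vep>0$, the WRAP property of $x$ furnishes a $p$-periodic $y\in\ca^\N$ with $d_W(x,y)<\vep/2$, and hence an $N_0$ beyond which $x[\ell,\ell+N-1]$ and $y[\ell,\ell+N-1]$ differ on fewer than $\vep N/2$ positions, uniformly in $\ell$. The aim is to produce a shift $y'(n):=y(n+r^*)$ of $y$ (still $p$-periodic) such that every window of $z$ aligns, via the embedding of $z$'s words into $x$, with the corresponding window of $y'$.

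The shift $r^*$ will be extracted by a pigeonhole/compactness argument. For each $N$ let $R^{(N)}\subset \Z/p\Z$ consist of those residues $r$ for which some occurrence $z[1,N]=x[k,k+N-1]$ has $k\equiv 1+r\pmod{p}$. These sets are non-empty (because $z\in X_x$), finite, and nested decreasing in $N$, so their intersection is non-empty; pick any $r^*$ in it. This residue records the ``phase'' of $z$ relative to the periodic structure of $y$.

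The main step is to verify that this single residue $r^*$ is compatible with every window $z[\ell,\ell+N-1]$, not merely with prefixes $z[1,N]$. Given $\ell\geq 1$ and $N\geq N_0$, choose $L\geq \ell+N-1$ large enough that $r^*\in R^{(L)}$, fix $k_L\equiv 1+r^*\pmod{p}$ realizing $z[1,L]=x[k_L,k_L+L-1]$, and slide: the index $k:=k_L+\ell-1$ yields $z[\ell,\ell+N-1]=x[k,k+N-1]$ with $k\equiv \ell+r^*\pmod{p}$. By $p$-periodicity of $y$, this congruence converts disagreements between $z$ and $y'$ on $[\ell,\ell+N-1]$ bijectively into disagreements between $x$ and $y$ on $[k,k+N-1]$, which number at most $\vep N/2$. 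Taking $\sup_\ell$ and then $\limsup_N$ yields $d_W(z|_\N,y')\leq \vep/2$, and letting $\vep\to 0$ shows that $z|_\N$ is WRAP.

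I expect the main obstacle to be precisely the compatibility of a single phase $r^*$ across all starting positions $\ell$; the ``nested prefix'' trick above resolves it and, pleasantly, avoids any need for the alignment to be unique (in particular, no constraint of the form $\vep<1/p$ on the smallness of $\vep$ relative to the period is required).
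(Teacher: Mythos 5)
Your proof is correct and follows essentially the same route as the paper's: both approximate $x$ by a periodic $y$, extract a single phase consistent with all embeddings of prefixes of $z$ into $x$ (the paper by pigeonholing the occurrence positions $\ell_K$ modulo the period, you by intersecting the nested residue sets $R^{(N)}$), and then use the correspondingly shifted copy of $y$ as the periodic approximant of $z|_\N$. The sliding step that transfers the phase from prefixes to arbitrary windows is likewise identical in substance to the paper's computation.
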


\begin{proof}
Let $z\in X_x$ and let $\varepsilon>0$.
Pick any periodic sequence $y\in \ca^\N$ with
$d_W(x,y)\leq \varepsilon/2$. Let $M$ denote the period of $y$. Let $\ell_K$ be such that $x(n+\ell_K)=z(n)$ for $n\leq K$. We can assume without loss of generality that there exists $0\leq i_0<M$
with $\ell_{K}\equiv i_0 \bmod M$ for all $K\in\N$.

Let $N_0$ be such that for all $N\geq N_0$, we have
\begin{equation}
\label{eqn:2}
\sup_{\ell\geq 0}\frac1N \sum_{n=1}^N\rho(x(n+\ell ),y(n+\ell ))\leq \varepsilon.
\end{equation}
Fix $N\geq N_0$ and $\ell\in\N$, and let $K\geq N+\ell$. Then, by the choice of $\ell_K$, we have
\begin{equation}\label{EQ1}
z(n+\ell )=x(n+\ell +\ell _{K})\text{ for all $n\leq N$}.
\end{equation}
Moreover, since $y$ is $M$-periodic and since $\ell_{K}\equiv i_0\bmod M$,
\begin{equation}\label{EQ2}
S^{i_0}y(n+\ell )=y(n+\ell +\ell _{K}) \text{ for each }n\in\N.
\end{equation}
Using \eqref{EQ1}, \eqref{EQ2} and \eqref{eqn:2}, we conclude that
\begin{multline*}
\frac1N \sum_{n=1}^N\rho(z(n+\ell ),S^{i_0}y(n+\ell ))=\frac1N \sum_{n=1}^{N}\rho(x(n+\ell +\ell _{K}),S^{i_0}y(n+\ell ))\\
=\frac1N \sum_{n=1}^{N}\rho(x(n+\ell +\ell _{K}),y(n+\ell +\ell _{K}))\leq\varepsilon
\end{multline*}
and the result follows.
\end{proof}

\subsection{Rationality along subsequences}
\label{sec_rational-subshifts}

In this short subsection we introduce and discuss a useful generalization of RAP sequences.
%We also intend to study sequences that can be approximated by periodic sequences in a sense that is more general than what is considered in \eqref{eqn:B-metric-on-0-1} and \eqref{eqn:B-metric-on-0-1-sec}.
%Many dynamical results that can be proved for subshifts given by RAP sequences have their immediate extension to this new context.

\begin{Def}\label{d:rational-subshifts}
Let $\Nk$ be an increasing sequence of natural numbers and
assume that $\ca$ is a finite set endowed with the discrete metric $\rho$.
\begin{itemize}
\item
For $x,y\in\ca^{\N}$, we define
(cf.\ \eqref{eqn:B-metric-on-0-1},
\eqref{eqn:B-metric-on-C} and \eqref{eqn:B-metric-on-0-1-sec}),
$$
d_B^{(N_k)}(x,y):=\limsup_{k\to\infty}\frac{1}{N_k}\sum_{n=1}^{N_k}\rho(x(n),y(n)).
$$
We say that $x\in \ca^{\N}$ is
\emph{rationally almost periodic along $\Nk$} (RAP along $\Nk$) if
$x$ is a $d_B^{(N_k)}$-limit of periodic sequences. A subset $R\subset\N$ is called {\em rational along} $\Nk$, if $\1_R$ is RAP along $\Nk$.
\item
%Assume additionally that $\ca$ is finite, $|\ca|\geq2$ (and $\rho$ is the discrete metric).
Let $(X,S)$ be a subshift of $(\ca^\Z,S)$.
We call the topological dynamical system $(X,S)$
\emph{rational along $\Nk$} if $X=X_\eta$ for some $\eta\in\ca^\N$  that is RAP along
$\Nk$.
If $N_k=k$ for all $k$ then, clearly,  $(X,S)$ a
rational subshift (see \cref{d:rational-subshifts-0}).
\end{itemize}
\end{Def}

%It is not hard to show that given a sequence $\Nk$, if $R\subset\N$ is RAP along $\Nk$ has
%existing density along $\Nk$, that is, the limit $\lim_{k\to\infty}\frac1{N_k}|\{1\leq n\leq  N_k: n\in R\}|$ exists.
\begin{Remark}\label{N1}
Let $R\subset \N$ and suppose that $R$ is rational along $\Nk$. For $u\geq 1$, let $R/u:=\{n\in\N : nu\in R\}$. Then $R/u$ is rational along $(N_k/u)_{k\geq 1}$ for any $u\geq 1$.
\end{Remark}
Clearly, any sequence that is RAP is also RAP along $\Nk$.
The following example shows that, in general, the converse does not hold. 

\begin{Example}\label{ex1}
Given an increasing sequence $(b_k)\subset\N$, we define
$$
x=x^{(b_k)}:=\underbrace{010101\ldots01}_{b_1}\underbrace{101010\ldots10}_{b_2}\underbrace{010101\ldots01}_{b_3}\ldots
$$
Note that if $(b_k)_{k\in\N}$ is increasing sufficiently fast then $x$ is RAP along $(N_{2k})_{k\geq 1}$, where $N_k:=b_1+\ldots +b_k$, $k\geq 1$. We will now show that for no choice of increasing $(b_k)$, the sequence $x$ is RAP. Suppose that there is $(b_k)_{k\in\N}$ that yields a RAP sequence $x$. Then the sequence
$$
x':=\underbrace{0\ldots0}_{b_1/2}\underbrace{1\ldots1}_{b_2/2}\underbrace{0\ldots0}_{b_3/2}\ldots
$$
must also be RAP and therefore the density $d(A)$ of
$A:=\{n\in\N : x'(n)=1\}$ exists. Notice that
$$
|\{1\leq n\leq N_{2k}/2 : x'(n)=1\}|> 1/2 \cdot N_{2k}/2,
$$
so, in particular $d(A)\geq 1/2>0$. Since $(b_k)$ is increasing, $d(A)=1/2$. Let $0<\vep<1/4$ and let $y$ be a periodic sequence with $d_B(x',y)<\vep$. Then
$|\{1\leq n\leq R : y(n)=1\}|=(\frac14\pm \vep)R$, where $R$ is a period of $y$. We will now estimate $d_B(x',y)$ from below. Fix $1\leq i\leq R$. If $y(i)=0$ then
$$
\frac{1}{N} |\{1\leq n\leq N : x'(Rn+i)\neq y(i)\}|=\frac{1}{N} |\{1\leq n\leq N : x'(Rn+i)=1\}| \to \frac1R\cdot\frac12 ,
$$
and similarly, if $y(i)=1$ then
$$
\frac{1}{N} |\{1\leq n\leq N : x'(Rn+i)\neq y(i)\}|=\frac{1}{N} |\{1\leq n\leq N : x'(Rn+i)=0\}| \to \frac1R\cdot\frac12.
$$
It follows that
\begin{multline*}
\vep\geq d_B(x',y)= \limsup_{N\to\infty}\frac{1}{N}|\{1\leq n\leq N : x'(n)\neq y(n)\}|\\
=\limsup_{N\to\infty}\frac{1}{N} \left|\bigcup_{0\leq i<R} \{1\leq n\leq N : x'(Rn+i)\neq y(i)\}\right|\\
\geq\liminf_{N\to\infty}\frac{1}{N} \left|\bigcup_{0\leq i<R} \{1\leq n\leq N : x'(Rn+i)\neq y(i)\}\right|\\
\geq \sum_{0\leq i<R}\liminf_{N\to\infty}\frac{1}{N} |\{1\leq n\leq N : x'(Rn+i)\neq y(i)\}|\\
\geq (\frac12-\vep)R\cdot \frac{1/2}{R}+(\frac12-\vep)R\cdot\frac{1/2}{R}=\frac12-\vep.
\end{multline*}
This yields a contradiction with our choice of $\vep$.
\end{Example}
\begin{Remark}
\label{rem:131}
Notice that for $x$ as in \cref{ex1} the density of the set $\{n\in \N : x(n)=1\}$ equals $\frac{1}{2}$.
This shows that there are sets that have density, are not rational but are rational along some subsequence. In fact, one can show that $x$ is a generic point for the measure $\frac12(\delta_{101010\ldots}+\delta_{010101\ldots})$.
\end{Remark}

%\begin{Example}\label{ex1-continued}
%As was mentioned in Subsection \ref{seq_applications-B-free},  in \cite{MR1512943} Besicovitch gave an example of a set $\sB$ for which the density of the set of $\sB$-free numbers $\cf_\sB$ does not exist and hence $\cf_{\sB}$ is not rational. However, if $\Nk$ is an increasing sequence such that $\underline{d}(\cf_\sB)=\lim_{k\to\infty}\frac{|\cf_{\sB}\cap \{1,\ldots,\N_k\}|}{N_k}$, then it follows from \cref{daer} that $\cf_{\sB}$ is rational along $\Nk$.
%\end{Example}
\begin{Example}\label{ex1-continued}
It was shown in Subsection \ref{seq_applications-B-free} (see \cref{c_rat-0}) that for any set $\sB\subset\N\setminus\{1\}$ the set of $\sB$-free numbers $\cf_\sB$ is rational if and only if $\sB$ is Besicovitch. In particular, if $\sB$ is not Besicovitch then $\cf_\sB$ is not rational. However, if $\Nk$ is an increasing sequence such that $\underline{d}(\cf_\sB)=\lim_{k\to\infty}\frac{|\cf_{\sB}\cap \{1,\ldots,\N_k\}|}{N_k}$, then it follows from \cref{daer} that $\cf_{\sB}$ is rational along $\Nk$.
\end{Example}

\subsection{Generalizing \cref{thm_orb-clos-of-rat}}
\label{sec_inv-meas}

%Rational subshifts support an abundance of invariant measures which yield measure preserving systems with rational discrete spectrum. The purpose of this subsection is to investigate this phenomenon more closely.

Let $\ca$ be a finite alphabet and let $(X,S)$ be a subshift of the full shift $(\ca^\Z,S)$.
Denote by $\mathcal{P}(X,S)$  the set of all $S$-invariant Borel probability measures on $X$ and by $\mathcal{P}^e(X,S)$ the subset of $\mathcal{P}(X,S)$ of ergodic measures.
By the Krylov-Bogolyubov theorem \cite{MR1503326}, $\mathcal{P}(X,S)$ is non-empty.

Given an increasing sequence $(N_k)_{k\geq 1}$ of natural numbers, we say that $x\in \ca^\N$ is \emph{quasi-generic for} $\mu\in\mathcal{P}(X,S)$ \emph{along} $(N_k)_{k\geq 1}$ if
$$
\lim_{k\to\infty}\frac{1}{N_k}\sum_{n=0}^{N_k-1}f(S^n\tilde x)= \int_X f\, d\mu
$$
for all continuous functions $f\in C(\ca^\Z)$, where
$\tilde x\in\ca^\Z$ is any two-sided sequence that \emph{extends} $x$, i.e., $\tilde x(n)=x(n)$ for all $n\in\N$. 
If $N_k=k$ for all $k$ then $x$ is \emph{generic} for $\mu$ (as was defined in \cref{sec_intro}).
Note that if $x$ is quasi-generic for $\mu$ then
$\mu\in \mathcal{P}(X_x,S)$.

The goal of this section is to show that for any RAP sequence $x\in\ca^\N$ there exists a measure $\mu$ for which $x$ is generic and the corresponding measure preserving system $(X_x,\mu,S)$ has rational discrete spectrum. As a matter of fact, we will prove a slightly more general theorem, see \cref{t:Benji} below.

A partly related result was proved by Iwanik in \cite[Theorem 2]{MR980516}.
He showed that any sequence $x\in\ca^\N$ in the class of so-called \emph{Weyl almost periodic sequences} (which is a class that contains all WRAP sequences) is generic for a measure $\mu$ and the measure preserving system $(X_x,\mu,S)$ has discrete spectrum (but not necessarily rational discrete spectrum). We refer the reader to \cite{MR980516} for the definitions of Weyl almost periodic sequences.
Our variant of Iwanik's result regarding RAP sequences seems to be new.
The authors would like to thank B.\ Weiss for fruitful discussions on the subject.

\begin{Th}\label{t:Benji}
Let $x\in \ca^{\N}$ be RAP along $\Nk$. There exists a measure $\mu$ for which $x$ is quasi-generic along $\Nk$ and the corresponding measure preserving system $(X_x,\mu,S)$ is ergodic and has rational discrete spectrum.
\end{Th}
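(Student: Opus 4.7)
The plan is to realize $\mu$ as a weak-$\ast$ limit of uniform measures on periodic orbits approximating $x$, and to exhibit $(X_x,\mu,S)$ as a measure-theoretic factor of a procyclic rotation, from which ergodicity and rational discrete spectrum come almost for free.

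First I would choose periodic approximants $y_m\in \ca^{\N}$ with periods $p_m$ satisfying $p_m\mid p_{m+1}$ and $d_B^{(N_k)}(x,y_m)\to 0$; let $\mu_m$ be the uniform measure on the shift orbit of $\tilde y_m$. For any cylinder $h:\ca^{\Z}\to \C$ depending on coordinates in $[-M,M]$ with $\|h\|_\infty\le 1$, the elementary bound $|h(S^n u)-h(S^n v)|\le 2\cdot \1_{\{\exists j\in[-M,M]:\,u(n+j)\ne v(n+j)\}}$ together with periodicity of $y_m$ yields
\[
\Big|\int h\,d\mu_{m_1}-\int h\,d\mu_{m_2}\Big|~\le~C_h\big(d_B^{(N_k)}(x,y_{m_1})+d_B^{(N_k)}(x,y_{m_2})\big),
\]
so $(\mu_m)$ is weakly Cauchy and converges to some probability measure $\mu$. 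The same estimate, this time comparing the ergodic averages of $\tilde x$ with those of $\tilde y_m$, shows that $x$ is quasi-generic for $\mu$ along $\Nk$, while applying it with $h=\1_{[w]}$ for any finite word $w$ not appearing in $x$ gives $\mu_m([w])\to 0$, so $\mu$ is supported on $X_x$.

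Next, set $G:=\varprojlim \Z/p_m\Z$, equipped with the Haar measure $m_G$ and the rotation $T_1:g\mapsto g+1$. Every character of $G$ factors through some $\Z/p_m\Z$ and takes the value a root of unity at $1$; since characters span $L^2(G,m_G)$, the system $(G,m_G,T_1)$ is ergodic and has rational discrete spectrum. Writing $y_m=\tilde f_m\circ\pi_m$ with $\tilde f_m:\Z/p_m\Z\to\ca$ and lifting to $f_m:G\to\ca$ via the projection $G\to \Z/p_m\Z$, the divisibility $p_m\mid p_{m'}$ gives
\[
\int_G \rho(f_m,f_{m'})\,dm_G~=~d_B(y_m,y_{m'})~\le~d_B^{(N_k)}(x,y_m)+d_B^{(N_k)}(x,y_{m'}),
\]
so $(f_m)$ is Cauchy in measure on $G$. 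Extract an a.e.\ limit $F:G\to \ca$ and define the $m_G$-a.e.\ map $\Phi:G\to\ca^{\Z}$ by $\Phi(g)(n):=F(T_1^n g)$ for $n\in\Z$; the $T_1$-invariance of $m_G$ ensures $\Phi$ is defined on a conull set and $\Phi\circ T_1=S\circ\Phi$.

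Finally, a direct cylinder-function computation shows $(\Phi_m)_\ast m_G=\mu_m$ for the analogous maps $\Phi_m(g)(n):=f_m(T_1^n g)$; convergence of $f_m$ to $F$ in measure upgrades to $(\Phi_m)_\ast m_G\to \Phi_\ast m_G$ in the weak-$\ast$ topology, and together with the previous paragraphs this forces $\Phi_\ast m_G=\mu$. Hence $(X_x,\mu,S)$ is a measure-theoretic factor of the ergodic rotation $(G,m_G,T_1)$. Ergodicity passes to factors, and discrete spectrum passes to factors with the factor's eigenvalues contained in those of the extension (eigenfunctions of $S$ pull back to eigenfunctions of $T_1$ with the same eigenvalue, while $L^2(X_x,\mu)\hookrightarrow L^2(G,m_G)$ combined with projection of eigenbases shows discreteness is preserved). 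Therefore $(X_x,\mu,S)$ is ergodic with rational discrete spectrum. The main obstacle is the third step: producing the a.e.\ defined factor map $\Phi$ and verifying that its pushforward coincides with the weak-$\ast$ limit $\mu$ built in the second step, which is where the Besicovitch approximation has to be converted into $L^1$-approximation on the profinite completion.
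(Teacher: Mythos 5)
Your argument is correct, and it reaches the conclusion by a genuinely different route from the paper. Both proofs share the first stage (the approximation estimates on cylinder sets that make $x$ quasi-generic along $\Nk$ for the weak-$\ast$ limit $\mu$ of the periodic-orbit measures $\mu_m$ -- this is the content of \cref{l:asympdenswords} and \cref{qge2}), and both ultimately obtain rational discrete spectrum by exhibiting $(X_x,\mu,S)$ as a measure-theoretic factor of a system that visibly has it. The difference is in how that extension is built. The paper considers the diagonal point $(x,x^{(1)},x^{(2)},\ldots)$ in an infinite product, extracts an invariant measure $\ov{\nu}$ for which it is quasi-generic, shows via \cref{l:benmar1} that $(\ca^\Z,\mu,S)$ sits inside $((\ca^\Z)^{\times\infty},\ov\nu|_{(\ca^\Z)^{\times\infty}},S^{\times\infty})$, and then uses the ergodic decomposition together with the separately proved ergodicity of $\mu$ (\cref{p:Benji1}) to land in a single \emph{ergodic joining} of the cyclic rotations. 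You instead exploit the freedom to choose nested periods $p_m\mid p_{m+1}$, replace the abstract joining by the concrete odometer $G=\varprojlim\Z/p_m\Z$, and build an explicit a.e.-defined factor map $\Phi$ as an in-measure limit of the lifted codes $f_m$; the identity $\int_G\rho(f_m,f_{m'})\,dm_G=d_B(y_m,y_{m'})$ is exactly the translation of Besicovitch approximation into $L^1(G)$ that makes this work. Your construction buys two things: ergodicity comes for free (a factor of the ergodic rotation $(G,m_G,T_1)$ is ergodic), so you do not need the correlation argument of \cref{p:Benji1}, and you avoid both the joining formalism and the ergodic-decomposition step. What the paper's route buys is robustness: the joining argument never needs the periods to be compatible and is the form in which the statement is reused elsewhere. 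One small point worth making explicit in your write-up: the identification $(\Phi_m)_\ast m_G=\mu_m$ and the passage $(\Phi_m)_\ast m_G\to\Phi_\ast m_G$ both rest on translation-invariance of $m_G$ (so that $m_G\{g: f_m(g+j)\neq F(g+j)\}=m_G\{f_m\neq F\}$ for each of the finitely many coordinates $j$ a cylinder function sees); as you note, this is the crux, and it does go through.
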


Note that \cref{thm_orb-clos-of-rat} follows immediately
from \cref{t:Benji}, if we put $N_k=k$ for all $k\geq 1$.

\begin{Remark}
Some special cases of \cref{t:Benji} are known.
It is shown
in \cite{MR3055764,sarnak-lectures}
-- in the context of the squarefree subshift $(X_\sqfree,S)$ -- that
the characteristic function of the set of squarefree numbers
is generic for a measure which yields a measure preserving system with rational discrete spectrum.
This result has been generalized to arbitrary
sets of $\sB$-free numbers $\cf_\sB$
(see \cite{Ab-Le-Ru,Ba-Ka-Ku-Le}).
Also, for regular Toeplitz sequences it is shown in \cite[Section 4]{MR0255766} that the corresponding Toeplitz system has rational discrete spectrum (with respect to the unique invariant measure). %The fact that these systems not only have discrete spectrum but actually rational discrete spectrum follows from \cref{t:Benji}.
\end{Remark}

The proof of \cref{t:Benji} hinges on four lemmas.
The first two lemmas, namely \cref{l:asympdenswords}
and \cref{qge2}, are needed to prove that any RAP
sequence is generic for an invariant probability measure.
\cref{p:Benji1} shows that the measure obtained this way is ergodic. Finally, \cref{l:benmar1} proves that the corresponding system has rational discrete spectrum.
We conclude this subsection by combining these four lemmas
to give a proof of \cref{t:Benji}.

Given $(\alpha_1,\ldots,\alpha_\ell)\in\ca^\ell$ and $n_1<\ldots<n_\ell$, we define the corresponding cylinder set
$$
C=C^{\alpha_1,\ldots,\alpha_\ell}_{n_1,\ldots,n_\ell}:=\{x\in\ca^{\Z} : x(n_j)=\alpha_j\text{ for }j=1,\ldots,\ell\}.$$
Each cylinder set is a clopen subset of $\ca^\Z$ and the family of cylinder sets forms a basis of topology on $\ca^{\Z}$. More generally, for every subshift $(X,S)$
a basis of topology is given by the clopen sets of the from $C^{\alpha_1,\ldots,\alpha_\ell}_{n_1,\ldots,n_\ell}\cap X$, $(\alpha_1,\ldots,\alpha_\ell)\in\ca^\ell$ and $n_1<\ldots<n_\ell\in\Z$.

\begin{Lemma}\label{l:asympdenswords}
Let $x,y\in \ca^{\N}$ and let $C=C^{\alpha_1,\ldots,\alpha_\ell}_{n_1,\ldots,n_\ell}$, where
$n_1,\ldots,n_\ell\in\Z$ and $\alpha_1,\ldots,\alpha_\ell \in \ca$. Then, for any two-sided sequences $\tilde{x},\tilde{y}\in\ca^\Z$ extending $x$ and $y$, we have
\begin{eqnarray}
\label{eqn:12-Nk}
\limsup_{k\to\infty}\frac{1}{N_k}
\sum_{n=1}^{N_k} \big|\1_C(S^n \tilde x)-\1_C(S^n\tilde y)\big|
&\leq & \ell d_B^{(N_k)}(x,y).
%\left(\sum_{i=1}^\ell\frac{2}{\vep_i}\right).
\end{eqnarray}
%where $\tilde{x},\tilde{y}\in\ca^\Z$ are any two-sided sequences extending $x$ and $y$.
\end{Lemma}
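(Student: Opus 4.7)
The plan is to reduce the statement to a pointwise inequality comparing $\1_C \circ S^n$ applied to $\tilde x$ versus $\tilde y$, and then match this against the Besicovitch-type averages appearing in $d_B^{(N_k)}(x,y)$.

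First, I would unpack the cylinder: $\1_C(S^n \tilde x)=1$ precisely when $\tilde x(n+n_j)=\alpha_j$ for all $j=1,\ldots,\ell$, and similarly for $\tilde y$. Consequently, if $\1_C(S^n \tilde x)\neq \1_C(S^n \tilde y)$, then there must exist at least one index $j\in\{1,\ldots,\ell\}$ with $\tilde x(n+n_j)\neq \tilde y(n+n_j)$. Using the discrete metric $\rho$ on $\ca$, this gives the pointwise bound
$$\bigl|\1_C(S^n \tilde x)-\1_C(S^n \tilde y)\bigr|\;\leq\;\sum_{j=1}^\ell \rho\bigl(\tilde x(n+n_j),\,\tilde y(n+n_j)\bigr).$$

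Next, I would average this inequality over $n=1,\ldots,N_k$, interchange the two sums, and reindex. For each fixed $j$, the inner sum becomes $\frac{1}{N_k}\sum_{m=1+n_j}^{N_k+n_j}\rho(\tilde x(m),\tilde y(m))$. Since $\tilde x,\tilde y$ agree with $x,y$ on $\N$, the terms with $m\geq 1$ coincide with $\rho(x(m),y(m))$, while only the finitely many boundary terms (at most $2|n_j|$ of them, contributing at most $2|n_j|/N_k$) may be affected by the choice of two-sided extension. Therefore, for each $j$,
$$\limsup_{k\to\infty}\frac{1}{N_k}\sum_{n=1}^{N_k}\rho\bigl(\tilde x(n+n_j),\tilde y(n+n_j)\bigr)\;=\;\limsup_{k\to\infty}\frac{1}{N_k}\sum_{n=1}^{N_k}\rho(x(n),y(n))\;=\;d_B^{(N_k)}(x,y).$$
Summing over $j=1,\ldots,\ell$ and applying subadditivity of $\limsup$ yields the desired bound $\ell\,d_B^{(N_k)}(x,y)$.

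I do not anticipate a genuine obstacle here: the only subtlety is the harmless bookkeeping around the two-sided extension and the shift by $n_j$, which vanishes in the limit because $n_j$ is fixed while $N_k\to\infty$. The conceptual content is entirely in the first step, namely that a cylinder on $\ell$ coordinates can differ between two sequences only if at least one of those $\ell$ coordinates differs, which is why the factor $\ell$ appears on the right-hand side.
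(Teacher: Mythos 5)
Your proof is correct and follows essentially the same route as the paper's: both rest on the pointwise bound that the cylinder indicator can change only if one of the $\ell$ relevant coordinates changes (the paper phrases this via $\1_{\ca^2\setminus\ca^\Delta}$, which coincides with $\rho$ for the discrete metric), followed by interchanging sums and taking the $\limsup$. Your explicit handling of the finitely many boundary terms coming from the two-sided extensions is a minor extra care the paper leaves implicit, but it is not a different argument.
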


\begin{proof}
Let $\ca^\Delta:=\{(a,a):a\in\ca\}$.
%
%For $\vep>0$ define $D_{\vep}:=\{(\alpha,\beta)\in \ca\times\ca: \rho(\alpha,\beta)\geq \vep\}$.
For any $k\geq1$,
$$
\frac1{N_k}\big|\{1\leq n\leq N_k :  x(n)\neq y(n)\}\big|= \frac1{N_k}\sum_{n=1}^{N_k}\rho(x(n),y(n)).$$
It follows that
\begin{equation}
\label{eq:disc-markov}
\limsup_{k\to\infty}\frac1{N_k}\sum_{n=1}^{N_k} \1_{\ca^2\setminus\ca^\Delta}(x(n),y(n))\leq d_B^{(N_k)}(x,y).
\end{equation}
In view of \eqref{eq:disc-markov}, the left hand side of \eqref{eqn:12-Nk} can be estimated by
\begin{eqnarray*}
\limsup_{k\to\infty}\frac1{N_k}\sum_{n=1}^{N_k} \big|\1_A(S^n \tilde{x})-\1_A(S^n\tilde{y})\big|
&\leq & \limsup_{k\to\infty}\frac1{N_k}\sum_{n=1}^{N_k} \sum_{i=1}^\ell
\1_{\ca^2\setminus\ca^\Delta}\Big(x(n+n_i),y(n+n_i)\Big)\\
&\leq &  \sum_{i=1}^\ell\limsup_{k\to\infty}\frac1{N_k}\sum_{n=1}^{N_k}  \1_{\ca^2\setminus\ca^\Delta}\Big(x(n+n_i),y(n+n_i)\Big)\\
&\leq &  \sum_{i=1}^\ell d_B^{(N_k)}(x,y).
\end{eqnarray*}
This completes the proof of \eqref{eqn:12-Nk}.
\end{proof}

\begin{Lemma}\label{qge2}
Let $x,x_n\in\ca^\N$, $n\in\N$,
and suppose $\lim_{n\to\infty}d_B^{(N_k)}(x_n,x)=0$.
If $x_n$ is quasi-generic along $\Nk$ for all $n\in\N$, then
$x$ is quasi-generic along $\Nk$.
\end{Lemma}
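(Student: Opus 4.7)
The plan is to combine the cylinder estimate of the preceding lemma with a standard diagonal/approximation argument. For each $n\in\N$ let $\mu_n$ denote the $S$-invariant Borel probability measure on $\ca^\Z$ for which $x_n$ is quasi-generic along $\Nk$. The first step is to show that the sequence $(\mu_n)$ converges weakly to some $S$-invariant probability measure $\mu$ on $\ca^\Z$. To this end, fix a cylinder $C=C^{\alpha_1,\ldots,\alpha_\ell}_{n_1,\ldots,n_\ell}$ and two-sided extensions $\tilde x_n$ of $x_n$. Applying the previous lemma to the pair $x_n, x_m$ (and using that both sides are quasi-generic along $\Nk$) yields
\[
|\mu_n(C)-\mu_m(C)| \le \ell\, d_B^{(N_k)}(x_n,x_m).
\]
By the triangle inequality $d_B^{(N_k)}(x_n,x_m)\le d_B^{(N_k)}(x_n,x)+d_B^{(N_k)}(x,x_m)\to 0$ as $n,m\to\infty$, so $(\mu_n(C))$ is Cauchy in $\R$ for every cylinder $C$.

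Since the algebra generated by cylinders is uniformly dense in $C(\ca^\Z)$ by Stone--Weierstrass, convergence of $\mu_n$ on every cylinder upgrades to weak-$*$ convergence $\mu_n\to\mu$, for some probability measure $\mu$ on $\ca^\Z$. Each $\mu_n$ is $S$-invariant, and $S$-invariance passes to weak-$*$ limits, so $\mu\in\mathcal{P}(\ca^\Z,S)$.

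The second step is to show that $x$ is quasi-generic for $\mu$ along $\Nk$. Again by Stone--Weierstrass it suffices to verify
\[
\lim_{k\to\infty}\frac{1}{N_k}\sum_{j=1}^{N_k}\1_C(S^j\tilde x) = \mu(C)
\]
for every cylinder $C$. Given $\vep>0$, choose $n$ large enough that both $d_B^{(N_k)}(x,x_n)<\vep$ and $|\mu_n(C)-\mu(C)|<\vep$. By the previous lemma,
\[
\limsup_{k\to\infty}\frac{1}{N_k}\sum_{j=1}^{N_k}\bigl|\1_C(S^j\tilde x)-\1_C(S^j\tilde x_n)\bigr|\le \ell\vep,
\]
while the quasi-genericity of $x_n$ gives $\tfrac{1}{N_k}\sum_{j=1}^{N_k}\1_C(S^j\tilde x_n)\to\mu_n(C)$. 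Combining these with the triangle inequality,
\[
\limsup_{k\to\infty}\Bigl|\frac{1}{N_k}\sum_{j=1}^{N_k}\1_C(S^j\tilde x)-\mu(C)\Bigr|\le \ell\vep+\vep,
\]
and letting $\vep\to 0$ finishes the argument.

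The only step that needs care, rather than being routine, is the passage from cylinder-wise Cauchyness to an actual limiting measure $\mu$: one must invoke that finite linear combinations of cylinder indicators are uniformly dense in $C(\ca^\Z)$ (so that the limit functional extends to $C(\ca^\Z)$, and by Riesz representation produces $\mu$), together with the fact that all $\mu_n$ are supported on the compact metrizable space $\ca^\Z$. Apart from this, the proof is essentially a two-step $3\vep$-argument driven entirely by the bound supplied by the preceding lemma.
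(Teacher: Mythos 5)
Your proof is correct and follows essentially the same route as the paper's: both reduce to indicator functions of cylinder sets by uniform approximation and then run a $3\vep$-argument driven by the estimate of \cref{l:asympdenswords}. The only (harmless) difference is that you explicitly construct the limit measure $\mu$ as the weak-$*$ limit of the $\mu_n$, whereas the paper merely shows that $\limsup$ and $\liminf$ of the averages agree, which already suffices for quasi-genericity.
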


\begin{proof}
To show that $x$ is quasi-generic, it suffices to show
that for all continuous functions $f\colon\ca^\Z\to\R$ the limit
\begin{equation}
\label{eq:denart-1}
\lim_{k\to\infty}\frac{1}{N_k}\sum_{n=1}^{N_k}f(S^n\tilde x)
\end{equation}
exists, where $\tilde{x}\in\ca^Z$ is any two-sided sequence extending $x\in\ca^\N$.
Note that any continuous function $f\colon\ca^\Z\to\R$ can be
approximated uniformly by linear combinations of characteristic functions of cylinder sets $A=C^{\alpha_1,\ldots,\alpha_\ell}_{n_1,\ldots,n_\ell}$.
Hence, we can assume without loss of generality that the function $f$
in~\eqref{eq:denart-1} is given by the indicator function of such a cylinder set.

Let $\vep>0$ be arbitrary and pick $m\geq 1$
such that $d_B^{(N_k)}(x_m,x)<\vep$. Let
$\tilde{x}_m\in\ca^\Z$ be any two-sided sequence extending $x_m\in\ca^\N$.
Then, from \cref{l:asympdenswords}, we deduce that the difference
$$
\limsup_{k\to\infty}\frac{1}{N_k}\sum_{n=1}^{N_k}f(S^n\tilde x)-\liminf_{k\to\infty}\frac{1}{N_k}\sum_{n=1}^{N_k}f(S^n\tilde x)
$$
is bounded from above by
$$
\limsup_{k\to\infty}\frac{1}{N_k}\sum_{n=1}^{N_k}f(S^n \tilde x_m)-\liminf_{k\to\infty}\frac{1}{N_k}\sum_{n=1}^{N_k}f(S^n \tilde x_m)
+ 2\ell d_B^{(N_k)}(x,x_m).
$$
But $x_m$ is quasi-generic along $\Nk$ and  therefore
$$
\limsup_{k\to\infty}\frac{1}{N_k}\sum_{n=1}^{N_k}f(S^n\tilde x_m)=\liminf_{k\to\infty}\frac{1}{N_k}\sum_{n=1}^{N_k}f(S^n\tilde x_m).
$$
This implies that
$$
\limsup_{k\to\infty}\frac{1}{N_k}\sum_{n=1}^{N_k}f(S^n\tilde x)-\liminf_{k\to\infty}\frac{1}{N_k}\sum_{n=1}^{N_k}f(S^n \tilde x)\leq 2\ell \epsilon.
$$
Since $\vep>0$ was arbitrary, this shows that the limit in \eqref{eq:denart-1} exists.
\end{proof}

Next, we need a slight generalization of
\cite[Proposition 4.6]{MR1727510}.
We include the proof for the convenience of the reader.
\begin{Lemma} [see {\cite[Proposition 4.6]{MR1727510}} for the case $N_k=k$]
\label{p:Benji1}
Suppose $x,x_j\in\ca^\N$, $j\in\N$,
and $\lim_{j\to\infty}d_B^{(N_k)}(x_j,x)=0$.
If each $x_j$ is quasi-generic along $\Nk$ for an ergodic measure, then $x$ is quasi-generic along $\Nk$ for an ergodic measure.
\end{Lemma}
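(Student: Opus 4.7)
By \cref{qge2}, the hypothesis $d_B^{(N_k)}(x_j,x)\to 0$ already implies that $x$ is quasi-generic along $(N_k)_{k\geq 1}$ for some $S$-invariant Borel probability measure $\mu$; explicitly, one can take $\mu$ to be a weak-$*$ limit of the ergodic measures $\mu_j$ for which $x_j$ is quasi-generic. The only new content of the lemma is thus to verify that this $\mu$ is \emph{ergodic}. My plan is to use the standard characterization that $\mu$ is ergodic if and only if
$$
\lim_{M\to\infty}\frac{1}{M}\sum_{h=1}^{M}\mu\bigl(A\cap S^{-h}B\bigr)=\mu(A)\mu(B)
$$
for every pair of cylinder sets $A,B$; a density argument based on the fact that cylinders generate the Borel $\sigma$-algebra then propagates ergodicity to all Borel pairs.

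The heart of the proof will be a \emph{uniform-in-$h$} approximation of the correlations of $\mu$ by those of $\mu_j$. If $A$ and $B$ are cylinders with $\ell_A$ and $\ell_B$ prescribed coordinates respectively, then $A\cap S^{-h}B$ is either empty or itself a cylinder with at most $\ell_A+\ell_B$ prescribed coordinates, \emph{regardless of} $h$. Applying \cref{l:asympdenswords} to this cylinder therefore yields
$$
\sup_{h\geq 0}\bigl|\mu_j\bigl(A\cap S^{-h}B\bigr)-\mu\bigl(A\cap S^{-h}B\bigr)\bigr|\leq(\ell_A+\ell_B)\,d_B^{(N_k)}(x_j,x)\xrightarrow[j\to\infty]{}0.
$$
Specializing to $h=0$ gives $\mu_j(A)\to\mu(A)$ and $\mu_j(B)\to\mu(B)$ as a byproduct.

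Given $\varepsilon>0$, pick $j$ so that the supremum above is less than $\varepsilon$ and also $|\mu_j(A)\mu_j(B)-\mu(A)\mu(B)|<\varepsilon$. Then, uniformly in $M\in\N$,
$$
\left|\frac{1}{M}\sum_{h=1}^{M}\mu\bigl(A\cap S^{-h}B\bigr)-\frac{1}{M}\sum_{h=1}^{M}\mu_j\bigl(A\cap S^{-h}B\bigr)\right|\leq\varepsilon.
$$
Since $\mu_j$ is ergodic, the second Ces\`aro average converges as $M\to\infty$ to $\mu_j(A)\mu_j(B)$, which differs from $\mu(A)\mu(B)$ by less than $\varepsilon$. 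Taking $\limsup_M$ and $\liminf_M$ in the display above, and then letting $\varepsilon\to 0$, yields the desired correlation identity, hence ergodicity of $\mu$.

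The main obstacle is securing the uniformity-in-$h$ bound that justifies interchanging the limits in $j$ and $M$; without it, one is merely asserting that $\mu$ is a weak-$*$ limit of ergodic measures, which of course need not be ergodic. The combinatorial observation that the complexity of the cylinder $A\cap S^{-h}B$ is bounded by $\ell_A+\ell_B$ independently of $h$ is precisely what makes \cref{l:asympdenswords} deliver this uniform estimate and drives the whole argument.
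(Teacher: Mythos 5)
Your proposal is correct and follows essentially the same route as the paper: reduce ergodicity to the Cesàro correlation identity for cylinder sets, and use \cref{l:asympdenswords} together with the observation that $A\cap S^{-h}B$ is a cylinder with at most $\ell_A+\ell_B$ prescribed coordinates to get the crucial estimate $|\mu(A\cap S^{-h}B)-\mu_j(A\cap S^{-h}B)|\leq\varepsilon$ uniformly in $h$, which justifies the interchange of limits. This is precisely the paper's argument (there the sets $A\cap S^{-h}B$ are called $C_n$ and the same uniformity-in-$n$ point is emphasized).
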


\begin{proof}
By \cref{qge2}, we know that $x$ is quasi-generic along $\Nk$
for an invariant measure $\mu\in \mathcal{P}(\ca^\Z,S)$.
It only remains to show that $\mu$ is ergodic. It suffices
to show that for all
$f,g\in L^\infty(\ca^\Z,\mu)$, one has
\begin{equation}
\label{eqn:Benj-eq1}
\lim_{N\to\infty}\frac1N\sum_{n=1}^N \int_{\ca^\Z} S^n f \cdot g\,d\mu =
\int_{\ca^\Z} f\, d\mu \int_{\ca^\Z} g\,d\mu.
\end{equation}
Similarly to the argument in the proof of \cref{qge2},
it suffices to prove \eqref{eqn:Benj-eq1}
for the special case where $f$ and $g$
are the indicator functions of cylinder sets. In other words, we can assume without loss of generality that $f=\1_A$ and $g=\1_B$, where $A=C^{\alpha_1,\ldots,\alpha_\ell}_{n_1,\ldots,n_\ell}$ and $B=C^{\beta_1,\ldots,\beta_r}_{m_1,\ldots,m_r}$.

Fix $\vep>0$. Let
$$
C_n:=S^{-n}A\cap B=\{z\in \ca^\Z:z(n+n_i)=\alpha_i,z(m_j)=\beta_j\text{ for }i=1,\ldots,\ell,j=1,\ldots,r\}.
$$
Then \eqref{eqn:Benj-eq1} can be rewritten as
\begin{equation}
\label{eqn:Benj-eq2}
\lim_{N\to\infty}\frac1N\sum_{n=1}^N \mu(C_n)=
\mu(A) \mu(B).
\end{equation}
The sequence $x$ is quasi-generic for $\mu$ along $(N_k)$, so
$$
\mu(C_n)=\lim_{k\to\infty}\frac1{N_k}\sum_{i=1}^{N_k} \1_{C_n}(S^i\tilde x),
$$
for all $\tilde{x}\in\ca^\Z$ that extend $x\in\ca^\N$.
Fix $\vep>0$. In view of \cref{l:asympdenswords}, for  $j$ sufficiently large, we obtain
\begin{equation}
\label{eqn:Benj-eq3}
\limsup_{k\to\infty}\frac1{N_k}\sum_{i=1}^{N_k}
|\1_{C_n}(S^i\tilde x)-\1_{C_n}(S^i \tilde x_j)|\leq \vep,
\end{equation}
for all $\tilde{x}_j\in\ca^\Z$ that extend $x_j\in\ca^\N$.
Here, it is important that $\epsilon$ appearing in~\eqref{eqn:Benj-eq3} does not depend on $n$.
Denote by $\mu_j$ the measure for which the sequence $x_j$ is quasi-generic along $\Nk$.
It then follows from \eqref{eqn:Benj-eq3}
that $|\mu(C_n)-\mu_j(C_n)|\leq \vep$ for all $n$.
A similar argument shows that if $j$ is sufficiently large then
$|\mu(A)-\mu_j(A)|\leq \vep$
and $|\mu(B)-\mu_j(B)|\leq \vep$.

Now, since $\mu_j$ is ergodic,
we have that \eqref{eqn:Benj-eq2} holds with $\mu$
replaced by $\mu_j$. Then, using the triangle inequality and the
fact that $\mu_j(C_n)$, $\mu_j(A)$ and $\mu_j(B)$ are $\vep$-close
to $\mu(C_n)$, $\mu(A)$ and $\mu(B)$, respectively, we obtain that
$$
\limsup_{N\to\infty}\left|\frac1N\sum_{n=1}^N \mu(C_n)-\mu(A)\mu(B)\right|
\leq 3\vep.
$$
Since $\vep$ was chosen arbitrarily, this proves \eqref{eqn:Benj-eq2}.
\end{proof}

For the statement of the next lemma, we need to recall
the definition of a joining.
Let $\xbmt$ and $(Y,\mathcal{C},\nu,R)$ be ergodic measure preserving systems and let $\lambda$ be a $(T\times R)$-invariant measure on $(X\times Y,\mathcal{B}\otimes \mathcal{C})$. We say that $(X\times Y,\mathcal{B}\otimes \mathcal{C},\lambda, T\times R)$ is a \emph{joining} of $\xbmt$ and $(Y,\mathcal{C},\nu,R)$ if
$\lambda|_X=\mu$ and $\lambda|_Y=\nu$~\cite{MR0213508}. We will write $J((X,\cb,\mu,T),(Y,\mathcal{C},\nu,R))$ for the set of all joinings of $\xbmt$ and $(Y,\mathcal{C},\nu,R)$. The subset of ergodic joinings will be denoted by $J^e((X,\cb,\mu,T),(Y,\mathcal{C},\nu,R))$.
The definition of a joining extends naturally to any finite or countably infinite family of systems.
%In an analogous way, we define and denote joinings of more than two systems (finitely many and countably many).

\begin{Lemma} \label{l:benmar1} Assume that $x,x^{(n)}\in \ca^{\N}$ are quasi-generic along $\Nk$ for ergodic measures $\mu$, $\mu_n$ ($n\geq1$), respectively. Assume moreover that $x^{(n)}\to x$ in $d_B^{(N_k)}$. Then $(X_x,\mu,S)$ is a factor of $\left((\ca^\Z)^{\times\infty},\nu, S^{\times\infty}\right)$ for some $\nu\in J^e\big((\ca^\Z,\mu_1,S),(\ca^\Z,\mu_2,S),\ldots\big)$.
\end{Lemma}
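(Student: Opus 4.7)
\emph{Proof proposal.} The plan is to construct $\nu$ as the projection of an ergodic component of a joint quasi-generic limit of the sequences $\tilde x,\tilde x^{(1)},\tilde x^{(2)},\ldots$.

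First, pass to a subsequence of $(x^{(n)})$ (still denoted $(x^{(n)})$) so that $d_B^{(N_k)}(x^{(n)},x)\leq 2^{-n}$, fix two-sided extensions $\tilde x,\tilde x^{(n)}\in\ca^\Z$, and consider the point $\mathbf z:=(\tilde x,\tilde x^{(1)},\tilde x^{(2)},\ldots)\in (\ca^\Z)^{\times\infty}$. By compactness of the space of Borel probability measures on $(\ca^\Z)^{\times\infty}$, after passing to a further subsequence of $(N_k)$ (still denoted $(N_k)$), $\mathbf z$ is quasi-generic along $(N_k)$ for some $S^{\times\infty}$-invariant probability measure $\nu'$. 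The hypotheses that $x$ and $x^{(n)}$ are quasi-generic along $(N_k)$ for $\mu$ and $\mu_n$ respectively force the $0$th- and $n$th-coordinate marginals of $\nu'$ to be $\mu$ and $\mu_n$.

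Next I would observe that each event $\{y_0(i)\neq y_n(i)\}$ depends on only finitely many coordinates, so its indicator is continuous on $(\ca^\Z)^{\times\infty}$; quasi-genericity of $\mathbf z$ therefore gives
\[
\nu'\bigl(\{y_0(i)\neq y_n(i)\}\bigr)\;\leq\;d_B^{(N_k)}(x,x^{(n)})\;\leq\;2^{-n},\qquad i\in\Z,\ n\geq 1.
\]
Since $\sum_n 2^{-n}<\infty$, Borel--Cantelli implies that $\nu'$-almost surely $y_n\to y_0$ coordinate-wise, so the $0$th coordinate is a measurable function of the tail $(y_n)_{n\geq 1}$. Now take the ergodic decomposition $\nu'=\int\nu'_\omega\,d\lambda(\omega)$. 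Each coordinate marginal of $\nu'_\omega$ is ergodic (being a factor of an ergodic system), and since $\mu,\mu_n$ are themselves ergodic -- hence extreme in the simplex of $S$-invariant measures on $\ca^\Z$ -- the $0$th and $n$th marginals of $\nu'_\omega$ equal $\mu$ and $\mu_n$ respectively, for $\lambda$-a.e.\ $\omega$. Intersecting these countably many conull sets with the conull set where $y_0=\lim_n y_n$ holds, pick any $\omega_0$ in the intersection.

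Finally, let $\nu$ be the projection of $\nu'_{\omega_0}$ onto the coordinates $(y_1,y_2,\ldots)$. Then $\nu\in J^e\bigl((\ca^\Z,\mu_1,S),(\ca^\Z,\mu_2,S),\ldots\bigr)$, because the projection of an ergodic measure is ergodic and the marginals are the prescribed $\mu_n$. The map $\pi\colon (y_1,y_2,\ldots)\mapsto\lim_n y_n$ is defined $\nu$-almost surely, is $S^{\times\infty}$-equivariant, and pushes $\nu$ forward to $\mu$; in particular $\pi$ takes values in $\operatorname{supp}(\mu)\subset X_x$, so it realizes $(X_x,\mu,S)$ as a factor of $\bigl((\ca^\Z)^{\times\infty},\nu,S^{\times\infty}\bigr)$. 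The only delicate point is the ergodic-decomposition extraction: one must simultaneously secure the correct marginals and the coordinate-wise limit identity $y_0=\lim_n y_n$ on a common conull set of $\omega$'s, which is handled by a countable intersection of conull sets as above.
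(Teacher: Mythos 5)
Your proposal is correct in substance and follows the same skeleton as the paper's argument: form the joint quasi-generic point $(\tilde x,\tilde x^{(1)},\tilde x^{(2)},\dots)$, identify the marginals of the limiting measure, show that the $0$th coordinate is measurable with respect to the remaining ones, and then pass to an ergodic component. The one genuinely different ingredient is how you establish that measurability: the paper estimates $\ov{\nu}\big((C_0^\alpha\times\cdots)\triangle(\cdots\times C_0^\alpha\times\cdots)\big)\leq d_B^{(N_k)}(x,x^{(n)})\to 0$ for the generating partition $\{C_0^\alpha:\alpha\in\ca\}$ and invokes the fact that a complete $\sigma$-algebra is closed in the metric $\ov{\nu}(\cdot\triangle\cdot)$, whereas you extract a summably fast subsequence and use Borel--Cantelli to produce an explicit a.e.-defined factor map $\pi(y_1,y_2,\dots)=\lim_n y_n$. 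Your route is more concrete, at the price of one small defect: by replacing $(x^{(n)})$ with a subsequence at the outset, the measure you construct lies in $J^e\big((\ca^\Z,\mu_{n_1},S),(\ca^\Z,\mu_{n_2},S),\dots\big)$ for a subfamily, rather than in $J^e\big((\ca^\Z,\mu_1,S),(\ca^\Z,\mu_2,S),\dots\big)$ as the lemma literally asserts. This is harmless for the intended application (in the proof of \cref{t:Benji} any subfamily of the cyclic rotations would do) and is trivially repaired: keep all coordinates $\tilde x^{(1)},\tilde x^{(2)},\dots$ in $\mathbf z$ and run the Borel--Cantelli argument only along sparse indices $n_j$ with $d_B^{(N_k)}(x^{(n_j)},x)\leq 2^{-j}$, so that $y_0=\lim_j y_{n_j}$ is still recovered from the full tail. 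The remaining steps --- ergodicity of the coordinate marginals of the ergodic components, their identification with $\mu$ and $\mu_n$ via extremality (equivalently, uniqueness of the ergodic decomposition, which is how the paper phrases it), the countable intersection of conull sets, and the equivariance and push-forward properties of $\pi$ with $\operatorname{supp}(\mu)\subset X_x$ --- are all sound.
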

\begin{proof}
 Consider
$$
z:=(x,x^{(1)},x^{(2)},\ldots)\in \ca^{\N}\times (\ca^\N)^{\times \infty}.
$$
Then $z$ is quasi-generic along a subsequence of $\Nk$ for an invariant measure $\ov{\nu}$, i.e.\ for an increasing sequence $(k_\ell)_{\ell\geq 1}$, we have
\begin{equation}\label{benmar1}
\int_{\ca^{\Z}\times (\ca^\Z)^{\times\infty}}f\ d\ov{\nu}=\lim_{\ell\to\infty} \frac1{N_{k_\ell}}\sum_{n=1}^{N_{k_\ell}}f((S\times(S^{\times\infty}))^n(\tilde{z}))
\end{equation}
for all $f\in C(\ca^{\Z}\times (\ca^\Z)^{\times \infty})$ and all $\tilde{z}\in \ca^{\Z}\times (\ca^\Z)^{\times \infty}$ that extend $z$.
Using the assumption of quasi-genericity along $(N_k)$, we have
$$
\ov{\nu} \in J\big((\ca^\Z,\mu,S),(\ca^\Z,\mu_1,S),(\ca^\Z,\mu_2,S),\ldots\big).
$$
We use $\cb(\ca^\Z)$ and $\cb((\ca^\Z)^{\times \infty})$
to denote the Borel $\sigma$-algebra on $\ca^\Z$
and $(\ca^\Z)^{\times \infty}$, respectively.
We claim now that $(\ca^{\Z},\mu,S)$ is a factor of $((\ca^\Z)^{\times \infty},\ov{\nu}|_{(\ca^\Z)^{\times\infty}},S^{\times\infty})$, i.e., that
up to $\ov{\nu}$-measure zero sets
the $\sigma$-algebra $\cb(\ca^{\Z})\otimes\{\emptyset,(\ca^\Z)^{\times \infty}\}$ is contained
in the $\sigma$-algebra $\{\emptyset,\ca^{\Z}\}\ot\cb((\ca^\Z)^{\times \infty})$.
Notice first that it is enough to show that for each
$\alpha\in \ca$ , we have
\begin{equation}\label{benmar2}
C^\alpha_0=\{u\in \ca^{\Z}: u(0)=\alpha\}\in \{\emptyset,\ca^{\Z}\}\ot\cb((\ca^\Z)^{\times \infty})\bmod\ov{\nu},
\end{equation}
as $\{C_0^{\alpha} :  \alpha\in \ca\}$
is a generating partition.
To obtain~\eqref{benmar2}, we note first that for each $n\geq1$, we have
\begin{multline}
\begin{split}
\label{benmar3}
\ov{\nu}\Big(\big(C_0^{\alpha}\times (\ca^\Z)^{\times \infty}\big)\triangle \big(\ca^{\Z}\times
(\ca^{\Z}\times\ldots \times \ca^{\Z}\times \underbrace{C_0^{\alpha}}_{n\text{-th position}}\times \ca^{\Z}\times\ldots)\big)\Big)\\
\leq d_B^{(N_k)}(x,x^{(n)}).
\end{split}
\end{multline}
%\begin{equation}
%\ov{\nu}\Big(\big(C_0^{\alpha}\times (\ca^\Z)^{\times \infty}\big)\triangle \big(\ca^{\Z}\times
%(\ca^{\Z}\times\ldots \times \ca^{\Z}\times \underbrace{C_0^{\alpha}}_{n\text{-th position}}\times \ca^{\Z}\times\ldots)\big)\Big)\leq d_B^{(N_k)}(x,x^{(n)}).
%\end{equation}
Indeed, if $\pi_{0,n}\colon\ca^{\Z}\times (\ca^\Z)^{\times \infty}\to
\ca^{\Z}\times\ca^{\Z}$ denotes the projection
$$
\pi_{0,n}\big(y,y^{(1)},\ldots,y^{(n-1)},y^{(n)},y^{(n+1)},\ldots\big):= (y,y^{(n)})
$$
and $\ov{\nu}_{0,n}$ denotes the push-forward
of $\ov{\nu}$ under $\pi_{0,n}$, then we obtain
\begin{align*}
\ov{\nu}&((C_0^{\alpha}\times (\ca^\Z)^{\times \infty})\triangle (\ca^{\Z}\times (\ca^{\Z}\times\ldots \times \ca^{\Z}\times \underbrace{C_0^{\alpha}}_n\times \ca^{\Z}\times\ldots)))\\
&=\ov{\nu}_{0,n}((C_0^{\alpha}\times \ca^{\Z})\triangle (\ca^{\Z}\times C_0^{\alpha}))=\ov{\nu}_{0,n}(C_0^{\alpha}\times (C_0^{\alpha})^c)+\ov{\nu}_{0,n}((C_0^{\alpha})^c\times C_0^{\alpha})\\
&=\lim_{\ell\to \infty}\frac1{N_{k_\ell}}|\{0\leq t\leq N_{k_\ell}-1:(S\times S)^t(x,x^{(n)})\in (C_0^{\alpha}\times(C_0^{\alpha})^c)\cup  ((C_0^{\alpha})^c\times C_0^{\alpha})\}|\\
&\leq d_{B}^{(N_k)}(x,x^{(n)})\to 0\text{ when }n\to\infty.
\end{align*}
(We used the fact that the sets under consideration are clopen and hence~\eqref{benmar1} applies.) Since~\eqref{benmar3} holds, also~\eqref{benmar2} holds as any (complete) $\sigma$-algebra is closed in the metric $\ov{\nu}(\cdot\triangle\cdot)$.

We have shown that $(\ca^{\Z},\mu,S)$ is a measure-theoretic factor of the system
$$
\mathcal{Z}:=((\ca^\Z)^{\times \infty},\ov{\nu}|_{(\ca^\Z)^{\times \infty}},S^{\times\infty}),
$$
i.e.\ $(\ca^{\Z},\mu,S)$ is represented by an $S^{\times\infty}$-invariant sub-$\sigma$-algebra $\mathcal{C}$ of $(\ca^\Z)^{\times \infty}$. Consider the ergodic decomposition of $\mathcal{Z}$:
$$
\ov{\nu}|_{(\ca^\Z)^{\times \infty}}=\int \kappa\, dQ(\kappa).
$$
After the restriction to $\mathcal{C}$, we obtain
$$
\mu=(\ov{\nu}|_{(\ca^\Z)^{\times \infty}})|_\mathcal{C}=\int \kappa|_\mathcal{C}\, dQ(\kappa).
$$
Since, by \cref{p:Benji1}, the system $(\ca^{\Z},\mu,S)$ is ergodic, it follows by the uniqueness of ergodic decomposition that $\kappa|_\mathcal{C}=\mu$ for $Q$-a.e.\ $\kappa$. In other words, $(\ca^{\Z},\mu,S)$ is a measure-theoretic factor of almost every ergodic component of $\mathcal{Z}$.
Moreover, because of the ergodicity of $\mu_n$, $n\geq1$, such an ergodic component is an ergodic joining of the family $\{(\ca^\Z,\mu_n,S)\}_{n\in\N}$. It follows that a typical ergodic component $\nu$ satisfies the assertion of the lemma.
\end{proof}

\begin{proof}[Proof of \cref{t:Benji}]
Suppose $x\in\ca^\N$ is RAP along
$\Nk$. By definition, we can find periodic points
$x_n\in\ca^\N$, $n\in\N$, such that $x_n$ converges to $x$ in
the $d_B^{(N_k)}$ pseudo-metric.

Each $x_n$ is generic for a cyclic rotation
and hence, by \cref{qge2}, $x$ is quasi-generic along $\Nk$
for some invariant measure $\mu\in\mathcal{P}(X_x,S)$.
Following \cref{p:Benji1}, we deduce that $(X_x,\mu,S)$
is ergodic.

Finally, any ergodic joining of cyclic rotations
exhibits rational discrete spectrum and therefore
any factor of an ergodic joining of cyclic rotations
also has rational discrete spectrum. However,
it follows from \cref{l:benmar1} that $(\ca^{\Z},\mu,S)$, which is isomorphic to $(X_x,\mu,S)$, is a factor of a system given by such a joining, hence it has rational
discrete spectrum.
\end{proof}

It is natural to inquire whether \cref{t:Benji} characterizes RAP sequences.
The answer is negative. In the following example we construct a subshift of $\{0,1\}^\Z$
containing a  point that is not RAP but is transitive and generic for an ergodic measure yielding a dynamical system with rational discrete spectrum.

\begin{Example}\label{prz:m1} We will define a uniquely ergodic model for the cyclic rotation on two points (in such a model each point is generic for the unique invariant measure). The subshift $X\subset\{0,1\}^{\Z}$ will consists of three orbits:
\begin{itemize}
\item the periodic point $a=\ldots01.010101\ldots$;
\item the orbit of the point $b$ which arises from the point $a$ by erasing one ``1'';
\item the orbit of the point $c$ which arises from the point $a$ by erasing infinitely many ``1''s so that the distance between the consecutive erased ``1''s goes to infinity.
\end{itemize}
It follows that in the orbit of $c$ longer and longer (``periodic'') words $0101\ldots01$ are approaching the periodic orbit of $a$ from odd and even positions -- this makes the point $c$ non-rational (cf.\ \cref{ex1}). Since $c$ is generic for the measure given by $a$, our claim follows.
\end{Example}

Even though the system constructed in \cref{prz:m1} contains
a transitive point that is not RAP,
it also contains an abundance of transitive points that are in fact RAP.
As the following proposition shows, this is not by coincidence:
\begin{Prop}\label{p:marjo1}
Let $\nu\in\mathcal{P}^e(\{0,1\}^\Z,S)$ be such that
$(\{0,1\}^\Z,\nu,S)$ has rational discrete spectrum. Then $\nu$-a.e.\ $x\in \{0,1\}^\Z$ is RAP.
Moreover, if $X$ is the topological support of $\nu$, then there exists a transitive point $\eta\in X$ for which $\eta|_{[1,\infty)}$ is RAP.
\end{Prop}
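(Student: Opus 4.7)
The plan is to exploit the structure of an ergodic system with rational discrete spectrum: the rational Kronecker factor $\ck_{rat}$ equals $\cb \pmod{\nu}$, and the system is an inverse limit of cyclic rotations. This allows each indicator $\1_A$ of a cylinder to be approximated in $L^1(\nu)$ by an indicator of a set measurable with respect to a finite factor on which $S$ acts as a cyclic permutation. The Birkhoff ergodic theorem then translates such an $L^1$-approximation into an approximation of the orbit $(\1_A(S^n x))_{n\geq 1}$ by a periodic sequence in the Besicovitch pseudo-metric.

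In more detail, for each $m\in\N$ let $\ck_{m!}$ denote the factor introduced in Subsection~\ref{sec_single-rec}, generated by eigenfunctions whose eigenvalues are roots of unity of order dividing $m!$. Since $\nu$ has rational discrete spectrum, $\ck_{rat}=\bigvee_m\ck_{m!}=\cb\pmod{\nu}$, and each $\ck_{m!}$ corresponds to a cyclic rotation on $\Z/p_m\Z$ for some $p_m\mid m!$, hence is generated by a partition $\{P_m^j\}_{j<p_m}$ cyclically permuted by $S$. Let $A:=\{x\in\{0,1\}^\Z:x(0)=1\}$. For each $k\geq 1$, Doob's martingale theorem together with thresholding the conditional expectation $\E(\1_A\mid\ck_{m!})$ at $1/2$ produces, for $m=m_k$ large enough, a set $B_k\in\ck_{m_k!}$ with $\nu(A\triangle B_k)<1/k$. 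Since $B_k$ is a union of atoms $P_{m_k}^j$, the sequence $n\mapsto \1_{B_k}(S^n x)$ is periodic in $n$ of period dividing $p_{m_k}$; denote it by $y_k^x$. Applying Birkhoff's ergodic theorem to $\1_{A\triangle B_k}$ yields, for $\nu$-a.e.\ $x$,
\[
d_B(x|_\N,y_k^x)=\lim_{N\to\infty}\frac1N\sum_{n=1}^N\1_{A\triangle B_k}(S^n x)=\nu(A\triangle B_k)<\frac1k.
\]
Intersecting the $\nu$-full sets over $k\in\N$ we conclude that $\nu$-a.e.\ $x$ has $x|_\N$ approximable in $d_B$ by periodic sequences, i.e.\ is RAP. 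For the second assertion, Birkhoff's theorem also ensures that $\nu$-a.e.\ $x$ is generic for $\nu$, so its forward orbit is dense in the topological support $X$ of $\nu$, making $x$ transitive in $X$. Choosing $\eta$ in the intersection of this full-measure set of transitive points with the full-measure set from the first part produces the required transitive point with $\eta|_{[1,\infty)}$ RAP.

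The only mildly delicate step is upgrading the $L^2$ convergence $\E(\1_A\mid\ck_{m!})\to\1_A$ to an approximation of $\1_A$ by an \emph{indicator} $\1_{B_k}$ of a set in the finite factor, so that the weight $y_k^x$ is $\{0,1\}$-valued and genuinely periodic; this is handled by the $\nu$-a.e.\ pointwise convergence of the martingale, which permits thresholding at $1/2$ without loss. The remainder is a direct application of rational discrete spectrum and the ergodic theorem, with no substantial obstacle.
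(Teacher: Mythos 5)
Your proposal is correct and follows essentially the same route as the paper's proof: both exploit that rational discrete spectrum makes the system an inverse limit of cyclic rotations, approximate the cylinder indicator in $L^1(\nu)$ by a union of atoms of a partition cyclically permuted by $S$ (the paper builds these partitions directly from the eigenfunctions $f_t$, you via the factors $\ck_{m!}$ and thresholding the martingale), and then invoke the pointwise ergodic theorem to turn the measure estimate into a $d_B$ estimate for a.e.\ point. The transitivity argument via genericity and full support is also the one used in the paper.
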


%Although, the above result is a particular case of \cite[Theorem 3.19]{MR773063}, where an analogous result for general BAP sequences is proved, we give a separate proof here.

\begin{proof}[Proof (cf. {\cite[Theorem 3.19]{MR773063}})]
By assumption, the spectrum of $(\{0,1\}^\Z,\nu,S)$ consists of roots of unity of degree $n_t$, with $n_t \divides n_{t+1}$, $t\geq 0$. Note that if $f_t\circ S =e^{2\pi i /n_t}\cdot f_t$, then $f_t^{n_t}\circ S=f_t^{n_t}$ and hence, by ergodicity, we can assume that $f_t$ takes its values in the group $\{e^{2\pi ij / n_t} : j=0,\dots,n_t-1\}$. By setting
$
D_0^{t}:=\{x\in \{0,1\}^\Z : f_t(x)=1\},
$
we obtain the partition
$$
D_t:=\{D_0^t,SD_0^t,\dots,S^{n_t-1}D_0^t\}
$$
of the space $\{0,1\}^\Z$. Since $\{f_t : t\geq 0\}$ forms an orthonormal basis of $L^2(\nu)$, it follows that for each $t\geq 0$ there is a partition $Q^t=\{E_0^t,E_1^t\}$ of $\{0,1\}^\Z$ such that $E_i^t$ is a union of elements of the partition $D_t$ ($i=0,1$) satisfying
$$
\nu(C^0_0\triangle E^t_0)+\nu(C^1_0\triangle E^t_1)\to0\text{ when }t\to\infty.
$$
Since $S^{n_t}D_0^t=D_0^t$, the sequence $(\1_{E_1^t}(S^kx))_k$ is periodic of period $n_t$, for $\nu$-a.e.\ $x\in \{0,1\}^\Z$. Moreover, $\nu$-a.e.\ point $x\in \{0,1\}^\Z$ satisfies the ergodic theorem for all sets $C_0^i\triangle E_i^t$, $t\geq 0$, $i=0,1$. Hence, the first part of the assertion follows from the pointwise ergodic theorem.

If, additionally, $\nu$ has full topological support then the orbit of $\nu$-a.e.\ point has to intersect any open set belonging to a countable basis of open sets. In other words, $\nu$-a.e.\ point is transitive, so the the second assertion follows from the first one.
\end{proof}

\begin{Remark}\label{topmixing}
If a subshift $(X,S)$ is a strictly ergodic and topologically mixing model of an odometer (note that such models exist due to \cite{MR890422}), then by \cref{p:marjo1} a.e.\ $x\in X$ is a RAP point generating a topologically mixing subshift. Hence, RAP points can generate topologically mixing systems.
\end{Remark}

\subsection{Revisiting \cref{thm_poly-multi-rec-along-rat}}
\label{sec_revisit}
%In view of \cref{t:Benji}, it is now clear that \cref{thm_poly-multi-rec-along-disc-rat-spec} contains \cref{thm_poly-multi-rec-along-rat} as a special case.

The purpose of this subsection is to give a proof
of the following generalized form of \cref{thm_poly-multi-rec-along-disc-rat-spec}
and discuss some applications thereof (see \cref{c_ave-rec-for-B-free-along-subseq}).

\begin{Def}
\label{def:ave-rec-subseq}
A set $R\subset \N$ is called an \emph{averaging set of polynomial multiple recurrence along $\Mk$} if for all invertible measure preserving systems
$\xbmt$, $\ell\in\N$, $A\in\mathcal{B}$ with $\mu(A)>0$
and for all polynomials $p_i\in\Q[t]$, $i=1,\ldots,\ell$,
with $p_i(\Z)\subset\Z$ and $p_i(0)=0$ for $i\in\{1,\ldots,\ell\}$, one has
$$
\lim_{k\to\infty}\frac{1}{M_k}\sum_{n=1}^{M_k}
\1_R(n)\mu\Big(A\cap T^{-p_1(n)}A\cap\ldots\cap T^{-p_\ell(n)}A\Big)
>0.
$$
\end{Def}

\begin{Th}
\label{thm_poly-multi-rec-along-disc-rat-spec-Nk}
Let $R\subset \N$ with $d^{(N_k)}(R)>0$ and suppose $\eta:=\1_R$
is quasi-generic along $(N_k)$ for a measure $\nu$
such that
$(X_\eta,\nu,S)$ has rational discrete spectrum.
Then there exists a subsequence $(M_k)_{k\geq1}$ of $\Nk$ such that the following are equivalent:
\begin{enumerate}[(I)~~]
\item\label{itm:pmradrs-a}
$R$ is divisible along $(M_k)_{k\geq1}$, that is,
$$
d^{(M_k)}(R\cap u\N):=\lim_{k\to\infty}\frac{|R\cap u\N
\cap \{1,\ldots,M_k\}|}{M_k}>0
$$ for all $u\in \N$.
\item\label{itm:pmradrs-c}
$R$ is an averaging set of polynomial multiple recurrence along
$\Mk$.
\end{enumerate}
\end{Th}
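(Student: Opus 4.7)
The implication \eqref{itm:pmradrs-c}$\Rightarrow$\eqref{itm:pmradrs-a} will follow by specializing \eqref{itm:pmradrs-c} to cyclic rotations: for each $u\in\N$, apply \eqref{itm:pmradrs-c} with $(X,\cb,\mu,T)=(\Z/u\Z,\mathrm{Haar},n\mapsto n+1)$, $A=\{0\}$, $\ell=1$ and $p_1(t)=t$; since $\mu(A\cap T^{-n}A)=\tfrac{1}{u}\1_{u\N}(n)$, positivity of the resulting limit is precisely $d^{(M_k)}(R\cap u\N)>0$.

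For the nontrivial direction \eqref{itm:pmradrs-a}$\Rightarrow$\eqref{itm:pmradrs-c}, the plan is to mimic the proof of \cref{p5new} along the subsequence $(M_k)$. Since we need $(M_k)$ to handle all test systems simultaneously, we begin by constructing it via a diagonal argument. \cref{thm_4.1} reduces each ergodic average $\varphi(n):=\mu(A\cap T^{-p_1(n)}A\cap\ldots\cap T^{-p_\ell(n)}A)$ to a nilsequence $\psi$ with $d_B(\varphi,\psi)=0$, and $\psi$ is uniformly approximable by basic nilsequences $n\mapsto f(T_g^nx)$ drawn from a separable class. A standard diagonal extraction from $(N_k)$ then produces a single subsequence $(M_k)$ along which $\tfrac{1}{M_k}\sum_{n=1}^{M_k}\1_R(n)f(T_g^nx)$ converges for every basic nilsequence in a chosen countable dense family, and along which $\tfrac{|R\cap u\N\cap[1,M_k]|}{M_k}$ converges for every $u\in\N$. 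By approximation and \cref{thm_4.1}, this suffices to make both \eqref{itm:pmradrs-a} and \eqref{itm:pmradrs-c} well defined along $(M_k)$.

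With $(M_k)$ fixed, the positivity argument will follow the structure of \cref{p5new}: use \cref{Cor-uniformity} to produce $\delta>0$ with $\lim_N\tfrac{1}{N}\sum_{n\le N}\varphi(un)>\delta$ uniformly in $u$; invoke \cref{thm_4.1} to pass to a basic nilsequence $n\mapsto f(T_g^nx)$; and apply \cref{prop_2.1} to find $u\in\N$ and a connected sub-nilmanifold $Y\ni x$ on which $T_{g^u}$ acts totally ergodically. The crucial new ingredient, and what I expect to be the main obstacle, is an analog of \cref{l1} along $(M_k)$, namely
$$
\lim_{k\to\infty}\frac{1}{M_k}\sum_{n=1}^{M_k}\1_{R}(n)\1_{u\N}(n)f(T_g^{n}x)=d^{(M_k)}(R\cap u\N)\int_Y f\,d\mu_Y>0.
$$
The plan for this is to invoke disjointness of measure preserving systems. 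Since $\nu$ has rational discrete spectrum, $(X_\eta,\nu,S)$ is (measure-theoretically) a Kronecker system all of whose eigenvalues are roots of unity; since $T_{g^u}$ is totally ergodic on $Y$, the Kronecker factor of $(Y,\mu_Y,T_{g^u})$ has no nontrivial roots of unity as eigenvalues. By the classical fact that a Kronecker system is disjoint from any ergodic system with which it shares no nontrivial eigenvalues, these two systems are disjoint. Passing to a further subsequence to ensure that $(S^n\tilde\eta,T_{g^u}^nx)$ is quasi-generic forces the resulting joining to equal $\nu\times\mu_Y$, which delivers the required convergence. Combined with divisibility $d^{(M_k)}(R\cap u\N)>0$, this yields the desired positivity and completes the proof.
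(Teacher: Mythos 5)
Your outline matches the paper's strategy (the trivial direction via cyclic rotations; the hard direction via \cref{Cor-uniformity}, \cref{thm_4.1}, \cref{prop_2.1}, and a non-correlation statement proved by disjointness), but two steps as written do not go through.

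First, the construction of $(M_k)$. You propose a diagonal extraction over ``a chosen countable dense family'' of basic nilsequences, but the collection of basic nilsequences is \emph{not} separable in the uniform norm: already the family $\big\{(e^{2\pi i n\alpha})_{n\in\N}:\alpha\in[0,1)\big\}$ is $2$-separated in $\ell^\infty$, so no countable family is uniformly dense and the diagonal argument cannot produce one subsequence handling all test systems. The paper instead diagonalizes over the countable family of powers $S^u$, $u\in\N$ (\cref{irg-l1}), arranging that $\eta$ is quasi-generic along $(M_k/u)$ for an $S^u$-invariant measure $\mu_{u,j}$ for every $u$ and every residue $j$, and then deduces convergence against an \emph{arbitrary} basic nilsequence from this (\cref{thm_poly-multi-conv-along-disc-rat-spec-Nk}) by splitting the nilmanifold into connected components.

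Second, the disjointness is invoked for the wrong pair of systems. After restricting to $n\in u\N$, the sum $\frac{1}{M_k}\sum_{n\le M_k}\1_R(n)\1_{u\N}(n)f(T_g^nx)$ becomes an average of $F((S^u)^m\tilde\eta)\,f(T_{g^u}^m x)$ over $m\le M_k/u$, so the joining that arises is a joining of $(X_\eta,\mu_u,S^u)$ with $(Y,\mu_Y,T_{g^u})$, where $\mu_u$ is the measure for which $\tilde\eta$ is quasi-generic \emph{under $S^u$} along $(M_k/u)$ --- not a joining of $(X_\eta,\nu,S)$ with $(Y,\mu_Y,T_{g^u})$. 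To force the product joining you must know that $(X_\eta,\mu_u,S^u)$ has rational discrete spectrum; this is exactly the content of \cref{irg-l1}, proved there via the identity $\frac1u\sum_{j=0}^{u-1}S^j\mu_u=\nu$, the resulting absolute continuity $\mu_{u,j}\ll\nu$, and the transfer and density of eigenfunctions. Your proposal skips this entirely, and without it the key identity
\[
\lim_{k\to\infty}\frac{1}{M_k}\sum_{n=1}^{M_k}\1_{R}(n)\1_{u\N}(n)f(T_g^{n}x)=d^{(M_k)}(R\cap u\N)\int_Y f\,d\mu_Y
\]
is unjustified. (Your remark that one may pass to a further subsequence for the positivity step is fine in principle, since existence of the limit along $(M_k)$ would already be secured --- but only once the two gaps above are filled.)
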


\begin{Remark}
\label{rem:12}
If $R$ is RAP along $\Nk$ then it follows from the proof of \cref{thm_poly-multi-rec-along-disc-rat-spec-Nk} given below that in the statement of the theorem one can take $M_k=N_k$ for all $k\in\N$.
On the other hand, if $R$ is not RAP along $\Nk$ then this is not necessarily true.
For instance, take $R\subset\N$ such that the sequence $\1_R$ equals the sequence $x$ from \cref{ex1}.
Then $\1_R$ is generic for a measure $\nu$ such that
$(X_\eta,\nu,S)$ has rational discrete spectrum (see \cref{rem:131}). However, the set $R$ is not divisible, since $d(R\cap 2\Z)$ does not exist. For the same reason, $\1_R$ will not be a good weight for polynomial multiple convergence (see \cref{def:goodweights}). 
Therefore it is indeed necessary to pass to a subsequence $(M_k)_{k\geq1}$ of $\Nk$ in \cref{thm_poly-multi-rec-along-disc-rat-spec-Nk}.
\end{Remark}

For the proof of
\cref{thm_poly-multi-rec-along-disc-rat-spec-Nk},
we need the following variant of \cref{l1}.

\begin{Lemma}\label{l1-2u}
Let $R\subset\N$ and suppose $\1_{R}=\eta$ is quasi-generic along $\Nk$ for a measure $\nu_\eta$ on $X_\eta\subset \{0,1\}^\Z$ such that $(X_\eta,\nu_\eta,S)$ has rational discrete spectrum. Let $(X,T)$ be a topological dynamical system and let $\mu\in\mathcal{P}^e(X,T)$ be a measure with a generic point $x\in X$. If $(X,\mu,T)$ is totally ergodic then
$$
\lim_{k\to\infty}\frac1{N_k}\sum_{n=1}^{N_k}\1_{R}(n)f(T^{n}x)=d^{(N_k)}(R)\int_Xf\,d\mu
$$
for each $f\in C(X)$.
\end{Lemma}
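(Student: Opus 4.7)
The plan is to compute the Cesàro average on the left-hand side via a joining/disjointness argument applied to the pair $(\tilde\eta, x)\in X_\eta\times X$ under the product transformation $S\times T$. First, by weak-$*$ compactness of the space of probability measures on $X_\eta\times X$, any subsequence of $(N_k)$ admits a further subsequence $(M_k)$ along which the empirical measures $\tfrac{1}{M_k}\sum_{n=1}^{M_k}\delta_{(S^n\tilde\eta,\,T^n x)}$ converge to some probability measure $\lambda$. Routine arguments show that $\lambda$ is $(S\times T)$-invariant, that its projection on the first coordinate equals $\nu$ (by quasi-genericity of $\eta$ along $(N_k)\supset(M_k)$), and that its projection on the second coordinate equals $\mu$ (since $x$ is generic for $\mu$ and $M_k\to\infty$). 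Thus $\lambda\in J(\nu,\mu)$.

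The crucial step is to show $\lambda=\nu\otimes\mu$ by exploiting the spectral incompatibility between $(X_\eta,\nu,S)$ and $(X,\mu,T)$. Let $h\in L^2(\nu)$ be an eigenfunction of $S$ with $h\circ S=\alpha h$, where $\alpha$ is a root of unity (by rational discrete spectrum). Define $g(z):=\E_\lambda\big(h\otimes 1\mid\{\emptyset,X_\eta\}\otimes\cb(X)\big)(z)$. The $(S\times T)$-invariance of this sub-$\sigma$-algebra together with the eigenvalue relation gives $g\circ T=\alpha\, g$ in $L^2(\mu)$. Total ergodicity of $(X,\mu,T)$ forces $\alpha=1$, since any nontrivial root-of-unity eigenvalue is forbidden; in that case $g$ is constant equal to $\int h\, d\nu$, while for $\alpha\neq 1$ we have $g\equiv 0$ and also $\int h\, d\nu=0$ by ergodicity of $(X_\eta,\nu,S)$. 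Either way, $\int (h\otimes\phi)\, d\lambda=\int h\, d\nu\cdot\int \phi\, d\mu$ for every $\phi\in L^2(\mu)$. Since eigenfunctions span $L^2(\nu)$, this identity extends by linearity and density to arbitrary product functions, so $\lambda=\nu\otimes\mu$.

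Applying this factorization to the continuous function $\1_{[1]}\otimes f$, where $[1]:=\{y\in X_\eta: y(0)=1\}$ is clopen, we obtain
\[
\lim_{k\to\infty}\frac{1}{M_k}\sum_{n=1}^{M_k}\1_R(n)\,f(T^n x)=\nu([1])\cdot\int_X f\, d\mu.
\]
Applying the quasi-genericity of $\eta$ along $(N_k)$ to $\1_{[1]}$ itself identifies $\nu([1])=d^{(N_k)}(R)$. Because every subsequence of $(N_k)$ admits a further subsequence along which the above limit holds with the same value, the limit exists along the full sequence $(N_k)$ and equals $d^{(N_k)}(R)\int_X f\, d\mu$, as desired.

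The main obstacle is the disjointness step, i.e.\ verifying carefully that rational discrete spectrum on one side and total ergodicity on the other force every joining to be the product measure. The remaining steps reduce to compactness and to unwinding definitions of quasi-genericity and of $d^{(N_k)}(R)$.
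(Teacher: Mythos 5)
Your proof is correct and follows essentially the same route as the paper: form the joining of $(\tilde\eta,x)$ under $S\times T$, argue that a totally ergodic system and a rational-discrete-spectrum system admit only the product joining, and evaluate the resulting genericity statement on $F\otimes f$ with $F(z)=z(0)$. The only difference is that you prove the disjointness step directly via eigenfunctions and conditional expectations (and make the weak-$*$ compactness/subsequence reduction explicit), whereas the paper simply cites spectral disjointness; your argument for that step is sound.
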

\begin{proof}
Consider $(\tilde\eta,x)\in X_\eta\times X$, where $\tilde\eta\in\{0,1\}^\Z$
is defined as $\tilde\eta(n)=\eta(n)$ for all $n\in\N$ and $\tilde\eta(n)=0$ for all $n\in\Z\setminus\N$.
Since $(X,\mu,T)$ is totally ergodic and
$(X_\eta,\nu_\eta,S)$ has rational discrete spectrum,
it follows that $(X,\mu,T)$ and $(X_\eta,\nu_\eta,S)$ are
spectrally disjoint \cite{Ha-Pa}. In particular,
the only joining of these two
systems is given by the product measure $\nu_\eta\otimes\mu$, i.e.,
$J((X_\eta,\nu_\eta,S),(X,\mu,T))=\{\nu_\eta\otimes\mu\}$.
It follows that
\begin{equation}\label{rrr1-2u}
\mbox{$(\tilde\eta,x)$ is quasi-generic along $\Nk$ for the product measure
$\nu_\eta\ot\mu$.}
\end{equation}
Fix $f\in C(X)$ and let $F\colon X_\eta\to\{0,1\}$ be given by $F(z)=z(0)$. Then $F\in C(X_\eta)$ and, since $\eta$ is quasi-generic along $\Nk$ for $\nu_\eta$, we obtain
$$
\int_{X_\eta}F\,d\nu_\eta=\lim_{k\to\infty}\frac1{N_k} \sum_{n=1}^{N_k}F(S^n\tilde \eta)=
\lim_{k\to\infty}\frac1{N_k} \sum_{n=1}^{N_k}\1_R(n)= d^{(N_k)}(R).
$$
In view of~\eqref{rrr1-2u}, we have
$$
\lim_{k\to\infty}\frac1{N_k}\sum_{n=1}^{N_k}F\ot f\big((S\times T)^{n}(\tilde\eta,x)\big)= \int F\ot f\,d(\nu_\eta\ot\mu)=\int_{X_\eta}F\,d\nu_\eta~\int_Xf\,d\mu.
$$
Finally, one only needs to observe that
$$
\frac1{N_k}\sum_{n=1}^{N_k}F\ot f\big((S\times T)^{n}(\tilde\eta,x)\big)=\frac1{N_k}\sum_{n=1}^{N_k}\1_{R}(n)f(T^{n}x)
$$
and the proof is complete.
\end{proof}

\begin{Lemma}\label{irg-l1}
Let $R\subset\N$ and suppose $\1_{R}=\eta$ is quasi-generic along $\Nk$ for a measure $\nu$ on $X_\eta\subset \{0,1\}^\Z$ such that $(X_\eta,\nu,S)$ has rational discrete spectrum.
For $u\in \N$ and $j\in\{0,1,\ldots,u-1\}$ let $(R-j)/u$ denote the set $\{n\in\N: nu+j\in R\}$. Then there exists a subsequence $(M_k)_{k\geq 1}$ of $\Nk$ with the property that for every $u\in\N$ and $j\in\{0,1,\ldots,u-1\}$ the point $\eta_{u,j}:=\1_{(R-j)/u}$ is quasi-generic along $(M_k/u)_{k\geq 1}$ for a measure $\nu_{u,j}$ such that $(X_{\eta_{u,j}},\nu_{u,j},S)$ has rational discrete spectrum.
\end{Lemma}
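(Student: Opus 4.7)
The plan is to combine a diagonal extraction with a factor-map argument and an analysis of the ergodic decomposition of $\nu$ under $S^u$.

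First, I will set $\tilde\eta_{u,j}(n):=\tilde\eta(nu+j)$, so that $\tilde\eta_{u,j}|_\N = \eta_{u,j}$, and apply Cantor diagonalization to the countable family of bounded averages
$$
\frac{1}{\lfloor N_k/u\rfloor}\sum_{m=0}^{\lfloor N_k/u\rfloor-1} F(S^m\tilde\eta_{u,j})
\quad\text{and}\quad
\frac{1}{\lfloor N_k/u\rfloor}\sum_{m=0}^{\lfloor N_k/u\rfloor-1} G\bigl((S^u)^m\tilde\eta\bigr),
$$
indexed by $u\in\N$, $j\in\{0,\ldots,u-1\}$, $F$ in a countable dense subset of $C(\{0,1\}^\Z)$, and $G$ in a countable dense subset of $C(X_\eta)$. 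This yields a single subsequence $\Mk$ of $\Nk$ along which all these averages converge, producing, for each $u,j$, a probability measure $\nu_{u,j}$ on $X_{\eta_{u,j}}$ for which $\eta_{u,j}$ is quasi-generic along $(\lfloor M_k/u\rfloor)_{k\geq1}$, and, for each $u$, an $S^u$-invariant probability measure $\tilde\mu_u$ on $X_\eta$ for which $\tilde\eta$ is quasi-generic under $S^u$ along $(\lfloor M_k/u\rfloor)_{k\geq 1}$.

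Next, I will use the map $\phi_j\colon X_\eta\to\{0,1\}^\Z$ defined by $\phi_j(y)(n):=y(nu+j)$. It is continuous, satisfies $\phi_j\circ S^u=S\circ\phi_j$, and sends $\tilde\eta$ to $\tilde\eta_{u,j}$, so for every $F\in C(\{0,1\}^\Z)$,
$$
\int F\,d\nu_{u,j}
=\lim_{k\to\infty}\frac{1}{\lfloor M_k/u\rfloor}\sum_{m=0}^{\lfloor M_k/u\rfloor-1}F\bigl(\phi_j((S^u)^m\tilde\eta)\bigr)
=\int F\circ\phi_j\,d\tilde\mu_u.
$$
Thus $\nu_{u,j}=(\phi_j)_*\tilde\mu_u$, exhibiting $(X_{\eta_{u,j}},\nu_{u,j},S)$ as a measure-theoretic factor of $(X_\eta,\tilde\mu_u,S^u)$.

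It will then remain to show that $(X_\eta,\tilde\mu_u,S^u)$ has rational discrete spectrum. Splitting the quasi-genericity average of $\tilde\eta$ under $S$ along $\Nk$ by residues modulo $u$ gives $\nu=\frac{1}{u}\sum_{j=0}^{u-1}(S^j)_*\tilde\mu_u$, hence $\tilde\mu_u\leq u\nu$ and in particular $\tilde\mu_u\ll\nu$. Since $(X_\eta,\nu,S)$ has rational discrete spectrum, it is measure-theoretically isomorphic to an ergodic rotation $R_\alpha$ on a procyclic abelian group $G$; under this isomorphism $S^u$ corresponds to $R_{u\alpha}$, whose ergodic components are rotations on the cosets of the closed finite-index subgroup $\overline{\langle u\alpha\rangle}\leq G$ and therefore all have rational discrete spectrum. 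Being $S^u$-invariant and absolutely continuous with respect to $\nu$, the measure $\tilde\mu_u$ has an $R_{u\alpha}$-invariant density, which is constant on each such ergodic component, so $\tilde\mu_u$ is a convex combination of them. It follows that $(X_\eta,\tilde\mu_u,S^u)$ has rational discrete spectrum, and since this property passes to measure-theoretic factors, so does $(X_{\eta_{u,j}},\nu_{u,j},S)$, completing the proof. The hard part will be this last step: preserving rational discrete spectrum under the simultaneous passage to a power $S^u$ (which breaks ergodicity) and to the absolutely continuous measure $\tilde\mu_u$; I expect to handle this via the explicit procyclic-rotation model of the rational Kronecker factor.
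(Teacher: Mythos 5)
Your proposal is correct, and its overall architecture coincides with the paper's: a diagonal extraction of $\Mk$, the identity $\nu=\frac{1}{u}\sum_{j=0}^{u-1}(S^j)_*\tilde\mu_u$ giving $\tilde\mu_u\ll\nu$, and the coordinate-section map $\phi_j$ intertwining $S^u$ with $S$ to realize $(X_{\eta_{u,j}},\nu_{u,j},S)$ as a factor of $(X_\eta,\tilde\mu_u,S^u)$. The one genuine divergence is the step you correctly flag as the hard one, namely showing that $(X_\eta,\tilde\mu_u,S^u)$ retains rational discrete spectrum. The paper does this without any structure theory: it observes that every eigenfunction of $(X_\eta,\nu,S)$ with eigenvalue $\lambda$ is ($\tilde\mu_u$-a.e., by absolute continuity) an eigenfunction of $S^u$ with eigenvalue $\lambda^u$, and then transfers density of the span of these (bounded) eigenfunctions from $L^2(\nu)$ to $L^2(\tilde\mu_u)$ via the norm comparison $\|g\|_{L^2(\tilde\mu_u)}^2\leq u\,\|g\|_{L^2(\nu)}^2$, which follows from $\tilde\mu_u\leq u\nu$. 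You instead invoke the Halmos--von Neumann representation of $(X_\eta,\nu,S)$ as an ergodic rotation on a procyclic group and compute the ergodic decomposition of Haar measure under $R_{u\alpha}$ over the finite-index subgroup $\overline{\langle u\alpha\rangle}$, concluding that $\tilde\mu_u$ is a finite convex combination of ergodic rotations with rational discrete spectrum. Both routes are valid; the paper's is more elementary and self-contained (no isomorphism theorem needed, and the eigenfunction/norm argument is two lines), while yours buys a concrete structural picture of $\tilde\mu_u$ as a convex combination of finitely many explicit ergodic components -- information the lemma does not require but which makes the invariance of the density and the finiteness of the decomposition transparent. If you write up your version, do spell out that the measure isomorphism, being defined only $\nu$-a.e., transports $\tilde\mu_u$ precisely because $\tilde\mu_u\ll\nu$, and that a finite direct sum of systems with rational discrete spectrum again has rational discrete spectrum.
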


\begin{proof}
By applying a standard diagonalization method, choose a subsequence $(M_k)_{k\geq 1}$ of $\Nk$ such that for every $u\in\N$ the point $\eta$ is quasi-generic along $(M_k/u)_{k\geq 1}$ for a measure $\mu_u$ with respect to the transformation $S^u$. In other words, for every $u\in\N$ and every continuous function $f\in C(\{0,1\}^\Z)$, the limit
$$
\lim_{k\to\infty}\frac{u}{M_k}\sum_{n=1}^{M_k/u} f(S^{un}\tilde\eta)
$$
exists and equals $\int f~d\mu_u$, where $\tilde\eta\in\{0,1\}^\Z$ is any two sided sequence that extends $\eta\in\{0,1\}^\N$. Define $\mu_{u,j}:=S^j\mu_u$ and note that $\mu_{u,j}$ is $S^u$-invariant.
Since
\begin{eqnarray*}
\frac{1}{u}\sum_{j=0}^{u-1}\int f~d\mu_{u,j} &=&
\frac{1}{u}\sum_{j=0}^{u-1}\int S^jf~d\mu_u
\\
&=&
\frac{1}{u}\sum_{j=0}^{u-1}\lim_{k\to\infty}\frac{u}{M_k}\sum_{n=1}^{M_k/u} f(S^{un+j}\tilde\eta)
\\
&=&
\lim_{k\to\infty}\frac{1}{M_k}\sum_{n=1}^{M_k} f(S^{n}\tilde\eta)
\\
&=&\int f~d\nu,
\end{eqnarray*}
we deduce that
\begin{equation}
\label{eq:rw1--2}
\frac{1}{u}\sum_{j=0}^{u-1}\mu_{u,j}=\nu.
\end{equation}
In particular, we have that for each Borel-measurable function $g$ on $X_\eta$,
\begin{equation}
\label{eq:rw1-1}
\|g\|_{L^2(\nu)}^2=\frac{1}{u}\sum_{j=0}^{u-1}\|g\|_{L^2(S^j\mu_u)}^2\geq \max_{0\leq j<u}\frac{1}{u}\|g\|_{L^2(\mu_{u,j})}^2.
\end{equation}

We deduce from \eqref{eq:rw1--2} that $\mu_{u,j}$ is absolutely continuous with respect to $\nu$, that is, any set that has zero measure with respect to $\nu$ also has zero measure with respect to $\mu_{u,j}$.
Therefore, any eigenfunction of the system $(X_\eta,\nu,S)$ with eigenvalue $\lambda$ is an eigenfunction of the system $(X_\eta, \mu_{u,j}, S^u)$ with eigenvalue $\lambda^u$.
The system $(X_\eta,\nu,S)$ has rational discrete spectrum and so the span of eigenfunctions with rational eigenvalue is dense in $L^2(\nu)$. However, if a class of bounded measurable functions is dense in $L^2(\nu)$, then, by \eqref{eq:rw1-1}, it is also dense in $L^2(\mu_{u,j})$. Hence, the span of eigenfunctions with rational eigenvalue is dense in $L^2(\mu_{u,j})$, which proves that $(X_\eta, \mu_{u,j}, S^u)$ has rational discrete spectrum.

Let $\Phi:\{0,1\}^\Z\to\{0,1\}^\Z$ denote the map defined by the rule
$\Phi(x)(n)=x(un+j)$ for all $x\in\{0,1\}^\Z$.
%Note that $\Phi(\eta)=\eta_{u,j}$.
It is straightforward to verify that $\Phi(X_\eta)=X_{\eta_{u,j}}$ and that
$\Phi$ satisfies
\begin{equation}
\label{law-irg-1}
\Phi\circ S^u = S\circ\Phi.
\end{equation}
Let $\nu_{u,j}$ denote the push-forward of $\mu_{u,j}$ under $\Phi$.
Since $S^j\eta$ is quasi-generic along $(M_k/u)_{k\geq 1}$ for $\nu_{u,j}$ under the transformation $S^u$, it follows from \eqref{law-irg-1} that $\eta_{u,j}$ is generic for the measure $\nu_{u,j}$ along $(M_k/u)_{k\geq 1}$.
Finally, observe that $(X_{\eta_{u,j}},\nu_{u,j},S)$ has rational discrete spectrum because $(X_\eta,\mu_{u,j}, S^u)$ has rational discrete spectrum.
\end{proof}

\begin{Th}
\label{thm_poly-multi-conv-along-disc-rat-spec-Nk}
Let $R\subset \N$ and suppose $\eta:=\1_R$ is quasi-generic along $(N_k)$ for a measure $\nu$ such that $(X_\eta,\nu,S)$ has rational discrete spectrum. Then there exists a subsequence $(M_k)_{k\geq1}$ of $\Nk$ such that $R$ is an averaging set of polynomial multiple convergence along $\Mk$, that is, for all invertible measure preserving systems $\xbmt$, $\ell\in\N$, $A\in\mathcal{B}$ with $\mu(A)>0$ and for all polynomials $p_i\in\Q[t]$, $i=1,\ldots,\ell$,
with $p_i(\Z)\subset\Z$ for $i\in\{1,\ldots,\ell\}$, the limit
\begin{equation}
\label{eq:irg-1-2}
\lim_{k\to\infty}\frac{1}{M_k}\sum_{n=1}^{M_k}
\1_R(n)\mu\Big(A\cap T^{-p_1(n)}A\cap\ldots\cap T^{-p_\ell(n)}A\Big)
\end{equation}
exists (cf. \cref{def:goodweights}).
\end{Th}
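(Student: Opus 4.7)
The plan is to adapt the argument from the proof of \cref{p5new}, retaining only the convergence step (positivity is not sought here). I would take $(M_k)_{k\geq 1}$ to be the subsequence of $(N_k)$ provided by \cref{irg-l1}, so that for every $u\in\N$ and every $j\in\{0,\ldots,u-1\}$ the sequence $\1_{(R-j)/u}$ is quasi-generic along $(M_k/u)$ for a measure $\nu_{u,j}$ with $(X_{\eta_{u,j}},\nu_{u,j},S)$ of rational discrete spectrum. I would prove the slightly stronger statement in which $\1_A$ is replaced by arbitrary $f_1,\ldots,f_\ell\in L^\infty(X,\mu)$; setting $\varphi(n):=\int_X \prod_{i=1}^{\ell}T^{p_i(n)}f_i\,d\mu$, the task becomes showing that $\frac{1}{M_k}\sum_{n=1}^{M_k}\1_R(n)\varphi(n)$ converges.

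First, by \cref{thm_4.1}, $\varphi$ is $d_B$-close to a nilsequence $\psi$; since $\|\1_R\|_\infty\leq 1$, the averages of $\1_R\cdot(\varphi-\psi)$ tend to zero, so it suffices to handle $\1_R\cdot\psi$. Approximating $\psi$ uniformly by basic nilsequences $F(T_g^n x)$ and running a standard Cauchy argument, I reduce the problem to showing that $\lim_k \frac{1}{M_k}\sum_{n=1}^{M_k}\1_R(n)F(T_g^n x)$ exists, where $F\in C(X')$ is continuous on a nilmanifold $X'=G/\Gamma$ and $T_g$ is an ergodic nilrotation. Applying \cref{prop_2.1} to the finitely many connected components of $X'$ (and taking an lcm), I obtain $u\in\N$ such that each component $Y_j\subset X'$ containing $T_g^j x$ is $T_{g^u}$-invariant, $(Y_j,\mu_{Y_j},T_{g^u})$ is totally ergodic, and $T_g^j x$ is a generic point of this uniquely ergodic nilsystem. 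Splitting the sum by residues $n=um+j$ modulo $u$ and absorbing boundary terms,
\[
\frac{1}{M_k}\sum_{n=1}^{M_k}\1_R(n)F(T_g^nx)=\frac{1}{u}\sum_{j=0}^{u-1}\frac{u}{M_k}\sum_{m\leq M_k/u}\1_{(R-j)/u}(m)\,F(T_{g^u}^m T_g^jx)+o(1),
\]
and \cref{l1-2u} applied to each $j$ (with the rational-discrete-spectrum sequence $\1_{(R-j)/u}$ from \cref{irg-l1}, the totally ergodic nilsystem $(Y_j,\mu_{Y_j},T_{g^u})$, and its generic point $T_g^j x$) shows each inner limit equals $d^{(M_k/u)}((R-j)/u)\int_{Y_j}F|_{Y_j}\,d\mu_{Y_j}$, so the full sum converges.

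The main obstacle is coordinating the three successive approximations --- Leibman's, uniform approximation by basic nilsequences, and residue-class decomposition --- with a single subsequence $(M_k)$ that must be fixed independently of the system $(X,\cb,\mu,T)$, the polynomials $p_i$, and the nilsystem extracted via \cref{thm_4.1} and \cref{prop_2.1}. This is precisely what \cref{irg-l1} arranges by supplying $(M_k)$ uniformly across all pairs $(u,j)$. The residual technicalities (boundary-term control when $u\nmid M_k$ and bookkeeping of the $\varepsilon$-approximations) are routine.
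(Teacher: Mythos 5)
Your proposal is correct and follows essentially the same route as the paper's proof: fix $(M_k)$ via \cref{irg-l1}, reduce to a basic nilsequence via \cref{thm_4.1} and an $\varepsilon$-approximation argument, pass to the connected components with \cref{prop_2.1}, and evaluate each residue-class average with \cref{l1-2u}. The only cosmetic differences (working with general $f_1,\ldots,f_\ell$ and splitting the nilsequence approximation into two explicit steps) do not change the argument.
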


\begin{proof}
By applying \cref{irg-l1} we can find a subsequence $(M_k)_{k\geq 1}$ of $\Nk$ such that for every $u\in\N$ and every $j\in\{0,1,\ldots,u-1\}$ the point $\eta_{u,j}:=\1_{(R-j)/u}$ is quasi-generic along $(M_k/u)_{k\geq 1}$ for a measure $\nu_{u,j}$ such that $(X_{\eta_{u,j}},\nu_{u,j},S)$ has rational discrete spectrum.
Let $\xbmt$, $\ell\in\N$, $A\in\mathcal{B}$ with $\mu(A)>0$ and $p_i\in\Q[t]$, $i=1,\ldots,\ell$,
with $p_i(\Z)\subset\Z$ for $i\in\{1,\ldots,\ell\}$ be arbitrary.
Define
$$
\varphi(n):=\mu\Big(A\cap T^{-p_1(n)}A\cap\ldots\cap T^{-p_\ell(n)}A\Big).
$$
In view of \cref{thm_4.1}, we can find for every $\epsilon >0$ a basic nilsequence $(f(T_g^n x))$, where
$T_g$ is an ergodic nilrotation on some nilmanifold $X=G/\Gamma$, $f\in C(X)$ and $x\in X$, such that
$$
\limsup_{N\to\infty} \frac{1}{N}\sum_{n=1}^N
\vert \varphi(n)- f(T_g^n x)\vert \leq \epsilon.
$$
It thus suffices to show that the limit
\begin{equation}
\label{eq:irg-1-3}
\lim_{k\to\infty}\frac{1}{M_k}\sum_{n=1}^{M_k}
\1_R(n)f(T_g^n x)
\end{equation}
exists, because from this it follows that
$$
\limsup_{k\to\infty} \frac{1}{M_k}\sum_{n=1}^{M_k}
\1_R(n)\varphi(n) -\liminf_{k\to\infty} \frac{1}{M_k}\sum_{n=1}^{M_k}
\1_R(n)\varphi(n)\leq 2\epsilon,
$$
from which we can deduce that the limit in \eqref{eq:irg-1-2} exists, as $\epsilon$ was chosen arbitrarily.

Let $X_0,X_1,\ldots,X_{u-1}$ denote the connected components of the nilmanifold $X$. Since $T_g$ is ergodic, it cyclically permutes the connected components of $X$. We can therefore assume without loss of generality that $T_gX_{j}=X_{j+1\bmod u}$. In particular, $T_g^u X_j=X_j$ and, according to \cref{prop_2.1}, the nilsystem $(X_j,\mu_{X_j}, T_g^u)$ is totally ergodic.
Note that
$$
\lim_{k\to\infty}\frac{1}{M_k}\sum_{n=1}^{M_k}
\1_R(n)f(T_g^n x)=
\sum_{j=0}^{u-1}\lim_{k\to\infty}\frac{u}{M_k}\sum_{n=1}^{M_k/u}
\1_{(R-j)/u}(n)f(T_g^{un+j} x),
$$
where the limit on the left hand side in the above equation exists if all the limits for $j=0,1,\ldots,u-1$ on the right hand side exist.
It remains to show that for every $j\in\{0,1,\ldots,u-1\}$ the limit
$$
\lim_{k\to\infty}\frac{u}{M_k}\sum_{n=1}^{M_k/u}
\1_{(R-j)/u}(n)f(T_g^{un+j} x)
$$
exists.
Suppose $T_g^jx\in X_{j_0}$ for some $j_0\in\{0,1,\ldots,u-1\}$. Since $(X_{j_0},\mu_{X_j}, T_g^u)$ is totally ergodic (and uniquely ergodic), it follows from \cref{l1-2u} that 
$$
\lim_{k\to\infty}\frac{u}{M_k}\sum_{n=1}^{M_k/u}
\1_{(R-j)/u}(n)f(T_g^{un+j} x)
=d^{(M_k)}((R-j)/u)\int f\,d\mu_{X_{j_0}}.
$$
This finishes the proof.
\end{proof}

The proof of \cref{thm_poly-multi-rec-along-disc-rat-spec-Nk} hinges on \cref{l1-2u} and \cref{irg-l1} and it is a modification of the proof of \cref{thm_poly-multi-rec-along-rat} given in \cref{sec_multi-rec}.

\begin{proof}[Proof of \cref{thm_poly-multi-rec-along-disc-rat-spec-Nk}]
Let $\xbmt$ be an invertible measure preserving system, let  $R\subset \N$ with $d^{(N_k)}(R)>0$ and assume $\eta:=\1_R$
is quasi-generic along $(N_k)$ for a measure $\nu$
such that $(X_\eta,\nu,S)$ has rational discrete spectrum.
Take any $A\in\cb$ with $\mu(A)>0$ and let $p_1,\ldots,p_\ell\in\Q[t]$ with $p_i(\Z)\subset\Z$, $p_i(0)=0$, $i=1,\ldots,\ell$, be arbitrary.
Choose a subsequence $(M_k)_{k\geq 1}$ of $\Nk$ such that the conclusion of both \cref{thm_poly-multi-conv-along-disc-rat-spec-Nk} and \cref{irg-l1} hold.
We will show that
\begin{equation}\label{rr20-4}
\lim_{k\to\infty}\frac{1}{M_k}\sum_{n=1}^{M_k}
\1_R(n)\varphi(n)>0,
\end{equation}
where $\varphi(n)=\mu\big(A\cap T^{-p_1(n)}A\cap\ldots\cap T^{-p_\ell(n)}A\big)$.
The existence of the limit in 
\eqref{rr20-4} follows from \cref{thm_poly-multi-conv-along-disc-rat-spec-Nk}. It remains to show that the limit in \eqref{rr20-4} is positive.

Arguing as in the proof of \cref{p5new},
we can assume without loss of generality that
$(\varphi(n))$ is a nilsequence.
By \cref{Cor-uniformity} (see the appendix), there exists $\delta>0$ such that
\begin{equation}\label{rr20-1-4}
\lim_{N\to\infty}\frac{1}{N}\sum_{n=1}^N \varphi(un)>\delta
\text{ for all }u\in\N.
\end{equation}
We can approximate $(\varphi(n))$
by a basic nilsequence $(f(T_g^n x))$, where
$T_g$ is a  nilrotation on some nilmanifold $X=G/\Gamma$, $f\in C(X)$ and $x\in X$, such that
$\vert \varphi(n)- f(T_g^n x)\vert \leq \delta/4 $ for all $n\in\N$.

Using \cref{prop_2.1}, we can find $u\in\N$ and a sub-nilmanifold $Y\subset X$ containing $x$
such that $(Y,\mu_Y,T_{g^u})$ is totally ergodic.
It follows from \cref{irg-l1} that
$\1_{R/u}$ is quasi-generic along $(M_k/u)_{k\in\N}$ for a measure $\nu'$ such that the system $(\{0,1\}^\Z,\nu',S)$ has rational discrete spectrum.
It now follows from \cref{l1-2u} that
\begin{equation}\label{rr21-1-4}
\lim_{k\to\infty}\frac{u}{M_k}\sum_{n=1}^{M_k/u}\1_{R/u}(n)f(T_{g^u}^{n}x)=
d^{(M_k/u)}(R/u)\int_Y f\,d\mu_Y.
\end{equation}

Finally, combining \eqref{rr20-1-4} and \eqref{rr21-1-4} and $\vert \varphi(un)- f(T_{g^u}^n x)\vert \leq \delta/4$, we obtain
\begin{eqnarray*}
\lim_{k\to\infty}\frac1{M_k}\sum_{n=1}^{M_k} \1_{R}(n)\psi(n)
&\geq&
\lim_{k\to\infty}\frac1{M_k}\sum_{n=1}^{M_k}\1_{R\cap u\N}(n)\psi(n)
\\
&=&
\frac{1}{u}\left(
\lim_{k\to\infty}\frac{u}{M_k}\sum_{n=1}^{M_k/u}\1_{R/u}(n)\psi(un)\right)
\\
&\geq&
\frac{1}{u}\left(
\lim_{k\to\infty}\frac{u}{M_k}\sum_{n=1}^{M_k/u} \1_{R/u}(n) f(T_{g^u}^{n} x)-
\frac{\delta }{4}d^{(M_k/u)}(R/u)\right)
\\
&\geq&  \frac{1}{u}\left(\frac{3\delta }{4}d^{(M_k/u)}(R/u) - \frac{\delta }{4}d^{(M_k/u)}(R/u)\right)~>~0.
\end{eqnarray*}
This completes the proof.
\end{proof}

As an application of \cref{thm_poly-multi-rec-along-disc-rat-spec-Nk} together with \cref{rem:12},
we obtain a strengthening of \cref{c_ave-rec-for-B-free}
that also applies to $\sB$-free numbers for a general set $\sB\subset\N\setminus\{1\}$.
%where $\sB$ is not assumed to be Besicovitch.

\begin{Th}
\label{c_ave-rec-for-B-free-along-subseq}
Suppose $\sB\subset\N\setminus\{1\}$. Then there exist an increasing
sequence of positive integers $\Nk$
and a set $D\subset \cf_\sB$
with $d^{(N_k)}(\cf_\sB\setminus D)=0$ such that for all
$r\in\N$ the following are equivalent:
\begin{itemize}
\item
$r\in D$;
\item
$\cf_\sB-r$ is divisible along $\Nk$;
\item
$\cf_\sB-r$ is an averaging set of polynomial multiple recurrence
along $\Nk$.
\end{itemize}
\end{Th}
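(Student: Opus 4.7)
I would split the argument into two cases depending on whether $\sB$ is Behrend. If $\bdelta(\cm_\sB)=1$, then $\ov{d}(\cf_\sB)=0$ by \cref{daer}, so every shift $\cf_\sB-r$ has density zero along any subsequence; divisibility (which is a necessary condition for averaging polynomial recurrence, as noted just before \cref{c_ave-rec-for-B-free}) fails vacuously for every $r$, and we may take $D=\emptyset$ with any $(N_k)$.

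In the non-Behrend case, I would invoke \cref{thm1} to fix a taut $\sC\subset\N\setminus\{1\}$ with $\cf_\sC\subset\cf_\sB$ and $\bdelta(\cf_\sC)=\bdelta(\cf_\sB)>0$. Using $\bdelta(\cf_\sC)=\ov{d}(\cf_\sC)$ (the $\cf$-version of \cref{daer}), I would extract an increasing sequence $(L_k)$ with $d^{(L_k)}(\cf_\sC)=\bdelta(\cf_\sC)$. Since $\cf_\sC\subset\cf_\sB$ and $\ov{d}(\cf_\sB)=\bdelta(\cf_\sB)=\bdelta(\cf_\sC)$, one automatically obtains $d^{(L_k)}(\cf_\sB)=\bdelta(\cf_\sB)$, whence $d^{(L_k)}(\cf_\sB\setminus\cf_\sC)=0$. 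The approximations $\1_{\cf_\sB}\approx\1_{\cf_{\sB(m)}}$ and $\1_{\cf_\sC}\approx\1_{\cf_{\sC(m)}}$ in $d_B^{(L_k)}$ (as in \cref{ex1-continued}) show that both $\1_{\cf_\sB}$ and $\1_{\cf_\sC}$ are RAP along $(L_k)$.

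I would then invoke \cref{t:Benji} to obtain ergodic measures with rational discrete spectrum for which these sequences are quasi-generic along $(L_k)$. A diagonalization as in \cref{irg-l1}, performed jointly for $\1_{\cf_\sB}$ and $\1_{\cf_\sC}$ and over the countable parameter $u\in\N$, refines $(L_k)$ to a subsequence $(N_k)$ such that for every $u\in\N$ and every $j\in\{0,\dots,u-1\}$ both $\1_{(\cf_\sB-j)/u}$ and $\1_{(\cf_\sC-j)/u}$ are quasi-generic along $(N_k/u)$ for measures with rational discrete spectrum. Since every $r\in\N$ reduces to some $j$ modulo any $u$, a single $(N_k)$ simultaneously handles all shifts $\cf_\sB-r$. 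Set $D:=\cf_\sC$; the equality $d^{(N_k)}(\cf_\sB\setminus\cf_\sC)=0$ is inherited from $(L_k)$.

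To finish, I would verify the two directions. For $r\in\cf_\sC$: \cref{l:divisibility-of-taut-case} gives $\bdelta((\cf_\sC-r)\cap u\N)>0$ for every $u$; combined with rationality of $\cf_\sC$ along $(N_k)$ and \cref{postepy}, this yields $d^{(N_k)}((\cf_\sC-r)\cap u\N)>0$, and $d^{(N_k)}(\cf_\sB\setminus\cf_\sC)=0$ promotes this to $d^{(N_k)}((\cf_\sB-r)\cap u\N)>0$. Hence $\cf_\sB-r$ is divisible along $(N_k)$, and by \cref{thm_poly-multi-rec-along-disc-rat-spec-Nk} applied to $\cf_\sB-r$ (the diagonalization above providing precisely the hypothesis its proof requires), it is also an averaging set of polynomial multiple recurrence along $(N_k)$. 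For $r\notin\cf_\sC$: one has $r\in\cm_\sC$, so some $c\in\sC$ divides $r$, whence $(\cf_\sC-r)\cap c\N=\emptyset$ and therefore $(\cf_\sB-r)\cap c\N\subset(\cf_\sB\setminus\cf_\sC)-r$ has $d^{(N_k)}$-density zero; divisibility fails, and hence so does averaging recurrence. The main obstacle is engineering a single $(N_k)$ that simultaneously realizes $d^{(N_k)}(\cf_\sB)=d^{(N_k)}(\cf_\sC)=\bdelta(\cf_\sB)$, preserves quasi-genericity along every $(N_k/u)$, and supports \cref{thm_poly-multi-rec-along-disc-rat-spec-Nk} uniformly across all shifts.
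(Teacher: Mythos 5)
Your proof is correct and follows essentially the same route as the paper's: the Behrend/non-Behrend dichotomy, the Davenport--Erd\H{o}s subsequence realizing the logarithmic density, the reduction to a taut $\sC$ via \cref{thm1} with $D=\cf_\sC$, \cref{l:divisibility-of-taut-case} for the divisibility characterization, and \cref{thm_poly-multi-rec-along-disc-rat-spec-Nk} (via \cref{t:Benji}) for the equivalence with averaging recurrence. The only differences are that you spell out what the paper compresses into ``repeat the proof of \cref{c_ave-rec-for-B-free} word for word'' --- in particular your choice of $(L_k)$ realizing $\ov{d}(\cf_\sC)$ (which then automatically handles $\cf_\sB$) and your explicit joint diagonalization in place of the appeal to \cref{rem:12}.
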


\begin{proof}[Proof of \cref{c_ave-rec-for-B-free-along-subseq}]
Let $\sB\subset \N$ be arbitrary.
If $\sB$ is Behrend then
$\cf_\sB$ has zero density and we can put $D=\emptyset$.
Thus, let us assume that $\sB$ is not Behrend. Therefore,
the logarithmic density of $\bdelta(\cf_\sB)$ is positive. Moreover, by \cref{daer},
$\bdelta(\cf_\sB)=d^{(N_k)}(\cf_\sB)$ for some increasing sequence $\Nk$. We now repeat word for word the proof of \cref{c_ave-rec-for-B-free} with density and divisibility replaced by density along $\Nk$ and divisibility along $\Nk$, respectively.
\end{proof}

\section{Applications to Combinatorics}
\label{seq_applications-combinatorics}

In this section we show how the results obtained in the previous sections
allow us to derive new refinements of the polynomial Szemer{\'e}di theorem.
In particular, we give a proof of
\cref{c:fc-rat-without-intersectivitiy} and of \cref{c:fc-rat}.

First, let us recall Furstenberg's correspondence principle:

\begin{Prop}[Furstenberg correspondence principle, see \cite{Bergelson87,Bergelson96}]
\label{prop:fc}
Let $E\subset \N$ be a set with positive upper density $\overline{d}(E)>0$.
Then there exist an invertible measure preserving system $\xbmt$
and a set $A\in \cb$ with $\mu(A)\geq \overline{d}(E)$ such that
for all $n_1,\ldots,n_\ell \in \N$, one has
\begin{equation}
\label{eq:fc}
\overline{d}\left(E\cap  (E-n_1)\cap \ldots\cap (E-n_\ell)\right)\geq
\mu\left(A\cap  T^{-n_1}A\cap \ldots\cap T^{-n_\ell} A\right).
\end{equation}
\end{Prop}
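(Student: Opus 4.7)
The plan is to realize $E$ symbolically inside the full shift and extract an invariant measure by a Krylov--Bogolyubov type limiting procedure. First I would let $X=\{0,1\}^{\Z}$ with the product topology, let $\cb$ be its Borel $\sigma$-algebra, and let $T=S$ be the left shift, so $(X,T)$ is a compact metrizable topological dynamical system and $T$ is a homeomorphism (hence invertible). Extend $\1_E$ to a two-sided sequence by setting $x(n):=\1_E(n)$ for $n\geq 1$ and $x(n):=0$ for $n\leq 0$. Define the clopen set $A:=\{y\in X : y(0)=1\}$.

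Next, I would pick an increasing sequence $(N_k)_{k\geq 1}$ of positive integers along which the upper density is attained, that is,
$$
\lim_{k\to\infty}\frac{|E\cap[1,N_k]|}{N_k}=\overline{d}(E),
$$
and form the empirical measures
$$
\mu_k:=\frac{1}{N_k}\sum_{n=1}^{N_k}\delta_{T^n x}\in\mathcal{P}(X).
$$
Since $\mathcal{P}(X)$ is weak-$\ast$ compact, we may pass to a subsequence (which I will still call $(N_k)$) so that $\mu_k\to\mu$ weakly for some Borel probability measure $\mu$ on $X$. A routine telescoping estimate shows $|\mu_k(f)-\mu_k(f\circ T)|\leq \tfrac{2\|f\|_\infty}{N_k}$ for $f\in C(X)$, so $\mu$ is $T$-invariant.

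Finally, I would verify the inequality \eqref{eq:fc}. For any $n_1,\ldots,n_\ell\in\N$ the set $B:=A\cap T^{-n_1}A\cap\ldots\cap T^{-n_\ell}A$ is clopen, so by the portmanteau theorem $\mu_k(B)\to\mu(B)$. Moreover $T^n x\in B$ if and only if $n\in E$ and $n+n_i\in E$ for each $i$, hence
$$
\mu(B)=\lim_{k\to\infty}\frac{|E\cap(E-n_1)\cap\ldots\cap(E-n_\ell)\cap[1,N_k]|}{N_k}\leq\overline{d}\bigl(E\cap(E-n_1)\cap\ldots\cap(E-n_\ell)\bigr),
$$
where the last inequality holds because the limit over the particular subsequence $(N_k)$ is bounded by the $\limsup$ defining $\overline{d}$. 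Taking $\ell=0$ yields $\mu(A)=\overline{d}(E)$.

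There is no substantial obstacle in the argument; the only point requiring mild care is arranging both weak-$\ast$ convergence of $(\mu_k)$ and the identity $\lim_k|E\cap[1,N_k]|/N_k=\overline{d}(E)$ simultaneously, which is accomplished by first choosing a sequence $(N_k)$ realizing the upper density and then passing to a further weak-$\ast$ convergent subsequence.
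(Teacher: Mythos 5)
Your proof is correct: the paper states this proposition as a known result with citations to \cite{Bergelson87,Bergelson96} and gives no proof of its own, and your argument is the standard symbolic realization of $E$ in $\{0,1\}^{\Z}$ with a Krylov--Bogolyubov limit along a density-realizing sequence, which is essentially the proof found in those references. All steps (invariance via the telescoping bound, convergence on clopen cylinder sets, and the identification of $\mu_k(B)$ with the counting quantity) check out.
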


We have now the following result regarding averaging sets of polynomial multiple recurrence along $\Nk$.

%With the help of \cref{prop:fc}, we can derive the following two propositions from the definition of averaging sets of polynomial multiple recurrence (along $\Nk$).

\begin{Prop}
\label{c:fc-w-i}
Let $\Nk$ be an increasing sequence and let $R\subset \N$ be an averaging set of polynomial multiple recurrence along $\Nk$.
Then for any set $E\subset \N$ with $\overline{d}(E)>0$
and any polynomials $p_1,\ldots,p_\ell \in\Q[t]$, which satisfy
$p_i(\Z)\subset\Z$ and $p_i(0)=0$ for all $i\in\{1,\ldots,\ell\}$,
there exists $\beta>0$ such that the set
$$
\left\{n\in R:\overline{d}\Big(
E\cap (E-p_1(n))\cap \ldots\cap(E-p_\ell(n))
\Big)>\beta \right\}
$$
has positive lower density (with respect to $\Nk$).
\end{Prop}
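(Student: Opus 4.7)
The plan is to reduce the combinatorial statement to the ergodic one via Furstenberg's correspondence principle, and then to extract the desired subset of $R$ from the positivity of the weighted averages by a Markov-type inequality.

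First I would apply \cref{prop:fc} to $E$: since $\overline{d}(E)>0$, this yields an invertible measure preserving system $\xbmt$ and a set $A \in \cb$ with $\mu(A) \geq \overline{d}(E) > 0$ such that
$$\overline{d}\left(E \cap (E-n_1) \cap \ldots \cap (E-n_\ell)\right) \geq \mu\left(A \cap T^{-n_1}A \cap \ldots \cap T^{-n_\ell}A\right)$$
for all $n_1,\ldots,n_\ell \in \N$. Define $f(n) := \mu(A \cap T^{-p_1(n)}A \cap \ldots \cap T^{-p_\ell(n)}A)$. Applying the hypothesis that $R$ is an averaging set of polynomial multiple recurrence along $\Nk$ (see \cref{def:ave-rec-subseq}) to $A$ and the polynomials $p_1,\ldots,p_\ell$ guarantees the existence of
$$c := \lim_{k\to\infty}\frac{1}{N_k}\sum_{n=1}^{N_k}\1_R(n)f(n) > 0.$$

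Next, I would set $\beta := c/2$ and consider $B := \{n \in R : f(n) > \beta\}$. Since $0 \leq f \leq 1$, splitting the weighted average at the threshold $\beta$ yields
$$\frac{1}{N_k}\sum_{n=1}^{N_k}\1_R(n)f(n) \leq \frac{|B \cap [1,N_k]|}{N_k} + \beta,$$
and passing to $\liminf_{k\to\infty}$ gives $c \leq \underline{d}^{(N_k)}(B) + c/2$, so $\underline{d}^{(N_k)}(B) \geq c/2 > 0$.

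Finally, by the Furstenberg correspondence inequality applied with the tuple $(p_1(n),\ldots,p_\ell(n))$, every $n \in B$ satisfies
$$\overline{d}\Big(E \cap (E-p_1(n)) \cap \ldots \cap (E-p_\ell(n))\Big) \geq f(n) > \beta,$$
so $B$ is contained in the set displayed in the proposition, which therefore has positive lower density with respect to $\Nk$. There is no real obstacle: the entire argument is a direct translation from the ergodic to the combinatorial setting, the heavy lifting having already been done in establishing \cref{def:ave-rec-subseq}-style positivity (via, e.g., \cref{thm_poly-multi-rec-along-disc-rat-spec-Nk}) and in the correspondence principle \cref{prop:fc}.
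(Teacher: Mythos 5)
Your proof is correct and follows exactly the route the paper intends: the authors dispose of \cref{c:fc-w-i} in one line as ``an immediate consequence of Furstenberg's correspondence principle and of the definition of an averaging set of polynomial multiple recurrence,'' and your argument simply fills in the details (the application of \cref{prop:fc}, the positivity of the weighted limit from \cref{def:ave-rec-subseq}, and the Markov-type splitting at the threshold $\beta=c/2$). No gaps; the write-up is a faithful expansion of the paper's sketch.
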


\begin{Prop}
\label{c:fc}
Let $R\subset \N$ be an averaging set of polynomial multiple recurrence
(along $\Nk$).
Then for any $E\subset \N$ with $\overline{d}(E)>0$
and any polynomials $p_1,\ldots,p_\ell\in\Q[t]$, which satisfy
$p_i(\Z)\subset\Z$ and $p_i(0)=0$ for all $i\in\{1,\ldots,\ell\}$,
there exists a subset $R'\subset R$ satisfying $\overline{d}(R')>0$ such that for any finite subset
$F\subset R'$ we have
$$
\overline{d}\left(\bigcap_{n\in F}\Big(
E\cap \big(E-p_1(n)\big)\cap \ldots\cap\big(E-p_\ell(n)\big)
\Big)\right)>0.
$$
\end{Prop}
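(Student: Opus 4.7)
The plan is to combine Furstenberg's correspondence principle with a Bergelson-style intersectivity argument. First, I apply \cref{prop:fc} to $E$, producing an invertible measure preserving system $\xbmt$ and a set $A\in\cb$ with $\mu(A)\geq\overline{d}(E)$ such that, for every finite tuple $n_1,\ldots,n_m\in\N$,
$$
\overline{d}\Bigl(E\cap(E-n_1)\cap\ldots\cap(E-n_m)\Bigr)\geq\mu\Bigl(A\cap T^{-n_1}A\cap\ldots\cap T^{-n_m}A\Bigr).
$$
Setting $B_n:=A\cap T^{-p_1(n)}A\cap\ldots\cap T^{-p_\ell(n)}A$, the hypothesis that $R$ is an averaging set of polynomial multiple recurrence (along some $\Nk$; the unrestricted case corresponds to $N_k=k$) yields
$$
\gamma:=\lim_{k\to\infty}\frac{1}{N_k}\sum_{n=1}^{N_k}\1_R(n)\mu(B_n)>0.
$$

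Next, I introduce the functions $h_k(x):=\frac{1}{N_k}\sum_{n=1}^{N_k}\1_R(n)\1_{B_n}(x)$, so that $\int h_k\,d\mu\to\gamma$, and set $h(x):=\limsup_{k\to\infty}h_k(x)$. Since $0\leq h_k\leq 1$, the reverse Fatou lemma gives $\int h\,d\mu\geq\gamma$, hence $U:=\{x\in X:h(x)>0\}$ has positive $\mu$-measure. The pointwise value $h(x)$ is exactly the upper density along $\Nk$ of the set $S(x):=\{n\in R:x\in B_n\}$.

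The crucial step is to avoid ``bad'' finite intersections by a null-set trick. Let $\mathscr{N}$ denote the countable collection of finite subsets $F\subset\N$ with $\mu\bigl(\bigcap_{n\in F}B_n\bigr)=0$, and set $Z:=\bigcup_{F\in\mathscr{N}}\bigcap_{n\in F}B_n$; then $\mu(Z)=0$. Pick $x_0\in U\setminus Z$, and put $R':=S(x_0)\subset R$. Since the usual upper density dominates the upper density along any subsequence, $\overline{d}(R')\geq h(x_0)>0$. Moreover, $x_0\notin Z$ means that no finite $F\subset R'=S(x_0)$ belongs to $\mathscr{N}$; equivalently $\mu\bigl(\bigcap_{n\in F}B_n\bigr)>0$ for every finite $F\subset R'$.

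Finally, for an arbitrary finite $F\subset R'$, applying Furstenberg's inequality to the finite list of shifts $\{p_i(n):n\in F,\ 1\leq i\leq \ell\}$ yields
$$
\overline{d}\!\left(\bigcap_{n\in F}\Bigl(E\cap(E-p_1(n))\cap\ldots\cap(E-p_\ell(n))\Bigr)\right)\geq\mu\!\left(\bigcap_{n\in F}B_n\right)>0,
$$
as desired. The main obstacle is the intersectivity step carried out in the second and third paragraphs: one must pass from the positivity of the weighted Cesaro average $\frac{1}{N_k}\sum\1_R(n)\mu(B_n)$ to a single set $R'\subset R$ of positive upper density \emph{all} of whose finite subsets produce intersections of positive (rather than merely nonzero) $\mu$-measure. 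The null-set trick---discarding $Z$ before choosing $x_0$---is what accomplishes this, and is in essence Bergelson's intersectivity lemma adapted to the present weighted setting.
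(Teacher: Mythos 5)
Your proof is correct, but it takes a somewhat different route from the paper's. The paper first extracts the set $D:=\{n\in R:\mu(B_n)>\delta\}$ (which has positive lower density because the weighted average of $\mu(B_n)$ over $R$ is positive and $\mu(B_n)\leq1$), enumerates $D$ as $n_1,n_2,\ldots$, and then invokes \cref{l:VB-thm1.1} (Bergelson's intersectivity theorem from \cite{Bergelson85}) as a black box to produce a positive-upper-density set $P$ of indices all of whose finite intersections $\bigcap_{i\in F}A_{n_i}$ have positive measure; finally it sets $R'=\{n_i:i\in P\}$ and checks $\overline{d}(R')>0$. You instead inline the proof of that intersectivity theorem, adapted directly to the weighted average: the reverse Fatou argument on $h_k=\frac1{N_k}\sum\1_R(n)\1_{B_n}$ produces a positive-measure set of points $x$ for which $S(x)=\{n\in R:x\in B_n\}$ has positive upper density along $\Nk$, and the null-set trick (discarding $Z=\bigcup_{F\in\mathscr{N}}\bigcap_{n\in F}B_n$) guarantees that every finite subset of $R'=S(x_0)$ has a positive-measure intersection. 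What your version buys is self-containedness and a cleaner verification that $\overline{d}(R')>0$ (it is immediate that $R'\subset\N$ has upper density $\geq h(x_0)$, whereas the paper must pass through the enumeration of $D$ and deduce $\overline{d}(R')>0$ from $\overline{d}(P)>0$ together with the density of $D$); what the paper's version buys is brevity by citation. Both arguments share the same minor technicality, namely that \cref{prop:fc} is stated for shifts $n_1,\ldots,n_\ell\in\N$ while the values $p_i(n)$ need not all be positive; this is standard and affects neither proof.
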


By combining \cref{c:fc-w-i}
with \cref{thm_poly-multi-rec-along-rat},
we immediately obtain a proof of \cref{c:fc-rat-without-intersectivitiy}.
Likewise, by combining \cref{c:fc}
with \cref{thm_poly-multi-rec-along-rat},
we immediately obtain a proof of \cref{c:fc-rat}.

We can also get a slight generalization of
\cref{c:fc-rat-without-intersectivitiy}:
we can replace the notions `rational' and `divisible'
with `rational along $\Nk$' and `divisible along $\Nk$'
for any increasing sequence $\Nk$ and, in virtue of
\cref{thm_poly-multi-rec-along-disc-rat-spec-Nk},
the statement of \cref{c:fc-rat-without-intersectivitiy}
remains valid.

\cref{c:fc-w-i} is an immediate consequence of
Furstenberg's correspondence principle and of the definition
of an averaging sets a polynomial multiple recurrence.

For the proof of \cref{c:fc} we need the
following theorem.

\begin{Th}[see {\cite[Theorem 1.1]{Bergelson85}}]
\label{l:VB-thm1.1}
Let $\xbm$ be a probability space and suppose $A_n\in\cb$,
$\mu(A_n)\geq \delta>0$, for $n=1,2,\ldots$.
Then there exists a set $P\subset \N$ with
$\overline{d}(P)\geq \delta$
such that for any finite subset $F\subset P$, we have
$$
\mu\left( \bigcap_{n\in F}A_n\right)>0.
$$
\end{Th}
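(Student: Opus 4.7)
The plan is to construct a single point $x_0 \in X$ such that the set $P := \{n \in \N : x_0 \in A_n\}$ simultaneously satisfies $\overline{d}(P) \geq \delta$ and $\mu\big(\bigcap_{n\in F} A_n\big) > 0$ for every finite $F \subset P$. Note that if $F \subset P$ then automatically $x_0 \in \bigcap_{n \in F} A_n$, so nonemptiness of the intersection is free; the only subtle point is to upgrade this to positive measure, which I will achieve by removing a single $\mu$-null set of ``bad'' points.

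For the density estimate, I would set $f_N(x) := \frac{1}{N}\sum_{n=1}^N \1_{A_n}(x)$, so by hypothesis $\int_X f_N \, d\mu \geq \delta$ for every $N$, and then consider $g(x) := \limsup_{N\to\infty} f_N(x)$. Observe that $g(x) = \overline{d}(P(x))$ where $P(x) := \{n \in \N : x \in A_n\}$. The reverse Fatou lemma applies since $0 \leq f_N \leq 1$ on a probability space, and yields $\int g \, d\mu \geq \limsup_N \int f_N \, d\mu \geq \delta$. Combined with $g \leq 1$, this forces $\mu(\{g \geq \delta\}) > 0$: otherwise $g < \delta$ almost everywhere, whence $\int g \, d\mu < \delta$, a contradiction.

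For the intersection property, I would use a countable-union-of-null-sets argument. Let $\mathcal{N}$ be the countable family of finite subsets $F \subset \N$ satisfying $\mu\big(\bigcap_{n \in F} A_n\big) = 0$, and define
$$E := \bigcup_{F \in \mathcal{N}} \bigcap_{n \in F} A_n.$$
Being a countable union of $\mu$-null sets, $\mu(E) = 0$. For every $x \notin E$ and every finite $F \subset P(x)$, the intersection $\bigcap_{n \in F} A_n$ contains $x$, so $F$ cannot lie in $\mathcal{N}$ (else $x$ would belong to $E$), i.e., $\mu\big(\bigcap_{n \in F} A_n\big) > 0$.

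To finish, since $\mu(\{g \geq \delta\}) > 0$ and $\mu(E) = 0$, the difference $\{g \geq \delta\} \setminus E$ is nonempty; pick any $x_0$ in it and set $P := P(x_0)$. Then $\overline{d}(P) = g(x_0) \geq \delta$ and $\mu\big(\bigcap_{n\in F} A_n\big) > 0$ for every finite $F \subset P$, as required. I anticipate no serious obstacle: the genuine content of the theorem is the realization that the ``intersection property'' is the complement of a countable event (indexed by finite subsets of $\N$), so almost every $x_0$ automatically yields a $P(x_0)$ with this property, and the reverse Fatou lemma supplies the density bound for a positive-measure choice of such $x_0$.
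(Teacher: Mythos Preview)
Your proof is correct. Note, however, that the paper does not give its own proof of this statement: it is quoted as \cite[Theorem~1.1]{Bergelson85} and used as a black box in the proof of \cref{c:fc}. Your argument is essentially the original one from \cite{Bergelson85}: apply the reverse Fatou lemma to $f_N = \frac{1}{N}\sum_{n\leq N}\1_{A_n}$ to see that $\overline{d}(P(x))\geq\delta$ on a set of positive measure, then discard the $\mu$-null union of all finite intersections of measure zero so that any surviving point $x_0$ yields $P = P(x_0)$ with the required intersection property. One small remark on your write-up: the step ``$g<\delta$ a.e.\ implies $\int g\,d\mu<\delta$'' deserves a word of justification (e.g., $\{g\leq \delta-1/k\}$ has positive measure for some $k$), since $g<\delta$ a.e.\ only gives $\int g \leq \delta$ at first glance; but this is routine and the argument stands.
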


\begin{proof}[Proof of \cref{c:fc}]
Let $R\subset \N$ be an averaging set of polynomial multiple recurrence
along $\Nk$.
Let $E\subset \N$ with $\overline{d}(E)>0$
and  let $p_i\in\Q[t]$, $i=1,\ldots,\ell$,
with $p_i(\Z)\subset\Z$ and $p_i(0)=0$, for all $i\in\{1,\ldots,\ell\}$.

By applying 
\cref{prop:fc}, we can find an invertible measure preserving system 
$\xbmt$ and a set $A\in \cb$ with $\mu(A)\geq \overline{d}(E)$ such that
\eqref{eq:fc} is satisfied. Next, since $R$ is an averaging set of polynomial multiple recurrence along $\Nk$,
we can find some $\delta>0$ such that the set
$$
D:=\left\{n\in R: \mu\left( A\cap T^{-p_1(n)}A\cap \ldots\cap T^{-p_\ell(n)}A\right)>\delta\right\}
$$
has positive lower density, i.e.,
$\underline{d}(D)=\liminf_{N\to\infty}\frac{|D\cap\{1,\ldots,N\}|}{N}>0$. Let $n_1,n_2,n_3,\ldots$ be an enumeration of
$D$ and let $A_i\in\cb$ denote the set
$$
A_i:=A\cap T^{-p_1(n_i)}A\cap \ldots\cap T^{-p_\ell(n_i)}A.
$$
Then, according to \cref{l:VB-thm1.1}, we can find a set
$P\subset \N$ with $\overline{d}(P)\geq \delta$
such that for any finite subset $F\subset P$, we have
\begin{equation}
\label{eq:intersective}
\mu\left( \bigcap_{n\in F}A_n\right)>0.
\end{equation}
Let $R':=\{n_i: i\in P\}$. Then $R'\subset R$ and it is straightforward to
show that $\overline{d}(R')>0$. Moreover,
combining~\eqref{eq:intersective} with~\eqref{eq:fc}, for
any finite subset $\{n_1,\ldots,n_k\}\subset R'$, we obtain
$$
\overline{d}\left(\bigcap_{i=1}^r\Big(
E\cap (E-p_1(n_i))\cap \ldots\cap(E-p_\ell(n_i))
\Big)\right)>0.
$$
From this the claim follows immediately.
\end{proof}

For the special case of $\sB$-free numbers, we have the following combinatorial corollary of the above results.

\begin{Th}
\label{th:b-free-corollary-combi}
For $\sB\subset\N\setminus\{1\}$ let $\cf_{\sB}$ denote the set of
$\sB$-free numbers and let $\Nk$
be any sequence of increasing positive integers
such that $d^{(N_k)}(\cf_\sB)$ exists
and is positive.
Then there exists a set $D\subset \cf_\sB$ with
$d^{(N_k)}(\cf_\sB\setminus D)=0$
and such that for all $r\in D$,
for all $E\subset \N$ with $\overline{d}(E)>0$
and any polynomials $p_i\in\Q[t]$, $i=1,\ldots,\ell$,
which satisfy
$p_i(\Z)\subset\Z$ and $p_i(0)=0$, for all $i\in\{1,\ldots,\ell\}$,
there exists $\beta>0$ such that the set
$$
\left\{n\in \cf_{\sB}-r:\overline{d}\Big(
E\cap (E-p_1(n_i))\cap \ldots\cap(E-p_\ell(n_i))
\Big)>\beta \right\}
$$
has positive lower density with respect to $\Nk$.
If, additionally, $\sB$ is taut then one can take $D=\cf_{\sB}$.
\end{Th}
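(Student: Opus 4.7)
My plan is to derive the theorem by combining \cref{c_ave-rec-for-B-free-along-subseq}, which produces shifts of $\cf_\sB$ that are averaging sets of polynomial multiple recurrence along $\Nk$, with \cref{c:fc-w-i}, which translates averaging multiple recurrence into the desired combinatorial conclusion via Furstenberg's correspondence principle. This is exactly parallel to the way \cref{c:fc-rat-without-intersectivitiy} is obtained from \cref{thm_poly-multi-rec-along-rat} and \cref{c:fc-w-i}.

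First I would apply \cref{c_ave-rec-for-B-free-along-subseq}: since $d^{(N_k)}(\cf_\sB) > 0$, the set $\sB$ cannot be Behrend, so \cref{thm1} yields a taut set $\sC \subset \N \setminus \{1\}$ with $\cf_\sC \subset \cf_\sB$ and $\bdelta(\cf_\sC) = \bdelta(\cf_\sB)$. Following the proof of \cref{c_ave-rec-for-B-free-along-subseq}, I would set $D := \cf_\sC$ and verify two properties. The first, $d^{(N_k)}(\cf_\sB \setminus D) = 0$, comes from comparing the asymptotic counts of $\cf_\sB$ and $\cf_\sC$ along $\Nk$ using $\bdelta(\cf_\sC) = \bdelta(\cf_\sB) = \overline{d}(\cf_\sB)$ (\cref{daer}). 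The second, that $\cf_\sB - r$ is an averaging set of polynomial multiple recurrence along $\Nk$ for every $r \in D$, combines three ingredients: \cref{l:divisibility-of-taut-case}, which gives $\bdelta((\cf_\sC - r)/u) > 0$ for all $u \in \N$ and hence divisibility of $\cf_\sB - r$ along $\Nk$; \cref{t:Benji}, which tells us that $\1_{\cf_\sB - r}$ is quasi-generic along $\Nk$ for a measure yielding a system with rational discrete spectrum; and \cref{thm_poly-multi-rec-along-disc-rat-spec-Nk} (together with \cref{rem:12}), which upgrades divisibility to averaging multiple recurrence along $\Nk$ itself, exploiting that $\cf_\sB - r$ is rational along $\Nk$.

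The final step is a direct application of \cref{c:fc-w-i}: fixing $r \in D$ and setting $R := \cf_\sB - r$, the proposition furnishes some $\beta > 0$ such that
\[
\left\{ n \in \cf_\sB - r : \overline{d}\bigl(E \cap (E - p_1(n)) \cap \ldots \cap (E - p_\ell(n))\bigr) > \beta \right\}
\]
has positive lower density with respect to $\Nk$, which is the combinatorial statement sought. The taut case $D = \cf_\sB$ is immediate: if $\sB$ itself is taut, one may take $\sC = \sB$ in the construction above, whence $D = \cf_\sC = \cf_\sB$ (cf.\ \cref{cor:vb-iff}). The main technical obstacle, and the only place requiring genuine care, is the compatibility between a generic $\Nk$ satisfying the hypothesis and the subsequence produced in the proof of \cref{c_ave-rec-for-B-free-along-subseq}; verifying that the logarithmic-density-based tools (\cref{daer} and \cref{postepy}) transfer to density-along-$\Nk$ estimates is the crux, and is where the assumption that $d^{(N_k)}(\cf_\sB)$ exists is genuinely used.
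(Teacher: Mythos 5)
Your proposal is correct and follows essentially the same route as the paper, which obtains \cref{th:b-free-corollary-combi} simply by combining \cref{c_ave-rec-for-B-free-along-subseq} with \cref{c:fc-w-i}. The additional care you take in unpacking the proof of \cref{c_ave-rec-for-B-free-along-subseq} and in flagging the compatibility between an arbitrary admissible sequence $\Nk$ and the subsequence constructed there actually goes beyond what the paper records, since the paper declares the combination ``immediate.''
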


A proof of \cref{th:b-free-corollary-combi} follows
immediately by combining \cref{c:fc-w-i} and
\cref{c_ave-rec-for-B-free-along-subseq}.

\section{Rational sequences and Sarnak's conjecture }\label{SAS}

\cref{SAS} is divided into two subsections. In Subsection \ref{SAS1} we give a proof of \cref{synch-ratio}, which states that any automatic sequence generated by a synchronized automaton is WRAP. 
In Subsection \ref{SAS2} we use \cref{synch-ratio} to
strengthen a result obtained by Deshouillers, Drmota and M{\"u}llner in \cite{De-Dr-Mu}, which states that sequences given by synchronized automata satisfy Sarnak's conjecture. 

\subsection{Synchronized automata and substitutions}\label{SAS1}

We begin with a proof of
\cref{synch-ratio}. For the convenience of the reader, we restate the proposition here.

\begin{named}{\cref{synch-ratio}}{}
Each automatic sequence given by a synchronized automaton $\Automaton$ is WRAP.
\end{named}

\begin{proof}
Let $\Automaton=(Q,\alphB,\delta,q_0,\tau)$ be a synchronized complete deterministic automaton with set of states $Q:=\{q_0,\ldots,q_r\}$, input alphabet $\alphB:=\{0,1,\ldots,k-1\}$, finite output alphabet $\ca$, transition function $\delta\colon Q\times \alphB \to Q$, initial state $q_0$ and output mapping $\tau:Q\to\ca$.
For $n\in \N$ let $[n]_k\in\alphB^*$ be defined as in \cref{sec:drs.e}.
Let  $a(n)=\tau(\delta(q_0,[n]_k))$, $n\in\N$, denote the automatic sequence generated by the synchronized automaton $\Automaton$.

Fix $\vep>0$. Let $n_1$ be such that at least $k^{n_1}(1-\vep)$ words of length $n_1$ are synchronizing (see \cref{def_synchro}). In other words, if we set
$$
K:=\{0\leq m < k^{n_1} : [m]_k \text{ is synchronizing}\}
$$
then we have $|K|\geq k^{n_1}(1-\vep)$ (note that if $w$ is a synchronizing word then so is every one of its extensions). Notice that
\begin{equation}\label{eq4}
a(n)=a(m) \text{ whenever }n\equiv m \bmod k^{n_1} \text{ for some }m\in K
\end{equation}
as $[n]_k$ and $[m]_k$ share the last $k^{n_1}$ digits. Consider $a'$ given by
$$
a'(n):=\begin{cases}
a(n) & \text{if }n\bmod k^{n_1}\text{ belongs to } K,\\
0 &\text{otherwise.}
\end{cases}
$$
Notice that $a'$ is periodic of period $k^{n_1}$: for $0\leq m<K$, $j\geq 0$, we have
$$
a'(m+jk^{n_1})=\begin{cases}
a(m),& \text{ if }m\in K,\\
0,& \text{ otherwise}.
\end{cases}
$$
Moreover, using~\eqref{eq4}, we obtain
\begin{multline*}
d_W(a,a')=\limsup_{N\to\infty}\sup_\ell\frac1N\left|\{1\leq n\leq N: a_{n+\ell}\neq a'_{n+\ell}\}\right|\\
\leq \limsup_{N\to\infty}\sup_\ell\frac{1}{N}|\{1\leq n\leq N : n+\ell\bmod k^{n_1}\not\in K\}|=\frac{k^{n_1}-|K|}{k^{n_1}}\leq \vep
\end{multline*}
and the result follows.
\end{proof}

\subsection{Orthogonality of RAP and WRAP sequences to the M\"obius function}
\label{SAS2}
\label{secSar}

%In this subsection we provide with the help of \cref{synch-ratio} an alternative proof of the main result in \cite{De-Dr-Mu}. As a matter of fact, we will state and prove a slight strengthening of their result in the form of \cref{Wshort} below.

Let $(X,T)$ be a topological system, that is, $X$ is a compact metric space and $T\colon X\to X$ a homeomorphism. Let $\mob$ denote the classical \emph{M{\"o}bius function}, i.e., for all $n\in\N$,
$$
\mob(n)=
\begin{cases}
(-1)^k,&\text{if there exist $k$ distinct prime numbers $p_1,\ldots,p_k$}
\\
&\text{such that $n=p_1\cdot\ldots\cdot p_k$;}
\\
0,&\text{otherwise.}
\end{cases}
$$
We write $(X,T)\perp\boldsymbol{\mu}$
whenever $\lim_{N\to\infty}\frac1N\sum_{n=1}^{N}f(T^nx)\boldsymbol{\mu}(n)=0$  for all $f\in C(X)$ and $x\in X$. Sarnak's conjecture \cite{sarnak-lectures} states that
\begin{equation}\label{sar1}
\mbox{$(X,T)\perp\boldsymbol{\mu}$ whenever the topological entropy of $T$ is zero.}\end{equation}
%Using the same notations as in the previous sections, let $\ca$ be a finite alphabet, let $S\colon \ca^\Z\to\ca^\Z$ denote the left-shift and for $x\in\ca^\N$ let $(X_x,S)$ denote the subshift of the full shift $(\ca^\Z,S)$ determined by $x$.

If $x\in \ca^\N$ is an automatic sequence 
generated by a synchronized automata
then its sub-word complexity is at most linear (see, e.g.\ Thm. 10.3.1 in~\cite{MR1997038}), which implies
that the entropy of the dynamical system
$(X_x,S)$ is zero.
It is therefore natural to ask if systems
generated by such automatic sequences
satisfy Sarnak's conjecture.
This question was answered affirmatively in \cite{De-Dr-Mu}.

The next theorem states that any $W$-rational system satisfies Sarnak's conjecture. In view of \cref{synch-ratio}, our result can be viewed as an extension of the main result in \cite{De-Dr-Mu}.

\begin{Th}\label{Wshort}
Let $x\in\ca^\N$ be WRAP. Then for all $f\in C(X_x)$ and $z\in X_x$, we have
\begin{equation}
\label{eq:sc-ep-3}
\lim_{N\to\infty}\frac1N\sum_{n=1}^N f(S^nz)\mob(n)=0.
\end{equation}
Equivalently, $(X_x,S)\perp \boldsymbol{\mu}$.
\end{Th}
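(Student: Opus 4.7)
The plan is to approximate the weight $n\mapsto f(S^n\tilde z)$ in the Besicovitch pseudo-metric by a \emph{periodic} $\C$-valued sequence and then exploit the classical cancellation of $\mob$ on arithmetic progressions. First, by \cref{l:flo} the restriction $z|_\N$ of any $z\in X_x$ is itself WRAP, so without loss of generality we may take $z\in\ca^\N$ WRAP and let $\tilde z\in\ca^\Z$ be any two-sided extension. Since cylinder functions are uniformly dense in $C(\ca^\Z)$, we may further reduce to the case
$$
f(w)=F\bigl(w(0),w(1),\ldots,w(k-1)\bigr)
$$
for some $k\geq 1$ and some $F\colon\ca^k\to\C$. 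Writing $g(n):=f(S^n\tilde z)=F\bigl(z(n),z(n+1),\ldots,z(n+k-1)\bigr)$, the goal becomes $\frac1N\sum_{n=1}^N g(n)\mob(n)\to 0$.

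Given $\varepsilon>0$, choose a periodic sequence $y_\varepsilon\in\ca^\N$ with $d_W(z,y_\varepsilon)<\varepsilon$ and let $P$ denote its period. Define the $P$-periodic, $\C$-valued sequence
$$
G_\varepsilon(n):=F\bigl(y_\varepsilon(n),y_\varepsilon(n+1),\ldots,y_\varepsilon(n+k-1)\bigr).
$$
If $g(n)\neq G_\varepsilon(n)$ then $z$ and $y_\varepsilon$ must disagree at one of the positions $n,n+1,\ldots,n+k-1$, and at such $n$ we have $|g(n)-G_\varepsilon(n)|\leq 2\|F\|_\infty$. A union bound, using the supremum-over-$\ell$ baked into the definition of $d_W$, therefore yields
$$
\limsup_{N\to\infty}\frac1N\sum_{n=1}^N|g(n)-G_\varepsilon(n)|\;\leq\; 2k\|F\|_\infty\,d_W(z,y_\varepsilon)\;\leq\; 2k\|F\|_\infty\,\varepsilon,
$$
and since $|\mob(n)|\leq 1$, the same bound holds after weighting by $\mob(n)$.

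To handle the periodic piece, decompose $G_\varepsilon=\sum_{b=0}^{P-1}G_\varepsilon(b)\,\1_{P\Z+b}$; the prime number theorem in arithmetic progressions gives $\frac1N\sum_{n\leq N,\, n\equiv b\,(\bmod P)}\mob(n)\to 0$ for every fixed residue $b$ modulo $P$, whence $\frac1N\sum_{n=1}^N G_\varepsilon(n)\mob(n)\to 0$. Combining this with the previous estimate, we obtain
$$
\limsup_{N\to\infty}\left|\frac1N\sum_{n=1}^N f(S^n\tilde z)\mob(n)\right|\;\leq\; 2k\|F\|_\infty\,\varepsilon,
$$
and letting $\varepsilon\to 0$ concludes the proof. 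The main (but routine) technical point is that taking a $k$-window of a WRAP sequence yields something still WRAP with constant at most $k\cdot d_W$, so the $d_W$-approximation of $z$ passes to an $L^1$-type approximation of the weight $g$; crucially, no uniform-in-$P$ bound on Möbius along arithmetic progressions is required, because $P$ is fixed once $\varepsilon$ is chosen.
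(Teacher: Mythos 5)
Your proof is correct and follows essentially the same route as the paper: reduce to $z|_{\N}$ being WRAP (hence RAP) via \cref{l:flo}, approximate $f$ by functions of finitely many coordinates, approximate the sequence by a periodic one (the paper's \cref{l:asympdenswords} is exactly your union bound over the $k$ window positions), and invoke M\"obius cancellation along arithmetic progressions for the periodic piece. The only cosmetic difference is that the paper packages the RAP case as a separate lemma (\cref{lem:sc-ep-W}) and notes that $d_B$-approximation already suffices there, whereas you carry the $d_W$ bound through inline.
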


For the proof of \cref{Wshort} we need two lemmas. The first lemma is a slight modification of \cref{l:asympdenswords} involving the Weyl pseudo-metric $d_W$ instead of the Besicovitch pseudo-metric $d_B$.

\begin{Lemma}\label{lem:sc-ep-0}
Let $x,y\in \ca^{\N}$,
$n_1,\ldots,n_\ell\in\Z$ and $\alpha_1,\ldots,\alpha_\ell \in \ca$. Then for $C=C^{\alpha_1,\ldots,\alpha_\ell}_{n_1,\ldots,n_\ell}$ we have
$$
\limsup_{H\to\infty}\sup_{m\in\N}\frac{1}{H}
\sum_{m\leq h<m+H} \big|\1_C(S^h \tilde x)-\1_C(S^h\tilde y)\big|
~\leq~ \ell d_W(x,y),
%\left(\sum_{i=1}^\ell\frac{2}{\vep_i}\right).
$$
where $\tilde{x},\tilde{y}\in\ca^\Z$ are any two-sided sequences extending $x$ and $y$, respectively.
\end{Lemma}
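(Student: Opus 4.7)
The plan is to follow essentially the same strategy as in the proof of Lemma \ref{l:asympdenswords}, adapting Cesaro averages to the windowed averages appropriate for $d_W$. The starting point is the same pointwise bound
$$\big|\1_C(S^h \tilde x) - \1_C(S^h \tilde y)\big| \leq \sum_{i=1}^\ell \1_{\ca^2\setminus\ca^\Delta}\big(\tilde x(h+n_i), \tilde y(h+n_i)\big),$$
which is immediate from $\1_C(S^h \tilde z) = \prod_{i=1}^\ell \1\{\tilde z(h+n_i) = \alpha_i\}$: if $\tilde x$ and $\tilde y$ coincide at all of $h+n_1,\ldots, h+n_\ell$, then the two indicator values agree.

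Averaging over $m \leq h < m+H$ and changing variables $h' = h + n_i$ in each of the $\ell$ resulting summands gives
$$\frac{1}{H}\sum_{m \leq h < m+H}\big|\1_C(S^h \tilde x) - \1_C(S^h \tilde y)\big| \leq \sum_{i=1}^\ell \frac{1}{H}\sum_{m+n_i \leq h' < m+H+n_i} \1_{\ca^2 \setminus \ca^\Delta}\big(\tilde x(h'), \tilde y(h')\big).$$
The only subtlety is that the shifted index $h'$ may be non-positive, in which case $\tilde x(h')$ and $\tilde y(h')$ depend on the (arbitrary) two-sided extensions of $x,y$. For each $i$, however, there are at most $|n_i|$ such boundary indices, contributing an error bounded by $|n_i|/H$, uniformly in $m$, which vanishes as $H \to \infty$. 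Away from the boundary one has $\tilde x(h') = x(h')$ and $\tilde y(h') = y(h')$, and the remaining sum runs over an interval contained in $\N$ of length at most $H$.

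Taking $\sup_m$ on both sides and using $\sup_m \sum_i \leq \sum_i \sup_m$, each of the $\ell$ terms on the right is bounded by $\sup_{\ell'\geq 1}\tfrac{1}{H}\big|\{\ell' \leq h' < \ell' + H : x(h') \neq y(h')\}\big|$, up to the vanishing boundary error (if necessary one extends the shifted interval to full length $H$, which can only increase the count). Finally, applying $\limsup_{H \to \infty}$ identifies each of these terms with $d_W(x,y)$ (the $<$ vs $\leq$ and the $N$ vs $N+1$ discrepancies with the definition \eqref{eq:d_W-1} are irrelevant in the limit), yielding the claimed inequality $\ell\, d_W(x,y)$. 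The adaptation from Lemma \ref{l:asympdenswords} is routine; the only mild care needed is in tracking the boundary terms introduced by the two-sided extensions $\tilde x,\tilde y$, and these are easily absorbed since $n_1,\ldots,n_\ell$ are fixed while $H\to\infty$.
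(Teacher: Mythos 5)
Your proof is correct and is exactly the adaptation of Lemma \ref{l:asympdenswords} that the paper has in mind (the paper omits the proof, stating only that it is very similar to that lemma). The pointwise bound, the change of variables in the windowed average, and the treatment of the $O(|n_i|/H)$ boundary terms coming from the two-sided extensions are all handled properly, so nothing further is needed.
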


The proof of \cref{lem:sc-ep-0} is very similar to the proof of \cref{l:asympdenswords} and is omitted.

The next lemma, which is also needed for the proof of \cref{Wshort}, states that RAP sequences are orthogonal to the M{\"o}bius function $\mob$.

\begin{Lemma}\label{lem:sc-ep-W}
Suppose $x\in\ca^\N$ is RAP and $f\in C(\ca^\Z)$. Then
\begin{equation}
\label{eq:sc-ep}
\lim_{N\to\infty}\frac1N\sum_{n=1}^N f(S^n \tilde x)\mob(n)~=~0.
\end{equation}
\end{Lemma}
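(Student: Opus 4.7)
My plan is to reduce the general statement to the case of periodic sequences via the $d_B$-approximation and then invoke Davenport's classical estimate for the M\"obius function on arithmetic progressions.

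First, by density of linear combinations of indicator functions of cylinder sets in $C(\ca^\Z)$, it suffices to prove \eqref{eq:sc-ep} in the case where $f=\1_C$ for a cylinder set $C=C^{\alpha_1,\ldots,\alpha_\ell}_{n_1,\ldots,n_\ell}$. Fix such a cylinder set and fix a two-sided extension $\tilde{x}\in\ca^\Z$ of $x$.

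Since $x$ is RAP, we may choose, for every $\vep>0$, a periodic sequence $y\in\ca^\N$ with $d_B(x,y)<\vep$, and let $\tilde{y}\in\ca^\Z$ be any two-sided extension of $y$ (for instance the periodic extension). The key fact from analytic number theory that we invoke is Davenport's estimate (a consequence of the prime number theorem in arithmetic progressions): for every periodic function $h\colon\N\to\C$,
\[
\lim_{N\to\infty}\frac{1}{N}\sum_{n=1}^N h(n)\mob(n)=0.
\]
Applying this to the periodic function $n\mapsto \1_C(S^n\tilde{y})$ yields
\[
\lim_{N\to\infty}\frac{1}{N}\sum_{n=1}^N \1_C(S^n\tilde{y})\mob(n)=0.
\]

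It therefore remains to control the error coming from replacing $\tilde{x}$ by $\tilde{y}$. Using $|\mob(n)|\leq 1$ and \cref{l:asympdenswords} (applied with $N_k=k$), we obtain
\[
\limsup_{N\to\infty}\frac{1}{N}\left|\sum_{n=1}^N\bigl(\1_C(S^n\tilde{x})-\1_C(S^n\tilde{y})\bigr)\mob(n)\right|
\leq \limsup_{N\to\infty}\frac{1}{N}\sum_{n=1}^N|\1_C(S^n\tilde{x})-\1_C(S^n\tilde{y})|
\leq \ell\, d_B(x,y)<\ell\vep.
\]
Combining the two estimates gives
\[
\limsup_{N\to\infty}\left|\frac{1}{N}\sum_{n=1}^N f(S^n\tilde{x})\mob(n)\right|\leq \ell\vep,
\]
and since $\vep>0$ was arbitrary the limit in \eqref{eq:sc-ep} equals zero.

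The only non-routine ingredient is Davenport's bound on the M\"obius function along arithmetic progressions; everything else is a soft approximation argument that exploits the uniform boundedness of $\mob$ together with \cref{l:asympdenswords}. I do not anticipate any real obstacle in executing this plan.
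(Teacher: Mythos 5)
Your proposal is correct and follows essentially the same route as the paper's proof: reduce to indicator functions of cylinder sets, approximate $x$ by a periodic sequence in $d_B$, apply the classical fact (equivalent to Dirichlet's prime number theorem in arithmetic progressions) that $\mob$ is orthogonal to periodic sequences, and control the error term via \cref{l:asympdenswords} together with $|\mob(n)|\leq 1$. No gaps.
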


\begin{proof}
Since any continuous function $f\in C(X_x)$ can be approximated uniformly by cylinder sets $C=C^{\alpha_1,\ldots,\alpha_\ell}_{n_1,\ldots,n_\ell}$, it suffices to show \eqref{eq:sc-ep} for the special case where $f=\1_C=\1_{C^{\alpha_1,\ldots,\alpha_\ell}_{n_1,\ldots,n_\ell}}$ for any $n_1,\ldots,n_\ell\in\Z$ and $\alpha_1,\ldots,\alpha_\ell\in\ca$. 

Hence, let $\ell\in\N$, $n_1,\ldots,n_\ell\in\Z$ and $\alpha_1,\ldots,\alpha_\ell\in\ca$ be arbitrary.
Fix $\epsilon>0$. Since $x$ is RAP we can find a periodic sequence $y\in\ca^\N$ such that $d_B(x,y)\leq \epsilon/\ell$. Let $\tilde y\in\ca^\Z$ be a two-sided periodic sequence that extends $y$.
Then, using \cref{l:asympdenswords}, we get
\begin{equation}
\label{eq:sc-ep-7}
\limsup_{N\to\infty}\left|\frac1N\sum_{n=1}^N \1_C(S^n \tilde x)\mob(n)
~-~
\frac1N\sum_{n=1}^N \1_C(S^n \tilde y)\mob(n)\right| ~\leq~\ell d_B(x,y)~=~\epsilon.
\end{equation}
It is a well-known fact that Dirichlet's prime number theorem along arithmetic progressions is equivalent to the assertion that for any periodic sequence $a(n)$ one has
$
\lim_{N\to\infty}\frac1N\sum_{n=1}^N a(n)\mob(n) =0
$. In particular, $a(n)=\1_C(S^n \tilde y)$ is a periodic sequence and hence 
$$
\lim_{N\to\infty}\frac1N\sum_{n=1}^N \1_C(S^n \tilde y)\mob(n) =0.
$$
Therefore, \eqref{eq:sc-ep-7} simplifies to 
$$
\limsup_{N\to\infty}\left|\frac1N\sum_{n=1}^N \1_C(S^n \tilde x)\mob(n)
\right| ~\leq~\epsilon.
$$
Since $\epsilon>0$ was chosen arbitrarily, the proof of \eqref{eq:sc-ep} is completed.
\end{proof}

\begin{proof}[Proof of \cref{Wshort}]
Let $x\in\ca^\N$ be WRAP and let $f\in C(X_x)$ and $z\in X_x$ be arbitrary. 
It follows from \cref{l:flo} that $z|_\N$ is WRAP and therefore $z|_\N$ is also RAP. Hence \eqref{eq:sc-ep-3} follows directly from \eqref{eq:sc-ep}.
\end{proof}

In light of \cref{Wshort} it is natural to inquire about the behavior of averages of the from
\begin{equation}
\label{eq:sc-ep-7-a}
\frac1H\sum_{m\leq h<m+H} f(S^h z)\mob(n)
\end{equation}
for large values of $H$ and arbitrary $m\in\N$. 
It is believed that the expression in \eqref{eq:sc-ep-7-a} does not converge to $0$ (as $H$ approaches $\infty$) uniformly in $m$.\footnote{Indeed, by Chowla's conjecture \cite{Chowla} (see also \cite{2014arXiv1410,sarnak-lectures}) it follows that for every word $w\in\{0,1\}^H$ that appears in $\mob^2=\1_Q$, where $Q$ denotes the set of squarefree numbers, all words in $v\in \{-1,0,1\}^H$ with $v^2=w$ must appear in $\mob$. In particular, (assuming Chowla's conjecture) for every $H\geq 1$ there is $m\geq 1$ such that $\1_Q(h)=\mob(m+h)$ for all $h\in [1,H]$ and therefore \eqref{eq:sc-ep-7-a} with $f=1$ is close to $\frac{6}{\pi^2}$.}
Nonetheless, using recent results of Matomaki, Radziwi\l \l \ and Tao \cite{Ma-Ra-Ta}, we will show that for large $H$ and ``typical'' $m\in\N$ the averages in \eqref{eq:sc-ep-7-a} are small.
Such averages of $\mob$ (or, more generally, of bounded multiplicative functions) over ``short intervals'' have also been considered in \cite{Ma-Ra, Ma-Ra-Ta, Ab-Le-Ru, Wa}.
We obtain the following result in this direction.

\begin{Th}\label{cor:b-1a}
Let $x\in\ca^\N$ be WRAP, let $f\in C(X_x)$ and let $z\in X_x$.
Then for every $\delta>0$ there exists $H_0\in\N$ such that for all $H\geq H_0$ the set of all $m\in\N$ for which
\begin{equation}
\label{eq:sc-ep-4-a}
\left|\frac1H\sum_{m\leq h<m+H}f(S^hz)\boldsymbol{\mu}(h)\right|< \delta.
\end{equation}
has lower density $\geq 1-\delta$.
\end{Th}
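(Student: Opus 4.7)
The plan is to reduce the statement, via two successive approximations, to the short-interval mean-value theorem of Matomaki and Radziwill for the M\"obius function along arithmetic progressions. First I would replace $z$ by $z|_\N$, which is WRAP by \cref{l:flo}, so that without loss of generality $z \in \ca^\N$ is itself WRAP with two-sided extension $\tilde z$. Since $|\mob| \leq 1$ and every $f \in C(X_x)$ is uniformly approximable by finite linear combinations of indicators of cylinder sets $C = C^{\alpha_1,\ldots,\alpha_\ell}_{n_1,\ldots,n_\ell}$, it suffices to establish the conclusion with $f = \1_C$ for each fixed cylinder $C$.

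Fix such a cylinder $C$ and $\delta > 0$. I would then approximate $z$ in the Weyl pseudo-metric by a periodic sequence $y \in \ca^\N$ with $d_W(z, y) \leq \delta/(8\ell)$, letting $\tilde y \in \ca^\Z$ denote a periodic two-sided extension. Then $h \mapsto \1_C(S^h \tilde y)$ is periodic of some period $P$, and \cref{lem:sc-ep-0} furnishes $H_1$ such that for all $H \geq H_1$ and all $m \in \N$,
$$
\frac{1}{H}\sum_{m \leq h < m+H}\bigl|\1_C(S^h \tilde z) - \1_C(S^h \tilde y)\bigr| \, < \, \delta/4.
$$
Combined with $|\mob| \leq 1$, this shows that the short average in \eqref{eq:sc-ep-4-a} differs from $\frac{1}{H}\sum_{m \leq h < m+H}\1_C(S^h \tilde y)\mob(h)$ by at most $\delta/4$, uniformly in $m$.

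Decomposing $\1_C(S^h \tilde y) = \sum_{r=0}^{P-1} c_r \1_{\{h \equiv r \pmod P\}}$ with $c_r \in \{0,1\}$, the next step is to invoke the $L^1$ short-interval mean-value theorem of Matomaki-Radziwill \cite{Ma-Ra}, as extended to arithmetic progressions in \cite{Ma-Ra-Ta}: for each fixed $r \in \{0, \ldots, P-1\}$,
$$
\lim_{H \to \infty}\limsup_{N \to \infty}\frac{1}{N}\sum_{m=1}^N\bigg|\frac{1}{H}\sum_{\substack{m \leq h < m+H \\ h \equiv r \pmod P}}\mob(h)\bigg| \, = \, 0.
$$
Setting $\delta' := \delta/(4P)$ and applying Markov's inequality, one obtains $H_0 \geq H_1$ such that for every $H \geq H_0$ and every residue $r$ the set $B_{H,r}$ of $m \in \N$ for which the corresponding residue-class short average has absolute value $\geq \delta'$ satisfies $\overline{d}(B_{H,r}) \leq \delta'$. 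On the complement of $B_H := \bigcup_{r=0}^{P-1} B_{H,r}$, which has lower density at least $1 - P\delta' = 1 - \delta/4 \geq 1 - \delta$, the triangle inequality bounds the original short average by $\delta/4 + P\delta' = \delta/2 < \delta$, as required. The hard part will be pinning down the precise uniform-in-$m$ arithmetic-progression version of the Matomaki-Radziwill estimate used above; once that black box is in place, the rest is a soft combination of \cref{l:flo}, \cref{lem:sc-ep-0}, and Markov's inequality.
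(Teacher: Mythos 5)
Your proposal is correct and follows essentially the same route as the paper: reduce to $z|_\N$ via \cref{l:flo}, pass to cylinder functions, approximate in $d_W$ by a periodic sequence with the uniform-in-$m$ error control of \cref{lem:sc-ep-0}, and feed the resulting periodic weight into the Matom\"aki--Radziwi\l{}\l{}--Tao short-interval estimate. The only difference is organizational: the paper packages the last step as the stronger \cref{Wshort-2} and then deduces \cref{cor:b-1a} by contradiction, whereas you apply a Markov/union-bound argument directly --- these are the same argument in different clothing.
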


It is not clear if \cref{Wshort} can be derived quickly from \cref{cor:b-1a}. However, we will see that  \cref{cor:b-1a} is a corollary of a stronger result which is a strengthening of \cref{Wshort} and which we state next.

\begin{Th}\label{Wshort-2}
Let $x\in\ca^\N$ be WRAP. Then for all $f\in C(X_x)$ and $z\in X_x$,
\begin{equation}
\label{eq:sc-ep-4}
\lim_{H\to\infty\atop \frac{H}{M}\to0} \frac1M\sum_{M\leq m<2M}\Big|\frac1H\sum_{m\leq h<m+H}f(S^hz)\boldsymbol{\mu}(h)\Big|=0.
\end{equation}
\end{Th}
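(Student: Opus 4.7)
The plan is to reduce the statement to the Matomäki--Radziwiłł(--Tao) theorem on short-interval averages of the Möbius function along arithmetic progressions, by first replacing $f$ with a cylinder indicator and then replacing $z$ with a periodic point in the Weyl pseudo-metric. Since any $f\in C(X_x)$ can be uniformly approximated by finite linear combinations of indicators of cylinder sets $C=C^{\alpha_1,\ldots,\alpha_\ell}_{n_1,\ldots,n_\ell}$, it suffices to establish \eqref{eq:sc-ep-4} in the case $f=\1_C$. Next, by \cref{l:flo}, the restriction $z|_\N$ is itself WRAP, so for any $\vep>0$ we may choose a periodic $y\in\ca^\N$ with $d_W(z|_\N,y)\leq \vep/\ell$; let $\tilde y\in\ca^\Z$ denote its periodic extension and set $a(h):=\1_C(S^h\tilde y)$, which is a periodic function of $h$ of some period $q$.

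The role of \cref{lem:sc-ep-0} is then to transfer the comparison from the symbolic level to the averages at hand: applied to (an extension of) $z|_\N$ and $\tilde y$, it yields $H_0\in\N$ such that for every $H\geq H_0$ and every sufficiently large $m$,
$$
\frac1H\sum_{m\leq h<m+H}\bigl|\1_C(S^hz)-a(h)\bigr|\leq \vep.
$$
Since $\mob$ is bounded by $1$ and since in the regime $H\to\infty$, $H/M\to 0$ one automatically has $m\geq M\to\infty$ (so that the finitely many $m<H_0$ contribute $O(H_0/M)=o(1)$), the triangle inequality reduces the proof of \eqref{eq:sc-ep-4} to the corresponding statement with $f(S^hz)$ replaced by the periodic weight $a(h)$, namely
$$
\lim_{H\to\infty,\ H/M\to0}\frac1M\sum_{M\leq m<2M}\left|\frac1H\sum_{m\leq h<m+H}a(h)\mob(h)\right|=0.
$$

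This final display is where the deep input enters. Writing $a(h)=\sum_{r=0}^{q-1}a(r)\1_{h\equiv r\,(q)}$, one has
$$
\frac1H\sum_{m\leq h<m+H}a(h)\mob(h)=\sum_{r=0}^{q-1}a(r)\cdot\frac1H\sum_{\substack{m\leq h<m+H\\ h\equiv r\,(q)}}\mob(h),
$$
so it is enough to know, for each fixed $r$ and $q$, that the $L^1$-average over $m\in[M,2M)$ of the absolute value of the inner short-interval sum tends to $0$ in the regime $H\to\infty$, $H/M\to0$. This is precisely the Matomäki--Radziwiłł--Tao theorem applied to $\mob$ restricted to the arithmetic progression $r+q\Z$ (equivalently, after Fourier-expanding $a$ in Dirichlet characters modulo $q$, the Matomäki--Radziwiłł bound applied to each bounded multiplicative function $\chi\cdot\mob$).

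I expect the only genuine difficulty in executing this plan to be bookkeeping: choosing the extension of $z|_\N$ to a two-sided sequence, tracking the dependence of the constants on the ``width'' $\ell$ of the cylinder $C$ and on the period $q$ of $y$, and verifying that boundary contributions from small $m$ are indeed negligible in the required double limit. The conceptual content of the theorem is entirely captured by the combination of the $d_W$-approximation provided by \cref{l:flo} and \cref{lem:sc-ep-0} (which replaces the orbit of $z$ by a periodic orbit with negligible error in short-interval averages) together with the Matomäki--Radziwiłł--Tao bound on short sums of $\mob$ over arithmetic progressions.
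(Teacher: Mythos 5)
Your proposal is correct and follows essentially the same route as the paper: the paper proves \cref{lem:sc-ep-W-2} (cylinder reduction, $d_W$-approximation by a periodic point, \cref{lem:sc-ep-0}, and the Matom\"aki--Radziwi\l\l--Tao bound for periodic weights) and then deduces the theorem via \cref{l:flo}, exactly as you do, with your argument merely inlining that lemma and spelling out the arithmetic-progression decomposition of the periodic weight that the paper leaves implicit.
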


Before providing a proof of \cref{Wshort-2}, let us show that \cref{Wshort-2} implies both \cref{Wshort} and \cref{cor:b-1a}.
%how \eqref{eq:sc-ep-4} implies \eqref{eq:sc-ep-3}.
We will need the following standard lemma, the proof of which is included for the convenience of the reader.

\begin{Lemma}\label{lem:b-1}
For every $H\in\N$ let $x_H\colon\N\to\C$ be a sequence bounded in modulus by $1$.
If
$$
\lim_{H\to\infty\atop \frac{H}{M}\to0}
\frac1M\sum_{M\leq m<2M}x_H(m) = 0,
$$
then
$$
\lim_{H\to\infty\atop \frac{H}{N}\to0}
\frac1N\sum_{n=1}^N x_H(n) = 0.
$$
\end{Lemma}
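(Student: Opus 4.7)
The plan is a straightforward dyadic decomposition: cut the interval $[1,N]$ into pieces of the form $[N/2^{k+1},N/2^{k})$ and apply the dyadic hypothesis on each piece as long as the piece is long enough compared to $H$, while controlling the leftover short initial piece trivially.

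Fix $\vep>0$. By hypothesis, there exist $H_0\in\N$ and $\eta>0$ such that whenever $H\geq H_0$ and $M\geq H/\eta$, one has
\begin{equation*}
\Bigl|\frac{1}{M}\sum_{M\leq m<2M}x_H(m)\Bigr|<\vep.
\end{equation*}
Now assume $H\geq H_0$ and $H/N$ is so small that $N\eta/H>2$. Let $K$ be the largest integer with $2^{K}<N\eta/H$, and set $M_k:=\lfloor N/2^{k}\rfloor$ for $k=0,1,\ldots,K$. Then $[1,N]$ decomposes (up to $O(K)$ endpoint terms, which we absorb at the end) as
\begin{equation*}
[1,N]\;=\;[1,M_K]\;\cup\;\bigcup_{k=0}^{K-1}[M_{k+1},M_{k}).
\end{equation*}

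For each $k\in\{0,1,\ldots,K-1\}$, the block $[M_{k+1},M_{k})$ has length $\approx M_{k+1}$ and $M_{k+1}\geq H/\eta$ by the choice of $K$, so the hypothesis yields
\begin{equation*}
\Bigl|\sum_{M_{k+1}\leq m<M_{k}}x_H(m)\Bigr|\;\leq\;\vep\,M_{k+1}.
\end{equation*}
Summing these bounds over $k$ gives a geometric series totalling at most $\vep N$. For the remaining initial piece $[1,M_K]$, we use the trivial bound $|x_H|\leq 1$, obtaining a contribution of at most $M_K\leq 2H/\eta$ in absolute value. Combining,
\begin{equation*}
\Bigl|\sum_{n=1}^{N}x_H(n)\Bigr|\;\leq\;\vep N+\frac{2H}{\eta}+O(K),
\end{equation*}
so dividing by $N$ and taking the $\limsup$ as $H\to\infty$ with $H/N\to 0$ yields a limit bounded by $\vep$. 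As $\vep>0$ was arbitrary, the desired limit equals $0$.

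There is no real obstacle here; the only mild point to verify is that the $O(K)$ endpoint correction from rounding $N/2^{k}$ to integers is negligible, since $K=O(\log(N\eta/H))$ is $o(N)$ in the relevant regime. Everything else is just a telescoping geometric bound on dyadic scales.
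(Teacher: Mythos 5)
Your proof is correct and follows essentially the same route as the paper: a dyadic decomposition of $[1,N]$ into blocks $[N/2^{k+1},N/2^{k})$ with the hypothesis applied blockwise. The only (cosmetic) difference is in how the truncation is handled -- you cut at the scale $\sim H/\eta$ where the hypothesis stops applying and bound the leftover initial segment by its length $O(H)=o(N)$, whereas the paper keeps a fixed number $\ell$ of dyadic scales and bounds the remaining mass by $2^{-\ell}$; both are sound, and your rounding corrections of size $O(\log N)$ are indeed negligible.
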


\begin{proof}
Let $H_k$ and $N_k$ be two sequences such that $\lim_{k\to\infty}H_k=\infty$ and $\lim_{k\to\infty}\tfrac{H_k}{N_k}=0$.
Let $\ell\in\N$ be arbitrary. We have
\begin{eqnarray*}
\left|\frac1{N_k}\sum_{n=1}^{N_k}x_{H_k}(n)\right|
&= &\left|\sum_{1\leq j\leq \log_2(N_k)}\frac{1}{2^j}\left(\frac{2^j}{N_k}\sum_{\tfrac{N_k}{2^j}\leq m < \tfrac{N_k}{2^{j-1}}}x_{H_k}(m)\right)+\frac{1}{N_k}x_{H_k}(N_k)\right|
\\
&\leq&\sum_{1\leq j\leq \ell}\frac{1}{2^j}\left|\frac{2^j}{N_k}\sum_{\tfrac{N_k}{2^j}\leq m < \tfrac{N_k}{2^{j-1}}}x_{H_k}(m)\right|+\frac{1}{2^\ell}+\frac{1}{N_k},
\end{eqnarray*}
whenever $\ell\leq\log_2(N_k)$. Note that
$$
\lim_{k\to\infty}\frac{2^j}{N_k}\sum_{\tfrac{N_k}{2^j}\leq m < \tfrac{N_k}{2^{j-1}}}x_{H_k}(m)=0,
$$
because $\lim_{k\to\infty} \tfrac{H_k}{N_k/2^j}=0$ for all $j\in\{1,\ldots,\ell\}$. Hence,
$$
\limsup_{k\to\infty}\frac1{N_k}\left|\sum_{n=1}^{N_k}x_{H_k}(n)\right|\leq \frac{1}{2^\ell}.
$$
Since $\ell\in\N$ was arbitrary, this finishes the proof.
\end{proof}

\begin{proof}[Proof that \cref{Wshort-2} implies \cref{Wshort}]
Define
$$
b_m(H):=\frac1H\left|\sum_{m\leq h<m+H}f(S^hz)\mob(h)\right|.
$$
First, we observe that according to \cref{lem:b-1} we have that $\frac1M\sum_{M\leq m<2M}b_m(H)\xrightarrow[]{H\to\infty,\frac{H}{M}\to0} 0$
implies $\frac1N\sum_{n=1}^{N}b_n(H)\xrightarrow[]{H\to\infty,\frac{H}{N}\to0} 0$.

Let $\epsilon>0$ be arbitrary, let $H_k$ and $N_k$ be two sequences such that $\lim_{k\to\infty}H_k=\infty$ and $\lim_{k\to\infty}\tfrac{H_k}{N_k}=0$ and take $J_k:=\{1\leq n\leq N_k: b_n(H_k)\leq \epsilon^2\}$.
It follows from $\lim_{k\to\infty} \frac{1}{N_k}\sum_{n=1}^{N_k}b_n(H_k)=0$ that for sufficiently large $k$ we have $\tfrac{|J_k|}{N_k}\geq 1-\epsilon$. For $t\in\{0,1,\ldots,H_k-1\}$ define $J_{k,t}:=J_k\cap (H_k\Z+t)$.
Then for some $r\in\{0,1,\ldots,H_k-1\}$ we must have
$$
\frac{|J_{k,r}|}{|(H_k\Z+r)\cap\{1,\ldots,N_k\}|}\geq 1-\epsilon.
$$
We get
\begin{eqnarray*}
\left|\frac1{N_k}\sum_{n=1}^{N_k}f(S^hz)\mob(h)\right|
&\leq &
\left|\frac{H_k}{N_k}\sum_{n\in(H_k\Z+r)\cap\{1,\ldots,N_k\}}\frac1{H_k}\sum_{h=n}^{n+H_k-1}f(S^hz)\mob(h)\right|
+\frac{H_k}{N_k}
\\
&\leq &
\frac{H_k}{N_k}\sum_{n\in(H_k\Z+r)\cap\{1,\ldots,N_k\}}b_n(H_k)
+\frac{H_k}{N_k}
\\
&\leq &
\frac{H_k}{N_k}\sum_{n\in J_{k,r}}b_n(H_k)
+\epsilon
+\frac{H_k}{N_k}\leq \frac{H_k|J_{k,r}|\epsilon^2}{N_k}+\epsilon
+\frac{H_k}{N_k}.
%\\
%&\leq &
%\frac{H_k|J_r|\epsilon^2}{N_k}+\epsilon
%+\frac{H_k}{N_k}.
\end{eqnarray*}
As $k\to\infty$ the expression $\frac{H_k|J_{k,r}|\epsilon^2}{N_k}+\epsilon
+\frac{H_k}{N_k}$ converges to $\epsilon^2+\epsilon$. Since $\epsilon$ is arbitrary, this finishes the proof.
\end{proof}

\begin{proof}[Proof that \cref{Wshort-2} implies \cref{cor:b-1a}]
We present a proof by contradiction. Assume there exists some $\delta>0$ such that one can find an increasing sequence $(H_k)_{k\geq 1}$ with the property that for every $k$ the set
$$
D_k:=\left\{m\in\N: \left|\frac{1}{H_k}\sum_{m\leq h<m+H_k}f(S^hz)\boldsymbol{\mu}(h)\right|\geq \delta \right\}
$$
satisfies $\overline{d}(D_k)\geq \delta$.
Since $\overline{d}(D_k)\geq \delta$, we can find $N_k\in\N$ such that $H_k\leq \tfrac{N_k}{k}$ and such that
$$
\frac{1}{N_k}\sum_{m=1}^{N_k}\left|\frac{1}{H_k}\sum_{m\leq h<m+H_k}f(S^hz)\boldsymbol{\mu}(h)\right|\geq \delta^2.
$$
This contradicts the fact that according to \cref{Wshort-2} and \cref{lem:b-1},
$$
\lim_{k\to\infty}\frac{1}{N_k}\sum_{m=1}^{N_k}\left|\frac{1}{H_k}\sum_{m\leq h<m+H_k}f(S^hz)\boldsymbol{\mu}(h)\right|=0.
$$
\end{proof}
\begin{Remark}
From \cref{cor:b-1a} it follows that for all $x\in\ca^\N$ that are WRAP, $f\in C(X_x)$ and $z\in X_x$, we have
$$
\lim_{H\to\infty}\limsup_{N\to\infty}\frac{1}{N}\sum_{n=1}^N\left|\frac1H\sum_{h=1}^H f(S^{n+h}z)\boldsymbol{\mu}(n+h)\right|~=~0.
$$
\end{Remark}

The remainder of this section is dedicated to proving \cref{Wshort-2}.
The following lemma (which is a variant of \cref{lem:sc-ep-W}) will be useful for the proof of \cref{Wshort-2}.

\begin{Lemma}\label{lem:sc-ep-W-2}
Suppose $x\in\ca^\N$ is WRAP and $f\in C(X_x)$. Then
\begin{equation}
\label{eq:sc-ep-2}
\lim_{H\to\infty\atop \frac{H}{M}\to0}\frac1M\sum_{M\leq m<2M}\Big|\frac1H\sum_{m\leq h<m+H}f(S^h\tilde x)\boldsymbol{\mu}(h)\Big|=0.
\end{equation}
where $\tilde x\in\ca^\Z$ is any two-sided sequence extending $x\in\ca^\N$.
\end{Lemma}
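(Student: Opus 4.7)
My plan is to adapt the proof of \cref{lem:sc-ep-W} to the short-interval setting, replacing the role played by Dirichlet's prime number theorem (cancellation of $\mob$ against periodic sequences on long intervals) by the deep short-interval cancellation results of Matomäki--Radziwi\l \l--Tao.

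First, by the Stone--Weierstrass theorem any $f\in C(X_x)$ is a uniform limit of finite $\C$-linear combinations of indicator functions of cylinder sets $C = C^{\alpha_1,\ldots,\alpha_\ell}_{n_1,\ldots,n_\ell}$, so without loss of generality I may assume $f=\1_C$ for such a cylinder. Fix $\epsilon>0$. Since $x$ is WRAP, I can choose a periodic sequence $y\in\ca^\N$ with $d_W(x,y)<\epsilon/\ell$; let $\tilde y\in\ca^\Z$ be a two-sided periodic extension of $y$. By \cref{lem:sc-ep-0}, there exists $H_0$ such that for all $H\geq H_0$ and every $m\in\N$,
$$
\frac{1}{H}\sum_{m\leq h<m+H}\bigl|\1_C(S^h\tilde x)-\1_C(S^h\tilde y)\bigr|\leq 2\epsilon.
$$
Combining this with $|\mob|\leq 1$ and the triangle inequality yields, for all $H\geq H_0$ and all $m$,
$$
\Bigl|\frac{1}{H}\sum_{m\leq h<m+H}\1_C(S^h\tilde x)\mob(h)\Bigr|\leq 2\epsilon+\Bigl|\frac{1}{H}\sum_{m\leq h<m+H}\1_C(S^h\tilde y)\mob(h)\Bigr|.
$$
Averaging in $m\in[M,2M)$ and letting $\epsilon\to0$ at the end, the proof reduces to establishing \eqref{eq:sc-ep-2} in the periodic case, i.e.\ with $\tilde x$ replaced by the periodic sequence $\tilde y$.

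Second, I handle the periodic case. Since $\tilde y$ is periodic, the sequence $h\mapsto \1_C(S^h\tilde y)$ is periodic of some period $P\in\N$; hence it is a finite $\C$-linear combination of additive characters $h\mapsto e^{2\pi i j h/P}$ for $j\in\{0,1,\ldots,P-1\}$. By linearity the matter reduces to proving, for each rational phase $\theta=j/P$,
$$
\lim_{\substack{H\to\infty\\ H/M\to 0}}\frac{1}{M}\sum_{M\leq m<2M}\Bigl|\frac{1}{H}\sum_{m\leq h<m+H}e^{2\pi i\theta h}\mob(h)\Bigr|=0.
$$
This is precisely the content (for rational phases) of the Matomäki--Radziwi\l \l--Tao theorem on short-interval averages of the Möbius function twisted by polynomial phases; see \cite{Ma-Ra-Ta}. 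Equivalently, decomposing $\1_{a\N+b}$ additively, one can invoke the corresponding result of Matomäki--Radziwi\l \l \cite{Ma-Ra} on short averages of $\mob$ in arithmetic progressions.

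The main obstacle is conceptual rather than computational: the entire argument hinges on the availability of short-interval cancellation for $\mob$ against periodic weights, which is a nontrivial input from analytic number theory and the place where the ``short interval'' character of the statement enters in an essential way. Once this input is granted, the WRAP-to-periodic approximation based on \cref{lem:sc-ep-0} is precisely what is needed, because that lemma supplies an approximation that is uniform in the base point $m$ of the short interval — this uniformity is exactly the reason the Weyl (as opposed to Besicovitch) pseudo-metric is the right tool in this context.
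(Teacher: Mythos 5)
Your proposal is correct and follows essentially the same route as the paper: reduce to indicator functions of cylinder sets, approximate $x$ by a periodic sequence in $d_W$ using \cref{lem:sc-ep-0} (whose uniformity in the base point $m$ is indeed the key reason WRAP rather than RAP is needed here), and then invoke the Matom\"aki--Radziwi\l\l--Tao short-interval cancellation for $\mob$ against periodic weights. The only cosmetic difference is that you further expand the periodic sequence into rational additive characters before citing \cite{Ma-Ra-Ta}, whereas the paper quotes the periodic-weight form of that result directly; this changes nothing of substance.
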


\begin{proof}
Since any continuous function $f\in C(X_x)$ can be approximated uniformly by cylinder sets $C=C^{\alpha_1,\ldots,\alpha_\ell}_{n_1,\ldots,n_\ell}$, it suffices to show \eqref{eq:sc-ep-2} for indicator functions of cylinder sets. 

Let $\ell\in\N$, $n_1,\ldots,n_\ell\in\Z$ and $\alpha_1,\ldots,\alpha_\ell\in\ca$ be arbitrary. Fix $\epsilon>0$ and let $y\in\ca^\N$ be a periodic sequence such that $d_W(x,y)\leq \epsilon/\ell$. Let $\tilde y\in\ca^\Z$ be a two-sided periodic sequence that extends $y$.
From \cref{lem:sc-ep-0} it follows that
\begin{equation}
\label{eq:sc-ep-9}
\limsup_{H\to\infty}\sup_{m\in\N}\left|\frac{1}{H}
\sum_{m\leq h<m+H} \big(\1_C(S^n \tilde x)-\1_C(S^n \tilde y)\big)\mob(n)\right| \leq\ell d_W(x,y)\leq\epsilon.
\end{equation}
By a recent result of Matomaki, Radziwi\l \l \ and Tao \cite{Ma-Ra-Ta}, we have that for each periodic sequence $a(n)$:
\begin{equation*}\label{sarSHORT}
\frac1M\sum_{M\leq m<2M}\left|\frac1H\sum_{m\leq h<m+H}a(h)\boldsymbol{\mu}(h)\right|\to0\text{ as }H\to\infty,H/M\to0.
\end{equation*}
Choosing $a(n)=\1_C(S^n \tilde y)$ we thus get
\begin{equation}\label{sarSHORT-2}
\frac1M\sum_{M\leq m<2M}\left|\frac1H\sum_{m\leq h<m+H}\1_C(S^h \tilde y)\boldsymbol{\mu}(h)\right|\to0\text{ as }H\to\infty,H/M\to0.
\end{equation}
Combining \eqref{eq:sc-ep-9} and \eqref{sarSHORT-2}, we obtain
\begin{equation*}
\begin{split}
\lim_{H\to\infty\atop \frac{H}{M}\to0}&\frac1M\sum_{M\leq m<2M} \Big|\frac1H\sum_{m\leq h<m+H}\1_C(S^h\tilde x)\boldsymbol{\mu}(h)\Big|
\\
&~\leq~
\lim_{H\to\infty\atop \frac{H}{M}\to0}\frac1M\sum_{M\leq m<2M}\Big|\frac1H\sum_{m\leq h<m+H}\1_C(S^h\tilde y)\boldsymbol{\mu}(h)\Big| +\epsilon ~=~\epsilon.
\end{split}
\end{equation*}
Since $\epsilon$ is arbitrarily, the proof of \eqref{eq:sc-ep-2} is completed.
\end{proof}

\begin{proof}[Proof of \cref{Wshort-2}]
The following argument is analogous to the one used in the proof of \cref{Wshort}: 
Let $x\in\ca^\N$ be WRAP and let $f\in C(X_x)$ and $z\in X_x$ be arbitrary. 
It follows from \cref{l:flo} that $z|_\N$ is WRAP. Therefore, equation \eqref{eq:sc-ep-4} follows from \eqref{eq:sc-ep-2}.
\end{proof}

\appendix
\section{Uniformity of polynomial multiple recurrence}
In this appendix we derive a uniform version of the following polynomial multiple recurrence theorem obtained in \cite{MR1325795}:

\begin{Th}[see {\cite[Theorem A]{MR1325795}}]
\label{thm:mtomrvl}
Let $\ell,u\in\N$ and let $p_{i,j}\in\Q[t]$ be polynomials satisfying $p_{i,j}(\Z)\subset\Z$ and $p_{i,j}(0)=0$, $i=1,\ldots,\ell$, $j=1,\ldots,u$. Then for any probability space $\xbm$, any $u$-tuple of commuting invertible measure preserving transformations $T_1,\ldots,T_u$ on $\xbm$ and any $A\in\cb$ with $\mu(A)>0$ one has
\[
\liminf_{N\to\infty} \frac1N\sum_{n=1}^{N} \mu\left(A\cap\prod_{j=1}^u T_j^{-p_{1,j}(n)}A\cap \prod_{j=1}^u T_j^{-p_{2,j}(n)}A\cap\ldots\cap\prod_{j=1}^u T_j^{-p_{\ell,j}(n)}A\right)>0.
\]
\end{Th}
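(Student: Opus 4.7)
The plan is to establish the theorem by combining two separate results: the positivity of the $\liminf$ (the Bergelson--Leibman polynomial Szemerédi theorem for commuting transformations) and the existence of the limit (a multidimensional polynomial convergence theorem). Once both are in hand, the limit exists and equals the positive $\liminf$, which gives the claim.

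First, I would invoke \cite{MR1325795} to obtain
\[
\liminf_{N\to\infty} \frac{1}{N}\sum_{n=1}^{N}\mu\Big(A\cap\prod_{j=1}^u T_j^{-p_{1,j}(n)}A\cap\ldots\cap\prod_{j=1}^u T_j^{-p_{\ell,j}(n)}A\Big)>0.
\]
The standard approach to this inequality is the PET (polynomial exhaustion technique) induction on the complexity of the family $\{\prod_j T_j^{p_{i,j}(n)}\}_{i=1,\ldots,\ell}$, which systematically lowers the degree of the leading polynomials by van der Corput-type passages, and ultimately reduces matters to the constant-polynomial case, handled by the multidimensional Szemerédi theorem of Furstenberg and Katznelson.

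Second, I would prove that the averages actually converge (and not merely have a positive $\liminf$). For a single transformation and polynomial iterates this is the work of Host--Kra \cite{MR2191208} and Leibman \cite{MR2151605}; for several commuting transformations with polynomial iterates the $L^2$-convergence of the associated multiple averages
\[
\frac{1}{N}\sum_{n=1}^N \prod_{i=1}^\ell \Big(\prod_{j=1}^u T_j^{p_{i,j}(n)}\Big)f_i
\]
(for $f_1,\ldots,f_\ell\in L^\infty(\mu)$) is a theorem of Walsh. Taking $f_i=\1_A$ and integrating against $\1_A$, one immediately deduces convergence of the integral averages appearing in the statement. Combined with Step 1, this forces the limit to exist and be strictly positive.

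The main obstacle in this plan is Step 2: while the Bergelson--Leibman PET induction readily delivers the $\liminf$ lower bound, proving $L^2$-convergence of multiple ergodic averages for commuting transformations along polynomial iterates is substantially harder and lies well outside the scope of the original PET argument. One route is via a structural analysis identifying nilfactors (or inverse limits thereof) as characteristic factors and then invoking equidistribution of polynomial sequences on nilmanifolds; Walsh's finitary decomposition approach bypasses the need for such structure theorems but requires delicate uniformity estimates. Either way, the convergence input is the nontrivial ingredient, whereas positivity is then essentially immediate from \cite{MR1325795}.
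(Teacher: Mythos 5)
The paper does not actually prove this statement: it is quoted in the appendix as a known result attributed to \cite{MR1325795} and serves only as input for the uniform version (\cref{Cor-uniformity}), so there is no proof to compare against line by line. Your assembly is correct and is the standard justification — \cite{MR1325795} (via PET induction reducing to Furstenberg--Katznelson) yields positivity of the $\liminf$, while existence of the limit for several commuting transformations along polynomial iterates requires Walsh's convergence theorem; the paper's own footnote in the introduction makes exactly this division of labour in the one-dimensional case (citing Host--Kra and Leibman for convergence there), and your only substantive addition is the correct observation that \cite{MR1325795} alone does not deliver the $\lim$ asserted in the multidimensional statement.
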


The uniform version in question is given by the following theorem (a special case of it was used in the proofs of Theorems \ref{p5new} and \ref{thm_poly-multi-rec-along-disc-rat-spec-Nk}).

\begin{Th}\label{Cor-uniformity}
For all $\ell,d\in\N$ and all $\vep>0$ there exists $\delta>0$ such that the following holds: For any $u\in \N$, for any polynomials $p_{i,j}\in\Q[t]$, $i=1,\ldots,\ell$, $j=1,\ldots,u$, satisfying $\deg(p_{i,j})\leq d$, $p_{i,j}(\Z)\subset\Z$, $p_{i,j}(0)=0$, for any probability space $\xbm$, for any $u$-tuple of commuting invertible measure preserving transformations $T_1,\ldots,T_u$ on $\xbm$, for any $A\in\cb$ with $\mu(A)\geq\vep$ and for any $s\in\N$ one has
\begin{equation}
\label{eq:ameq-1}
\begin{split}
\lim_{N-M\to\infty} \frac1{N-M}\sum_{n=M}^{N-1} \mu\Bigg(A\cap\prod_{j=1}^u T_j^{-p_{1,j}(sn)}A\cap \prod_{j=1}^u T_j^{-p_{2,j}(sn)} A & \cap\ldots
\\
\ldots & \cap \prod_{j=1}^u T_j^{-p_{\ell,j}(sn)}A\Bigg)>\delta.
\end{split}
\end{equation}
\end{Th}

We remark that a slightly less general version of \cref{Cor-uniformity}
is stated in \cite[Theorem 4.1]{FHK13} without a proof.

In the course of proving \cref{Cor-uniformity} we will make use of the following equivalent combinatorial form of \cref{thm:mtomrvl}.

\begin{Th}[see {\cite[Theorem 3.2]{MR1784213}}]\label{Th-finite-PSz}
Let $\ell,u\in\N$, let $\vep>0$ and let $p_{i,j}\in\Z[t]$ be polynomials satisfying $p_{i,j}(0)=0$, $i=1,\ldots,\ell$, $j=1,\ldots,u$. Then there exists a positive integer $N=N(\ell,u,\vep,p_{i,j})$ such that for all sets $A\subset \Z^d$ with
\[
\frac{\vert A\cap [1,N]^u\vert }{N^u}>\vep
\]
there exist $n\in\N$ and $a\in A$ such that $a+(p_{i,1}(n),\ldots,p_{i,u}(n))\in A$ for all $i\in\{1,2,\ldots,\ell\}$.
\end{Th}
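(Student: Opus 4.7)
The plan is to deduce this finite statement from the infinite ergodic \cref{thm:mtomrvl} by a compactness argument, which is the natural multidimensional analogue of Furstenberg's correspondence principle applied in the contrapositive direction. Suppose for contradiction that the conclusion fails. Then there exist $\ell,u\in\N$, $\vep>0$ and polynomials $p_{i,j}\in\Q[t]$ satisfying the stated hypotheses such that for every $N\in\N$ we can choose a set $A_N\subset\Z^u$ with $|A_N\cap[1,N]^u|/N^u>\vep$ but admitting no configuration $\{a,\,a+P_1(n),\ldots,a+P_\ell(n)\}\subset A_N$, where $P_i(n):=(p_{i,1}(n),\ldots,p_{i,u}(n))$. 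Let $X:=\{0,1\}^{\Z^u}$, equipped with the product topology (a compact metrizable space), and let $T_j\colon X\to X$ denote the shift by the standard basis vector $e_j$, so that $T_1,\ldots,T_u$ are commuting homeomorphisms. Write $x_N:=\1_{A_N}\in X$ and, for $m=(m_1,\ldots,m_u)\in\Z^u$, put $T^m:=T_1^{m_1}\cdots T_u^{m_u}$.

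Next I would form the empirical measures
\[
\mu_N:=\frac{1}{N^u}\sum_{m\in[1,N]^u\cap\Z^u}\delta_{T^m x_N}.
\]
By weak-$\ast$ compactness of the space of Borel probability measures on $X$, we may extract a subsequence along which $\mu_N\to\mu$. Since $[1,N]^u$ is a F\o lner sequence for $\Z^u$ (its shift by any $e_j$ differs from itself in a set of cardinality $O(N^{u-1})$), one has $(T_j)_\ast\mu=\mu$ for each $j$, so $(X,\mu,T_1,\ldots,T_u)$ is a $\Z^u$ measure preserving system. Letting $C:=\{x\in X:x(0)=1\}$, which is clopen, we compute $\mu_N(C)=|A_N\cap[1,N]^u|/N^u>\vep$, whence $\mu(C)\geq\vep>0$ by weak-$\ast$ convergence (clopen sets are continuity sets).

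Applying \cref{thm:mtomrvl} to the system $(X,\mu,T_1,\ldots,T_u)$ and the set $C$ yields
\[
\lim_{N\to\infty}\frac1N\sum_{n=1}^N\mu\Bigl(C\cap\bigcap_{i=1}^\ell\prod_{j=1}^u T_j^{-p_{i,j}(n)}C\Bigr)>0,
\]
so there exists some $n_0\in\N$ with $\mu(C\cap\bigcap_i\prod_j T_j^{-p_{i,j}(n_0)}C)>0$. Since this intersection is a finite-coordinate cylinder, hence clopen, weak-$\ast$ convergence forces $\mu_N(C\cap\bigcap_i\prod_j T_j^{-p_{i,j}(n_0)}C)>0$ for all sufficiently large $N$ (along the chosen subsequence). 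Unpacking the definitions, this produces some $m\in[1,N]^u$ with $m\in A_N$ and $m+P_i(n_0)\in A_N$ for every $i\in\{1,\ldots,\ell\}$, contradicting our choice of $A_N$.

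The conceptual content lies entirely in the correspondence step: the realisation of a weak-$\ast$ limit of empirical measures as an invariant probability measure of positive mass on the cylinder $C$. Once that is in place, \cref{thm:mtomrvl} does all the real work, and the passage back from the ergodic conclusion to the combinatorial one is automatic because the sets involved are clopen cylinders and therefore continuity sets for weak-$\ast$ convergence. The only technical point requiring attention is the F\o lner estimate that secures $T_j$-invariance of $\mu$ for every $j$, and this is a routine boundary-term computation.
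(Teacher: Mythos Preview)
Your argument is correct: this is the standard Furstenberg-type compactness argument deriving the finitary combinatorial statement from the ergodic multiple recurrence theorem, and all the steps (F{\o}lner boundary estimate for invariance, clopen cylinders as continuity sets, unpacking the cylinder condition back to membership in $A_N$) go through as you describe.

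Note, however, that the paper does not supply its own proof of this statement. It is quoted as Theorem~3.2 from \cite{MR1784213} and described there merely as the ``equivalent combinatorial form'' of \cref{thm:mtomrvl}; the paper then uses it as a black box to prove \cref{Cor-uniformity-1}. Your compactness argument is precisely the standard route by which this equivalence is established (and is essentially how it is done in \cite{MR1784213}), so there is no meaningful divergence to report --- you have simply filled in a proof that the present paper chose to cite rather than reproduce.
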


We will need the following theorem, which is of independent interest and can be interpreted as a polynomial extension of Theorem F2 in \cite{MR1784213}.

\begin{Th}\label{Cor-uniformity-1}
For every $\ell,d\in\N$ and every $\vep>0$ there exist $K\in\N$ and $\beta>0$ such that for any probability space $\xbm$, any commuting invertible measure preserving transformations $T_{i,j}$, $1\leq i\leq \ell$ and $1\leq j\leq d$, and any $A\in\cb$ with $\mu(A)\geq\vep$ there exists $n\in\{1,\ldots,K\}$ such that
\begin{equation}
\label{eq:asbr-0}
\mu\left(A\cap\prod_{j=1}^d T_{1,j}^{-n^j}A\cap \prod_{j=1}^d T_{2,j}^{-n^j}A\cap\ldots\cap\prod_{j=1}^d T_{\ell,j}^{-n^j}A\right)>\beta.
\end{equation}
Moreover,
\begin{equation}
\label{eq:asbr-1}
\lim_{N-M\to\infty}\frac{1}{N-M}\sum_{n=M}^{N-1} \mu\left(A\cap\prod_{j=1}^d T_{1,j}^{-n^j}A\cap \prod_{j=1}^d T_{2,j}^{-n^j}A\cap\ldots\cap\prod_{j=1}^d T_{\ell,j}^{-n^j}A\right)\geq \frac{\beta}{K^2}.
\end{equation}
\end{Th}

\begin{proof}
Let $u:=d\ell$ and, for $1\leq i\leq \ell$ and $1\leq t\leq u$, define
\begin{equation}
\label{eq:asbr-2}
p_{i,t}(n)=
\begin{cases}
n^j,&\text{if}~t=(i-1)d+j~\text{with}~1\leq i\leq \ell~\text{and}~1\leq j\leq d;\\
0,&\text{otherwise}.
\end{cases}
\end{equation}
Let $K=N(\ell,u,\vep/2,p_{i,t})$ as guaranteed by \cref{Th-finite-PSz}.
For the remainder of this proof let us call a set of the form $\{a\}\cup \{a+(p_{i,1}(n),\ldots,p_{i,u}(n)): 1\leq i\leq \ell\}$ for some
$a=(a_1,\ldots,a_u)\in \N^u$ and $n\in\N$ a {\em basic arrangement}.
Let $J$ denote the collection of all basic arrangements contained in $\{1,\ldots,K\}^u$.
Set $\beta:=\frac{\vep}{4|J|}$.
We claim that \eqref{eq:asbr-0} and \eqref{eq:asbr-1} are satisfied with this choice of $K$ and $\beta$.

Let $\xbm$ be an arbitrary probability space, let $T_{i,j}$, $1\leq i\leq \ell$ and $1\leq j\leq d$, be commuting invertible measure preserving transformations on $X$ and let $A\in\cb$ with $\mu(A)\geq\vep$.
For $1\leq t\leq u$ let $S_t:= T_{i,j}$ where $(i,j)\in \{1,\ldots,\ell\}\times\{1,\ldots,d\}$ is such that $t=(i-1)d+j$. 
It thus follows from \eqref{eq:asbr-2} that
\begin{equation}
\label{eq:asbr-3}
\prod_{j=1}^d T_{i,j}^{n^j}
=
\prod_{t=1}^u S_t^{p_{i,t}(n)}.
\end{equation}

Define
$$
f(x):= \frac{1}{K^u}\sum_{(n_1,\ldots,n_u)\in [1,K]^u} \1_A\left(\prod_{t=1}^u S_t^{n_t} (x)\right).
$$
Clearly, $f$ is a non-negative function and $\int_X f\,d\mu \geq \vep$. Therefore the set $B:=\{x\in X: f(x)\geq \vep/2\}$ satisfies $\mu(B)\geq \vep/2$. Also, for every $x\in B$ the set
$$
E_x:=\left\{(n_1,\ldots,n_u)\in[1,K]^u:\prod_{t=1}^u S_t^{n_t} (x)\in A\right\}
$$
has density at least $\vep/2$ in $[1,K]^u$, i.e.\ $|E_x|\geq (\vep/2) K^u$.
By our choice of $K$, we are guaranteed to find at least one basic arrangement contained in $E_x$.

We have shown that for every $x\in B$ there exists a
basic arrangement contained in $E_x\subset[1,K]^u$. Since there are $|J|$-many basic arrangements in $[1,K]^u$, by the pigeonhole principle there exists a set $C\subset B$ with $\mu(C)\geq \frac{\vep}{2|J|}$ such that $E_x$ contains the same basic arrangement for every $x\in C$. Suppose this basic arrangement
is given by $\{(a_1,\ldots,a_u)\}\cup \{(a_1,\ldots,a_u)+(p_{i,1}(n),\ldots,p_{i,u}(n)): 1\leq i\leq \ell\}$.
Let $C':=\prod_{t=1}^u S_t^{a_t} C$. Then for any $x'\in C'$ and any $i\in\{1,\ldots,\ell\}$, if $x:= \prod_{t=1}^u S_t^{-a_t} (x')$ then by \eqref{eq:asbr-3} and the definition of $E_x$ we have
$$
\prod_{j=1}^d T_{i,j}^{n^j}(x')
~=~
\prod_{t=1}^u S_t^{p_{i,t}(n)}(x')
~=~
\prod_{t=1}^u S_t^{a_t + p_{i,t}(n)}(x) ~\in A.
$$
This shows that $C'$ is contained in the intersection $A\cap\prod_{j=1}^d T_{1,j}^{-n^j}A\cap \prod_{j=1}^d T_{2,j}^{-n^j}A\cap\ldots\cap\prod_{j=1}^d T_{\ell,j}^{-n^j}A$. Since $\mu(C')=\mu(C)>\beta$, this finishes the proof of \eqref{eq:asbr-0}.

Next, we give a proof of \eqref{eq:asbr-1}.
Let $M\geq 1$ be arbitrary. Note that for all $m$ with $M(K-1)<m\leq MK$ and all $k$ with $1\leq k\leq K$ the products $mk$ are pairwise distinct. 
For $1\leq j\leq d$, $1\leq m\leq M$ and $1\leq i \leq \ell$ define $R_{i,j,m}:=T_{i,j}^{m^j}$.
It follows that
\begin{equation*}
\begin{split}
&\frac{1}{M K^2}\sum_{n=1}^{MK^2}\mu\left(A\cap\prod_{j=1}^d T_{1,j}^{-n^j}A\cap \prod_{j=1}^d T_{2,j}^{-n^j}A\cap\ldots\cap\prod_{j=1}^d T_{\ell,j}^{-n^j}A\right)
\\
&
\geq \frac1{MK^2}\sum_{m=M(K-1)+1}^{MK}~~\sum_{k=1}^K\mu\left(A\cap\prod_{j=1}^d T_{1,j}^{-(mk)^j}A\cap \prod_{j=1}^d T_{2,j}^{-(mk)^j}A\cap\ldots\cap\prod_{j=1}^d T_{\ell,j}^{-(mk)^j}A\right)
\\
&
= \frac1{MK^2}\sum_{m=M(K-1)+1}^{MK}~~\sum_{k=1}^K\mu\left(A\cap\prod_{j=1}^d R_{1,j,m}^{-k^j}A\cap \prod_{j=1}^d R_{2,j,m}^{-k^j}A\cap\ldots\cap\prod_{j=1}^d R_{\ell,j,m}^{-k^j}A\right).
\end{split}
\end{equation*}
In light of \eqref{eq:asbr-0} we have
$$
\sum_{k=1}^K\mu\left(A\cap\prod_{j=1}^d R_{1,j,m}^{-k^j}A\cap \prod_{j=1}^d R_{2,j,m}^{-k^j}A\cap\ldots\cap\prod_{j=1}^d R_{\ell,j,m}^{-k^j}A\right)>\beta
$$
for all $1\leq m\leq M$.
Therefore, 
\begin{equation*}
\begin{split}
\frac1{MK^2}\sum_{m=M(K-1)+1}^{MK}~~\sum_{k=1}^K&\mu\left(A\cap\prod_{j=1}^d R_{1,j,m}^{-k^j}A\cap \prod_{j=1}^d R_{2,j,m}^{-k^j}A\cap\ldots\cap\prod_{j=1}^d R_{\ell,j,m}^{-k^j}A\right)
\\
&>
\frac1{MK^2}\sum_{m=M(K-1)+1}^{MK}\beta
\\
&=\frac{\beta}{K^2}.
\end{split}
\end{equation*}
This proves that
\begin{equation}
\label{eq:bmcp-tr}
\liminf_{M\to\infty}
\frac{1}{M K^2}\sum_{n=1}^{MK^2}\mu\left(A\cap\prod_{j=1}^d T_{1,j}^{-n^j}A\cap \prod_{j=1}^d T_{2,j}^{-n^j}A\cap\ldots\cap\prod_{j=1}^d T_{\ell,j}^{-n^j}A\right)\geq \frac{\beta}{K^2}.
\end{equation}
Finally, it follows from the results in \cite{Walsh12} that the limits on the left hand side of \eqref{eq:bmcp-tr} and on the left hand side of \eqref{eq:asbr-1} exist and are equal. This finishes the proof of \eqref{eq:asbr-1}.
\end{proof}

\begin{proof}[Proof of \cref{Cor-uniformity}]
Depending only on $\ell,d\in\N$ and $\epsilon>0$, choose $\beta>0$ and $K\geq 1$ as guaranteed by \cref{Cor-uniformity-1}. Note that coefficients of integer polynomials of degree $d$ can be written as fractions with denominator $q:=d!$. Define $b:=q!$ and pick any $\delta>0$ such that $\delta<\frac{\beta}{bK^2}$. We claim that \eqref{eq:ameq-1} holds with this choice of $\delta$.

Let $u,s\in \N$ and let $p_{i,j}\in\Q[t]$, $i=1,\ldots,\ell$, $j=1,\ldots,u$, with $\deg(p_{i,j})\leq d$, $p_{i,j}(\Z)\subset\Z$, $p_{i,j}(0)=0$ and such that the denominators of the coefficients of $p_{i,j}$ (when written as reduced fractions) are at most $q$. Furthermore, let $T_1,\ldots,T_u$ be commuting invertible measure preserving transformations on a probability space $\xbm$ and let $A\in\cb$ with $\mu(A)\geq\vep$.
It follows from \cite{Walsh12} that the limit on the left hand side of \eqref{eq:ameq-1} exists and is equal to 
\begin{equation}
\label{eq:ameq-2}
\lim_{N\to\infty} \frac1{N}\sum_{n=1}^{N} \mu\Bigg(A\cap\prod_{j=1}^u T_j^{-p_{1,j}(sn)}A\cap \prod_{j=1}^u T_j^{-p_{2,j}(sn)} A \cap
\ldots \cap \prod_{j=1}^u T_j^{-p_{\ell,j}(sn)}A\Bigg).
\end{equation}
It thus suffices to show that \eqref{eq:ameq-2} is bigger than $\delta$.

For $1\leq i\leq \ell$ and $1\leq j\leq u$ find $a_{i,j}^{(1)},\ldots,a_{i,j}^{(d)}$ such that
$$
p_{i,j}(n)=a_{i,j}^{(1)}n+a_{i,j}^{(2)}n^2+\ldots+a_{i,j}^{(d)}n^d.
$$
By assumption, $b a_{i,j}^{(k)}\in \Z$ and hence $s^kb^k a_{i,j}^{(k)}\in \Z$ for all $s\in \N$. Define
$$
R_{i,k}:=\prod_{j=1}^{u}T_j^{s^k b^k a_{i,j}^{(k)}}, \qquad 1\leq k\leq d.
$$
Clearly,
$$
\prod_{j=1}^u T_j^{p_{i,j}(bsn)}=\prod_{j=1}^d R_{i,j}^{n^j},\qquad\forall n\in\N.
$$
We thus have
\begin{equation*}
\begin{split}
\frac1{N}\sum_{n=1}^{N} \mu\left(A\cap\prod_{j=1}^u T_j^{-p_{1,j}(sn)}A\cap \prod_{j=1}^u T_j^{-p_{2,j}(sn)}A\cap\ldots\cap\prod_{j=1}^u T_j^{-p_{\ell,j}(sn)}A\right)
\\
\geq\frac1{N}\sum_{n=1}^{\lfloor N/b\rfloor } \mu\left(A\cap\prod_{j=1}^u T_j^{-p_{1,j}(bsn)}A\cap \prod_{j=1}^u T_j^{-p_{2,j}(bsn)}A\cap\ldots\cap\prod_{j=1}^u T_j^{-p_{\ell,j}(bsn)}A\right)
\\
=\frac1{N }\sum_{n=1}^{\lfloor N/b\rfloor } \mu\left(A\cap\prod_{j=1}^d R_{1,j}^{-n^j}A\cap \prod_{j=1}^d R_{2,j}^{-n^j}A\cap\ldots\cap\prod_{j=1}^d R_{\ell,j}^{-n^j}A\right).
\end{split}
\end{equation*}
From \eqref{eq:asbr-1} it follows that
$$
\lim_{N\to\infty}\frac1{N}\sum_{n=1}^{\lfloor N/b\rfloor } \mu\left(A\cap\prod_{j=1}^d R_{1,j}^{-n^j}A\cap \prod_{j=1}^d R_{2,j}^{-n^j}A\cap\ldots\cap\prod_{j=1}^d R_{\ell,j}^{-n^j}A\right)
\geq \frac{\beta}{bK^2}>\delta.
$$
Therefore,
$$
\lim_{N\to\infty} \frac1N\sum_{n=1}^{N} \mu\left(A\cap\prod_{j=1}^u T_j^{-p_{1,j}(sn)}A\cap \prod_{j=1}^u T_j^{-p_{2,j}(sn)}A\cap\ldots\cap\prod_{j=1}^u T_j^{-p_{\ell,j}(sn)}A\right)>\delta.
$$
\end{proof}

\small
\bibliography{BibAF}

\begin{thebibliography}{10}

\bibitem{MR1170439}
{\sc J.-P. Allouche}, {\em The number of factors in a paperfolding sequence},
  Bull. Austral. Math. Soc., 46 (1992), pp.~23--32.

\bibitem{MR1997038}
{\sc J.-P. Allouche and J.~Shallit}, {\em Automatic sequences}, Cambridge
  University Press, Cambridge, 2003.
\newblock Theory, applications, generalizations.

\bibitem{AGH63}
{\sc L.~Auslander, L.~Green, and F.~Hahn}, {\em Flows on homogeneous spaces},
  With the assistance of L. Markus and W. Massey, and an appendix by L.
  Greenberg. Annals of Mathematics Studies, No. 53, Princeton University Press,
  Princeton, N.J., 1963.

\bibitem{Ba-Ka-Ku-Le}
{\sc A.~Bartnicka, S.~Kasjan, J.~Ku\l{}aga-Przymus, and M.~Lema\'{n}czyk}, {\em
  $\mathscr{B}$-free sets and dynamics}.
\newblock to appear in Trans. Amer. Math. Soc.,
  \url{http://arxiv.org/abd/1509.08010}.

\bibitem{MR2259058}
{\sc M.~Beiglb{\"o}ck, V.~Bergelson, N.~Hindman, and D.~Strauss}, {\em
  Multiplicative structures in additively large sets}, J. Combin. Theory Ser.
  A, 113 (2006), pp.~1219--1242.

\bibitem{MR773063}
{\sc A.~Bellow and V.~Losert}, {\em The weighted pointwise ergodic theorem and
  the individual ergodic theorem along subsequences}, Trans. Amer. Math. Soc.,
  288 (1985), pp.~307--345.

\bibitem{Bergelson85}
{\sc V.~Bergelson}, {\em Sets of recurrence of {${\bf Z}^m$}-actions and
  properties of sets of differences in {${\bf Z}^m$}}, J. London Math. Soc.
  (2), 31 (1985), pp.~295--304.

\bibitem{Bergelson87}
\leavevmode\vrule height 2pt depth -1.6pt width 23pt, {\em Ergodic {R}amsey
  theory}, in Logic and combinatorics ({A}rcata, {C}alif., 1985), vol.~65 of
  Contemp. Math., Amer. Math. Soc., Providence, RI, 1987, pp.~63--87.

\bibitem{Bergelson96}
\leavevmode\vrule height 2pt depth -1.6pt width 23pt, {\em Ergodic {R}amsey
  theory---an update}, in Ergodic theory of {${\bf Z}^d$} actions ({W}arwick,
  1993--1994), vol.~228 of London Math. Soc. Lecture Note Ser., Cambridge Univ.
  Press, Cambridge, 1996, pp.~1--61.

\bibitem{MR1412598}
{\sc V.~Bergelson and I.~J. H{\aa}land}, {\em Sets of recurrence and
  generalized polynomials}, in Convergence in ergodic theory and probability
  ({C}olumbus, {OH}, 1993), vol.~5 of Ohio State Univ. Math. Res. Inst. Publ.,
  de Gruyter, Berlin, 1996, pp.~91--110.

\bibitem{MR1784213}
{\sc V.~Bergelson, B.~Host, R.~McCutcheon, and F.~Parreau}, {\em Aspects of
  uniformity in recurrence}, Colloq. Math., 84/85 (2000), pp.~549--576.
\newblock Dedicated to the memory of Anzelm Iwanik.

\bibitem{MR1325795}
{\sc V.~Bergelson and A.~Leibman}, {\em Polynomial extensions of van der
  {W}aerden's and {S}zemer{\'e}di's theorems}, J. Amer. Math. Soc., 9 (1996),
  pp.~725--753.

\bibitem{MR1954690}
{\sc V.~Bergelson and I.~Ruzsa}, {\em Squarefree numbers, {IP} sets and ergodic
  theory}, in Paul {E}rd{\H o}s and his mathematics, {I} ({B}udapest, 1999),
  vol.~11 of Bolyai Soc. Math. Stud., J{\'a}nos Bolyai Math. Soc., Budapest,
  2002, pp.~147--160.

\bibitem{MR2567477}
{\sc J.~Berstel, D.~Perrin, and C.~Reutenauer}, {\em Codes and automata},
  vol.~129 of Encyclopedia of Mathematics and its Applications, Cambridge
  University Press, Cambridge, 2010.

\bibitem{MR1512943}
{\sc A.~S. Besicovitch}, {\em On the density of certain sequences of integers},
  Math. Ann., 110 (1935), pp.~336--341.

\bibitem{MR0068029}
\leavevmode\vrule height 2pt depth -1.6pt width 23pt, {\em Almost periodic
  functions}, Dover Publications, Inc., New York, 1955.

\bibitem{MR3055764}
{\sc F.~Cellarosi and Y.~G. Sinai}, {\em Ergodic properties of square-free
  numbers}, J. Eur. Math. Soc., 15 (2013), pp.~1343--1374.

\bibitem{Chowla}
{\sc S.~Chowla}, {\em The {R}iemann hypothesis and {H}ilbert's tenth problem},
  Mathematics and Its Applications, Vol. 4, Gordon and Breach Science
  Publishers, New York-London-Paris, 1965.

\bibitem{Dav33}
{\sc H.~Davenport}, {\em {\"U}ber numeri abudantes}, Sitzungsber. Preuss. Akad.
  Wiss.,Phys.-Math. Kl., No. 6,  (1933), pp.~830--837.

\bibitem{Davenport1936}
{\sc H.~Davenport and P.~Erd{\H o}s}, {\em On sequences of positive integers},
  Acta Arithmetica, 2 (1936), pp.~147--151.

\bibitem{MR0043835}
\leavevmode\vrule height 2pt depth -1.6pt width 23pt, {\em On sequences of
  positive integers}, J. Indian Math. Soc. (N.S.), 15 (1951), pp.~19--24.

\bibitem{MR684028}
{\sc M.~Dekking, M.~Mend{\`e}s~France, and A.~van~der Poorten}, {\em Folds},
  Math. Intelligencer, 4 (1982), pp.~130--138.

\bibitem{De-Dr-Mu}
{\sc J.-M. Deshouillers, M.~Drmota, and C.~M\"ullner}, {\em Automatic sequences
  generated by synchronizing automata fulfill the {S}arnak conjecture}, Studia
  Math., 231 (2015), pp.~83--95.

\bibitem{MR2180227}
{\sc T.~Downarowicz}, {\em Survey of odometers and {T}oeplitz flows}, in
  Algebraic and topological dynamics, vol.~385 of Contemp. Math., Amer. Math.
  Soc., Providence, RI, 2005, pp.~7--37.

\bibitem{Downar}
\leavevmode\vrule height 2pt depth -1.6pt width 23pt, {\em Entropy in dynamical
  systems}, vol.~18 of New Mathematical Monographs, Cambridge University Press,
  Cambridge, 2011.

\bibitem{MR951081}
{\sc T.~Downarowicz and A.~Iwanik}, {\em Quasi-uniform convergence in compact
  dynamical systems}, Studia Math., 89 (1988), pp.~11--25.

\bibitem{2014arXiv1410}
{\sc E.~{El Abdalaoui}, J.~{Ku{\l}aga-Przymus}, M.~{Lema{{\'n}}czyk}, and
  T.~{De La Rue}}, {\em The {C}howla and the {S}arnak conjectures from ergodic
  theory point of view}.
\newblock Accepted for publication in Discrete Continuous Dynam. Systems.

\bibitem{Ab-Le-Ru}
{\sc E.~El~Abdalaoui, M.~Lema\'{n}czyk, and T.~de~la Rue}, {\em A dynamical
  point of view on the set of $\mathscr{B}$-free integers}, International
  Mathematics Research Notices,  (2015), pp.~7258--7286.

\bibitem{MR2415080}
{\sc N.~Frantzikinakis}, {\em Multiple ergodic averages for three polynomials
  and applications}, Trans. Amer. Math. Soc., 360 (2008), pp.~5435--5475.

\bibitem{FHK13}
{\sc N.~Frantzikinakis, B.~Host, and B.~Kra}, {\em The polynomial
  multidimensional {S}zemer\'edi theorem along shifted primes}, Israel J.
  Math., 194 (2013), pp.~331--348.

\bibitem{MR0213508}
{\sc H.~Furstenberg}, {\em Disjointness in ergodic theory, minimal sets, and a
  problem in {D}iophantine approximation}, Math. Systems Theory, 1 (1967),
  pp.~1--49.

\bibitem{MR0498471}
{\sc H.~Furstenberg}, {\em Ergodic behavior of diagonal measures and a theorem
  of {S}zemer{\'e}di on arithmetic progressions}, J. Analyse Math., 31 (1977),
  pp.~204--256.

\bibitem{Furstenberg81}
{\sc H.~Furstenberg}, {\em Recurrence in ergodic theory and combinatorial
  number theory}, Princeton University Press, Princeton, N.J., 1981.
\newblock M. B. Porter Lectures.

\bibitem{Ha-Pa}
{\sc F.~Hahn and W.~Parry}, {\em Some characteristic properties of dynamical
  systems with quasi-discrete spectra}, Math. Systems Theory, 2 (1968),
  pp.~179--190.

\bibitem{MR1414678}
{\sc R.~R. Hall}, {\em Sets of multiples}, vol.~118 of Cambridge Tracts in
  Mathematics, Cambridge University Press, Cambridge, 1996.

\bibitem{MR564927}
{\sc N.~Hindman}, {\em Ultrafilters and combinatorial number theory}, in Number
  theory, {C}arbondale 1979 ({P}roc. {S}outhern {I}llinois {C}onf., {S}outhern
  {I}llinois {U}niv., {C}arbondale, {I}ll., 1979), vol.~751 of Lecture Notes in
  Math., Springer, Berlin, 1979, pp.~119--184.

\bibitem{MR0073618}
{\sc B.~Hornfeck}, {\em Zur {D}ichte der {M}enge der vollkommenen {Z}ahlen},
  Arch. Math. (Basel), 6 (1955), pp.~442--443.

\bibitem{MR2191208}
{\sc B.~Host and B.~Kra}, {\em Convergence of polynomial ergodic averages},
  Israel J. Math., 149 (2005), pp.~1--19.
\newblock Probability in mathematics.

\bibitem{MR980516}
{\sc A.~Iwanik}, {\em Weyl almost periodic points in topological dynamics},
  Colloq. Math., 56 (1988), pp.~107--119.

\bibitem{MR0255766}
{\sc K.~Jacobs and M.~Keane}, {\em {$0-1$}-sequences of {T}oeplitz type}, Z.
  Wahrscheinlichkeitstheorie und Verw. Gebiete, 13 (1969), pp.~123--131.

\bibitem{Keane}
{\sc M.~Keane}, {\em Generalized {M}orse sequences}, Z.
  Wahrscheinlichkeitstheorie und Verw. Gebiete, 10 (1968), pp.~335--353.

\bibitem{MR1503326}
{\sc N.~Kryloff and N.~Bogoliouboff}, {\em La th\'eorie g\'en\'erale de la
  mesure dans son application \`a l'\'etude des syst\`emes dynamiques de la
  m\'ecanique non lin\'eaire}, Ann. of Math. (2), 38 (1937), pp.~65--113.

\bibitem{MR3356811}
{\sc J.~Ku{\l}aga-Przymus, M.~Lema{{\'n}}czyk, and B.~Weiss}, {\em On invariant
  measures for $\mathscr{B}$-free systems}, Proc. Lond. Math. Soc. (3), 110
  (2015), pp.~1435--1474.

\bibitem{MR890422}
{\sc E.~Lehrer}, {\em Topological mixing and uniquely ergodic systems}, Israel
  J. Math., 57 (1987), pp.~239--255.

\bibitem{MR2151605}
{\sc A.~Leibman}, {\em Convergence of multiple ergodic averages along
  polynomials of several variables}, Israel J. Math., 146 (2005), pp.~303--315.

\bibitem{MR2122919}
\leavevmode\vrule height 2pt depth -1.6pt width 23pt, {\em Pointwise
  convergence of ergodic averages for polynomial sequences of translations on a
  nilmanifold}, Ergodic Theory Dynam. Systems, 25 (2005), pp.~201--213.

\bibitem{MR2643713}
\leavevmode\vrule height 2pt depth -1.6pt width 23pt, {\em Multiple polynomial
  correlation sequences and nilsequences}, Ergodic Theory Dynam. Systems, 30
  (2010), pp.~841--854.

\bibitem{Ma-Ra}
{\sc K.~Matom{\"a}ki and M.~Radziwi\l~\l}, {\em Multiplicative functions in
  short intervals}, Ann. of Math. (2), 183 (2016), pp.~1015--1056.

\bibitem{Ma-Ra-Ta}
{\sc K.~Matom{\"a}ki, M.~Radziwi{\l}{\l}, and T.~Tao}, {\em An averaged form of
  {C}howla's conjecture}, Algebra Number Theory, 9 (2015), pp.~2167--2196.

\bibitem{MR1507944}
{\sc M.~Morse and G.~A. Hedlund}, {\em Symbolic {D}ynamics}, Amer. J. Math., 60
  (1938), pp.~815--866.

\bibitem{MR0000745}
\leavevmode\vrule height 2pt depth -1.6pt width 23pt, {\em Symbolic dynamics
  {II}. {S}turmian trajectories}, Amer. J. Math., 62 (1940), pp.~1--42.

\bibitem{Parry69}
{\sc W.~Parry}, {\em Ergodic properties of affine transformations and flows on
  nilmanifolds.}, Amer. J. Math., 91 (1969), pp.~757--771.

\bibitem{Peckner12}
{\sc R.~Peckner}, {\em Uniqueness of the measure of maximal entropy for the
  squarefree flow}, Israel J. Math., 210 (2015), pp.~335--357.

\bibitem{MR833286}
{\sc K.~Petersen}, {\em Ergodic theory}, vol.~2 of Cambridge Studies in
  Advanced Mathematics, Cambridge University Press, Cambridge, 1983.

\bibitem{MR2590264}
{\sc M.~Queff{{\'e}}lec}, {\em Substitution dynamical systems---spectral
  analysis}, vol.~1294 of Lecture Notes in Mathematics, Springer-Verlag,
  Berlin, second~ed., 2010.

\bibitem{sarnak-lectures}
{\sc P.~Sarnak}, {\em Three lectures on the {M}{\"o}bius function, randomness
  and dynamics}.
\newblock \url{http://publications.ias.edu/sarnak/}.

\bibitem{MR1544950}
{\sc I.~Schoenberg}, {\em \"{U}ber die asymptotische {V}erteilung reeller
  {Z}ahlen mod 1}, Math. Z., 28 (1928), pp.~171--199.

\bibitem{MR0369312}
{\sc E.~Szemer{\'e}di}, {\em On sets of integers containing no {$k$} elements
  in arithmetic progression}, Acta Arith., 27 (1975), pp.~199--245.
\newblock Collection of articles in memory of Juri{\u\i} Vladimirovi{\v{c}}
  Linnik.

\bibitem{Thue1}
{\sc A.~Thue}, {\em {\"U}ber unendliche zeichenreihen}, Norske Vid. Selsk. Skr.
  I Math-Nat. Kl. 7,  (1906).

\bibitem{Thue2}
\leavevmode\vrule height 2pt depth -1.6pt width 23pt, {\em Die l{\"o}sung eines
  spezialfalles eines generellen logischen problems}, , Norske Vid. Selsk. Skr.
  I Math-Nat. Kl. Chris. 8,  (1910).

\bibitem{Walsh12}
{\sc M.~N. Walsh}, {\em Norm convergence of nilpotent ergodic averages}, Ann.
  of Math. (2), 175 (2012), pp.~1667--1688.

\bibitem{Wa}
{\sc Z.~Wang}, {\em M{\"o}bius disjointness for analytic skew products},
  Invent. Math., 209 (2017), pp.~175--196.

\bibitem{MR1727510}
{\sc B.~Weiss}, {\em Single orbit dynamics}, vol.~95 of CBMS Regional
  Conference Series in Mathematics, American Mathematical Society, Providence,
  RI, 2000.

\end{thebibliography}

\bigskip
\footnotesize

\noindent
Vitaly Bergelson\\
\textsc{Department of Mathematics, Ohio State University, Columbus, OH 43210, USA}\par\nopagebreak
\noindent
\href{mailto:vitaly@math.ohio-state.edu}
{\texttt{vitaly@math.ohio-state.edu}}

\medskip

\noindent
Joanna Ku\l aga-Przymus\\
\textsc{Faculty of Mathematics and Computer Science, Nicolaus Copernicus University, Chopina 12/18, 87-100 Toru\'{n}, Poland}\par\nopagebreak
\noindent
\href{mailto:joanna.kulaga@gmail.com}
{\texttt{joanna.kulaga@gmail.com}}

\medskip

\noindent
Mariusz Lema\'nczyk\\
\textsc{Faculty of Mathematics and Computer Science, Nicolaus Copernicus University, Chopina 12/18, 87-100 Toru\'{n}, Poland}\par\nopagebreak
\noindent
\href{mailto:mlem@mat.umk.pl}
{\texttt{mlem@mat.umk.pl}}

\medskip

\noindent
Florian K.\ Richter\\
\textsc{Department of Mathematics, Ohio State University, Columbus, OH 43210, USA}\par\nopagebreak
\noindent
\href{mailto:richter.109@osu.edu}
{\texttt{richter.109@osu.edu}}
\end{document}